\numberwithin{equation}{section}
\numberwithin{figure}{section}
\theoremstyle{plain}
\newtheorem{thm}{\protect\theoremname}[section]
\theoremstyle{definition}
\newtheorem{example}[thm]{\protect\examplename}
\theoremstyle{plain}
\newtheorem{qst}[thm]{\protect\questionname}
\theoremstyle{definition}
\newtheorem{defn}[thm]{\protect\definitionname}
\theoremstyle{remark}
\newtheorem{rem}[thm]{\protect\remarkname}
\theoremstyle{plain}
\newtheorem{prop}[thm]{\protect\propositionname}
\theoremstyle{plain}
\newtheorem{lem}[thm]{\protect\lemmaname}
\theoremstyle{plain}
\newtheorem{cor}[thm]{\protect\corollaryname}
\providecommand{\corollaryname}{Corollary}
\providecommand{\definitionname}{Definition}
\providecommand{\examplename}{Example}
\providecommand{\lemmaname}{Lemma}
\providecommand{\propositionname}{Proposition}
\providecommand{\questionname}{Question}
\providecommand{\remarkname}{Remark}
\providecommand{\theoremname}{Theorem}
\begin{document}
\global\long\def\sf#1{\mathsf{#1}}%

\global\long\def\scr#1{\mathscr{{#1}}}%

\global\long\def\cal#1{\mathcal{#1}}%

\global\long\def\bb#1{\mathbb{#1}}%

\global\long\def\bf#1{\mathbf{#1}}%

\global\long\def\frak#1{\mathfrak{#1}}%

\global\long\def\fr#1{\mathfrak{#1}}%

\global\long\def\u#1{\underline{#1}}%

\global\long\def\tild#1{\widetilde{#1}}%

\global\long\def\mrm#1{\mathrm{#1}}%

\global\long\def\pr#1{\left(#1\right)}%

\global\long\def\abs#1{\left|#1\right|}%

\global\long\def\inp#1{\left\langle #1\right\rangle }%

\global\long\def\br#1{\left\{  #1\right\}  }%

\global\long\def\norm#1{\left\Vert #1\right\Vert }%

\global\long\def\hat#1{\widehat{#1}}%

\global\long\def\opn#1{\operatorname{#1}}%

\global\long\def\bigmid{\,\middle|\,}%

\global\long\def\Top{\sf{Top}}%

\global\long\def\Set{\sf{Set}}%

\global\long\def\SS{\sf{sSet}}%

\global\long\def\Kan{\sf{Kan}}%

\global\long\def\Cat{\mathcal{C}\sf{at}}%

\global\long\def\imfld{\cal M\mathsf{fld}}%

\global\long\def\ids{\cal D\sf{isk}}%

\global\long\def\ich{\cal C\sf h}%

\global\long\def\SW{\mathcal{SW}}%

\global\long\def\SHC{\mathcal{SHC}}%

\global\long\def\Fib{\mathcal{F}\mathsf{ib}}%

\global\long\def\Bund{\mathcal{B}\mathsf{und}}%

\global\long\def\B{\sf B}%

\global\long\def\Spaces{\sf{Spaces}}%

\global\long\def\Mod{\sf{Mod}}%

\global\long\def\Nec{\sf{Nec}}%

\global\long\def\Fin{\sf{Fin}}%

\global\long\def\Ch{\sf{Ch}}%

\global\long\def\Ab{\sf{Ab}}%

\global\long\def\SA{\sf{sAb}}%

\global\long\def\P{\mathsf{POp}}%

\global\long\def\Op{\mathcal{O}\mathsf{p}}%

\global\long\def\Opg{\mathcal{O}\mathsf{p}_{\infty}^{\mathrm{gn}}}%

\global\long\def\Tup{\mathsf{Tup}}%

\global\long\def\Del{\mathbf{\Delta}}%

\global\long\def\id{\operatorname{id}}%

\global\long\def\Aut{\operatorname{Aut}}%

\global\long\def\End{\operatorname{End}}%

\global\long\def\Hom{\operatorname{Hom}}%

\global\long\def\Ext{\operatorname{Ext}}%

\global\long\def\sk{\operatorname{sk}}%

\global\long\def\ihom{\underline{\operatorname{Hom}}}%

\global\long\def\N{\mathrm{N}}%

\global\long\def\-{\text{-}}%

\global\long\def\op{\mathrm{op}}%

\global\long\def\To{\Rightarrow}%

\global\long\def\rr{\rightrightarrows}%

\global\long\def\rl{\rightleftarrows}%

\global\long\def\mono{\rightarrowtail}%

\global\long\def\epi{\twoheadrightarrow}%

\global\long\def\comma{\downarrow}%

\global\long\def\ot{\leftarrow}%

\global\long\def\corr{\leftrightsquigarrow}%

\global\long\def\lim{\operatorname{lim}}%

\global\long\def\colim{\operatorname{colim}}%

\global\long\def\holim{\operatorname{holim}}%

\global\long\def\hocolim{\operatorname{hocolim}}%

\global\long\def\Ran{\operatorname{Ran}}%

\global\long\def\Lan{\operatorname{Lan}}%

\global\long\def\Sk{\operatorname{Sk}}%

\global\long\def\Sd{\operatorname{Sd}}%

\global\long\def\Ex{\operatorname{Ex}}%

\global\long\def\Cosk{\operatorname{Cosk}}%

\global\long\def\Sing{\operatorname{Sing}}%

\global\long\def\Sp{\operatorname{Sp}}%

\global\long\def\Spc{\operatorname{Spc}}%

\global\long\def\Ho{\operatorname{Ho}}%

\global\long\def\Fun{\operatorname{Fun}}%

\global\long\def\map{\operatorname{map}}%

\global\long\def\diag{\operatorname{diag}}%

\global\long\def\Gap{\operatorname{Gap}}%

\global\long\def\cc{\operatorname{cc}}%

\global\long\def\Ob{\operatorname{Ob}}%

\global\long\def\Map{\operatorname{Map}}%

\global\long\def\Rfib{\operatorname{RFib}}%

\global\long\def\Lfib{\operatorname{LFib}}%

\global\long\def\Tw{\operatorname{Tw}}%

\global\long\def\Equiv{\operatorname{Equiv}}%

\global\long\def\Arr{\operatorname{Arr}}%

\global\long\def\Cyl{\operatorname{Cyl}}%

\global\long\def\Path{\operatorname{Path}}%

\global\long\def\Alg{\operatorname{Alg}}%

\global\long\def\ho{\operatorname{ho}}%

\global\long\def\Comm{\operatorname{Comm}}%

\global\long\def\Triv{\operatorname{Triv}}%

\global\long\def\triv{\operatorname{triv}}%

\global\long\def\Env{\operatorname{Env}}%

\global\long\def\Act{\operatorname{Act}}%

\global\long\def\act{\operatorname{act}}%

\global\long\def\loc{\operatorname{loc}}%

\global\long\def\Assem{\operatorname{Assem}}%

\global\long\def\Nat{\operatorname{Nat}}%

\global\long\def\lax{\mathrm{lax}}%

\global\long\def\weq{\mathrm{weq}}%

\global\long\def\fib{\mathrm{fib}}%

\global\long\def\cof{\mathrm{cof}}%

\global\long\def\inj{\mathrm{inj}}%

\global\long\def\univ{\mathrm{univ}}%

\global\long\def\Ker{\opn{Ker}}%

\global\long\def\Coker{\opn{Coker}}%

\global\long\def\Im{\opn{Im}}%

\global\long\def\Coim{\opn{Im}}%

\global\long\def\coker{\opn{coker}}%

\global\long\def\im{\opn{\mathrm{im}}}%

\global\long\def\coim{\opn{coim}}%

\global\long\def\gn{\mathrm{gn}}%

\global\long\def\Mon{\opn{Mon}}%

\global\long\def\Un{\opn{Un}}%

\global\long\def\St{\opn{St}}%

\global\long\def\cun{\widetilde{\opn{Un}}}%

\global\long\def\cst{\widetilde{\opn{St}}}%

\global\long\def\Sym{\operatorname{Sym}}%

\global\long\def\CA{\operatorname{CAlg}}%

\global\long\def\rd{\mathrm{rd}}%

\global\long\def\xmono#1#2{\stackrel[#2]{#1}{\rightarrowtail}}%

\global\long\def\xepi#1#2{\stackrel[#2]{#1}{\twoheadrightarrow}}%

\global\long\def\adj{\stackrel[\longleftarrow]{\longrightarrow}{\bot}}%

\global\long\def\btimes{\boxtimes}%

\global\long\def\ps#1#2{\prescript{}{#1}{#2}}%

\global\long\def\ups#1#2{\prescript{#1}{}{#2}}%

\global\long\def\hofib{\mathrm{hofib}}%

\global\long\def\cofib{\mathrm{cofib}}%

\global\long\def\Vee{\bigvee}%

\global\long\def\w{\wedge}%

\global\long\def\t{\otimes}%

\global\long\def\bp{\boxplus}%

\global\long\def\rcone{\triangleright}%

\global\long\def\lcone{\triangleleft}%

\global\long\def\S{\mathsection}%

\global\long\def\p{\prime}%

\global\long\def\pp{\prime\prime}%

\global\long\def\W{\overline{W}}%

\global\long\def\o#1{\overline{#1}}%

\title[The Grothendieck Construction for Categorical Patterns]{The Grothendieck Construction for $\infty$-Categories Fibered over Categorical Patterns}
\begin{abstract}
We show how to treat families of $\infty$-categories fibered in categorical
patterns (e.g., $\infty$-operads and monoidal $\infty$-categories)
in terms of fibrations by relativizing the Grothendieck construction.
As applications, we construct an analog of the universal cocartesian
fibration and explain how to compute limits and colimits of $\infty$-categories
fibered in categorical patterns.
\end{abstract}

\author{Kensuke Arakawa}
\address{Department of Mathematics, Kyoto University, Kyoto, 606-8502, Japan}
\email{arakawa.kensuke.22c@st.kyoto-u.ac.jp}
\keywords{categorical patterns, Grothendieck construction, straightening--unstraightening
equivalence, $\infty$-categories}
\subjclass[2020]{18D30, 55U35, 55U40, 18A30,}

\maketitle
\tableofcontents{}

\section{Introduction}

\iffalse
\begin{enumerate}
\item The ability to forgo arbitrary choices whenever possible is even more
important in higher category theory, where there are less and less
canonical specific choices available (such as the smash product of
spectra). As such, the straightening-unstraightening is very important.
\item Presenting $\infty$-categories with structures is one area(?) where
cocartesian fibrations play an important role. 
\begin{enumerate}
\item Example: Symmoncat.
\end{enumerate}
\item To formalize the theory of $\infty$-categories with structures, Lurie
developed the theory of categorical patterns.
\item We often want to consider $\frak P\-\Fib$ valued diagrams, without
making arbitrary choices. We need Grothendieck constructions!
\item We do this by using $\frak P$-bundles.
\end{enumerate}
\fi

\settocdepth{section}

\subsection{\texorpdfstring{$\infty$}{infinity}-Categories with Structures}

The Grothendieck construction in ordinary category theory establishes
an equivalence between $\sf{Cat}$-valued pseudofunctors and Grothendieck
opfibrations \cite[$\S$8.3]{Borceux2}. An analog of this construction
in $\infty$-category theory, due to Lurie \cite[Chapter 3]{HTT},
is called the \textbf{straightening--unstraightening} equivalence:
It consists of a pair of categorical equivalences
\[
\St:\cal C\sf{oCart}\pr{\cal C}\stackrel[\simeq]{\simeq}{\rl}\Fun\pr{\cal C,\Cat_{\infty}}:\Un
\]
between the $\infty$-category of cocartesian fibrations over $\cal C$
and the $\infty$-category of $\Cat_{\infty}$-valued functors\footnote{Pseudofunctors do not enter the picture here because every functor
of $\infty$-categories is more or less something like a pseudofunctor.} on $\cal C$. 

The utility of this equivalence comes from the fact that cocartesian
fibrations require less choices than the corresponding functor $\cal C\to\Cat_{\infty}$.
Indeed, while a functor $F:\cal C\to\Cat_{\infty}$ associates with
each morphism $f:C\to D$ in $\cal C$ a \textit{specific} functor
$Ff:FC\to FD$ of $\infty$-categories, a cocartesian fibration $p:\cal E\to\cal C$
associates with $f$ a \textit{contractible space} of functors $p^{-1}\pr C\to p^{-1}\pr D$,
any of which may be called the functor induced by $f$. This gives
homotopy theorists a strong incentive to prefer cocartesian fibrations,
for when objects are well-defined only up to contractible ambiguity,
making arbitrary (yet explicit) choices is often unnatural and difficult.

The straightening--unstraightening equivalence is especially useful
in dealing with $\infty$-categories with structures. The definition
of symmetric monoidal $\infty$-categories illustrates this well.
It is tempting to define a symmetric monoidal $\infty$-category as
an $\infty$-category $\cal C$ equipped with a bifunctor $\otimes:\cal C\times\cal C\to\cal C$,
a unit object $I\in\cal C$, and coherent natural equivalences, satisfying
various compatibility conditions. However, there are two problems
with this approach:
\begin{enumerate}
\item Writing down the compatibility conditions (much less verifying them)
will be agonizing.
\item There is often no canonical choice for the tensor product nor the
unit object. 
\end{enumerate}

Hopefully, there is a neat way to get around problem (1). To explain
this, we recall that the data of a commutative monoid can be encoded
as a functor. More precisely, let $\Fin_{\ast}$ denote the category
of the pointed sets $\inp n=\pr{\{\ast,1,\dots,n\},\ast}$, where
$n\geq0$, and pointed maps between them. For each $n\geq0$, let
$\cal I_{n}$ denote the discrete category with $n$ objects $1,\dots,n$,
and let $\cal I_{n}^{\lcone}$ denote the category obtained by adjoining
an initial object $\infty$ to $\cal I_{n}$. There is a functor $\rho:\cal I_{n}^{\lcone}\to\Fin_{\ast}$
which carries the morphism $\infty\to i$ to the morphism $\rho^{i}:\inp n\to\inp 1$
such that $\pr{\rho^{i}}^{-1}\pr 1=\{i\}$. Given a cartesian monoidal
category $\cal A$, a commutative monoid in $\cal A$ is equivalent
to a functor $F:\Fin_{\ast}\to\cal A$ satisfying the \textbf{Segal
condition}: For each $n\ge0$, the composite
\[
\cal I_{n}^{\lcone}\xrightarrow{\rho}\Fin_{\ast}\to\cal A
\]
is a limit diagram. (If $n=0$, this means that $F\inp 0$ is the
terminal object.) The equivalence goes as follows: If $M$ is a commutative
monoid in $\cal A$, the corresponding functor is defined by $F\inp n=M^{n}$,
with obvious structure maps.

Whatever a symmetric monoidal $\infty$-category is, it should give
rise to a commutative monoid in $\Cat_{\infty}$. Therefore, it seems
reasonable to define a symmetric monoidal $\infty$-category as a
functor $F:N\pr{\Fin_{\ast}}\to\Cat_{\infty}$ satisfying the Segal
condition. This definition will solve problem (1); however, it does
not address problem (2), for to define such an $F$, we will have
to make \textit{specific }choices on tensor products, unit objects,
and coherent natural equivalences. The actual definition of symmetric
monoidal $\infty$-categories avoids this issue by using cocartesian
fibrations: A symmetric monoidal $\infty$-category is defined to
be a cocartesian fibration $p:\cal C^{\t}\to N\pr{\Fin_{\ast}}$ such
that, for every $n\geq0$, the composite
\[
N\pr{\cal I_{n}^{\lcone}}\to N\pr{\Fin_{\ast}}\xrightarrow{F}\Cat_{\infty}
\]
is a limit diagram, where $F$ denotes the functor classifying $p$. 

\subsection{Categorical Patterns}

We have seen that the straightening--unstraightening equivalence
is useful when we talk about $\infty$-categories with structures.
More precisely, to define ``$\infty$-categories with structures,''
we encode the structure as a condition on $\Cat_{\infty}$-valued
functor on some $\infty$-category $\cal D$, and then pass to the
corresponding cocartesian fibrations over $\cal D$.\footnote{The idea of presenting algebraic structures in terms of limit conditions
is reminiscent of the ideas of Lawvere's theories {\cite{Law63}}
and sketches {\cite[Section 4]{BW05}}. See {\cite{CH19}} for an
extensive account of the development of this idea in the $\infty$-categorical
setting.}

It turns out that a slightly more general construction is useful to
express a wider class of structures on $\infty$-categories. Instead
of considering cocartesian fibrations $\cal E\to\cal D$, we consider
functors which may not have cocartesian lifts on some morphisms of
$\cal D$. Intuitively, this amounts to considering a ``partial functor''
$\cal D\dasharrow\Cat_{\infty}$, a functor-like object that are defined
only on certain morphisms of $\cal D$. The theories of $\infty$-operads
\cite{HA}, $\Phi$-quasioperads (a generalization of $\infty$-operads)
\cite{Barwick18} and $\infty$-bicategories \cite{GH15}, are all
presented in this way.\footnote{These ideas culminate in Haugseng and Chu's weak Segal $\cal O$-fibrations
\cite{CH19}, which are also presented by categorical patterns.}

The formalism of partial functors is expressed in the language of
\textbf{categorical patterns} \cite[Appendix B]{HA}. A categorical
pattern on $\cal D$ is an additional data that specify which partial
functors $\cal D\dasharrow\Cat_{\infty}$ we wish to consider. Explicitly,
it is a pair\footnote{For convenience, we are simplifying the definition of categorical
patterns here. The definition we just made corresponds to a special
subclass of categorical patterns, called \textit{commutative} categorical
patterns in the main body of the paper. We will continue this simplification
until the end of the introduction.} $\frak P=\pr{M_{\cal D},\{p_{\alpha}:K_{\alpha}^{\lcone}\to\cal D\}_{\alpha\in A}}$,
where $M_{\cal D}$ is a set of edges of $\cal D$ containing all
equivalences, and each $p_{\alpha}$ is a diagram which carries the
edges of $K_{\alpha}^{\lcone}$ into $M_{\cal D}$. A functor of $\infty$-categories
$\cal E\to\cal D$ is said to be \textbf{$\frak P$-fibered} if it
roughly corresponds to a partial functor $F:\cal D\dasharrow\Cat_{\infty}$
defined on the edges in $M_{\cal D}$, such that for each $\alpha\in A$,
the composite $Fp_{\alpha}:K_{\alpha}^{\lcone}\to\Cat_{\infty}$ is
a limit diagram. The $\infty$-category of $\frak P$-fibered objects
and functors over $\cal D$ which preserve cocartesian edges over
$M_{\cal D}$ is denoted by $\frak P\-\Fib$ (Definition \ref{def:P-fib}).
\begin{example}
Consider the categorical pattern $\frak P=\pr{\{\text{all}\},\{\cal I_{n}^{\lcone}\to N\pr{\Fin_{\ast}}\}_{n\geq0}}$
on $N\pr{\Fin_{\ast}}$. A functor $\cal C^{\t}\to N\pr{\Fin_{\ast}}$
is $\frak P$-fibered if and only if it is a symmetric monoidal $\infty$-category.
\end{example}

\subsection{What this Paper is about}

Let $\frak P$ be a categorical pattern on an $\infty$-category $\cal D$.
We frequently want to consider a collection of $\frak P$-fibered
objects, parametrized by another $\infty$-category. The collection
$\{\pr{\Mod_{A},\otimes_{A}}\}_{A\in\sf{CRing}}$ of symmetric monoidal
categories of modules over commutative rings is one such example.\footnote{Examples of this sort, i.e., pseudofunctors with values in the $2$-category
of monoidal categories, are often called \textit{indexed monoidal
categories} and appear in various contexts, such as logic programming
and the study of monads {\cite{AnAn93, HoDe06, Shul13}}.} However, as in the case of $\Cat_{\infty}$-valued functors, realizing
such a collection as a functor $\cal C\to\frak P\-\Fib$ is often
inconvenient or unnatural. We thus ask the following question:
\begin{qst}
\label{que:1}Is there an analog of the Grothendieck construction
of functors with values in $\frak P\-\Fib$?
\end{qst}

In this paper, we answer Question \ref{que:1} by using $\frak P$-bundles.
To motivate the definition of $\frak P$-bundles, observe that a functor
$\cal C\to\frak P\-\Fib$ of $\infty$-categories determines a functor
$F:\cal C\to\Cat_{\infty}$ (by composing the forgetful functor $\frak P\-\Fib\to\Cat_{\infty}$)
and a natural transformation $F\to\delta\pr{\cal D}$, where $\delta\pr{\cal D}$
denotes the constant functor at $\cal D$. Under the straightening--unstraightening
equivalence, this corresponds to a functor $\cal E\to\cal C\times\cal D$
over $\cal C$, where $\cal E\to\cal C$ denote the unstraightening
of $F$. The definition of $\frak P$-bundles is an axiomatization
of functors arising in this way:
\begin{defn}
[Definition \ref{def:P-bundle}]A \textbf{$\frak P$-bundle }(\textbf{over
}$\cal C$) is a commutative diagram % https://q.uiver.app/#q=WzAsMyxbMCwwLCJcXG1hdGhjYWx7WH0iXSxbMiwwLCJcXG1hdGhjYWx7Q31cXHRpbWVzIFxcbWF0aGNhbHtEfSJdLFsxLDEsIlxcbWF0aGNhbHtDfSJdLFswLDEsInAiXSxbMSwyLCJcXG9wZXJhdG9ybmFtZXtwcn0iXSxbMCwyLCJxIiwyXV0=
\[\begin{tikzcd}
	{\mathcal{X}} && {\mathcal{C}\times \mathcal{D}} \\
	& {\mathcal{C}}
	\arrow["p", from=1-1, to=1-3]
	\arrow["{\operatorname{pr}}", from=1-3, to=2-2]
	\arrow["q"', from=1-1, to=2-2]
\end{tikzcd}\]of simplicial sets which satisfies the following conditions:

\begin{enumerate}[label=(\alph*)]

\item The map $q:\cal X\to\cal C$ is a cocartesian fibration.

\item The map $p$ is a categorical fibration which preserves cocartesian
edges over $\cal C$.

\item For each object $C\in\cal C$, the map $\cal X_{C}=\cal X\times_{\cal C}\{C\}\to\cal D$
is $\frak P$-fibered.

\item For each morphism $f:C\to C'$ in $\cal C$, the induced functor
$f_{!}:\cal X_{C}\to\cal X_{C'}$ is a morphism of $\frak P$-fibered
objects.

\end{enumerate}
\end{defn}

\begin{rem}
The definition of $\frak P$-bundles is similar to that of families
of $\infty$-operads (\cite[Definition 2.3.1.10]{HA}). In fact, if
$\frak{Op}$ denotes the categorical pattern for $\infty$-operads,
every $\frak{Op}$-bundle is a family of $\infty$-operads.
\end{rem}

We may understand $\frak P$-bundles as a \textit{relative }version
of cocartesian fibrations: In its crudest form, it is just a morphism
of cocartesian fibrations over $\cal C$. As such, $\frak P$-bundles
are easier to handle and more natural than functors taking values
in $\frak P\-\Fib$, as is already implicit from the widespread use
of families of $\infty$-operads in \cite{HA}.

Let $\frak P\-\Bund\pr{\cal C}$ denote the $\infty$-category of
$\frak P$-bundles over $\cal C$ and functors over $\cal C\times\cal D$
which preserves cocartesian edges over $\cal C\times\cal D$ whose
images in $\cal D$ are marked by $\frak P$ (Definition \ref{def:eq_P-bund}).\textbf{
}The following theorem is our answer to Question \ref{que:1}:
\begin{thm}
[Corollary \ref{cor:main}]\label{thm:intro}The straightening-unstraightening
equivalence lifts to a categorical equivalence
\[
\frak P\-\Bund\pr{\cal C}\simeq\Fun\pr{\cal C,\frak P\-\Fib}.
\]
\end{thm}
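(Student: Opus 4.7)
The plan is to deduce the equivalence from the straightening--unstraightening equivalence by matching the definitional data of $\frak P$-bundles against functors valued in $\frak P\-\Fib$.

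By axioms (a) and (b), the datum underlying a $\frak P$-bundle $\pr{q,p}$ is precisely a morphism in the $\infty$-category $\cal{C}\sf{oCart}\pr{\cal C}$ of cocartesian fibrations over $\cal C$, from $q:\cal X\to\cal C$ to the trivial cocartesian fibration $\mathrm{pr}:\cal C\times\cal D\to\cal C$. Applying the straightening equivalence $\St:\cal{C}\sf{oCart}\pr{\cal C}\simeq\Fun\pr{\cal C,\Cat_{\infty}}$ identifies $q$ with a functor $F:\cal C\to\Cat_{\infty}$, and the trivial fibration with the constant diagram $\delta\pr{\cal D}$, so $p$ corresponds to a natural transformation $F\to\delta\pr{\cal D}$. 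The universal property of slice $\infty$-categories then identifies such pairs with functors $\tild F:\cal C\to(\Cat_{\infty})_{/\cal D}$.

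It remains to check that conditions (c) and (d) correspond exactly to the factorization of $\tild F$ through the subcategory $\frak P\-\Fib\subseteq(\Cat_{\infty})_{/\cal D}$. By definition, (c) says that each value $\tild F\pr C=\pr{\cal X_C\to\cal D}$ is a $\frak P$-fibered object, while (d) says that each edge $\tild F\pr f$, which is computed by the transition functor $f_!:\cal X_C\to\cal X_{C'}$ associated with a cocartesian lift of $f$, is a morphism in $\frak P\-\Fib$; together, they are equivalent to $\tild F$ factoring through $\frak P\-\Fib$. A parallel unwinding on mapping spaces shows that a morphism in $\frak P\-\Bund\pr{\cal C}$ --- a functor over $\cal C\times\cal D$ that preserves cocartesian edges over $\cal C\times\cal D$ whose $\cal D$-projection lies in $M_{\cal D}$ --- restricts fiberwise to a morphism in $\frak P\-\Fib$, hence corresponds under straightening to a natural transformation of $\frak P\-\Fib$-valued functors.

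The main obstacle will be the bookkeeping around marked edges: one must distinguish cocartesian edges of $q$ over $\cal C$ (sent by (b) to cocartesian edges over $\cal C\times\cal D$ whose $\cal D$-projection is an equivalence) from the auxiliary cocartesian edges over $\cal C\times\cal D$ lying above $M_{\cal D}$, and verify that the latter correspond under straightening precisely to the marked morphisms in the definition of $\frak P\-\Fib$. This matching of marked structures --- presumably the substance of the main theorem of which \ref{cor:main} is a corollary --- is where the real content lies; once it is established, the desired equivalence follows by repackaging the identifications above.
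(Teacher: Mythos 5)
Your plan is essentially a fleshed-out version of the heuristic the paper itself uses in the introduction to motivate the \emph{definition} of a $\frak P$-bundle, not its proof. The paper proves Corollary \ref{cor:main} by a genuinely different, model-categorical route: it puts a model structure on $\SS_{/S\times\frak P}^{+}$ whose fibrant objects are exactly the $\frak P$-bundles (Proposition \ref{prop:recognition_of_fibrant_objects}), introduces an auxiliary pattern $\frak P_{\Un}$ on $\widetilde{\Un}_{\Delta^0}(\cal D)$, and shows that the marked unstraightening functor lifts to a right Quillen equivalence $\Fun^s(\cal C_\Delta,\SS_{/\frak P}^{+})\to\SS_{/S\times\frak P_{\Un}}^{+}$ (Theorem \ref{thm:unstraightening_P-fibered_obj}) which, composed with the comparison $\SS_{/S\times\frak P_{\Un}}^{+}\to\SS_{/S\times\frak P}^{+}$ (Proposition \ref{prop:P_Un_good}), yields the equivalence on underlying $\infty$-categories.

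The gap in your argument is the one you flag yourself --- the matching of marked structures, which you identify as "where the real content lies." That is not a side issue to be settled by bookkeeping: it \emph{is} the theorem. Two points make it bite. First, straightening is an equivalence of $\infty$-categories, not an explicit dictionary, so "matching definitional data" between $\frak P$-bundles and functors $\cal C\to\frak P\-\Fib$ is not a tautology; one needs a concrete model of the unstraightening functor whose fibers, transition functors, and markings can actually be computed and compared, which is why the paper develops the explicit description of $\widetilde{\Un}_\phi$ and proves Proposition \ref{prop:rectification}. Second, the $\frak P$-fibered condition and conditions (c), (d) of a $\frak P$-bundle involve relative limit diagrams (condition (6) of Definition \ref{def:categorical_pattern}), not only marked-edge data, and verifying that these are preserved and reflected under (un)straightening requires separate work (Lemma \ref{lem:cat_inv} and the analysis of $\frak P_{\Un}$). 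Your sketch identifies the target picture correctly and is a fair summary of what must be established, but as written it is a description of the theorem rather than a proof of it.
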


\begin{rem}
Both $\infty$-categories appearing in Theorem \ref{thm:intro} can
be presented by model categories. This suggests that the equivalence
of the theorem lifts to a Quillen equivalence, and this is what we
will actually prove. Strictly speaking, proving this stronger statement
is not necessary if one is only interested in Theorem \ref{thm:intro}.
However, it has several distinctive advantages. The biggest advantage
is that many applications of Theorem \ref{thm:intro} (Section \ref{sec:applications})
find concise formulations in the language of model categories. Also,
it is natural to prove an equivalence of underlying $\infty$-categories
of model categories by establishing a Quillen equivalence. In addition,
Quillen equivalences give us very explicit equivalences. With all
these benefits, the author decided that a model-categorical approach
is worth a detour.
\end{rem}

\subsection{What is it Good for?}

The straightening--unstraightening equivalence has various applications
in proving results on cocartesian fibrations and $\Cat_{\infty}$-valued
functors. Theorem \ref{thm:intro} offers generalizations of these
results for $\frak P$-bundles and $\frak P\-\Fib$-valued functors.
In this paper, we will focus on two of them:

\begin{enumerate}[label=(\Roman*)]

\item (\textbf{Structure Theory of $\frak P$-bundles}) Just like
there is a universal cocartesian fibration, there is a universal $\frak P$-bundle
which classifies all $\frak P$-bundles (Subsection \ref{subsec:Classification}).

\item (\textbf{Limits and Colimits of $\frak P$-Fibered Objects})
Theorem \ref{thm:intro} can be used to facilitate computations of
limits and colimits in $\frak P\-\Fib$. For example, there are explicit
formulas for limits and colimits in $\frak P\-\Fib$ (Corollaries
\ref{cor:limit_formula} and \ref{cor:colimit_formula}) and criteria
for a diagram in $\frak P\-\Fib$ to be a limit or a colimit diagram
in terms of the associated bundle (Proposition \ref{prop:3.3.3.1}
and \ref{prop:3.3.4.2}). 

\end{enumerate}

The colimit criterion in (II) is closely related to Lurie's theory
of \textit{assembly of $\infty$-operads} \cite[$\S$ 2.3]{HA}. In
our future work, we will use this observation to show that a certain
diagram in the $\infty$-category of $\infty$-operads is a colimit
diagram.

\settocdepth{subsection}

\subsection*{Outline of the Paper}

We start by establishing basic facts on categorical patterns in Section
\ref{sec:catpat}. In Section \ref{sec:P-bundles}, we introduce $\frak P$-bundles,
the main subject of this paper. Section \ref{sec:St_Un} is devoted
to the review of the straightening--unstraightening equivalence.
A particular emphasis is placed on the explicit description of the
unstraightening functor, which seems to be lacking in the literature.
After these preparations, we will prove the main theorem of this paper
(Theorem \ref{thm:intro}) in Section \ref{sec:Rectification}. Section
\ref{sec:applications} discusses applications of the main theorem:
We will construct the universal $\frak P$-bundle, and explain how
to compute limits and colimits of $\frak P$-bundles.

\subsection*{Notation and Terminology}

\label{subsec:not_term}

We will mainly adopt the terminology of \cite{HTT} and \cite{HA},
with the following exceptions:
\begin{itemize}
\item If $\cal C$ is an $\infty$-category, we will refer to its maximal
sub Kan complex as the \textbf{core} of $\cal C$ and denote it by
$\cal C^{\simeq}$. Equivalently, the core of $\cal C$ is its subcategory
spanned by the equivalences.
\item The symbol $\SS$ denotes the category of simplicial sets and $\SS^{+}$
the category of marked simplicial sets. We sometimes regard these
categories as simplicial categories; $\SS$ is endowed with the enrichment
by its internal hom, and $\SS^{+}$ is enriched by the mapping complex
$\Map^{\sharp}\pr{-,-}$ of \cite[$3.1.3$]{HTT}.
\item Following Joyal, we will refer to the weak equivalences of the Joyal
model structure \cite[$\S$2.2.5]{HTT} on $\SS$ as the \textbf{weak
categorical equivalences}.
\item If $\overline{X}=\pr{X,M}$ is a marked simplicial set, we let $\overline{X}_{\flat}$
denote the simplicial set $X$ and call it the \textbf{underlying
simplicial set} of $\overline{X}$. If $F:\cal C\to\SS^{+}$ is a
simplicial functor , then we let $F_{\flat}$ denote the simplicial
functor $\cal C\to\SS$ given by $C\mapsto F\pr C_{\flat}$. 
\item Given a simplicial functor $F:\cal C\to\SS$, we define a simplicial
functor $F^{\sharp}:\cal C\to\SS^{+}$ by $F^{\sharp}\pr C=F\pr C^{\sharp}$.
If $F$ takes values in the full simplicial subcategory spanned by
the $\infty$-categories, and if for each pair of objects $X,Y\in\cal C$,
the map $\cal C\pr{X,Y}\to\Fun\pr{F\pr X,F\pr Y}$ factors through
the core of $\Fun\pr{F\pr X,F\pr Y}$, we will write $F^{\natural}:\cal C\to\SS^{+}$
for the simplicial functor defined by $F^{\natural}\pr C=F\pr C^{\natural}$. 
\item If $\cal C$ and $\cal D$ are simplicial categories, we will write
$\Fun^{s}\pr{\cal C,\cal D}$ for the category of simplicial functors
$\cal C\to\cal D$ and simplicial natural transformations between
them. 
\item Following \cite{Nguyen2019}, we will say that a morphism is \textbf{marked
right anodyne} if it is marked anodyne in the sense of \cite[$\S$3.1]{HTT}.
Morphsims of marked simplicial sets having the right lifting property
for marked right anodyne extensions will be called \textbf{marked
right fibrations}. The opposite of a marked right anodyne map will
be called a \textbf{marked left anodyne} map, and \textbf{marked left
fibrations} are defined similarly.
\item For integers $i\leq j$, we will write $P_{i,j}$ for the poset of
subsets of $\bb Z$ whose minimum element is $i$ and whose maximum
element is $j$, ordered by inclusion. We will write $\widetilde{\fr C}[\Delta^{n}]$
for the simplicial category whose hom-simplicial sets are given by
$\widetilde{\fr C}[\Delta^{n}]\pr{i,j}=N\pr{P_{i,j}}^{\op}$, with
composition induced by inclusion.
\item By the \textbf{homotopy coherent nerve} of a simplicial category $\cal C$,
we mean the simplicial set $N\pr{\cal C}$ whose $n$-simplices are
the simplicial functors $\widetilde{\fr C}[\Delta^{n}]\to\cal C$.
(Note that this convention is the opposite of the one adopted in \cite{HTT},
and it agrees with the one in \cite{kerodon}).\footnote{The choice of the convention essentially boils down to whether one
wants to prioritize covariant unstraightening/straightening (i.e.,
classification of cocartesian fibrations) or contravariant unstraightening/straightening.
Our convention is better suited for covariant unstraightening.} The association $\cal C\mapsto N\pr{\cal C}$ determines a functor
from the category of small simplicial categories to $\SS$. Its left
adjoint will be denoted by $\widetilde{\fr C}[-]$.
\item We will often indicate a model structure by using subscripts. For
example, if $\cal C$ is a combinatorial model category and $\cal I$
is a small category, then $\Fun\pr{\cal I,\cal C}_{{\rm proj}}$ denotes
the model category equipped with the projective model structure.
\item If $K$ is a simplicial set, we let $\infty$ denote the cone point
of the simplicial sets $K^{\lcone}$ and $K^{\rcone}$.
\end{itemize}

\subsection*{Acknowledgment}

The author appreciates Daisuke Kishimoto and Mitsunobu Tsutaya for
their constant support and encouragement. He also appreciates Ryo
Horiuchi and Takumi Maegawa for commenting on earlier drafts of this
paper.

\section{\label{sec:catpat}Categorical Patterns}

This section is about categorical patterns. In Subsection \ref{subsec:recollection},
we will review basic notions related to categorical patterns. A categorical
pattern $\frak P$ on a simplicial set $S$ is an additional datum
on $S$ which, among other things, makes $S$ into a marked simplicial
set $\overline{S}$. Each categorical pattern $\frak P$ on $S$ gives
rise to a model structure on $\SS_{/\overline{S}}^{+}$, denoted by
$\SS_{/\frak P}^{+}$. We recall what the fibrant objects of this
model structure are. In Subsection \ref{subsec:recognition}, we will
introduce special classes of categorical patterns for which fibrations
and weak equivalences of the associated model structure can be recognized
easily. Finally, in Subsection \ref{subsec:cat_inv}, we consider
when categorical equivalences of $\infty$-categories equipped with
categorical patterns induce Quillen equivalences of the associated
model structure.

\subsection{\label{subsec:recollection}Recollection}

In this subsection, we briefly review the categorical patterns and
related notions, following \cite[Appendix B]{HA}.
\begin{defn}
\cite[Definition B.0.19, Remark B.0.26]{HA}\label{def:categorical_pattern}
Let $S$ be a simplicial set. A \textbf{categorical pattern} on $S$
is a triple $\frak P=\pr{M_{S},T,\{p_{\alpha}:K_{\alpha}^{\lcone}\to S\}_{\alpha\in A}}$,
where:
\begin{itemize}
\item $M_{S}$ is a set of edges of $S$ containing all degenerate edges.
\item $T$ is a set of $2$-simplices of $S$ containing all degenerate
$2$-simplices.
\item $\{p_{\alpha}:K_{\alpha}^{\lcone}\to S\}_{\alpha\in A}$ is a (small)
set of maps of simplicial sets such that for each $\alpha\in A$,
the map $p_{\alpha}$ carries each edge of $K_{\alpha}^{\lcone}$
into $M_{S}$ and each $2$-simplex of $K_{\alpha}^{\lcone}$ into
$T$.
\end{itemize}
If $\frak P'=\pr{M'_{S},T',\{p'_{\beta}:K_{\beta}^{\lcone}\to S\}_{\beta\in B}}$
is another categorical pattern on $S$, we will write $\frak P\subset\frak P'$
to mean that $M_{S}\subset M_{S}'$, that $T\subset T'$, and that
for each $\alpha\in A$, there is some $\beta\in B$ such that $p_{\alpha}=p'_{\beta}$.

A \textbf{marked simplicial set over} $\frak P$ is a map $\pr{X,M}\to\pr{S,M_{S}}=\overline{S}$
of marked simplicial sets. We let $\SS_{/\frak P}^{+}$ denote the
category of marked simplicial sets over $\frak P$. 

A marked simplicial set $\overline{X}=\pr{X,M}\in\SS_{/\frak P}^{+}$
over $\frak P$ is said to be \textbf{$\frak P$-fibered} if the following
conditions are satisfied:
\begin{itemize}
\item [(1)]The map $p:X\to S$ of simplicial sets is an inner fibration.
\item [(2)]For each marked edge $\Delta^{1}\to S$, the induced map $p':X\times_{S}\Delta^{1}\to\Delta^{1}$
is a cocartesian fibration.
\item [(3)]An edge $e$ of $X$ belongs to $M$ if and only if $p\pr e$
belongs to $M_{S}$ and $e$ is locally $p$-cocartesian.
\item [(4)]Given a commutative diagram % https://q.uiver.app/#q=WzAsNCxbMCwwLCJcXERlbHRhXntcXHswLDFcXH19Il0sWzAsMSwiXFxEZWx0YV4yIl0sWzEsMCwiWCJdLFsxLDEsIlMiXSxbMCwxXSxbMCwyLCJlIl0sWzIsMywicCJdLFsxLDMsIlxcc2lnbWEiLDJdXQ==
\[\begin{tikzcd}
	{\Delta^{\{0,1\}}} & X \\
	{\Delta^2} & S
	\arrow[from=1-1, to=2-1]
	\arrow["e", from=1-1, to=1-2]
	\arrow["p", from=1-2, to=2-2]
	\arrow["\sigma"', from=2-1, to=2-2]
\end{tikzcd}\]with $e\in M$ and $\sigma\in T$, the induced edge of $X\times_{S}\Delta^{2}$
is $p'$-cocartesian, where $p':X\times_{S}\Delta^{2}\to\Delta^{2}$
denotes the projection.
\item [(5$'$)]For each $\alpha\in A$ and every cocartesian section $s_{0}$
of $X\times_{S}K_{\alpha}\to K_{\alpha}$, there is a cocartesian
section $s$ of $X\times_{S}K_{\alpha}^{\lcone}\to K_{\alpha}^{\lcone}$
which extends $s_{0}$. (See Remark \ref{rem:rezk}.)
\item [(6)]For each index $\alpha\in A$ and every cocartesian section
$s:K_{\alpha}^{\lcone}\to X\times_{S}K_{\alpha}^{\lcone}$ of the
projection $X\times_{S}K_{\alpha}^{\lcone}\to K_{\alpha}^{\lcone}$,
the composite 
\[
K_{\alpha}^{\lcone}\xrightarrow{s}X\times_{S}K_{\alpha}^{\lcone}\to X
\]
is a $p$-limit diagram in $X$. (See Remarks \ref{rem:relative_limit}
and \ref{rem:rezk}.)
\end{itemize}
\end{defn}

\begin{rem}
\label{rem:relative_limit}In \cite{HTT}, relative limits are defined
only for inner fibrations between $\infty$-categories, so condition
(6) of Definition \ref{def:categorical_pattern} needs an elaboration.
Given an inner fibration $p:X\to S$ of simplicial sets and a diagram
$\overline{f}:K^{\rcone}\to X$, we say that $\overline{f}$ is a
\textbf{$p$-limit }diagram if the map
\[
X_{/\overline{f}}\to X_{/f}\times_{S_{/pf}}S_{/p\overline{f}}
\]
is a trivial fibration, where $f=\overline{f}\vert K$. 
\end{rem}

\begin{rem}
\label{rem:rezk}Let $\frak P$ be a categorical pattern on a simplicial
set $S$ and let $\pr{X,M}\in\SS_{/\frak P}^{+}$ be an object satisfying
conditions (1), (2), (3), and (4) of Definition \ref{def:categorical_pattern}.
Then for each map $K\to S$ of simplicial sets which maps every edge
into $M_{S}$ and every $2$-simplex into $T$, the map
\[
\overline{X}\times_{\overline{S}}K^{\sharp}\to K^{\sharp}
\]
is a marked left fibration. In other words, the map $p':X\times_{S}K\to K$
is a cocartesian fibration, and an edge of $X\times_{S}K$ is $p'$-cocartesian
if and only if its image in $X$ belongs to $M$. (In particular,
conditions (5$'$) and (6) make sense.) This follows from the following
more general assertion:
\begin{itemize}
\item [($\ast$)]Let $p:X\to S$ be an inner fibration of simplicial sets,
and let $e:x\to y$ be an edge of $X$. Suppose that, for each commutative
diagram % https://q.uiver.app/#q=WzAsNCxbMCwwLCJcXERlbHRhXntcXHswLDFcXH19Il0sWzAsMSwiXFxEZWx0YV4yIl0sWzEsMCwiWCJdLFsxLDEsIlMiXSxbMCwxXSxbMCwyLCJlIl0sWzIsMywicCJdLFsxLDMsIlxcc2lnbWEiLDJdXQ==
\[\begin{tikzcd}
	{\Delta^{\{0,1\}}} & X \\
	{\Delta^2} & S,
	\arrow[from=1-1, to=2-1]
	\arrow["e", from=1-1, to=1-2]
	\arrow["p", from=1-2, to=2-2]
	\arrow["\sigma"', from=2-1, to=2-2]
\end{tikzcd}\]the induced edge $e_{\sigma}:\Delta^{1}\to X\times_{S}\Delta^{2}$
is a cartesian edge over $\Delta^{2}$. Then $e$ is $p$-cartesian.
\end{itemize}
To prove ($\ast$), we must show that the map $\theta:X_{e/}\to X_{x/}\times_{S_{p\pr x/}}S_{p\pr e/}$
is a trivial fibration. Since $\theta$ is a left fibration, it suffices
to show that its fibers are contractible. So let $\pr{f,\sigma}\in X_{x/}\times_{S_{p\pr x/}}S_{p\pr e/}$
be an arbitrary vertex, where $f:x\to z$ is an edge of $X$ and $\sigma$
is a $2$-simplex of $S$ such that $d_{1}\sigma=p\pr f$ and $d_{2}\sigma=p\pr e$.
We must show that $\theta^{-1}\pr{f,\sigma}$ is contractible. Set
$X'=X\times_{S}\Delta^{2}$. Since the square % https://q.uiver.app/#q=WzAsNCxbMCwwLCJYJ197ZV9cXHNpZ21hL30iXSxbMCwxLCJYJ197KHgsMCkvfVxcdGltZXMgX3tcXERlbHRhXjJfezAvfX1cXERlbHRhXjJfezBcXHRvMS99Il0sWzEsMCwiWF97ZS99Il0sWzEsMSwiWF97eC99XFx0aW1lcyBfe1Nfe3AoeCkvfX1TX3twKGUpL30iXSxbMCwyXSxbMiwzLCJcXHRoZXRhIl0sWzEsM10sWzAsMSwiXFx0aGV0YSciLDJdXQ==
\[\begin{tikzcd}
	{X'_{e_\sigma/}} & {X_{e/}} \\
	{X'_{(x,0)/}\times _{\Delta^2_{0/}}\Delta^2_{0\to1/}} & {X_{x/}\times _{S_{p(x)/}}S_{p(e)/}}
	\arrow[from=1-1, to=1-2]
	\arrow["\theta", from=1-2, to=2-2]
	\arrow[from=2-1, to=2-2]
	\arrow["{\theta'}"', from=1-1, to=2-1]
\end{tikzcd}\]is cartesian, there is an isomorphism of simplicial sets $\pr{\theta^{\p}}^{-1}\pr{\pr{f,0\to2},\id_{\Delta^{2}}}\cong\theta^{-1}\pr{f,\sigma}$.
By hypothesis, the map $\theta'$ is a trivial fibration, so its fibers
are contractible. Hence $\theta^{-1}\pr{f,\sigma}$ is contractible,
as required.
\end{rem}

\begin{rem}
\cite[Remark B.0.23]{HA}\label{rem:B.0.23} Let $\frak P$ be a categorical
pattern on a simplicial set $S$ and let $p:X\to S$ be an inner fibration
of simplicial sets. If there is a marking on $X$ with respect to
which $p$ is $\frak P$-fibered, then condition (3) of Definition
\ref{def:categorical_pattern} completely determines the marked edges
of $X$; they are the locally $p$-cocartesian morphisms over the
marked edges of $S$. Because of this, we will say that $p$ (or $X$,
if $p$ is clear from the context) is \textbf{$\frak P$-fibered}
if there is a marking on $X$ which makes $p$ into a $\frak P$-fibered
object. If $X\to S$ and $Y\to S$ are $\frak P$-fibered maps, then
a map $X\to Y$ over $S$ is called a \textbf{morphism of $\frak P$-fibered
objects} if it preserves locally cocartesian morphisms over the marked
edges of $S$.
\end{rem}

\begin{rem}
\label{rem:mlfib}Some of the defining conditions of $\frak P$-fibered
objects can be stated more succinctly. Let $\frak P=\pr{M_{S},T,\{p_{\alpha}:K_{\alpha}^{\lcone}\to S\}_{\alpha\in A}}$
be a categorical pattern on a simplicial set $S$ and let $\overline{X}\in\SS_{/\frak P}^{+}$
be an object. Using Remark \ref{rem:rezk}, we find that $\overline{X}$
satisfies conditions (5') and (6) of Definition \ref{def:categorical_pattern}
if and only if it satisfies the following condition:
\begin{itemize}
\item [(B)]Every map $\pr{K_{\alpha}}^{\sharp}\to\overline{X}$ over $S$
extends to $\pr{K_{\alpha}^{\lcone}}^{\sharp}$, and any such extension
is a $p$-limit diagram.
\end{itemize}
Suppose that $T$ contains every $2$-simplex $\Delta^{2}\to S$ whose
restriction to $\Delta^{\{0,1\}}$ belongs to $M_{S}$. (For instance,
this is true if $\frak P$ is commutative in the sense of Definition
\ref{def:creative_commutative}.) Then by assertion ($\ast$) of Remark
\ref{rem:rezk} and \cite[Proposition 3.1.1.6]{HTT}, we deduce that
$\overline{X}$ satisfies conditions (1), (2), (3), and (4) if and
only if it satisfies the following condition:
\begin{itemize}
\item [(A)]The map $\overline{X}\to\pr{S,M_{S}}$ is a marked left fibration.
\end{itemize}
\end{rem}

\begin{rem}
Let $\frak P=\pr{M_{S},T,\{p_{\alpha}\}_{\alpha\in A}}$ be a categorical
pattern on a simplicial set $S$ and let $\frak P'$ be another categorical
pattern obtained from $\frak P$ by replacing $T$ with the set of
all $2$-simplices of $S$. Suppose that $T$ contains every $2$-simplex
$\sigma$ such that $\sigma\vert\Delta^{\{0,1\}}$ belongs to $M_{S}$.
Then the model structures on $\SS_{/\frak P}^{+}$ and $\SS_{/\frak P'}^{+}$
are identical, because they have the same class of cofibrations and
fibrant objects. This follows from Remark \ref{rem:mlfib}.
\end{rem}

\begin{rem}
\cite[Remark B.0.25]{HA}\label{rem:enrichment} Let $\frak P$ be
a categorical pattern on a simplicial set $S$. Given objects $\o X,\o Y\in\SS_{/\frak P}^{+}$,
we define a simplicial set $\Map_{S}^{\sharp}\pr{\o X,\o Y}$ as follows:
Its $n$-simplex is a morphism $\pr{\Delta^{n}}^{\sharp}\times\o X\to\o Y$
of marked simplicial sets over $\pr{S,M_{S}}$. This makes $\SS_{/\frak P}^{+}$
into a simplicial category. 
\end{rem}

\begin{rem}
Let $\frak P$ be a categorical pattern on a simplicial set $S$.
There is another simplicial enrichment $\Map_{S}^{\flat}\pr{-,-}$
of $\SS_{/\frak P}^{+}$, where an $n$-simplex of $\Map_{S}^{\flat}\pr{\o X,\o Y}$
is a map $\pr{\Delta^{n}}^{\flat}\times\o X\to\o Y$ of marked simplicial
sets over $\pr{S,M_{S}}$. However, we rarely use this enrichment
in this paper. Because of this, we will always understand that $\SS_{/\frak P}^{+}$
carries the simplicial enrichment of Remark \ref{rem:enrichment}.
\end{rem}

\begin{example}
\label{exa:pb_cat_pat}Let $\frak P=\pr{M_{S},T,\{p_{\alpha}:K_{\alpha}^{\lcone}\to S\}_{\alpha\in A}}$
be a categorical pattern on a simplicial set $S$. Given an object
$f:\pr{X,M}\to\pr{S,M_{S}}\in\SS_{/\frak P}^{+}$, we will write $f^{*}\frak P$
for the categorical pattern on $X$ whose set of edges is $M$, whose
set of $2$-simplices is $f^{-1}\pr T$, and whose set of diagrams
consists of the diagrams $K_{\alpha}^{\lcone}\to X$ which lifts $p_{\alpha}$
for some $\alpha\in A$. More generally, if $g:Y\to S$ is a map of
simplicial sets, we will write $g^{*}\frak P$ for the categorical
pattern obtained by applying the above procedure to the object $\pr{Y,g^{-1}\pr{M_{S}}}\to\pr{S,M_{S}}\in\SS_{/\frak P}^{+}$.
\end{example}

The main result of \cite[Appendix B]{HA} asserts the following:
\begin{thm}
\label{thm:B.0.20}\cite[Theorem B.0.20]{HA} Let $\frak P$ be a
categorical pattern on a simplicial set $S$. There is a combinatorial
model structure on $\SS_{/\frak P}^{+}$, which is uniquely characterized
by the following properties:
\begin{enumerate}
\item A morphism is a cofibration if and only if it is a monomorphism.
\item An object $\overline{X}\in\SS_{/\frak P}^{+}$ is fibrant if and only
if it is $\frak P$-fibered.
\end{enumerate}
Moreover, the model structure is simplicial with respect to the simplicial
enrichment of Remark \ref{rem:enrichment}.
\end{thm}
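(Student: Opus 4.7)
The plan is to apply Jeffrey Smith's recognition theorem for combinatorial model categories. The underlying category $\SS_{/\frak P}^{+}$ is locally presentable, and I take as cofibrations the monomorphisms, which are generated by the set $I$ consisting of $\partial\Delta^{n}\hookrightarrow\Delta^{n}$ (with minimal marking) together with $\pr{\Delta^{1}}^{\flat}\hookrightarrow\pr{\Delta^{1}}^{\sharp}$, each equipped with all possible structure maps to $\pr{S,M_{S}}$. The heart of the proof is to identify a small set $J$ of generating trivial cofibrations whose injective objects are precisely the $\frak P$-fibered ones.

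I would define $J$ by packaging the clauses of Definition \ref{def:categorical_pattern} into lifting problems. The marked left anodyne extensions of \cite[Section 3.1]{HTT} with all legal structure maps into $\overline{S}$ handle conditions (1)--(3), and for each $2$-simplex $\sigma\in T$ the appropriate "horn-like" inclusions handle (4). To encode conditions (5$'$) and (6), I include for each $\alpha\in A$ the inclusion $\pr{K_\alpha}^{\sharp}\hookrightarrow\pr{K_\alpha^{\lcone}}^{\sharp}$ over $p_{\alpha}$, together with pushout-product style extensions
\[
\pr{K_\alpha^{\lcone}}^{\sharp}\times\partial\Delta^n\coprod\nolimits_{\pr{K_\alpha}^{\sharp}\times\partial\Delta^n}\pr{K_\alpha}^{\sharp}\times\Delta^n\hookrightarrow\pr{K_\alpha^{\lcone}}^{\sharp}\times\Delta^n
\]
that translate the relative $p$-limit condition of Remark \ref{rem:relative_limit} --- i.e., the trivial fibration property of the slice fibration $X_{/\o f}\to X_{/f}\times_{S_{/pf}}S_{/p\o f}$ --- into a right lifting property against morphisms built from joins.

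I then define the class $W$ of weak equivalences as those maps $f$ in $\SS_{/\frak P}^{+}$ such that $\Map_{S}^{\sharp}\pr{f,\o Z}$ is a weak homotopy equivalence for every $\frak P$-fibered $\o Z$. The class $W$ satisfies 2-out-of-3, is closed under retracts and filtered colimits, and contains all maps in $\opn{Inj}\pr I$. To verify Smith's axioms it remains to show: (a) $\opn{Inj}\pr J$ coincides with the collection of $\frak P$-fibered objects --- a direct translation of Definition \ref{def:categorical_pattern}, using Remark \ref{rem:rezk} and the reformulation in Remark \ref{rem:mlfib} to fold condition (3) and conditions (5$'$)--(6) into lifting problems; (b) $\cof\pr J\subseteq W$ --- taking a fibrant replacement $R$ built by transfinite composition of pushouts of elements of $J$, one deduces for $f\in\cof\pr J$ that the square $f\to Rf$ has $\frak P$-fibered corners, from which $f\in W$ follows by inspection.

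The principal obstacle is (a), specifically the translation of condition (6) into lifting against members of $J$. Because $X_{/\o f}$ can only be formed with Joyal's slice construction, one must arrange the pushout-product family in $J$ --- and verify its compatibility with the marked structure --- so that right lifting against it is tautologically equivalent to the trivial-fibration statement defining a $p$-limit diagram; this is the combinatorial heart of Lurie's original argument. Finally, the simplicial enrichment follows by verifying the pushout-product axiom for $\pr{\Delta^n}^{\sharp}\times-:\SS_{/\frak P}^{+}\to\SS_{/\frak P}^{+}$, which preserves cofibrations tautologically and trivial cofibrations because of the tensor-cotensor relation $\Map_{S}^{\sharp}\pr{\pr{\Delta^n}^{\sharp}\times\o X,\o Z}\cong\Map_{S}^{\sharp}\pr{\o X,\o Z}^{\Delta^n}$, reducing the required pushout-product property to Kan's in $\SS$.
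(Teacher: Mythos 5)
The paper does not prove this statement; it cites \cite[Theorem B.0.20]{HA} and uses it as a black box. So there is no internal argument to compare against --- you are effectively being asked to reconstruct Lurie's proof, and your sketch should be measured against that. In broad outline your strategy is recognizable (cofibrations = monomorphisms, weak equivalences detected by $\Map_S^{\sharp}\pr{-,\overline{Z}}$ for fibrant $\overline{Z}$, generating trivial cofibrations encoding the fibrancy conditions), and it is roughly what Lurie does, except that Lurie does not invoke Smith's theorem raw: he passes through \cite[Proposition A.2.6.13]{HTT}, a purpose-built recognition principle for simplicial model categories in which the weak equivalences are \emph{defined} via mapping spaces into a prescribed class of fibrant objects, and that proposition quietly handles the accessibility/solution-set bookkeeping that a direct Smith argument would have to address. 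The construction of the set of generating trivial cofibrations is precisely the content of \cite[Definition B.1.1]{HA} ($\frak P$-anodyne morphisms) and the subsequent lemmas of \S B.1.

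There are two real gaps in your proposal as written. First, the family you propose for encoding the $p$-limit condition is formed with \emph{products} $(K_\alpha^{\lcone})^{\sharp}\times\Delta^n$, but the slice $X_{/\overline{f}}$ is built from joins, so the lifting problems that translate ``$X_{/\overline{f}}\to X_{/f}\times_{S_{/pf}}S_{/p\overline{f}}$ is a trivial fibration'' into data inside $X$ are of the form $K_\alpha\star\partial\Delta^n\to K_\alpha\star\Delta^n$ (pushed out over the cone point), not pushout-products with $\Delta^n$. This is not a cosmetic slip: the product-based maps you list do not detect condition (6), and the join-based ones are exactly Lurie's classes $(C_1)$ and $(C_2)$ in Definition B.1.1. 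Second, your steps (a) and (b) are where all the work lives, and you have deferred both: (a) is precisely the assertion that the generating set you wrote down characterizes $\frak P$-fibered objects --- you acknowledge this as ``the combinatorial heart'' without carrying it out --- and (b), the claim that $\cof\pr J\subseteq W$, requires already knowing that small-object-argument fibrant replacements land in the $\frak P$-fibered objects, which is again (a). To apply Smith you also need to verify that $W$ (equivalently, that $\Map_S^{\sharp}\pr{-,\overline{Z}}$ is accessible in the arrow-category sense) satisfies the solution-set condition; this is asserted but not argued, and it is precisely the point that \cite[A.2.6.13]{HTT} is engineered to dispatch. So the sketch correctly locates the hard steps but does not close them.
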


From now on, we will understand that given a categorical pattern $\frak P$,
the category $\SS_{/\frak P}^{+}$ is equipped with the model structure
of Theorem \ref{thm:B.0.20}.

With Theorem \ref{thm:B.0.20} in mind, we make the following definitions.
\begin{defn}
\cite[Definition B.2.1]{HA} Let $\frak P$ be a categorical pattern
on a simplicial set $S$. We say that a morphism $f:\overline{X}\to\overline{Y}$
in $\SS_{/\frak P}^{+}$ is a \textbf{$\frak P$-equivalence} if it
is a weak equivalence of $\SS_{/\frak P}^{+}$, i.e., for each $\frak P$-fibered
object $\overline{Z}\in\SS_{/\frak P}^{+}$, the map
\[
\Map_{S}^{\sharp}\pr{\overline{Y},\overline{Z}}\to\Map_{S}^{\sharp}\pr{\overline{X},\overline{Z}}
\]
is a homotopy equivalence of Kan complexes.
\end{defn}

\begin{defn}
\label{def:P-fib}Let $\frak P$ be a categorical pattern on a simplicial
set $S$. We will write $\frak P\-\Fib$ for the homotopy coherent
nerve of the full simplicial subcategory of $\SS_{/\frak P}^{+}$
spanned by the fibrant--cofibrant objects. 
\end{defn}

We conclude this subsection with a certain stability property of marked
edges of $\frak P$-fibered objects.
\begin{prop}
\label{prop:A_0}Let $\frak P=\pr{M_{S},T,\{p_{\alpha}\}_{\alpha}}$
be a categorical pattern on a simplicial set $S$ and let $\overline{X}=\pr{X,M}\in\SS_{/\frak P}^{+}$
be a $\frak P$-fibered object. Suppose we are given a $2$-simplex
$\sigma$ of $X$, which we depict as% https://q.uiver.app/?q=WzAsMyxbMCwxLCJ4Il0sWzEsMCwieSJdLFsyLDEsInoiXSxbMCwxLCJmIl0sWzEsMiwiZyJdLFswLDIsImgiLDJdXQ==
\[\begin{tikzcd}
	& y \\
	x && z.
	\arrow["f", from=2-1, to=1-2]
	\arrow["g", from=1-2, to=2-3]
	\arrow["h"', from=2-1, to=2-3]
\end{tikzcd}\]Let $p:X\to S$ denote the projection. Suppose that $p\pr{\sigma}$
belongs to $T$, that $p\pr f,p\pr g,p\pr h$ belong to $M_{S}$,
and that $f$ belongs to $M$. Then $g$ belongs to $M$ if and only
if $h$ belongs to $M$.
\end{prop}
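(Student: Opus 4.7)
The plan is to reduce to a standard $2$-out-of-$3$ property of cocartesian edges by pulling back $p : X \to S$ along the $2$-simplex $p(\sigma) : \Delta^2 \to S$. Since $p(\sigma) \in T$ and each of its edges $p(f), p(g), p(h)$ lies in $M_S$, the map $p(\sigma)$ satisfies the hypotheses of Remark \ref{rem:rezk} (with $K = \Delta^2$). Hence the projection
\[
p' : X' := X \times_S \Delta^2 \longrightarrow \Delta^2
\]
is a cocartesian fibration, and an edge of $X'$ is $p'$-cocartesian if and only if its image in $X$ lies in $M$.

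Next, I would observe that the $2$-simplex $\sigma$ itself lifts canonically to a $2$-simplex $\tilde\sigma : \Delta^2 \to X'$ with edges $\tilde f, \tilde g, \tilde h$ lying over $f, g, h$. By the characterization above and the hypothesis that $f \in M$, the edge $\tilde f$ is $p'$-cocartesian. Moreover, $g \in M$ (resp. $h \in M$) if and only if $\tilde g$ (resp. $\tilde h$) is $p'$-cocartesian, again by the description of cocartesian edges in $X' \to \Delta^2$ coming from Remark \ref{rem:rezk}.

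Finally, I would invoke the standard fact (HTT Proposition 2.4.1.7) that in any inner fibration, if the first edge of a $2$-simplex is cocartesian, then the second edge is cocartesian if and only if the composite edge is. Applied to $\tilde\sigma$ in the cocartesian fibration $p' : X' \to \Delta^2$, this yields $\tilde g$ cocartesian iff $\tilde h$ cocartesian, and hence $g \in M$ iff $h \in M$, as required.

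The argument is essentially a bookkeeping exercise, so there is no serious obstacle; the only subtlety is making sure Remark \ref{rem:rezk} applies, which requires checking that the $2$-simplex $p(\sigma)$ has all its edges marked and lies in $T$, both of which are part of the hypotheses.
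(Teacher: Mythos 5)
Your argument is correct, but it takes a genuinely different route from the paper's. You pull back $p$ along $p(\sigma) : \Delta^2 \to S$ and reduce to the elementary two-out-of-three property of cocartesian edges (HTT 2.4.1.7) in the cocartesian fibration $X \times_S \Delta^2 \to \Delta^2$, with Remark \ref{rem:rezk} supplying the identification of $p'$-cocartesian edges with edges mapping into $M$. This is concrete and self-contained, and the checks you flag (that $p(\sigma)\in T$, that all edges of $\Delta^2$ including degenerate ones land in $M_S$, and that degenerate $2$-simplices land in $T$) are exactly the ones required for Remark \ref{rem:rezk} to apply and are all guaranteed either by the stated hypotheses or by the definition of a categorical pattern. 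The paper instead stays in the model category $\SS_{/\frak P}^{+}$: it regards $\pr{\Delta^2}^{\sharp}\to\overline{S}$ (via $p(\sigma)$) as an object, observes that the inclusions $\pr{\Lambda_1^2}^{\sharp}\cup\pr{\Delta^2}^{\flat}\subset\pr{\Delta^2}^{\sharp}$ and $\pr{\Lambda_0^2}^{\sharp}\cup\pr{\Delta^2}^{\flat}\subset\pr{\Delta^2}^{\sharp}$ are $\frak P$-anodyne (hence trivial cofibrations), and extracts the marking conclusion from the lifting property of the fibrant object $\overline{X}$. Your version trades the black-boxed appeal to HA Propositions B.1.6 and B.1.11 for a direct computation with cocartesian edges; the paper's version is shorter to state but relies on more machinery. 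Both are valid proofs.
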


\begin{proof}
Regard $\pr{\Delta^{2}}^{\sharp}$ as an object of $\SS_{/\frak P}^{+}$
using the map $\sigma$. It will suffice to show that the inclusions
$\pr{\Lambda_{1}^{2}}^{\sharp}\cup\pr{\Delta^{2}}^{\flat}\subset\pr{\Delta^{2}}^{\sharp}$
and $\pr{\Lambda_{0}^{2}}^{\sharp}\cup\pr{\Delta^{2}}^{\flat}\subset\pr{\Delta^{2}}^{\sharp}$
are trivial cofibrations of $\SS_{/\frak P}^{+}$. According to \cite[Proposition B.1.6]{HA}
and \cite[Lemma B.1.11]{HA}, these maps are $\frak P$-anodyne extensions,
which form a subclass of $\frak P$-equivalences (\cite[Example B.2.2]{HA}).
The claim follows.
\end{proof}

\subsection{\label{subsec:recognition}Recognizing Fibrations and Weak Equivalences
of \texorpdfstring{$\mathsf{sSet}^+_{/\mathfrak{P}}$}{}}

Let $\frak P=\pr{M_{\cal D},T,\{p_{\alpha}:K_{\alpha}^{\lcone}\to\cal D\}}$
be a categorical pattern on an $\infty$-category $\cal D$. Suppose
that $M_{\cal D}$ contains every equivalence of $\cal D$ and $T$
contains every $2$-simplex $\sigma$ such that $\sigma\vert\Delta^{\{0,1\}}$
is an equivalence. Then for every fibrant object $\overline{\cal X}=\pr{\cal X,M}\in\SS_{/\frak P}^{+}$,
the map $p:\cal X\to\cal D$ of simplicial sets is automatically a
categorical fibration (by ($\ast$) of Remark \ref{rem:rezk}), and
$\overline{\cal X}$ is weakly terminal if and only if $p$ is a trivial
fibration. This naturally leads to the following question: To what
extent can fibrations and weak equivalences of fibrant objects of
$\SS_{/\frak P}^{+}$ be detected by their underlying morphisms of
simplicial sets? In this subsection, we will introduce a class of
categorical patterns for which there is a complete answer to this
question.
\begin{defn}
\label{def:creative_commutative}Let $\frak P=\pr{M_{\cal D},T,\{p_{\alpha}:K_{\alpha}^{\lcone}\to\cal D\}_{\alpha\in A}}$
be a categorical pattern on an $\infty$-category $\cal D$. We say
that $\frak P$ is \textbf{creative} if the set $M_{\cal D}$ contains
every equivalence of $\cal D$ and the set $T$ contains every $2$-simplex
$\sigma$ such that $\sigma\vert\Delta^{\{0,1\}}$ is an equivalence.
If further $T$ contains \textit{all} $2$-simplices of $S$, we say
that $\frak P$ is \textbf{commutative}. If $\frak P$ is commutative,
we will omit $T$ from the notation and simply say that $\frak P=\pr{M_{\cal D},\{p_{\alpha}\}_{\alpha\in A}}$
is a commutative categorical pattern.
\end{defn}

\begin{rem}
Almost every categorical pattern which appears in nature is commutative. 
\end{rem}

The following result, which is the main result of this subsection,
asserts that for creative categorical patterns, fibrations and weak
equivalences are created (i.e., preserved and reflected) by the forgetful
functor $\SS_{/\frak P}^{+}\to\SS_{{\rm Joyal}}$, hence justifying
our terminology.
\begin{prop}
\label{prop:creative}Let $\frak P$ be a creative categorical pattern
on an $\infty$-category $\cal D$, and let $f:\overline{\cal X}\to\overline{\cal Y}$
be a morphism between fibrant objects of $\SS_{/\frak P}^{+}$. Then:
\begin{enumerate}
\item The map $f$ is a fibration of $\SS_{/\frak P}^{+}$ if and only if
its underlying morphism of simplicial sets is a categorical fibration.
\item The map $f$ is a weak equivalence of $\SS_{/\frak P}^{+}$ if and
only its underlying morphism of simplicial sets is a categorical equivalence.
\end{enumerate}
\end{prop}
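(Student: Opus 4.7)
The plan is to establish two technical results, from which both statements follow.

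First, I prove what I will call the \emph{Key Lemma}: every trivial cofibration $\iota \colon \overline{\cal A} \to \overline{\cal B}$ in $\SS^+_{/\mathfrak{P}}$ between fibrant objects has underlying simplicial map a categorical equivalence. Since $\overline{\cal A}$ is fibrant, lifting $\id_{\overline{\cal A}}$ against the trivial cofibration $\iota$ and the fibration $\overline{\cal A} \to \ast$ produces a retraction $r \colon \overline{\cal B} \to \overline{\cal A}$ with $r \iota = \id_{\overline{\cal A}}$. Since $\overline{\cal B}$ is also fibrant, the simplicial enrichment of $\SS^+_{/\mathfrak{P}}$ provides a homotopy $H \colon (\Delta^1)^\sharp \times \overline{\cal B} \to \overline{\cal B}$ between $\iota r$ and $\id_{\overline{\cal B}}$. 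For each vertex $b \in \overline{\cal B}$, the edge $H_b \colon \iota r(b) \to b$ is marked in $\overline{\cal B}$ (because the corresponding edge in the product $(\Delta^1)^\sharp \times \overline{\cal B}$ is marked, the degenerate $s_0 b$ being marked) and projects to a degenerate edge of $\cal D$; thus it is locally cocartesian in its fiber, and therefore an equivalence in $\cal B$. This makes the natural transformation $\iota r \To \id_{\cal B}$ in $\Fun(\cal B, \cal B)$ pointwise an equivalence, and therefore a natural equivalence. Combined with $r \iota = \id_{\cal A}$, this shows $\iota$ is a categorical equivalence.

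Next, I prove the \emph{Key Observation}: for fibrant $\overline{\cal X}, \overline{\cal Y}$ in $\SS^+_{/\mathfrak{P}}$ and a morphism $g \colon \overline{\cal X} \to \overline{\cal Y}$ whose underlying map is a categorical fibration, any edge $e$ of $\cal X$ with $g(e)$ marked in $\overline{\cal Y}$ and $p_{\cal X}(e) \in M_{\cal D}$ is itself marked in $\overline{\cal X}$. I choose a locally $p_{\cal X}$-cocartesian lift $\tilde{e}$ of $p_{\cal X}(e)$ at the source of $e$ (possible since $\overline{\cal X}$ is $\mathfrak{P}$-fibered), and construct a $2$-simplex $\sigma$ in $\cal X$ with $d_2(\sigma) = \tilde{e}$ and $d_1(\sigma) = e$, so that $e$ is equivalent to $\epsilon \circ \tilde{e}$ for the fiber edge $\epsilon = d_0(\sigma)$. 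Applying $g$, both $g(\tilde{e})$ and $g(e)$ are locally cocartesian, and the uniqueness of cocartesian lifts forces $g(\epsilon)$ to be an equivalence in the fiber of $\cal Y$; since a categorical fibration reflects equivalences on each fiber, $\epsilon$ is itself an equivalence in $\overline{\cal X}$, and hence $e \simeq \epsilon \circ \tilde{e}$ is locally cocartesian by \cite[Proposition 2.4.1.7]{HTT}. I expect this to be the main obstacle of the proof: it is the step that crucially combines the creative hypothesis (ensuring via Remark \ref{rem:B.0.23} that the marked edges of $\overline{\cal Y}$ over $M_{\cal D}$ coincide with the locally cocartesian ones), the $\mathfrak{P}$-fibered structure of $\overline{\cal X}$, and the categorical-fibration property of $g$.

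The proposition follows. For part (1) ``$\Rightarrow$'', observe that the inner horn inclusions $(\Lambda^n_i)^\flat \hookrightarrow (\Delta^n)^\flat$ and the vertex-to-edge inclusions $(\Delta^0)^\sharp \hookrightarrow (\Delta^1)^\sharp$ over equivalence edges of $\cal D$ are $\mathfrak{P}$-anodyne (the latter uses the creative hypothesis, so that equivalences in $\overline{\cal Y}$ are marked); $f$ being a fibration therefore has the RLP against these, which amounts to the inner-horn and equivalence-lifting properties of a categorical fibration. For part (1) ``$\Leftarrow$'', given a trivial cofibration $j \colon \overline{\cal A} \to \overline{\cal B}$ and a lifting problem against $f$, I fibrantly replace $\overline{\cal A}$ and $\overline{\cal B}$ (extending the given maps to $\overline{\cal X}, \overline{\cal Y}$ via lifting along the fibrations $\overline{\cal X} \to \ast$ and $\overline{\cal Y} \to \ast$) to reduce to the case where both are fibrant; the Key Lemma then makes the underlying $j$ a Joyal trivial cofibration, so an underlying lift exists, and the Key Observation upgrades it to a morphism in $\SS^+_{/\mathfrak{P}}$. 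For part (2), factor $f = p \circ i$ in $\SS^+_{/\mathfrak{P}}$ with $i$ a trivial cofibration and $p$ a fibration; by part (1) and the Key Lemma, underlying $p$ is a categorical fibration and underlying $i$ is a categorical equivalence. The direction ``$\Rightarrow$'' then follows from 2-of-3 in the Joyal model structure (if $f$ is a weak equivalence so is $p$, which is therefore a trivial fibration with underlying a trivial Kan fibration, hence a categorical equivalence). For ``$\Leftarrow$'', if underlying $f$ is a categorical equivalence then so is underlying $p$ by 2-of-3, and the Key Observation verifies the RLP of $p$ against $(\Delta^1)^\flat \hookrightarrow (\Delta^1)^\sharp$, making $p$ a trivial fibration in $\SS^+_{/\mathfrak{P}}$, so that $f = p \circ i$ is a weak equivalence.
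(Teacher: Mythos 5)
Your Key Lemma is correct, and the idea — observing that each component $H_b\colon \iota r(b)\to b$ of the simplicial homotopy is a marked edge of $\overline{\cal B}$ lying over a degenerate edge of $\cal D$, hence an equivalence in the fiber — is exactly right. However, the Key Observation is false. Take $\cal D=\Delta^0$ with the trivial commutative categorical pattern, so that fibrant objects of $\SS^+_{/\frak P}$ are precisely the marked simplicial sets $\cal X^{\natural}$ for $\cal X$ an $\infty$-category. The map $g\colon(\Delta^1)^{\natural}\to(\Delta^0)^{\sharp}$ is a morphism in $\SS^+_{/\frak P}$ between fibrant objects whose underlying map is a categorical fibration. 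The nondegenerate edge $e$ of $\Delta^1$ has $g(e)$ marked and $p_{\cal X}(e)\in M_{\cal D}$, yet $e$ is not marked. The flawed step is your claim that a categorical fibration reflects equivalences on each fiber; $\Delta^1\to\Delta^0$ refutes it directly. (For an example where $e$ lies over a non-equivalence, take a general $\cal D$, a categorical pattern with no diagrams, the projection $\cal D\times\Delta^1\to\cal D$, and $e=(\alpha,0\to 1)$ for a non-equivalence $\alpha\in M_{\cal D}$.)

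This breaks Part~(1)~``$\Leftarrow$'': you apply the Key Observation to $f$, which is only assumed to be a categorical fibration, to conclude that an arbitrary lift of the underlying Joyal lifting problem respects markings, and in general it will not. One must instead produce a \emph{specific} marked lift — for instance, lift first inside the Kan complex $\Map^{\sharp}(\overline{\cal B},\overline{\cal X})$, using that $j$ is a trivial cofibration and $\overline{\cal X}$ is fibrant, then correct the discrepancy in the codomain by lifting a pointwise natural equivalence along the categorical fibration $f$, and finally transfer markedness across the resulting square of fiberwise equivalences by repeated application of Proposition~\ref{prop:A_0}. This is essentially the technique used in the essential-surjectivity step of Proposition~\ref{prop:creative_precise}; the paper in fact avoids proving Part~(1) directly and simply cites \cite[Proposition B.2.7]{HA}. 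Your use of the Key Observation in Part~(2)~``$\Leftarrow$'' is repairable: there the underlying map of $p$ is a categorical equivalence and, by Part~(1), a categorical fibration — hence a trivial Kan fibration, and trivial fibrations between $\infty$-categories are conservative. Restating the Key Observation with the underlying map a trivial fibration gives a true statement that suffices for Part~(2), but the version you need for Part~(1) is the one that fails.
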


\begin{rem}
The creativity of $\frak P$ is essential in Proposition \ref{prop:creative}.
For example, let $J$ denote the nerve of the groupoid with two objects
$0$ and $1$, and with exactly one morphisms between each pair of
objects. Consider the categorical pattern $\frak P$ on $J$ which
consists of degenerate edges, degenerate $2$-simplices, and no diagram.
Then any functor $\cal X\to J$ of $\infty$-categories is $\frak P$-fibered.
So a $\frak P$-fibered map is usually not a categorical fibration.
The inclusion $\{0\}\subset J$ is a categorical equivalence between
$\frak P$-fibered objects, but it is not a weak equivalence in $\SS_{/\frak P}^{+}$
because there is no morphism $J\to\{0\}$ over $J$.
\end{rem}

Assertion (1) of Proposition \ref{prop:creative} is proved in \cite[Proposition B.2.7]{HA},
so we shall focus on (2). For this, we will need the following lemma.
(Compare \cite[Remark 3.1.3.1]{HTT}.) 
\begin{lem}
\label{lem:Map_S^=00005Csharp_core}Let $\frak P=\pr{M_{S},T,\{p_{\alpha}\}_{\alpha\in A}}$
be a categorical pattern on a simplicial set $S$, and let $\overline{X}$
and $\overline{Y}$ be objects of $\SS_{/\frak P}^{+}$. Suppose that
$\overline{Y}$ is fibrant. Then the simplicial set $\Map_{S}^{\flat}\pr{\overline{X},\overline{Y}}$
is an $\infty$-category, and its core is the Kan complex $\Map_{S}^{\sharp}\pr{\overline{X},\overline{Y}}$.
\end{lem}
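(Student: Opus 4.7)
The plan is to first verify that $\Map_{S}^{\flat}\pr{\o X,\o Y}$ is an $\infty$-category, and then to identify its core with $\Map_{S}^{\sharp}\pr{\o X,\o Y}$.

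For the first part, I need to fill inner horns. By adjunction, given $0<i<n$, each map $\Lambda_{i}^{n}\to\Map_{S}^{\flat}\pr{\o X,\o Y}$ corresponds to a morphism $(\Lambda_{i}^{n})^{\flat}\times\o X\to\o Y$ in $\SS_{/\frak P}^{+}$, and filling the horn amounts to extending along the inclusion $(\Lambda_{i}^{n})^{\flat}\times\o X\hookrightarrow(\Delta^{n})^{\flat}\times\o X$. Since $\o Y$ is fibrant, the structure map $\o Y\to\o S$ is a fibration in $\SS_{/\frak P}^{+}$, so it suffices to show this inclusion is a trivial cofibration. The map $(\Lambda_{i}^{n})^{\flat}\to(\Delta^{n})^{\flat}$ is $\frak P$-anodyne in the sense of \cite[Definition B.1.1]{HA}, and its product with $\o X$ remains $\frak P$-anodyne by the standard closure properties of that class.

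For the core identification, $\Map_{S}^{\sharp}\pr{\o X,\o Y}$ is a Kan complex by Theorem \ref{thm:B.0.20}, and the natural comparison $\iota:\Map_{S}^{\sharp}\pr{\o X,\o Y}\to\Map_{S}^{\flat}\pr{\o X,\o Y}$ is a monomorphism: an $n$-simplex of $\Map_{S}^{\flat}$ lies in the image of $\iota$ precisely when, for each edge $\alpha$ of $\Delta^{n}$ and each marked edge $\beta$ of $\o X$, the edge $(\alpha,\beta)$ is sent to a marked edge of $\o Y$. Since the image is a Kan subcomplex of $\Map_{S}^{\flat}$, it lies in the core; the nontrivial half is the converse, namely that every equivalence edge of $\Map_{S}^{\flat}\pr{\o X,\o Y}$ lies in the image of $\iota$. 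For this, let $\phi:(\Delta^{1})^{\flat}\times\o X\to\o Y$ be such an equivalence, with endpoints $f_{0},f_{1}:\o X\to\o Y$. By Joyal's characterization, $\phi$ extends to a map $\psi:J\to\Map_{S}^{\flat}\pr{\o X,\o Y}$. For each vertex $x\in\o X$, evaluation at $x$ shows that $\phi_{x}=\phi\vert_{\Delta^{1}\times\{x\}}$ is an equivalence in the $\infty$-category $Y$. Crucially, since the structure map of $(\Delta^{1})^{\flat}\times\o X$ factors through the projection to $\o X$, the image $p\phi_{x}$ is the degenerate edge at $p(x)$. Consequently $\phi_{x}$ lies in the fiber $Y_{p(x)}$ and is an equivalence there, so it is locally $p$-cocartesian over a degenerate (and hence marked) edge of $S$; by condition (3) of Definition \ref{def:categorical_pattern}, $\phi_{x}$ is marked in $\o Y$.

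For the final step, fix a marked edge $\beta$ of $\o X$ and apply Proposition \ref{prop:A_0} to the 2-simplex of $\o Y$ obtained by composing $\phi\circ(\id_{\Delta^{1}}\times\beta)$ with the 2-simplex of $\Delta^{1}\times\Delta^{1}$ spanned by $(0,0),(0,1),(1,1)$. Its three edges are $f_{0}(\beta)$ (marked, as $f_{0}$ is a morphism in $\SS_{/\frak P}^{+}$), $\phi_{\beta(1)}$ (marked, by the previous paragraph), and the diagonal $\phi(0\to1,\beta)$; its image in $\o S$ is the degenerate 2-simplex $s_{1}(p\beta)$, which lies in $T$ and has all three edges in $M_{S}$. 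Proposition \ref{prop:A_0} forces the diagonal to be marked in $\o Y$, which is precisely what is needed for $\phi$ to extend to $(\Delta^{1})^{\sharp}\times\o X\to\o Y$ and thereby lie in the image of $\iota$. The main difficulty of the argument is the passage from ``$\phi$ is an equivalence in $\Map_{S}^{\flat}$'' to ``$\phi_{x}$ is marked in $\o Y$''; it is resolved by the observation that $p\phi_{x}$ is degenerate, which reduces the locally cocartesian condition to a purely fiberwise equivalence.
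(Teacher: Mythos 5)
Your proof is correct and takes essentially the same route as the paper: the heart of both arguments is the application of Proposition \ref{prop:A_0} to the degenerate $2$-simplex $s_1(p\beta)$ together with the observation that the evaluation $\phi_{\beta(1)}$ is a fiberwise equivalence and hence marked. One small point worth making explicit: to pass from ``every equivalence edge lies in $\Map_S^{\sharp}$'' to ``$\Map_S^{\sharp}$ equals the core,'' you implicitly use that membership in $\Map_S^{\sharp}$ is detected on edges (your characterization of the image of $\iota$) and that the core is the subcategory spanned by the equivalences; the paper handles this by explicitly showing the inclusion $\Map_S^{\sharp}\subset\Map_S^{\flat}$ is an inner fibration (their step (1)), a step you skip but which is easily restored from your own edge-detection remark.
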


\begin{proof}
We will write $p:X\to S$ and $q:Y\to S$ for the projections. First
we show that $\Map_{S}^{\flat}\pr{\overline{X},\overline{Y}}$ is
an $\infty$-category. Since $\overline{Y}$ is fibrant, the map $q$
is an inner fibration. Therefore, the map $\Fun\pr{X,Y}\to\Fun\pr{X,S}$
is also an inner fibration. Hence the map $\Map^{\flat}\pr{\overline{X},\overline{Y}}\to\Map^{\flat}\pr{\overline{X},\overline{S}}$
is an inner fibration, so its fiber $\Map_{S}^{\flat}\pr{\overline{X},\overline{Y}}$
is an $\infty$-category. 

Next, we show that the core of $\Map_{S}^{\flat}\pr{\overline{X},\overline{Y}}$
is equal to $\Map_{S}^{\sharp}\pr{\overline{X},\overline{Y}}$. Since
$\SS_{/\frak P}^{+}$ is a simplicial model category, the simplicial
set $\Map_{S}^{\sharp}\pr{\overline{X},\overline{Y}}$ is a Kan complex.
Therefore, $\Map_{S}^{\sharp}\pr{\overline{X},\overline{Y}}$ is contained
in the core of $\Map_{S}^{\flat}\pr{\overline{X},\overline{Y}}$.
To prove the reverse inclusion, we must prove the following:
\begin{enumerate}
\item $\Map_{S}^{\sharp}\pr{\overline{X},\overline{Y}}$ is a subcategory
of $\Map_{S}^{\flat}\pr{\overline{X},\overline{Y}}$ in the sense
of \cite[1.2.11]{HTT}. In other words, the inclusion $\Map_{S}^{\sharp}\pr{\overline{X},\overline{Y}}\subset\Map_{S}^{\flat}\pr{\overline{X},\overline{Y}}$
is an inner fibration.
\item Every equivalence of $\Map_{S}^{\flat}\pr{\overline{X},\overline{Y}}$
belongs to $\Map_{S}^{\sharp}\pr{\overline{X},\overline{Y}}$.
\end{enumerate}
For assertion (1), it suffices to show that for every $0<i<n$, the
inclusion $\pr{\pr{\Lambda_{i}^{n}}^{\sharp}\cup\pr{\Delta^{n}}^{\flat}}\times\overline{X}\to\pr{\Delta^{n}}^{\sharp}\times\overline{X}$
is a $\frak P$-equivalence. This follows from \cite[Remark B.2.5]{HA}.

For assertion (2), let $h:\pr{\Delta^{1}}^{\flat}\times\overline{X}\to\overline{Y}$
be an equivalence of $\Map_{S}^{\flat}\pr{\overline{X},\overline{Y}}$.
We must show that $h$ determines a map $\pr{\Delta^{1}}^{\sharp}\times\overline{X}\to\overline{Y}$
of marked simplicial sets. Let $f:0\to1$ denote the unique nondegenerate
edge of $\Delta^{1}$ and let $g:x\to x'$ be a marked edge of $\overline{X}$.
We wish to show that the edge $h\pr{f,g}$ is marked in $\overline{Y}$.
Consider the $2$-simplex $\sigma$ of $Y$, depicted as % https://q.uiver.app/#q=WzAsMyxbMCwxLCJoKDAseCkiXSxbMSwwLCJoKDAseCcpIl0sWzIsMSwiaCgxLHgnKSJdLFswLDEsImgoMCxnKSJdLFsxLDIsImgoZix4JykiXSxbMCwyLCJoKGYsZykiLDJdXQ==
\[\begin{tikzcd}
	& {h(0,x')} \\
	{h(0,x)} && {h(1,x').}
	\arrow["{h(0,g)}", from=2-1, to=1-2]
	\arrow["{h(f,x')}", from=1-2, to=2-3]
	\arrow["{h(f,g)}"', from=2-1, to=2-3]
\end{tikzcd}\]By hypothesis, the edge $h\pr{0,g}$ is marked in $Y$. The simplex
$q\pr{\sigma}$ is a degeneration of $p\pr g$, so $q\pr{\sigma}$
belongs to $T$ and its boundary consists of the edges in $M_{S}$.
Therefore, by Proposition \ref{prop:A_0}, it suffices to show that
$h\pr{f,x'}$ is marked in $Y$. Since $h$ is an equivalence, its
image in the $\infty$-category $\Map_{S}^{\flat}\pr{\{x'\}^{\sharp},\overline{Y}}\cong Y\times_{S}\{q(x')\}$
is an equivalence. In other words, $h\pr{f,x'}$ is an equivalence
in a fiber of $q$. In particular, it is a locally $q$-cocartesian
morphism lying over a marked edge of $S$. Therefore, it is marked
in $\overline{Y}$, as required.
\end{proof}
\begin{cor}
\label{cor:P-equivalence}Let $\frak P$ be a categorical pattern
on a simplicial set $S$ and let $f:\overline{X}\to\overline{Y}$
be a morphism of $\frak P$. The following conditions are equivalent:
\begin{enumerate}
\item For each fibrant object $\overline{Z}\in\SS_{/\frak P}^{+}$, the
map
\[
\Map_{S}^{\flat}\pr{\overline{Y},\overline{Z}}\to\Map_{S}^{\flat}\pr{\overline{X},\overline{Z}}
\]
is a categorical equivalence.
\item For each fibrant object $\overline{Z}\in\SS_{/\frak P}^{+}$, the
map
\[
\Map_{S}^{\sharp}\pr{\overline{Y},\overline{Z}}\to\Map_{S}^{\sharp}\pr{\overline{X},\overline{Z}}
\]
is a homotopy equivalence.
\end{enumerate}
\end{cor}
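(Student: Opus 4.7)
The plan is to use Lemma \ref{lem:Map_S^=00005Csharp_core} as the principal bridge between the two conditions. That lemma identifies $\Map_S^\sharp\pr{\overline{X},\overline{Z}}$ with the core of the $\infty$-category $\Map_S^\flat\pr{\overline{X},\overline{Z}}$ whenever $\overline{Z}$ is fibrant, so both conditions will be read off from cores of functor categories.

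The implication (1)$\Rightarrow$(2) is then immediate: passage to the core $\pr{-}^{\simeq}$ is a functor from $\infty$-categories to Kan complexes that sends categorical equivalences to homotopy equivalences (a quasi-inverse automatically restricts to the maximal sub Kan complexes). Applying $\pr{-}^{\simeq}$ to the categorical equivalence supplied by (1) and using Lemma \ref{lem:Map_S^=00005Csharp_core} to identify cores yields (2).

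For (2)$\Rightarrow$(1), I would invoke the following recognition principle: a functor $g\colon\cal A\to\cal B$ between $\infty$-categories is a categorical equivalence if and only if, for every simplicial set $K$, the induced map $\Fun\pr{K,\cal A}^{\simeq}\to\Fun\pr{K,\cal B}^{\simeq}$ is a weak equivalence of Kan complexes. Using the tensor--cotensor adjunction in $\SS^+_{/\overline{S}}$ together with Lemma \ref{lem:Map_S^=00005Csharp_core} to identify cores, condition (1) becomes the requirement that
\[
\Map_S^\sharp\pr{K^\flat\times\overline{Y},\overline{Z}}\longrightarrow\Map_S^\sharp\pr{K^\flat\times\overline{X},\overline{Z}}
\]
be a weak equivalence of Kan complexes for every simplicial set $K$ and every fibrant $\overline{Z}$. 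This reduces the proof of (2)$\Rightarrow$(1) to showing that $K^\flat\times f$ is a $\frak P$-equivalence whenever $f$ is.

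The main obstacle is verifying this last claim, i.e., that the endofunctor $K^\flat\times\pr{-}$ of $\SS_{/\frak P}^+$ preserves $\frak P$-equivalences. Since this functor manifestly preserves cofibrations, by Ken Brown's lemma it suffices to show it preserves trivial cofibrations, i.e., that it is a left Quillen endofunctor. I would reduce, by cellular induction on $K$ together with the stability of trivial cofibrations under pushouts and transfinite composition, to the case $K=\Delta^n$, and further to a direct check on the generating $\frak P$-anodyne morphisms described in \cite[\S B.1]{HA}, appealing to Proposition \ref{prop:A_0} where necessary to control which edges of $K^\flat\times\overline{Y}$ acquire markings when passing to the fibered setting.
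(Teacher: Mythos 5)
Your (1)$\Rightarrow$(2) direction is exactly what the paper does: apply Lemma~\ref{lem:Map_S^=00005Csharp_core} and pass to cores. For (2)$\Rightarrow$(1) you choose a different scaffolding --- first invoking the recognition principle that a functor of $\infty$-categories is an equivalence iff $\Fun(K,-)^{\simeq}$ is a weak homotopy equivalence for all $K$, then transporting this through the $\flat$-tensor/cotensor adjunction and Lemma~\ref{lem:Map_S^=00005Csharp_core} to reduce to the assertion that $K^{\flat}\times(-)$ preserves $\frak P$-equivalences. The paper instead applies Ken Brown's lemma directly to the contravariant functor $\Map_{S}^{\flat}(-,\overline{Z})$ and shows the resulting map is a trivial fibration by unwinding the lifting property against $\partial\Delta^{n}\subset\Delta^{n}$. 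Both routes land on the same core fact, namely that the pushout-product of a monomorphism of simplicial sets (with the $\flat$-marking) and a trivial cofibration of $\SS_{/\frak P}^{+}$ is a $\frak P$-equivalence; the paper simply cites \cite[Remark B.2.5]{HA} for this, while your proposal attempts to prove it. Your version is a bit more circuitous --- the recognition principle is not needed once you know that the codomain of $\Map_{S}^{\flat}(-,\overline{Z})$ is already the Joyal model structure --- but the two approaches are conceptually dual formulations of the same enrichment compatibility.

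The genuine gap is in your final step. You propose to establish that $K^{\flat}\times(-)$ preserves trivial cofibrations by ``a direct check on the generating $\frak P$-anodyne morphisms.'' This does not suffice: $\frak P$-anodyne maps form a strict subclass of the trivial cofibrations of $\SS_{/\frak P}^{+}$ (they are $\frak P$-equivalences by \cite[Example B.2.2]{HA}, but there is no reason for the converse, just as inner anodyne maps are a proper subclass of Joyal trivial cofibrations). The model structure's trivial cofibrations do not come with an explicit small generating set in terms of which you could run such a verification, so you cannot reduce an arbitrary trivial cofibration $\overline{A}\to\overline{B}$ to the $\frak P$-anodyne case. The cellular induction on $K$ is also phrased imprecisely --- the correct reduction is not ``to $K=\Delta^{n}$'' but to the pushout-product with $\partial\Delta^{n}\subset\Delta^{n}$, since the class of cofibrations $j$ of $\SS$ for which $j^{\flat}\hat{\times}(-)$ sends trivial cofibrations to trivial cofibrations is weakly saturated. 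In short, the needed compatibility of the $\frak P$-model structure with the $\flat$-tensoring is precisely \cite[Remark B.2.5]{HA}; it is a nontrivial ingredient that the paper imports as a black box, and your sketched proof of it does not close.
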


\begin{proof}
The implication (1)$\implies$(2) follows from Proposition \ref{lem:Map_S^=00005Csharp_core}.
For the converse, we must show that for each fibrant object $\overline{Z}\in\SS_{/\frak P}^{+}$,
the functor $\Map_{S}^{\flat}\pr{-,\overline{Z}}:\pr{\SS_{/\frak P}^{+}}^{\op}\to\SS_{{\rm Joyal}}$
preserves weak equivalences. By Ken Brown's lemma \cite[Lemma 1.1.12]{Hovey},
it suffices to show that the functor $\Map_{S}^{\flat}\pr{-,\overline{Z}}$
carries trivial cofibrations of $\SS_{/\frak P}^{+}$ to categorical
equivalences. So take an arbitrary trivial cofibration $\overline{A}\to\overline{B}$
$\SS_{/\frak P}^{+}$. We claim that the map
\[
\Map_{S}^{\flat}\pr{\overline{B},\overline{Z}}\to\Map_{S}^{\flat}\pr{\overline{A},\overline{Z}}
\]
is a trivial fibration. Unwinding the definitions, we must show that,
for each $n\geq0$, the map
\[
\pr{\Delta^{n}}^{\flat}\times\overline{A}\cup\pr{\partial\Delta^{n}}^{\flat}\times\overline{B}\to\pr{\Delta^{n}}^{\flat}\times\overline{B}
\]
is a $\frak P$-equivalence. This follows from \cite[Remark B.2.5]{HA}.
\end{proof}
As a consequence of Corollary \ref{cor:P-equivalence}, we obtain
a stronger version of part (2) of Proposition \ref{prop:creative}:
\begin{prop}
\label{prop:creative_precise}Let $\frak P=\pr{M_{\cal D},T,\{p_{\alpha}\}_{\alpha\in A}}$
be a categorical pattern on an $\infty$-category $\cal D$, and let
$f:\overline{\cal X}\to\overline{\cal Y}$ be a morphism between fibrant
objects of $\SS_{/\frak P}^{+}$. Suppose that the set $M_{\cal D}$
contains every equivalence of $\cal D$. Then $f$ is a $\frak P$-equivalence
if and only if its underlying map of simplicial sets is a categorical
equivalence.
\end{prop}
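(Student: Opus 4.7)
The plan is to reduce the proposition to Corollary \ref{cor:P-equivalence} and then compare the enriched mapping spaces $\Map_\cal D^\flat(-,-)$ with the ordinary functor categories $\Fun_\cal D(-,-)$. A first preparatory observation is that under the hypothesis that $M_\cal D$ contains every equivalence, every fibrant object $\overline{\cal X}$ of $\SS_{/\frak P}^+$ has its underlying projection $\cal X\to\cal D$ a categorical fibration: it is an inner fibration by condition (1) of Definition \ref{def:categorical_pattern}, and since equivalences of $\cal D$ lie in $M_\cal D$, condition (2) yields cocartesian lifts of them, which are themselves equivalences because they lie over equivalences. Thus throughout, $\cal X,\cal Y\to\cal D$ may be treated as categorical fibrations between $\infty$-categories.

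For the ($\Leftarrow$) direction, the central lemma I would prove is: assuming the underlying $f$ is a categorical equivalence, for any fibrant $\overline Z\in\SS_{/\frak P}^+$ a functor $\phi:\cal Y\to Z$ over $\cal D$ preserves marked edges if and only if $\phi\circ f$ does. The forward direction is immediate since $f$ itself preserves markings. For the converse, I would fix a marked edge $\bar e$ of $\overline{\cal Y}$ over some $e\in M_\cal D$ and pull back along the classifying map $\Delta^1\to\cal D$. The induced map $f_e:\cal X\times_\cal D\Delta^1\to\cal Y\times_\cal D\Delta^1$ is then a categorical equivalence between cocartesian fibrations over $\Delta^1$ preserving cocartesian edges, and such a map admits a quasi-inverse $g_e$ over $\Delta^1$ that also preserves cocartesian edges. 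Since $\phi_e$ is equivalent to $\phi_e\circ f_e\circ g_e$ in $\Fun_{\Delta^1}(\cal Y\times_\cal D\Delta^1,Z\times_\cal D\Delta^1)$, and the latter composite preserves cocartesian edges (because $\phi\circ f$ and $g_e$ do), the equivalence-invariance of cocartesian-edge preservation forces $\phi_e$, hence $\phi(\bar e)$, to be marked.

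With the lemma in hand, the ($\Leftarrow$) direction concludes as follows. Right properness of the Joyal model structure furnishes a categorical equivalence $\Fun_\cal D(\cal Y,Z)\to\Fun_\cal D(\cal X,Z)$, obtained as the fiberwise restriction of the categorical equivalence $\Fun(\cal Y,Z)\to\Fun(\cal X,Z)$ of categorical fibrations over the categorical equivalence $\Fun(\cal Y,\cal D)\to\Fun(\cal X,\cal D)$. By the lemma, this map restricts to the full subsimplicial sets $\Map_\cal D^\flat(\overline{\cal Y},\overline Z)$ and $\Map_\cal D^\flat(\overline{\cal X},\overline Z)$ and reflects membership; since these subcategories are closed under equivalence in the respective ambient functor categories (locally cocartesian edges being an equivalence-invariant notion), the restriction is itself a categorical equivalence, and Corollary \ref{cor:P-equivalence} yields that $f$ is a $\frak P$-equivalence. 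For the ($\Rightarrow$) direction, a standard Yoneda-style argument suffices: Corollary \ref{cor:P-equivalence} applied with $\overline Z=\overline{\cal X}$ produces a functor $g:\cal Y\to\cal X$ over $\cal D$ with $gf\simeq\id_\cal X$, while applying it with $\overline Z=\overline{\cal Y}$ and comparing $\id_\cal Y$ and $fg$ through their common image $f$ under $f^*:\Map_\cal D^\flat(\overline{\cal Y},\overline{\cal Y})\to\Map_\cal D^\flat(\overline{\cal X},\overline{\cal Y})$ yields $fg\simeq\id_\cal Y$, showing the underlying $f$ is a categorical equivalence.

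The main obstacle I anticipate is the central lemma: although $f$ is a categorical equivalence, it need not be surjective on vertices or edges, so marked edges of $\cal Y$ need not literally originate in $\cal X$. The quasi-inverse argument restricted over $\Delta^1$ is the cleanest way around this, but it requires careful formulation of equivalences of cocartesian fibrations over $\Delta^1$ and the equivalence-invariance of cocartesian-edge preservation within the $\infty$-category of cocartesian fibrations.
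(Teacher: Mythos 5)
Your proof is correct and shares the paper's overall skeleton: reduce via Corollary \ref{cor:P-equivalence} to showing $\Map_{\cal D}^{\flat}(\overline{\cal Y},\overline{\cal Z})\to\Map_{\cal D}^{\flat}(\overline{\cal X},\overline{\cal Z})$ is a categorical equivalence, get full faithfulness from the ambient equivalence $\Fun_{\cal D}(\cal Y,\cal Z)\to\Fun_{\cal D}(\cal X,\cal Z)$, and establish essential surjectivity by proving that marking-preservation transfers across $f$. Where you genuinely diverge is in proving that transfer. The paper proceeds directly: for a marked edge $e$ of $\overline{\cal Y}$, it uses the equivalence of arrow $\infty$-categories over $\cal D$ to produce a marked $\widetilde{e}$ in $\overline{\cal X}$ with $f(\widetilde{e})$ equivalent to $e$, then chases markedness from $g(\widetilde{e})$ to $g'(e)$ through repeated applications of Proposition \ref{prop:A_0}, handling equivalence-invariance and the transfer along $f$ in a single combined argument. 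You instead isolate a clean central lemma ($\phi$ preserves markings iff $\phi\circ f$ does) together with a separate closed-under-equivalence statement, and prove the lemma by pulling back along the classifying map $\Delta^1\to\cal D$ of a marked edge, observing that the induced comparison of cocartesian fibrations over $\Delta^1$ is a cocartesian equivalence admitting a marking-preserving quasi-inverse $g_e$. Your route is more modular and makes the logical dependencies explicit, but trades the elementary iteration of Proposition \ref{prop:A_0} for heavier homotopical input: right properness of the Joyal model structure (so the pullback over $\Delta^1$ remains a categorical equivalence) and the homotopy theory of the cocartesian model structure over $\Delta^1$ (to extract $g_e$). Your forward direction is also a Yoneda-style construction of a quasi-inverse over $\cal D$, where the paper simply appeals to the forgetful functor $\frak P\-\Fib\to\Cat_{\infty}$ preserving equivalences. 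One shared caveat: the preliminary claim that $\cal X,\cal Y,\cal Z\to\cal D$ are categorical fibrations in fact uses more than the stated hypothesis on $M_{\cal D}$ alone (it needs $T$ to be compatible with equivalences, as in the definition of creativity); the paper's own proof inherits the same imprecision when it invokes Proposition \ref{prop:creative}(1).
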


\iffalse

By looking at mapping spaces, we can also prove that every $\frak P$-equivalence
is a weak categorical equivalence, even when the underlying simplicial
set of $\frak P$ is not an $\infty$-category.

\fi
\begin{proof}
Suppose first that $f$ is a weak equivalence. We must show that $f$
is a categorical equivalence. Since $\cal D$ is an $\infty$-category,
the underlying simplicial sets of $\frak P$-fibered object are $\infty$-categories.
Therefore, the forgetful functor $\frak P\-\Fib\to\Cat_{\infty}$
is well-defined. Since $f$ is an equivalence of $\frak P\-\Fib$,
it follows that its image in $\Cat_{\infty}$ is also an equivalence,
as claimed.

Conversely, suppose that $f$ is a categorical equivalence. According
to Corollary \ref{cor:P-equivalence}, it suffices to show that, for
every fibrant object $\overline{\cal Z}\in\SS_{/\frak P}^{+}$, the
functor 
\begin{equation}
f^{*}:\Map_{\cal D}^{\flat}\pr{\overline{\cal Y},\overline{\cal Z}}\to\Map_{\cal D}^{\flat}\pr{\overline{\cal X},\overline{\cal Z}}\label{eq:cateq-peq_flat}
\end{equation}
is a categorical equivalence. Since $\frak P$ is a creative categorical
pattern, part (1) of Proposition \ref{prop:creative} shows that the
map $\cal Z\to\cal D$ is a categorical fibration. Therefore, the
map
\begin{equation}
f^{*}:\Fun_{\cal D}\pr{\cal Y,\cal Z}\to\Fun_{\cal D}\pr{\cal X,\cal Z}\label{eq:cateq-peq}
\end{equation}
is a categorical equivalence. So the functor (\ref{eq:cateq-peq_flat})
is fully faithful. We complete the proof by showing that it is essentially
surjective.

Let $g\in\Map_{\cal D}^{\flat}\pr{\overline{\cal X},\overline{\cal Z}}$
be an arbitrary vertex. We must find a vertex $g'\in\Map_{\cal D}^{\flat}\pr{\overline{\cal Y},\overline{\cal Z}}$
such that $g'f$ is equivalent to $g$ as an object of $\Map_{\cal D}^{\flat}\pr{\overline{\cal X},\overline{\cal Z}}$.
Since the functor (\ref{eq:cateq-peq}) is essentially surjective,
there are a functor $g'\in\Fun_{\cal D}\pr{\cal Y,\cal Z}$ and an
equivalence $g\xrightarrow{\simeq}g'f$ in $\Fun_{\cal D}\pr{\cal X,\cal Z}$.
We claim that $g'$ belongs to $\Map_{\cal D}^{\flat}\pr{\overline{\cal Y},\overline{\cal Z}}$. 

Let $e:Y\to Y'$ be a marked edge of $\overline{\cal Y}$. We must
show that the edge $g'\pr e$ is marked in $\overline{\cal Z}$. Let
$p:\cal X\to\cal D$ and $q:\cal Y\to\cal D$ denote the projections.
Since $p$ and $q$ are categorical fibrations, the functor
\[
\Fun\pr{\Delta^{1},\cal X}\times_{\Fun\pr{\Delta^{1},\cal D}}\{q\pr e\}\to\Fun\pr{\Delta^{1},\cal Y}\times_{\Fun\pr{\Delta^{1},\cal D}}\{q\pr e\}
\]
is a categorical equivalence. Therefore, we can find a morphism $\widetilde{e}:X\to X'$
of $\cal X$ and a diagram $\Delta^{1}\times\Delta^{1}\to\cal Y$
depicted as % https://q.uiver.app/?q=WzAsNCxbMCwwLCJmKFgpIl0sWzEsMCwiZihYJykiXSxbMSwxLCJZJyJdLFswLDEsIlkiXSxbMCwxLCJmKFxcd2lkZXRpbGRle2V9KSJdLFsxLDIsIlxcc2ltZXEiXSxbMywyLCJlIiwyXSxbMCwzLCJcXHNpbWVxIiwyXV0=
\[\begin{tikzcd}
	{f(X)} & {f(X')} \\
	Y & {Y',}
	\arrow["{f(\widetilde{e})}", from=1-1, to=1-2]
	\arrow["\simeq", from=1-2, to=2-2]
	\arrow["e"', from=2-1, to=2-2]
	\arrow["\simeq"', from=1-1, to=2-1]
\end{tikzcd}\]such that the composite $\Delta^{1}\times\Delta^{1}\to\cal Y\to\cal D$
is equal to the map $\Delta^{1}\times\Delta^{1}\xrightarrow{\opn{pr}_{1}}\Delta^{1}\xrightarrow{q\pr e}\cal D$.
The morphism $f\pr{\widetilde{e}}$ is locally $q$-cocartesian because
$e$ is locally $q$-cocartesian. Since $f$ is a categorical equivalence,
this implies that $\widetilde{e}$ is locally $q$-cocartesian. Hence
$\widetilde{e}$ is marked in $\overline{\cal X}$. Therefore, the
edge $g\pr{\widetilde{e}}$ is marked. Now since $M_{\cal D}$ contains
every equivalence of $\cal D$, all equivalence of $\cal Z$ are marked.
Thus, applying Proposition \ref{prop:A_0} twice, we deduce that $g'f\pr{\widetilde{e}}$
is marked. Applying Proposition \ref{prop:A_0} twice again, we deduce
that $g'\pr e$ is marked. The proof is now complete.
\end{proof}
\begin{proof}
[Proof of Proposition \ref{prop:creative}]As we stated above, assertion
(1) is proved in \cite[Proposition B.2.7]{HA}. Part (2) is a consequence
of Proposition \ref{prop:creative_precise}.
\end{proof}

\subsection{\label{subsec:cat_inv}Categorical Invariance of Categorical Patterns}

Let $f:\cal C\to\cal D$ be a functor of $\infty$-categories. We
say that $f$ is \textbf{compatible} with categorical patterns $\frak P_{\cal C}$
and $\frak P_{\cal D}$ on $\cal C$ and $\cal D$ if $\frak P_{\cal C}\subset f^{*}\frak P_{\cal D}$.
In this case, the adjunction
\begin{equation}
f_{!}:\SS_{/\frak P_{\cal C}}^{+}\adj\SS_{/\frak P_{\cal D}}^{+}:f^{*}\label{eq:compatible}
\end{equation}
is a Quillen adjunction \cite[Proposition B.2.9]{HA}. In this subsection,
we give a sufficient condition for this Quillen adjunction to be a
Quillen equivalence.

To state the main result of this section, we introduce a bit of terminology.
\begin{defn}
Let $f:\cal C\to\cal D$ be a categorical equivalence of $\infty$-categories,
and let $\frak P_{\cal C}=\pr{M_{\cal C},\{q_{i}\}_{i\in I}}$ and
$\frak P_{\cal D}=\pr{M_{\cal D},\{p_{\alpha}\}_{\alpha\in A}}$ be
commutative categorical patterns on $\cal C$ and $\cal D$. We say
that $f$ is \textbf{strongly compatible} with $\frak P_{\cal C}$
and $\frak P_{\cal D}$ if it satisfies the following conditions:
\begin{enumerate}
\item The map $f$ is compatible with $\frak P_{\cal C}$ and $\frak P_{\cal D}$.
\item Every element of $M_{\cal D}$ is equivalent to an element of $f\pr{M_{\cal C}}$
as an object of $\Fun\pr{\Delta^{1},\cal D}$.
\item For each $\alpha\in A$, there is some $i\in I$ such that the diagram
$p_{\alpha}$ is naturally equivalent to $fq_{i}$.
\end{enumerate}
\end{defn}

Here is the main result of this subsection.
\begin{prop}
\label{prop:cat_inv}Let $f:\cal C\to\cal D$ be a categorical equivalence
of $\infty$-categories, and let $\frak P_{\cal C}=\pr{M_{\cal C},\{q_{i}\}_{i\in I}}$
and $\frak P_{\cal D}=\pr{M_{\cal D},\{p_{\alpha}\}_{\alpha\in A}}$
be commutative categorical patterns on $\cal C$ and $\cal D$. If
$f$ is strongly compatible with $\frak P_{\cal C}$ and $\frak P_{\cal D}$,
then the adjunction
\[
f_{!}:\SS_{/\frak P_{\cal C}}^{+}\adj\SS_{/\frak P_{\cal D}}^{+}:f^{*}
\]
is a Quillen equivalence.
\end{prop}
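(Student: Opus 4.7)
My plan is to verify the standard criterion for a Quillen equivalence: $(f_!, f^*)$ is a Quillen equivalence if and only if (a) $f^*$ reflects weak equivalences between fibrant objects, and (b) for every fibrant object $\o Y \in \SS^+_{/\frak P_{\cal D}}$, the counit $f_! f^* \o Y \to \o Y$ is a weak equivalence in $\SS^+_{/\frak P_{\cal D}}$. The main leverage will be that, both patterns being commutative (hence creative), Propositions~\ref{prop:creative} and~\ref{prop:creative_precise} allow me to translate weak equivalences and fibrations between fibrant objects into categorical equivalences and fibrations of underlying simplicial sets, bringing the Joyal model structure into play.

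For (a), given $\phi : \o Y \to \o Y'$ between fibrant objects with $f^* \phi$ a weak equivalence, I will invoke Proposition~\ref{prop:creative_precise} to conclude that $f^* \phi$ is a categorical equivalence of underlying simplicial sets, i.e., that $\cal C \times_{\cal D} \cal Y \to \cal C \times_{\cal D} \cal Y'$ is a categorical equivalence. Proposition~\ref{prop:creative}(1) then makes $\cal Y, \cal Y' \to \cal D$ into categorical fibrations, and since $f$ is a categorical equivalence, right properness of the Joyal model structure makes the two base-change projections $\cal C \times_{\cal D} \cal Y \to \cal Y$ and $\cal C \times_{\cal D} \cal Y' \to \cal Y'$ categorical equivalences. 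Two-out-of-three then forces $\phi$ itself to be a categorical equivalence, and a second application of Proposition~\ref{prop:creative_precise} upgrades this to a weak equivalence in $\SS^+_{/\frak P_{\cal D}}$.

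For (b), I will invoke Corollary~\ref{cor:P-equivalence} to reduce to showing that, for every fibrant $\o Z$, the precomposition map
\[
\Map^\flat_{\cal D}(\o Y, \o Z) \longrightarrow \Map^\flat_{\cal D}(f_! f^* \o Y, \o Z) = \Map^\flat_{\cal C}(f^* \o Y, f^* \o Z)
\]
is a categorical equivalence. My plan is to sandwich this map between the full subcategory inclusions $\Map^\flat \hookrightarrow \Fun$ (cut out by the pointwise condition that each vertex preserves markings) and the bottom map $\Fun_{\cal D}(\cal Y, \cal Z) \to \Fun_{\cal C}(f^* \cal Y, f^* \cal Z) = \Fun_{\cal D}(f^* \cal Y, \cal Z)$, given by precomposition with the projection $f^* \cal Y \to \cal Y$. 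Since $\cal Z \to \cal D$ is a categorical fibration (Proposition~\ref{prop:creative}(1)) and $f^* \cal Y \to \cal Y$ is a categorical equivalence (right properness), this bottom map is a categorical equivalence.

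The remaining essential surjectivity on the sub-$\infty$-categories will be the main obstacle, and the place where strong compatibility is essential. Given a marking-preserving $\psi : f^* \cal Y \to f^* \cal Z$, I will lift it via the bottom equivalence to $\phi : \cal Y \to \cal Z$ with $f^* \phi \simeq \psi$ and verify that $\phi$ preserves markings. For each $e_Y \in M_Y$, condition (2) of strong compatibility supplies $c \in M_{\cal C}$ and an equivalence $f(c) \simeq p_Y(e_Y)$ in $\Fun(\Delta^1, \cal D)$, which I will lift along the categorical fibration $\cal Y \to \cal D$ to an equivalent marked edge $e_Y' \in M_Y$ with $p_Y(e_Y') = f(c)$. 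Evaluating the equivalence $f^* \phi \simeq \psi$ at the marked pair $(c, e_Y') \in M_{f^* \cal Y}$ produces an equivalence between $\phi(e_Y')$ and a marked edge of $\cal Z$; since every equivalence in $\cal Z$ is marked (by commutativity of $\frak P_{\cal D}$), repeated application of Proposition~\ref{prop:A_0} will promote this to $\phi(e_Y') \in M_Z$, and hence to $\phi(e_Y) \in M_Z$. Condition (3) of strong compatibility enters a step earlier, underlying the fact that fibrant objects on the $\cal D$-side pull back to $\frak P_{\cal C}$-fibered objects on the $\cal C$-side by matching limit-diagram conditions under the equivalences $p_\alpha \simeq f q_i$.
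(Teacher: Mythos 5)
The criterion you invoke at the outset is not a valid characterization of Quillen equivalences, and the gap matters. Your (a) says precisely that $\bb Rf^{*}$ is conservative; your (b) says precisely that $\bb Rf^{*}$ is fully faithful. A fully faithful (hence automatically conservative) right adjoint need not be an equivalence: the right adjoint of any nontrivial left Bousfield localization $\cal M\rightleftarrows L_{S}\cal M$ satisfies both (a) and (b) without the adjunction being a Quillen equivalence. The correct pairings are either $\bb Rf^{*}$ conservative together with the derived \emph{unit} $\o X\to f^{*}R(f_{!}\o X)$ being a weak equivalence for cofibrant $\o X$, or else $\bb Lf_{!}$ conservative together with the derived counit being a weak equivalence. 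Your (a) and (b) together establish only that $\bb Rf^{*}$ is fully faithful and say nothing about its essential surjectivity, i.e.\ about the derived unit.

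This is not merely a bookkeeping slip, as is visible from the fact that condition (3) of strong compatibility does no real work in your argument. You attribute its role to the preservation of fibrant objects by $f^{*}$, but that already follows from $f^{*}$ being right Quillen, which is mere compatibility. Condition (3) is what makes it possible to go in the other direction: it is needed to show that when one factors $fp\colon\cal X\to\cal D$ (for $\o{\cal X}$ fibrant in $\SS_{/\frak P_{\cal C}}^{+}$) as a categorical equivalence followed by a categorical fibration, the resulting marked object over $\cal D$ is $\frak P_{\cal D}$-fibered. Without this the proposition is simply false (take $\frak P_{\cal C}$ with no diagram collections and $\frak P_{\cal D}$ with nontrivial ones). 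The paper isolates exactly this fibrancy transfer as Lemma \ref{lem:cat_inv} and then proves the derived unit is a weak equivalence using essentially the same $\Map^{\flat}$/marking-preservation technique you deploy in (b). The right fix is therefore to keep (a), discard (b), prove instead that the derived unit is an isomorphism on an arbitrary fibrant $\o{\cal X}\in\SS_{/\frak P_{\cal C}}^{+}$, and supply Lemma \ref{lem:cat_inv} as the missing fibrancy input.
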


The proof of Proposition \ref{prop:cat_inv} relies on a lemma.
\begin{lem}
\label{lem:cat_inv}Suppose we are given a commutative diagram % https://q.uiver.app/?q=WzAsNCxbMCwwLCJcXG1hdGhjYWx7WH0iXSxbMCwxLCJcXG1hdGhjYWx7Q30iXSxbMSwxLCJcXG1hdGhjYWx7RH0iXSxbMSwwLCJcXG1hdGhjYWx7WX0iXSxbMCwzLCJnIl0sWzMsMiwicSJdLFsxLDIsImYiLDJdLFswLDEsInAiLDJdLFsxLDIsIlxcc2ltZXEiXSxbMCwzLCJcXHNpbWVxIiwyXV0=
\[\begin{tikzcd}
	{\mathcal{X}} & {\mathcal{Y}} \\
	{\mathcal{C}} & {\mathcal{D}}
	\arrow["g", from=1-1, to=1-2]
	\arrow["q", from=1-2, to=2-2]
	\arrow["f"', from=2-1, to=2-2]
	\arrow["p"', from=1-1, to=2-1]
	\arrow["\simeq", from=2-1, to=2-2]
	\arrow["\simeq"', from=1-1, to=1-2]
\end{tikzcd}\]of $\infty$-categories, where $f$ and $g$ are categorical equivalences
and $p$ and $q$ are categorical fibrations. Let $\frak P_{\cal C}=\pr{M_{\cal C},\{q_{i}\}_{i\in I}}$
and $\frak P_{\cal D}=\pr{M_{\cal D},\{p_{\alpha}\}_{\alpha\in A}}$
be commutative categorical patterns on $\cal C$ and $\cal D$, and
suppose that $f$ is strongly compatible with $\frak P_{\cal C}$
and $\frak P_{\cal D}$. The following conditions are equivalent:
\begin{enumerate}
\item The map $p$ is $\frak P_{\cal C}$-fibered.
\item The map $q$ is $\frak P_{\cal D}$-fibered.
\end{enumerate}
\end{lem}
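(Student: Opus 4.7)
The plan is to verify that the six defining conditions of Definition \ref{def:categorical_pattern} are symmetrically transferred between $p$ and $q$ through the categorical equivalences $f$ and $g$. Since $p$ and $q$ are already categorical fibrations between $\infty$-categories, condition (1) is automatic for both. Since $\frak P_{\cal C}$ and $\frak P_{\cal D}$ are commutative, Remark \ref{rem:mlfib} further reduces the remaining conditions to (A) that the map be a marked left fibration over the base, and (B) that every map $\pr{K_{\alpha}}^{\sharp}\to\cal X$ (resp.\ $\cal Y$) over the base extends to $\pr{K_{\alpha}^{\lcone}}^{\sharp}$ and any such extension is a relative limit diagram. Thus the task is to show that (A) and (B) transfer across the square.

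For (A), the key technical input is that, since $p$ and $q$ are categorical fibrations and the square consists of horizontal categorical equivalences, for any map $K\to\cal C$ of simplicial sets the induced map $\cal X\times_{\cal C}K\to\cal Y\times_{\cal D}K$ over $K$ is a categorical equivalence. Specializing to $K=\Delta^{1}$, I would conclude that $\cal X\times_{\cal C}\Delta^{1}\to\Delta^{1}$ is a cocartesian fibration if and only if $\cal Y\times_{\cal D}\Delta^{1}\to\Delta^{1}$ is, and that the cocartesian edges on one side correspond (up to fiberwise equivalence) to those on the other. Combining this with condition (2) of strong compatibility --- every marked edge of $\cal D$ is equivalent to some $f(\alpha)$ with $\alpha\in M_{\cal C}$ --- I conclude that $p$ satisfies (A) with marking $M_{\cal C}$ if and only if $q$ satisfies (A) with marking $M_{\cal D}$.

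For (B), I would use condition (3) of strong compatibility: each $p_{\alpha}$ is naturally equivalent to $fq_{i}$ for some $i\in I$. Specializing the technical input above to $K=K_{i}^{\lcone}$ yields a categorical equivalence $\cal X\times_{\cal C}K_{i}^{\lcone}\to\cal Y\times_{\cal D}K_{i}^{\lcone}$ of $\infty$-categories over $K_{i}^{\lcone}$. Since relative limits of diagrams factoring through a categorical fibration are preserved and reflected by categorical equivalences of total spaces over the same base, condition (B) for $p$ against $q_{i}$ translates to the corresponding condition for $q$ against $fq_{i}$. Invariance of relative limits under natural equivalence of the base diagram then upgrades this to condition (B) for $q$ against $p_{\alpha}$.

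The main obstacle I anticipate is the careful handling of the ``up to equivalence'' clauses in strong compatibility: Definition \ref{def:categorical_pattern} is phrased in terms of specific marked edges and specific diagrams, whereas strong compatibility matches the data of $\frak P_{\cal C}$ and $\frak P_{\cal D}$ only up to equivalence. Bridging this gap requires homotopy invariance arguments for the $\frak P$-fibered conditions under equivalence of marked edges and natural equivalence of cone diagrams, which can be justified using the invariance principles developed in Subsection \ref{subsec:recognition}, particularly those underlying Proposition \ref{prop:creative_precise}.
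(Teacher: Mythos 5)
Your reduction to conditions (A) and (B) of Remark~\ref{rem:mlfib} is correct, and the high-level strategy of transferring those conditions across the square by comparing pullbacks is the right one. However, there are two substantive gaps. First, your plan treats the two directions symmetrically, but the paper avoids this: it proves only $(1)\Rightarrow(2)$ directly, and then derives $(2)\Rightarrow(1)$ by pulling $q$ back to $q'\colon\cal Y\times_{\cal D}\cal C\to\cal C$ (which is $\frak P_{\cal C}$-fibered because $f^*$ is right Quillen), producing an inverse categorical equivalence $\cal Y\times_{\cal D}\cal C\to\cal X$ over $\cal C$, and then applying $(1)\Rightarrow(2)$ to that square with identity base. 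This side-steps having to transfer condition (B) in the reverse direction directly, which your plan doesn't address.

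Second, and more importantly, your proposed resolution of the ``up to equivalence'' obstacle --- appealing to the invariance principles of Subsection~\ref{subsec:recognition}, in particular Proposition~\ref{prop:creative_precise} --- is the wrong tool. Those results detect $\frak P$-equivalences and fibrations from underlying maps of simplicial sets; they do not let you replace a given object $Y\in\cal Y$, a given edge $\alpha\in M_{\cal D}$, or a given cocartesian section over $p_{\alpha}$ by equivalent data lying in the images of $g$, $f(M_{\cal C})$, and $\{q_i\}$. What is actually needed, and what the paper uses repeatedly, is the concrete machinery of lifting diagrams along categorical fibrations up to specified homotopies (\cite[Proposition A.2.3.1]{HTT}): for (A), to build the $3$-simplex replacing $\alpha$ by a lift of $f(\alpha')$ from the source $g(X)$, and for (B), to replace a cocartesian section over $p_{\alpha}$ by an equivalent one over $fq_i$ and to match extensions. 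Until those lifting arguments are carried out, the claim that ``condition (B) for $q$ against $fq_i$ upgrades to condition (B) for $q$ against $p_{\alpha}$'' remains unjustified, since the two pullbacks $\cal Y\times_{\cal D}K_{\alpha}^{\lcone}$ via $p_{\alpha}$ and via $fq_i$ are only equivalent, not equal, and their cocartesian sections do not literally correspond.
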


\begin{proof}
We first prove that (2)$\implies$(1), assuming (1)$\implies$(2).
Suppose that $q$ is $\frak P_{\cal D}$-fibered. Since the functor
$\SS_{/\frak P_{\cal D}}^{+}\to\SS_{/\frak P_{\cal C}}^{+}$ is right
Quillen, the map $q':\cal Y\times_{\cal D}\cal C\to\cal C$ is $\frak P_{\cal C}$-fibered.
Now since $p$ and $q'$ are both categorical fibrations, the functor
$\cal X\to\cal Y\times_{\cal D}\cal C$ has an inverse categorical
equivalence which commutes with the projection to $\cal C$. Applying
the implication (1)$\implies$(2) to the inverse equivalence, we deduce
that $p$ is $\frak P_{\cal C}$-fibered.

Next, we prove that (1)$\implies$(2). Suppose that $p$ is $\frak P_{\cal C}$-fibered.
We must show that $q$ is $\frak P_{\cal D}$-fibered. Let $\overline{\cal Y}$
denote the marked simplicial set obtained from $\cal Y$ by marking
the $q$-cocartesian edges lying over the edges in $M_{\cal D}$.
According to Remark \ref{rem:mlfib}, we must prove the following:
\begin{itemize}
\item [(A)]For each object $Y\in\cal Y$ and each morphism $\alpha:q\pr Y\to D'$
in $M_{\cal D}$, there is a $q$-cocartesian morphism $Y\to Y'$
which lifts $\alpha$.
\item [(B-1)]Every morphism $u:\pr{K_{\alpha}}^{\sharp}\to\overline{\cal Y}$
in $\SS_{/\frak P_{\cal D}}^{+}$ can be extended to $\pr{K_{\alpha}^{\lcone}}^{\sharp}$.
\item [(B-2)]Any morphism $h:\pr{K_{\alpha}^{\lcone}}^{\sharp}\to\overline{\cal Y}$
in $\SS_{/\frak P_{\cal D}}^{+}$ is a $q$-limit diagram.
\end{itemize}
We begin with (A). Since $f$ and $g$ are categorical equivalences,
we can find objects $X\in\cal X$ and $C\in\cal C$, equivalences
$\beta:g\pr X\xrightarrow{\simeq}Y$ and $\gamma:D'\xrightarrow{\simeq}f\pr C$,
and a $3$-simplex $\sigma:\Delta^{3}\to\cal D$ which we depict as
% https://q.uiver.app/#q=WzAsNCxbMCwxLCJxKGcoWCkpIl0sWzIsMCwiRCciXSxbMSwxLCJxKFkpIl0sWzIsMiwiZihDKSJdLFswLDEsIiIsMCx7ImN1cnZlIjotMX1dLFswLDIsIlxcc2ltZXEiLDJdLFsxLDMsIlxcc2ltZXEiLDJdLFsyLDEsIlxcYWxwaGEiXSxbMCwzLCJcXGRlbHRhIiwyLHsiY3VydmUiOjF9XSxbMCwyLCJxKFxcYmV0YSkiXSxbMSwzLCJcXGdhbW1hIl0sWzIsM11d
\[\begin{tikzcd}
	&& {D'} \\
	{q(g(X))} & {q(Y)} \\
	&& {f(C).}
	\arrow[curve={height=-6pt}, from=2-1, to=1-3]
	\arrow["\simeq"', from=2-1, to=2-2]
	\arrow["\simeq"', from=1-3, to=3-3]
	\arrow["\alpha", from=2-2, to=1-3]
	\arrow["\delta"', curve={height=6pt}, from=2-1, to=3-3]
	\arrow["{q(\beta)}", from=2-1, to=2-2]
	\arrow["\gamma", from=1-3, to=3-3]
	\arrow[from=2-2, to=3-3]
\end{tikzcd}\]It suffices to show that the morphism $\delta$ admits a $q$-cocartesian
lift with source $g\pr X$. Since $f$ is a categorical equivalence,
there is a morphism $\delta:p\pr X\to C$ such that $f\pr{\delta'}$
and $\delta$ are homotopic. Replacing $\sigma$ if necessary, we
may assume that $\delta=f\pr{\delta'}$. Since $f$ is strongly compatible
with $\frak P_{\cal C}$ and $\frak P_{\cal D}$, the morphism $\delta$
is equivalent to an edge in $M_{\cal C}$ as an object of $\Fun\pr{\Delta^{1},\cal C}$.
In particular, there is a $p$-cocartesian morphism $\widetilde{\delta'}:X\to X'$
over $\delta'$. Then $g\pr{\widetilde{\delta}'}$ is a $q$-cocartesian
morphism lying over $\delta$ with source $g\pr X$, as desired. 

Next, we verify condition (B-1). Since $p$ is a categorical fibration,
using \cite[Proposition A.2.3.1]{HTT}, we can find a commutative
diagram% https://q.uiver.app/?q=WzAsNixbMSwxLCJcXG1hdGhjYWx7RH0iXSxbMiwxLCJcXG1hdGhjYWx7Q30iXSxbMiwwLCJcXG1hdGhjYWx7WH0iXSxbMSwwLCJcXG1hdGhjYWx7WX0iXSxbMCwxLCJLX1xcYWxwaGFeXFx0cmlhbmdsZWxlZnQiXSxbMCwwLCJLX1xcYWxwaGEiXSxbMCwxLCJmJyIsMl0sWzIsMSwicCJdLFszLDIsImcnIl0sWzMsMCwicSJdLFs0LDAsInBfXFxhbHBoYSIsMl0sWzUsMywidSJdLFs1LDRdXQ==
\[\begin{tikzcd}
	{K_\alpha} & {\mathcal{Y}} & {\mathcal{X}} \\
	{K_\alpha^\triangleleft} & {\mathcal{D}} & {\mathcal{C}}
	\arrow["{f'}"', from=2-2, to=2-3]
	\arrow["p", from=1-3, to=2-3]
	\arrow["{g'}", from=1-2, to=1-3]
	\arrow["q", from=1-2, to=2-2]
	\arrow["{p_\alpha}"', from=2-1, to=2-2]
	\arrow["u", from=1-1, to=1-2]
	\arrow[from=1-1, to=2-1]
\end{tikzcd}\]such that the maps $f'$ and $g'$ are inverse equivalences of $f$
and $g$. By hypothesis, the composite $f'p_{\alpha}$ is naturally
equivalent to a diagram in $\{q_{i}\}_{i\in I}$. Therefore, by \cite[Proposition A.2.3.1]{HTT},
we can find a diagram $h':K_{\alpha}^{\lcone}\to\cal X$ which makes
the diagram % https://q.uiver.app/?q=WzAsNCxbMiwxLCJcXG1hdGhjYWx7Q30iXSxbMiwwLCJcXG1hdGhjYWx7WH0iXSxbMCwxLCJLX1xcYWxwaGFeXFx0cmlhbmdsZWxlZnQiXSxbMCwwLCJLX1xcYWxwaGEiXSxbMSwwLCJwIl0sWzMsMl0sWzIsMSwiaCciLDFdLFsyLDAsImYncF9cXGFscGhhIiwyXSxbMywxLCJnJ3UiXV0=
\[\begin{tikzcd}
	{K_\alpha} && {\mathcal{X}} \\
	{K_\alpha^\triangleleft} && {\mathcal{C}}
	\arrow["p", from=1-3, to=2-3]
	\arrow[from=1-1, to=2-1]
	\arrow["{h'}"{description}, from=2-1, to=1-3]
	\arrow["{f'p_\alpha}"', from=2-1, to=2-3]
	\arrow["{g'u}", from=1-1, to=1-3]
\end{tikzcd}\]commutative, such that $h'$ carries each edge of $K_{\alpha}^{\lcone}$
to a $p$-cocartesian morphism. Using \cite[Proposition A.2.3.1]{HTT}
once again, we can find a diagram $h:K_{\alpha}^{\lcone}\to\cal Y$
which makes the diagram % https://q.uiver.app/?q=WzAsNCxbMSwxLCJcXG1hdGhjYWx7RH0iXSxbMSwwLCJcXG1hdGhjYWx7WX0iXSxbMCwxLCJLX1xcYWxwaGFeXFx0cmlhbmdsZWxlZnQiXSxbMCwwLCJLX1xcYWxwaGEiXSxbMSwwLCJxIl0sWzIsMCwicF9cXGFscGhhIiwyXSxbMywxLCJ1Il0sWzMsMl0sWzIsMSwiaCIsMV1d
\[\begin{tikzcd}
	{K_\alpha} & {\mathcal{Y}} \\
	{K_\alpha^\triangleleft} & {\mathcal{D}}
	\arrow["q", from=1-2, to=2-2]
	\arrow["{p_\alpha}"', from=2-1, to=2-2]
	\arrow["u", from=1-1, to=1-2]
	\arrow[from=1-1, to=2-1]
	\arrow["h"{description}, from=2-1, to=1-2]
\end{tikzcd}\]commutative, and such that the composite $g'h$ is naturally equivalent
to $h'$. Since the composite $g'h$ carries each edge of $K_{\alpha}^{\lcone}$
to a $p$-cocartesian morphism, the map $h$ carries each edge of
$K_{\alpha}^{\lcone}$ to a $q$-cocartesian morphism. Thus $h$ gives
the desired extension.

Finally, we verify condition (B-2). Let $f',g'$ be as in the previous
paragraph. By hypothesis, the composite $f'p_{\alpha}$ is naturally
equivalent to a diagram in $\{q_{i}\}_{i\in I}$. It follows that
the composite $g'h$ is a $p$-limit diagram. Since $f'$ and $g'$
are categorical equivalences, we deduce that $h$ is a $q$-limit
diagram, and we are done.
\end{proof}
\begin{proof}
[Proof of Proposition \ref{prop:cat_inv}]We will show that the total
left derived functor $\bb Lf_{!}$ is fully faithful and that the
total right derived functor $\bb Rf^{*}$ is conservative. 

The conservativity of $\bb Rf^{*}$ follows from Proposition \ref{prop:creative}.
To show that $\bb L\pi_{!}$ is fully faithful, let $\pr{p:\overline{\cal X}\to\overline{\cal C}}\in\SS_{/\frak P_{\cal C}}^{+}$
be a fibrant object. Factor the map $fp:\cal X\to\cal D$ as 
\[
\cal X\xrightarrow{g}\cal Y\xrightarrow{q}\cal D,
\]
where $g$ is a categorical equivalence and $q$ is a categorical
fibration. Let $\overline{\cal Y}$ denote the marked simplicial set
obtained from $\cal Y$ by marking the $q$-cocartesian morphisms
lying over the morphisms in $M_{\cal D}$. According Lemma \ref{lem:cat_inv},
the object $\overline{\cal Y}\in\SS_{/\frak P}^{+}$ is fibrant. We
will show that the induced map $g:\overline{\cal X}\to\overline{\cal Y}$
is a $\frak P$-equivalence. It will then follow from Proposition
\ref{prop:creative} that the derived unit is an isomorphism, so that
$\bb Lf_{!}$ is fully faithful.

Let $\overline{\cal Z}\in\SS_{/\frak P_{\cal C}}^{+}$ be a fibrant
object. We must show that the map 
\[
\Map_{\cal D}^{\sharp}\pr{\overline{\cal Y},\overline{\cal Z}}\to\Map_{\cal D}^{\sharp}\pr{\overline{\cal X},\overline{\cal Z}}
\]
is a homotopy equivalence. By Lemma \ref{lem:Map_S^=00005Csharp_core},
it suffices to show that the functor
\[
\theta:\Map_{\cal D}^{\flat}\pr{\overline{\cal Y},\overline{\cal Z}}\to\Map_{\cal D}^{\flat}\pr{\overline{\cal X},\overline{\cal Z}}
\]
is a categorical equivalence. According to Proposition \ref{prop:creative},
the functor $r:\cal Z\to\cal D$ is a categorical fibration. Therefore,
the functor
\[
\theta':\Fun_{\cal D}\pr{\cal Y,\cal Z}\to\Fun_{\cal D}\pr{\cal X,\cal Z}
\]
is a categorical equivalence. It follows that $\theta$ is fully faithful,
so it will suffice to show that $\theta$ is essentially surjective.
Let $h\in\Map_{\cal D}^{\flat}\pr{\overline{\cal X},\overline{\cal Z}}$
be an arbitrary object. Since $\theta'$ is essentially surjective,
we can find a functor $h'\in\Fun_{\cal D}\pr{\cal Y,\cal Z}$ and
an equivalence $h'g\simeq h$ in $\Fun_{\cal D}\pr{\cal X,\cal Z}$.
To complete the proof, it suffice to show that $h'$ belongs to $\Map_{\cal D}^{\flat}\pr{\overline{\cal Y},\overline{\cal Z}}$.
Let $\beta$ be a marked edge of $\overline{\cal Y}$. We wish to
show that $h'\pr{\beta}$ is $r$-cocartesian. By hypothesis, there
is a morphism $\gamma$ in $M_{\cal C}$ such that $f\pr{\gamma}$
is equivalent to $q\pr{\beta}$ in $\Fun\pr{\Delta^{1},\cal Y}$.
Since $q$ is a categorical fibration, this means that $\beta$ is
equivalent to a morphism lying over $f\pr{\gamma}$ in $\Fun\pr{\Delta^{1},\cal Y}$.
Since $p$ and $q$ are categorical fibrations and $f$ and $g$ are
categorical equivalences, the functor
\[
\Fun\pr{\Delta^{1},\cal X}\times_{\Fun\pr{\Delta^{1},\cal C}}\{\gamma\}\to\Fun\pr{\Delta^{1},\cal Y}\times_{\Fun\pr{\Delta^{1},\cal D}}\{f\pr{\gamma}\}
\]
is a categorical equivalence. Thus $\beta$ is equivalent to a morphism
$f\pr{\alpha}$, where $\alpha$ is a morphism of $\cal X$ lying
over $\gamma$. Now $\alpha$ is necessarily $p$-cocartesian, so
$h\pr{\alpha}$ is $r$-cocartesian. Since $h\pr{\beta}$, $h'g\pr{\alpha}$,
and $h\pr{\alpha}$, are all equivalent objects of $\Fun\pr{\Delta^{1},\cal Z}$,
we deduce that $h\pr{\beta}$ is also $r$-cocartesian. The proof
is now complete.
\end{proof}

\section{\label{sec:P-bundles}\texorpdfstring{$\mathfrak{P}$}{P}-bundles}

In this section, we will introduce the notion of $\frak P$-bundles,
where $\frak P$ is a commutative categorical pattern on another $\infty$-category
$\cal D$ (Definition \ref{def:P-bundle}). Roughly speaking, $\frak P$-bundles
over a simplicial set $S$ are to functors $S\to\frak P\-\Bund$ what
cocartesian fibrations are to functors $S\to\Cat_{\infty}$. We will
then construct a model structure of $\frak P$-bundles over $\cal C$
(Proposition \ref{prop:recognition_of_fibrant_objects}), and establish
a fiberwise criterion for weak equivalences of this model structure
(Proposition \ref{prop:fiberwise_P_equiv}).
\begin{defn}
\label{def:P-bundle}Let $\frak P$ be a commutative categorical pattern
on an $\infty$-category $\cal D$. Let $S$ be a simplicial set.
A \textbf{$\frak P$-bundle }(\textbf{over }$S$) is a commutative
diagram % https://q.uiver.app/#q=WzAsMyxbMCwwLCJYIl0sWzIsMCwiU1xcdGltZXMgXFxtYXRoY2Fse0R9Il0sWzEsMSwiUyJdLFswLDEsInAiXSxbMSwyLCJcXG9wZXJhdG9ybmFtZXtwcn0iXSxbMCwyLCJxIiwyXV0=
\[\begin{tikzcd}
	X && {S\times \mathcal{D}} \\
	& S
	\arrow["p", from=1-1, to=1-3]
	\arrow["{\operatorname{pr}}", from=1-3, to=2-2]
	\arrow["q"', from=1-1, to=2-2]
\end{tikzcd}\]of simplicial sets which satisfies the following conditions:

\begin{enumerate}[label=(\alph*)]

\item The map $q:X\to S$ is a cocartesian fibration.

\item The map $p$ lifts to a fibration of fibrant objects of $\SS_{/S}^{+}$
with respect to the cocartesian model structure.

\item For each vertex $v\in S$, the map $X_{v}=X\times_{S}\{v\}\to\cal D$
is $\frak P$-fibered.

\item For each edge $f:v\to v'$ in $S$, the induced functor $f_{!}:X_{v}\to X_{v'}$
is a morphism of $\frak P$-fibered objects.

\end{enumerate}

Since the map $q$ can be recovered from $p$, we will often say that
the map $p$ is a $\frak P$-bundle over $\cal C$. Given a $\frak P$-bundle
$p:X\to S\times\cal D$, we will write $X_{\natural}$ for the marked
simplicial set obtained from $X$ by marking the $p$-cocartesian
edges lying over the morphisms in $M_{\cal D}$.
\end{defn}

\begin{rem}
\label{rem:P-bund_b}Recall that, given a simplicial set $K$ and
fibrant objects $X^{\natural},Y^{\natural}\in\SS_{/K}^{+}$ of the
cocartesian model structure, a map $X^{\natural}\to Y^{\natural}$
is a fibration if and only if it is a marked left fibration \cite[Proposition 1.1.7]{JacoMT}.
Therefore, condition (b) of Definition \ref{def:P-bundle} is equivalent
to the condition that the map $p:X\to S\times\cal D$ satisfy the
following conditions:
\begin{itemize}
\item The map $p$ is an inner fibration.
\item For each vertex $x\in X$ with image $\pr{v,D}\in S\times\cal D$,
and for each edge $\pr{f,g}:\pr{v,D}\to\pr{v',D'}$, if $g$ is an
equivalence of $\cal D$, there is a $p$-cocartesian edge $e:x\to x'$
such that $p\pr e=\pr{f,g}$.
\end{itemize}
\end{rem}

\begin{rem}
\label{rem:P-bundle}In condition (d) of Definition \ref{def:P-bundle},
we tacitly assumed that the functor $f_{!}$ is obtained from a cocartesian
natural transformation $X_{v}\times\Delta^{1}\to\cal X$ fitting into
the commutative diagram% https://q.uiver.app/#q=WzAsNSxbMCwwLCJcXHswXFx9XFx0aW1lcyBYX3YiXSxbMCwxLCJcXERlbHRhXjFcXHRpbWVzIFhfdiJdLFsyLDAsIlgiXSxbMiwxLCJTXFx0aW1lcyBcXG1hdGhjYWx7RH0iXSxbMSwxLCJcXERlbHRhXjFcXHRpbWVzICBcXG1hdGhjYWx7RH0iXSxbMCwyXSxbMiwzLCJwIl0sWzEsNF0sWzQsMywiZlxcdGltZXNcXG9wZXJhdG9ybmFtZXtpZH0iLDJdLFsxLDIsIiIsMSx7InN0eWxlIjp7ImJvZHkiOnsibmFtZSI6ImRhc2hlZCJ9fX1dLFswLDFdXQ==
\[\begin{tikzcd}
	{\{0\}\times X_v} && X \\
	{\Delta^1\times X_v} & {\Delta^1\times  \mathcal{D}} & {S\times \mathcal{D}}
	\arrow[from=1-1, to=1-3]
	\arrow["p", from=1-3, to=2-3]
	\arrow[from=2-1, to=2-2]
	\arrow["{f\times\operatorname{id}}"', from=2-2, to=2-3]
	\arrow[dashed, from=2-1, to=1-3]
	\arrow[from=1-1, to=2-1]
\end{tikzcd}\]Such a cocartesian natural transformation exists because for each
object $x\in X_{v}$, there is a $p$-cocartesian morphism $x\to x'$
lying over $f$ and an identity morphism of $\cal D$ (by conditions
(a) and (b)). The functor $f_{!}$ is well-defined up to natural equivalence
over $\cal D$.
\end{rem}

We now construct the model structure of $\frak P$-bundles over a
fixed base.
\begin{defn}
Let $\frak P=\pr{M_{\cal D},T,\{K_{\alpha}^{\lcone}\to\cal D\}_{\alpha\in A}}$
be a categorical pattern on an $\infty$-category $\cal D$. Let $\overline{S}=\pr{S,M_{S}}$
be a marked simplicial set. We let $\overline{S}\times\frak P$ denote
the categorical pattern 
\[
\pr{M_{S}\times M_{\cal D},S_{2}\times T,\{\{v\}\times K_{\alpha}^{\lcone}\to\cal C\times\cal D\}_{v\in S_{0},\alpha\in A}}.
\]
We will write $S\times\frak P=S^{\sharp}\times\frak P$.
\end{defn}

The goal of this section is to prove the following:
\begin{prop}
\label{prop:recognition_of_fibrant_objects}Let $\frak P=\pr{M_{\cal D},\{p_{\alpha}:K_{\alpha}^{\lcone}\to\cal D\}_{\alpha\in A}}$
be a commutative categorical pattern on an $\infty$-category $\cal D$
and let $S$ be a simplicial set. An object $\overline{X}=\pr{X,M}\in\SS_{/\cal C\times\frak P}^{+}$
is $S\times\frak P$-fibered if and only the map $X\to S\times\cal D$
is a $\frak P$-bundle over $S$ and $\overline{X}=X_{\natural}$
(Definition \ref{def:P-bundle}).
\end{prop}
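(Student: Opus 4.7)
The plan is to use Remark~\ref{rem:mlfib} to reformulate the $S \times \mathfrak{P}$-fibered condition, then verify each direction by unwinding. Since $\mathfrak{P}$ is commutative, the product pattern $S \times \mathfrak{P}$ is commutative as well (its $T$ consists of every $2$-simplex of $S \times \mathcal{D}$), so Remark~\ref{rem:mlfib} shows that $\overline{X}$ is $S \times \mathfrak{P}$-fibered if and only if the following two conditions hold: (A) the map $\overline{X} \to (S \times \mathcal{D}, M_{S \times \mathcal{D}})$ is a marked left fibration, and (B) for each $v \in S_{0}$ and each $\alpha \in A$, every map $(\{v\} \times K_\alpha)^\sharp \to \overline{X}$ over $S \times \mathcal{D}$ extends to $(\{v\} \times K_\alpha^\lcone)^\sharp$, and any such extension is a $p$-limit diagram. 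A key consequence of (A), extracted from assertion $(\ast)$ of Remark~\ref{rem:rezk} and condition~(4), is that the marked edges of $\overline{X}$ coincide with the $p$-cocartesian edges lying over marked base edges; this already identifies the marking with $X_\natural$ in the forward direction.

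For the forward implication, condition~(b) of Definition~\ref{def:P-bundle} follows from (A) together with creativity: $p$ is an inner fibration by (1), and any $(f, g)$ with $g$ an equivalence is marked in $S \times \mathcal{D}$, whence (A) supplies the required $p$-cocartesian lifts. Condition~(a) then follows by applying \cite[Proposition~2.4.1.3(3)]{HTT} to the factorization $X \xrightarrow{p} S \times \mathcal{D} \xrightarrow{\operatorname{pr}_S} S$: the $p$-cocartesian lift of $(f, \id_D)$ is automatically $q$-cocartesian. For (c), base-changing $S \times \mathfrak{P}$ along $\{v\} \times \mathcal{D} \hookrightarrow S \times \mathcal{D}$ recovers $\mathfrak{P}$ itself, and marked left fibrations are stable under base change, so $X_v \to \mathcal{D}$ inherits (A) and (B) and is therefore $\mathfrak{P}$-fibered. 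For (d), given a marked $\bar g \colon y \to y'$ in $X_v$, the edge $f_!(\bar g)$ produced from $p$-cocartesian lifts $\tilde f, \tilde f'$ of $(f, \id)$ by the standard square-filling construction is shown to be $p$-cocartesian by two applications of \cite[Proposition~2.4.1.7]{HTT}, hence marked in $X_{v'}$.

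For the backward implication, condition~(B) follows from (c) via the identification $X \times_{S \times \mathcal{D}} (\{v\} \times K_\alpha^\lcone) \cong X_v \times_{\mathcal{D}} K_\alpha^\lcone$, together with the fact (deducible from the cocartesian fibration structure of $q$) that $p_v$-limits in $X_v$ remain $p$-limits in $X$. The heart of (A) is the existence, for every $(f, g) \in M_{S} \times M_{\mathcal{D}}$ and every $x \in X_{(v, D)}$, of a $p$-cocartesian lift starting at $x$. Construct such a lift $h \colon x \to y$ by filling an inner horn with $\tilde f \colon x \to f_!(x)$ (supplied by (b), $p$-cocartesian over $(f, \id_D)$) and $\bar g \colon f_!(x) \to y$ (supplied by (c), $p_{v'}$-cocartesian over $g$ in $X_{v'}$). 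By \cite[Proposition~2.4.1.7]{HTT}, the composite $h$ is $p$-cocartesian provided $\bar g$ is $p$-cocartesian in the total space $X$, not merely in the fiber.

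The main obstacle is precisely this last upgrade: showing that every marked edge of a fiber $X_v$ is $p$-cocartesian in $X$. I would establish it via the mapping-space criterion \cite[Proposition~2.4.4.3]{HTT}. For any $z \in X_{v''}$, the cocartesian fibration $q$ exhibits $\Map_X(y, z)$ and $\Map_X(y', z)$ as fibered over $\Map_S(v', v'')$, with fibers over $f' \colon v' \to v''$ identified with $\Map_{X_{v''}}(f'_!(y), z)$ and $\Map_{X_{v''}}(f'_!(y'), z)$ respectively; the comparison square accordingly reduces fiberwise to the analogous square for the transported edge $f'_!(\bar g)$, which is marked in $X_{v''}$ by (d) and $p_{v''}$-cocartesian by (c) combined with the first-paragraph observation. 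Granting this lemma, the remainder of the proof reduces to routine verification.
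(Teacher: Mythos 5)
Your proposal is correct and follows the paper's overall structure (apply Remark~\ref{rem:mlfib} to reformulate, then check conditions (a)--(d) and (A)--(B) by unwinding), but it diverges from the paper at the single place where the real content lies: upgrading a $p_{v}$-cocartesian edge in a fiber $X_{v}$ to a $p$-cocartesian edge in the total space $X$, and conversely transporting cocartesianness along $f_{!}$.

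The paper isolates this difficulty into a dedicated preliminary result, Lemma~\ref{lem:fiberwise_colimit}, which asserts that relative colimits over $Z$ for a marked left fibration $X\to Y$ of cocartesian fibrations over $Z$ can be formed and detected fiberwise; it is proved by a somewhat involved slice-category argument. Specializing to $K=\Delta^{0}$ gives precisely the statement that a fiber edge is $p$-cocartesian iff all its cocartesian transports are cocartesian in their fibers. You instead attack the statement head-on using the mapping-space criterion \cite[Proposition~2.4.4.3]{HTT} together with the observation that $q$ being a cocartesian fibration makes each mapping space in the relevant square a Kan fibration over $\Map_{S}(v',v'')$, with fibers the transported mapping spaces; the homotopy-cartesianness of the total square then reduces to the fiber squares, which are handled by conditions (c) and (d). This is a legitimate and arguably more elementary route for this particular proposition, though you should spell out the ``a square of Kan fibrations over a common base is homotopy cartesian iff each fiber square is'' reduction and verify that precomposition with $e$ genuinely descends to precomposition with $f'_{!}(e)$ on the fibers (both are true, but not free). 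The trade-off is modularity: the paper's Lemma~\ref{lem:fiberwise_colimit} is reused later (e.g.\ in the proof of Theorem~\ref{thm:unstraightening_P-fibered_obj} and Proposition~\ref{prop:Un_=00005Cphi^=00007B=00005CfrakP=00007D_right_Quillen}), so it earns its keep as a standalone result; your direct argument proves only what is needed here. Minor points: your appeal to ``$p_{v}$-limits in $X_{v}$ remain $p$-limits in $X$'' is correct but left as a gesture where the paper cites \cite[Corollary~4.3.1.15]{HTT}; and in the forward direction for (a), your use of \cite[Proposition~2.4.1.3(3)]{HTT} is a cleaner justification than the paper's terse phrasing.
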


In view of Theorem \ref{thm:B.0.20} and Proposition \ref{prop:creative},
the above Proposition immediately implies the existence of a model
category of $\frak P$-bundles:
\begin{cor}
\label{cor:model_structure}Let $\frak P$ be a commutative categorical
pattern on an $\infty$-category $\cal D$ and let $S$ be a simplicial
set. There is a combinatorial simplicial model structure on $\SS_{/S\times\frak P}^{+}$
which has the following properties:
\begin{itemize}
\item The simplicial enrichment is given by $\Map_{S\times\cal D}^{\sharp}\pr{-,-}$.
\item Cofibrations are the monomorphisms.
\item Fibrant objects are the objects of the form $X_{\natural}$, where
$X$ is a $\frak P$-bundle over $S$.
\end{itemize}
If further $S$ is an $\infty$-category, then this model structure
enjoys the following additional property:
\begin{itemize}
\item A morphism $\cal X_{\natural}\to\cal Y_{\natural}$ between fibrant
objects is a fibration (resp. weak equivalence) if and only if the
underlying map $\cal X\to\cal Y$ of simplicial sets is a categorical
fibration (resp. categorical equivalence).
\end{itemize}
\end{cor}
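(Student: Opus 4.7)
The plan is to assemble the result directly from Theorem \ref{thm:B.0.20}, Proposition \ref{prop:recognition_of_fibrant_objects}, and Proposition \ref{prop:creative}, with no new ingredients required. First I would apply Theorem \ref{thm:B.0.20} to the categorical pattern $S\times\frak{P}$ on the simplicial set $S\times\cal{D}$. This produces a combinatorial simplicial model structure on $\SS^+_{/S\times\frak{P}}$ whose cofibrations are the monomorphisms and whose fibrant objects are exactly the $S\times\frak{P}$-fibered marked simplicial sets. The simplicial enrichment delivered by the theorem is the one from Remark \ref{rem:enrichment}, which in our notation is $\Map^\sharp_{S\times\cal{D}}(-,-)$, matching the first bullet of the statement.

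Next I would invoke Proposition \ref{prop:recognition_of_fibrant_objects} to rewrite the fibrancy condition: an object $\overline{\cal{X}}\in\SS^+_{/S\times\frak{P}}$ is $S\times\frak{P}$-fibered precisely when the underlying map $\cal{X}\to S\times\cal{D}$ is a $\frak{P}$-bundle over $S$ and the marking is forced to be $\cal{X}_\natural$. Combining this with the output of Theorem \ref{thm:B.0.20} yields the first half of the corollary.

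For the additional property, assume $S$ is an $\infty$-category, so that $S\times\cal{D}$ is an $\infty$-category. I would then verify that the product pattern $S\times\frak{P}$ is creative in the sense of Definition \ref{def:creative_commutative}: since every edge of $S^\sharp$ is marked, any equivalence $(f,g)$ of $S\times\cal{D}$ automatically lies in $S_1\times M_\cal{D}$ (using that $M_\cal{D}$ contains the equivalences of $\cal{D}$), and the condition on $2$-simplices is immediate because $\frak{P}$ is commutative, so $T$ already consists of all $2$-simplices. With creativity established, Proposition \ref{prop:creative} applied to $S\times\frak{P}$ gives precisely the desired characterization: a morphism $\cal{X}_\natural\to\cal{Y}_\natural$ between fibrant objects is a fibration (resp. weak equivalence) iff its underlying morphism of simplicial sets is a categorical fibration (resp. categorical equivalence).

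The corollary is essentially a bookkeeping consequence of the three cited results, so I do not anticipate any real obstacle. The only point worth double-checking in the write-up is the upgrade from commutativity of $\frak{P}$ to creativity of $S\times\frak{P}$, which is what licenses the use of Proposition \ref{prop:creative}; this reduces to the two short verifications sketched above.
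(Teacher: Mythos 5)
Your proposal is correct and matches the paper's own (one-sentence) proof exactly: apply Theorem \ref{thm:B.0.20} to the pattern $S\times\frak P$, rewrite the fibrancy condition via Proposition \ref{prop:recognition_of_fibrant_objects}, and invoke Proposition \ref{prop:creative} for the fibration/weak-equivalence characterization when $S$ is an $\infty$-category. Your verification that $S\times\frak P$ is creative (indeed, commutative, since $T=(\cal D)_2$ forces $S_2\times T=(S\times\cal D)_2$) is the one small point the paper leaves implicit, and you handle it correctly.
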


With Corollary \ref{cor:model_structure} in mind, we make the following
definition:
\begin{defn}
\label{def:eq_P-bund}Let $\frak P$ be a commutative categorical
pattern on an $\infty$-category $\cal D$. Given a simplicial set
$S$, we will write $\frak P\-\Bund\pr S$ for the homotopy coherent
nerve of the full simplicial subcategory of $\SS_{/S\times\frak P}^{+}$
spanned by the fibrant--cofibrant objects. If $X\to S\times\cal D$
and $Y\to S\times\cal D$ are $\frak P$-bundles, then a map $X\to Y$
of simplicial sets over $S\times\cal D$ is called a \textbf{morphism}
(resp. \textbf{equivalence}) \textbf{of $\frak P$-bundles }(\textbf{over
}$S$)\textbf{ }if it determines a morphism (resp. equivalence) in
$\frak P\-\Bund\pr S$.
\end{defn}

We now turn to the proof of Proposition \ref{prop:recognition_of_fibrant_objects}.
We need a few preliminaries.

\iffalse

The following lemma appears as \cite[Lemma 2.2.2.7]{HA}, but we will
reprove it because the author fails to understand the original proof.

\fi
\begin{lem}
\label{lem:2.2.2.7}Let $q:X\to S$ be an inner fibration of simplicial
sets and let $i:A\to B$ be a monomorphism of simplicial sets. Suppose
we are given a map $\overline{f}:B\times K^{\rcone}\to X$ of simplicial
sets such that, for each vertex $b\in B$, the diagram $\overline{f}\vert\{b\}\times K^{\rcone}$
is a $q$-colimit diagram. Set $\overline{f}_{A}=\overline{f}\vert A\times K^{\rcone}\cup B\times K$.
The map 
\[
\theta_{i,\overline{f}}:X_{\overline{f}/}\to X_{\overline{f}_{A}/}\times_{S_{q\overline{f}_{A}/}}S_{q\overline{f}/}
\]
is a trivial fibration.
\end{lem}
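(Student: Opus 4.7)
The proof is a standard skeletal induction. By the join/slice adjunction, the claim that $\theta_{i,\overline{f}}$ is a trivial fibration is equivalent to the solvability of every lifting problem
\begin{equation*}
\begin{tikzcd}
P(i,m) \ar[r] \ar[d] & X \ar[d,"q"] \\
\Delta^{m}\star(B\times K^{\rcone}) \ar[r] & S
\end{tikzcd}
\end{equation*}
for $m\ge 0$, where $P(i,m)$ denotes the subcomplex $\partial\Delta^{m}\star(B\times K^{\rcone})\cup\Delta^{m}\star(A\times K^{\rcone}\cup B\times K)$ of $\Delta^{m}\star(B\times K^{\rcone})$.

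The first step is to reduce to the universal case $i:\partial\Delta^{n}\hookrightarrow\Delta^{n}$ by a skeletal filtration. Given an arbitrary cofibration $i:A\to B$, I would write $B$ as a transfinite composition of pushouts of boundary inclusions $\partial\Delta^{n_{\alpha}}\hookrightarrow\Delta^{n_{\alpha}}$ starting from $A$, and use that morphisms having the left lifting property against $q$ are closed under pushouts and transfinite composition. The base case $i:\emptyset\hookrightarrow\Delta^{0}=\{b\}$ of this reduction is exactly the hypothesis that $\overline{f}\vert\{b\}\times K^{\rcone}$ is a $q$-colimit diagram.

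For the universal case $B=\Delta^{n}$, $A=\partial\Delta^{n}$, the key observation is that the pairing $(n,\infty)$ of the top vertex of $\Delta^{n}$ with the cone point of $K^{\rcone}$ is a distinguished vertex of $\Delta^{n}\times K^{\rcone}$. I would factor the inclusion $P(i,m)\hookrightarrow\Delta^{m}\star(\Delta^{n}\times K^{\rcone})$ as the attachment of the nondegenerate simplices meeting $(n,\infty)$, followed by the attachment of the remaining nondegenerate simplices. The first attachment is solved by the trivial-fibration reformulation of the $q$-colimit condition for $\overline{f}\vert\{n\}\times K^{\rcone}$, namely that
\[
X_{\overline{f}\vert\{n\}\times K^{\rcone}/}\to X_{\overline{f}\vert\{n\}\times K/}\times_{S_{q\overline{f}\vert\{n\}\times K/}}S_{q\overline{f}\vert\{n\}\times K^{\rcone}/}
\]
is a trivial fibration; the second attachment is a finite sequence of inner-horn fillings, which can be performed because $q$ is an inner fibration.

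The main obstacle is the combinatorial bookkeeping: setting up the two attachments in the universal case as explicit sequences of inclusions, and checking that each inclusion is of the required type (cone-point extension or inner horn) and is compatible with the data already constructed. This is routine but tedious, reflecting the intricate structure of the nondegenerate simplices of $\Delta^{m}\star(\Delta^{n}\times K^{\rcone})$, and is the entire content of the proof beyond the generalities of skeletal reduction.
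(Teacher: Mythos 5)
Your skeletal reduction to boundary inclusions $\partial\Delta^{n}\subset\Delta^{n}$ matches the paper; the gap is in the universal case itself. Your two-stage decomposition of $P(i,m)\hookrightarrow\Delta^{m}\star(\Delta^{n}\times K^{\rcone})$ is degenerate: \emph{every} nondegenerate simplex of $\Delta^{m}\star(\Delta^{n}\times K^{\rcone})$ not already in $P(i,m)$ has $(n,\infty)$ as its final vertex. Indeed, such a simplex is $\Delta^{m}\star\rho$, where $\rho$ is a nondegenerate simplex of $\Delta^{n}\times K^{\rcone}$ whose first coordinate surjects onto $[n]$ and whose second coordinate meets the cone point $\infty$; since $\infty$ is terminal in $K^{\rcone}$ and the last vertex of $\rho$ must project to $n$, that last vertex is $(n,\infty)$. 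Thus your ``second attachment'' is empty, and your ``first attachment'' is the whole inclusion $P(i,m)\hookrightarrow\Delta^{m}\star(\Delta^{n}\times K^{\rcone})$, which is not a pushout of a single map of the shape $\partial\Delta^{\ell}\star K^{\rcone}\cup\Delta^{\ell}\star K\to\Delta^{\ell}\star K^{\rcone}$ and so cannot be dispatched by one application of the $q$-colimit condition. What remains is to interleave cone extensions and inner-horn fillings in a carefully chosen order, and that is precisely the part that is \emph{not} routine; it is the delicate combinatorics that the paper deliberately avoids.

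The paper's argument for the universal case is genuinely different. It defines $\scr M$ to be the class of monomorphisms for which the conclusion holds, observes that each $\theta_{i,\overline{f}}$ is a left fibration and hence is a trivial fibration if and only if it is a categorical equivalence (citing \cite[Proposition 3.3.1.7]{HTT}), and concludes that $\scr M$ satisfies right cancellation. In the induction on $n$, $\emptyset\subset\partial\Delta^{n}$ lies in $\scr M$ by the inductive hypothesis, so right cancellation reduces $\partial\Delta^{n}\subset\Delta^{n}$ to $\emptyset\subset\Delta^{n}$, which factors as $\emptyset\subset\{n\}\subset\Delta^{n}$. The first step is the $q$-colimit hypothesis on the nose; the second is right anodyne, hence lies in $\scr M$ for free, because the pushout-product $\{n\}\times K^{\rcone}\cup\Delta^{n}\times K\to\Delta^{n}\times K^{\rcone}$ is then right anodyne and its pushout-join with $\partial\Delta^{\ell}\to\Delta^{\ell}$ is inner anodyne. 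This sidesteps the simplex-by-simplex bookkeeping entirely, and you should either follow that route or actually carry out the filtration you only sketched.
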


\begin{proof}
Let $\scr M$ denote the class of monomorphisms $A\to B$ for which
the lemma holds. We wish to show that $\scr M$ contains all monomorphisms.
Since $\scr M$ is closed under pushouts and transfinite compositions,
it will suffice to show that it contains the inclusion $\partial\Delta^{n}\subset\Delta^{n}$
for every $n\geq0$.

Before we proceed, we remark that $\scr M$ has the following right
cancellation property: Given a sequence of monomorphisms $A\xrightarrow{f}B\xrightarrow{g}C$
of simplicial sets with $f\in\scr M$, we have $gf\in\scr M$ if and
only if $g\in\scr M$. This follows from the observation that the
map $\theta_{i,\overline{f}}$ is a left fibration, so it is a trivial
fibration if and only if it is a weak categorical equivalence \cite[Proposition 3.3.1.7]{HTT}.

We now show that the inclusions $\{\partial\Delta^{n}\subset\Delta^{n}\}_{n\geq0}$
belong to $\scr M$. We prove this by induction on $n$. When $n=0$,
the claim follows from the definition of relative colimits. For the
inductive step, suppose that we have proved the assertion up to $n-1$.
Since $\scr M$ is closed under pushouts and compositions, the inductive
hypothesis ensures that the inclusion $\emptyset\subset\partial\Delta^{n}$
is an element of $\scr M$. Therefore, by the right cancellation property
of $\scr M$, we are reduced to showing that the inclusion $\emptyset\subset\Delta^{n}$
belongs to $\scr M$. Since $\scr M$ is closed under composition
and the inclusion $\emptyset\subset\{n\}$ belongs to $\scr M$, it
suffices to show that the inclusion $\{n\}\subset\Delta^{n}$ is an
element of $\scr M$. But this is obvious, because $\scr M$ contains
every right anodyne extension.\iffalse

(Omitted verifications)

First we check that $\scr M$ is closed under pushouts. Suppose we
are given a pushout square % https://q.uiver.app/?q=WzAsNCxbMCwwLCJBJyJdLFsxLDAsIkInIl0sWzEsMSwiQiJdLFswLDEsIkEiXSxbMCwxXSxbMywyXSxbMCwzLCJpIiwyXSxbMSwyLCJqIl1d
\[\begin{tikzcd}
	{A'} & {B'} \\
	A & B
	\arrow[from=1-1, to=1-2]
	\arrow[from=2-1, to=2-2]
	\arrow["i"', from=1-1, to=2-1]
	\arrow["j", from=1-2, to=2-2]
\end{tikzcd}\]with $i\in\scr M$. Let $\overline{f}:B\times K^{\rcone}\to\cal C$
be a diagram such that, for each vertex $b\in B$, the diagram $\overline{f}\vert\{b\}\times K^{\rcone}$
is a $q$-colimit diagram. The map $\theta_{j,\overline{f}}$ is a
pullback of the map $\theta_{i,\overline{g}}$, where $\overline{g}$
denotes the composite $A\times K^{\rcone}\to B\times K^{\rcone}\xrightarrow{\overline{f}}\cal C$.
The latter map is a trivial fibration since $i\in\scr M$. Hence $\scr M$
is closed under pushouts.

Next we check that $\scr M$ is closed under transfinite compositions.
Let $A_{0}\xrightarrow{i_{1}}A_{1}\xrightarrow{i_{2}}\cdots$ be a
transfinite sequence of morphisms in $\scr M$ indexed by an ordinal
$\alpha$, and set $A=\colim_{\alpha}A_{\bullet}$. Let $\overline{f}:A\times K^{\rcone}\to\cal C$
be a diagram such that $\overline{f}\vert\{a\}\times K^{\rcone}$
is a $q$-limit cone for each vertex $a\in A$. We let $i$ denote
the map $A_{0}\to A$. The map $\theta_{i,\overline{f}}$ is a transfinite
cocomposition of pullbacks of the trivial fibrations $\{\theta_{i_{\beta+1},\overline{f}\vert A_{\beta+1}\times K^{\rcone}}\}_{\beta<\alpha}$,
so it is a trivial fibration in itself. Hence $\scr M$ is closed
under transfinite compositions. \fi
\end{proof}
The following lemma shows that relative colimits can be formed and
detected fiberwise. (Compare \cite[Porposition 4.3.1.10]{HTT}.)
\begin{lem}
\label{lem:fiberwise_colimit}Let % https://q.uiver.app/#q=WzAsMyxbMCwwLCJYIl0sWzIsMCwiWSJdLFsxLDEsIloiXSxbMCwxLCJmIl0sWzEsMiwicSJdLFswLDIsInAiLDJdXQ==
\[\begin{tikzcd}
	X && Y \\
	& Z
	\arrow["f", from=1-1, to=1-3]
	\arrow["q", from=1-3, to=2-2]
	\arrow["p"', from=1-1, to=2-2]
\end{tikzcd}\]be a commutative diagram of simplicial sets, let $K$ be a simplicial
set, let $z\in Z$ be a vertex, and let $\overline{\sigma}:K^{\rcone}\to X_{z}$
be a diagram. Assume that the following conditions are satisfied:
\begin{itemize}
\item [(a)]The functors $p$ and $q$ are cocartesian fibrations.
\item [(b)]The functor $f$ induces a marked left fibration $X^{\natural}\to Y^{\natural}$,
where $X^{\natural}$ and $Y^{\natural}$ denote the marked simplicial
set obtained from $X$ and $Y$ by marking the cocartesian edges over
$Z$.
\end{itemize}
Then the following conditions are equivalent:
\begin{enumerate}
\item The diagram $\overline{\sigma}$ is an $f$-colimit diagram.
\item For each edge $\alpha:z\to z'$ in $Z$, the induced diagram $\alpha_{!}\overline{\sigma}$
is an $f_{E'}$-colimit diagram, where $f_{E'}:X_{z'}\to Y_{z'}$
is the restriction of $f$.
\end{enumerate}
\end{lem}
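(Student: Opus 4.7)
The plan is to use the characterization that $\overline{\sigma}$ is an $f$-colimit diagram if and only if the canonical map
\[\theta \colon X_{\overline{\sigma}/} \longrightarrow X_{\sigma/} \times_{Y_{f\sigma/}} Y_{f\overline{\sigma}/},\]
with $\sigma = \overline{\sigma}\vert K$, is a trivial fibration. This map is always a left fibration (as a pullback of $X_{\overline{\sigma}/} \to X_{\sigma/}$), hence is a trivial fibration iff each of its fibers is contractible. In both directions I compare diagrams in $X$ with diagrams in the fiber $X_{z'}$ using cocartesian transport along edges $\alpha \colon z \to z'$ in $Z$, which by hypothesis (b) is preserved by $f$.

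For $(1) \Rightarrow (2)$, I fix $\alpha \colon z \to z'$ and choose a $p$-cocartesian natural transformation $H \colon K^{\rcone} \times \Delta^{1} \to X$ from $\overline{\sigma}$ to $\alpha_{!}\overline{\sigma}$, whose components at each vertex of $K^{\rcone}$ are $p$-cocartesian lifts of $\alpha$; by condition (b), the image $fH$ is a $q$-cocartesian transport from $f\overline{\sigma}$ to $\alpha_{!}(f\overline{\sigma})$. Applying Lemma \ref{lem:2.2.2.7} to $H$ and $fH$ with $B = \Delta^{1}$ (and to their restrictions) produces a commutative square of slice projections in which three sides are trivial fibrations, forcing the fourth to be a trivial fibration as well. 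This shows that if $\overline{\sigma}$ is an $f$-colimit diagram in $X$, so is $\alpha_{!}\overline{\sigma}$; since $\alpha_{!}\overline{\sigma}$ factors through $X_{z'}$ and $X_{z'}, Y_{z'}$ are strict pullbacks over $\{z'\} \hookrightarrow Z$, a direct comparison of slices then yields that $\alpha_{!}\overline{\sigma}$ is an $f_{z'}$-colimit in $X_{z'}$.

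For $(2) \Rightarrow (1)$, I show each fiber of $\theta$ is contractible. Let $v = (g, h)$ be a vertex of the codomain. The cone point $x_{0}$ of $g$ lies over a vertex $z' \in Z$; since $\sigma$ lands in $X_{z}$, all cone edges in $g$ project to the same edge $\alpha \colon z \to z'$. Using a $p$-cocartesian transport $H$ as above and Lemma \ref{lem:2.2.2.7} on both the $X$-side and the $Y$-side, I identify $\theta^{-1}(v)$ with the corresponding fiber of the analogous map $\theta_{z'}$ associated to $\alpha_{!}\overline{\sigma}$ in $X_{z'} \to Y_{z'}$; hypothesis (2) applied to $\alpha_{!}\overline{\sigma}$ then supplies the contractibility.

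The main obstacle is carrying out the slice comparisons in both implications, which amount to careful applications of Lemma \ref{lem:2.2.2.7} to the pushforward $H$. Each application gives a trivial fibration between a slice over the ``big'' diagram and a slice over the restricted/transported diagram, and one must chase these through the pullback defining the codomain of $\theta$ without disturbing the marked-fibration structure supplied by condition (b). This is the $f$-relative analog of the argument for cocartesian fibrations in \cite[Proposition 4.3.1.10]{HTT}.
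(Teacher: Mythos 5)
Your overall strategy is sound: $\theta$ is a left fibration, so $\overline{\sigma}$ is an $f$-colimit iff the fibers of $\theta$ are contractible, and those fibers should be compared with the fibers of the corresponding map $\theta_{z'}$ in $X_{z'}\to Y_{z'}$ via cocartesian transport and Lemma~\ref{lem:2.2.2.7}. This is essentially the paper's plan. However, the specific way you propose to invoke Lemma~\ref{lem:2.2.2.7} does not have valid hypotheses.

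You apply the lemma to the transport $H\colon K^{\rcone}\times\Delta^{1}\to X$ with $B=\Delta^{1}$. In that reading, the lemma's cone variable is our $K^{\rcone}$, and the hypothesis is that for each $t\in\Delta^{1}$ the restriction $H\vert\{t\}\times K^{\rcone}$ is a relative colimit diagram for whichever inner fibration you apply the lemma to. If that fibration is $f\colon X\to Y$, then at $t=1$ this hypothesis asserts that $\alpha_{!}\overline{\sigma}$ is already an $f$-colimit — precisely what $(1)\Rightarrow(2)$ is trying to prove, so the argument is circular; at $t=0$ it asserts $\overline{\sigma}$ is an $f$-colimit, which is fine only under hypothesis (1), so it is also unavailable for $(2)\Rightarrow(1)$. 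If the fibration is instead $p\colon X\to Z$, then one needs $\overline{\sigma}$ to be a $p$-colimit, which is not implied by it being an $f$-colimit. In either case the "three trivial fibrations force the fourth" step never gets off the ground.

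The paper's correct use of Lemma~\ref{lem:2.2.2.7} transposes the roles of the two factors. It transports $\sigma$ (\emph{without} the cone), producing $\phi(\sigma\times\id_{\Delta^{1}})\colon K\times\Delta^{1}\to X$, and applies the lemma with $B=K$, $A=\emptyset$, the lemma's "$K$" being $\{0\}$ so that its "$K^{\rcone}$" is $\Delta^{1}$, and the inner fibration being $p\colon X\to Z$. Then the required hypothesis — that $\phi(\sigma\times\id)\vert\{k\}\times\Delta^{1}$ is a $p$-colimit for each $k\in K$ — is automatic, because a $p$-cocartesian edge is exactly a $p$-colimit diagram over $\Delta^{1}$. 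This produces the trivial fibration $X_{\phi(\sigma\times\id)/}\to X_{\sigma/}\times_{Z_{\delta_K(z)/}}Z_{\alpha\pi/}$ unconditionally, and combined with the finality of $\{1\}\subset\Delta^{1}$ it identifies the Kan complexes $\pr{X_{\sigma/}\times_{X}\{x'\}}_{\alpha_{K}}$ and $\pr{X_{z'}}_{\alpha_{!}\sigma/}\times_{X_{z'}}\{x'\}$ for free, before any colimit hypothesis enters. The paper then runs a single chain of reformulations $(1)\iff(1')\iff(1'')\iff(2)$ rather than splitting into two directions; once the fiber-identification is in place, both implications follow simultaneously. Your step that an $f$-colimit landing in $X_{z'}$ is in particular an $f_{z'}$-colimit (via pullback of a trivial fibration) is correct, but it is also rendered unnecessary once the transport is organized the paper's way.
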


\begin{proof}
We will write $\delta_{K}\pr z:K\to Z$ for the constant map at the
vertex $z$. If $\alpha:z\to z'$ is an edge of $Z$, we will write
$\alpha_{K}:K^{\rcone}\to Z$ for the composite
\[
K^{\rcone}\to\pr{\Delta^{0}}^{\rcone}\xrightarrow{\alpha}Z.
\]
We will also write $\sigma=\overline{\sigma}\vert K$.

Condition (1) is equivalent to the requirement that, for any pair
of vertices $z'\in Z$ and $x'\in X_{z'}$, the top square of the
diagram % https://q.uiver.app/#q=WzAsNixbMCwwLCJYX3tcXG92ZXJsaW5le1xcc2lnbWF9L31cXHRpbWVzIF97WH1cXHt4J1xcfSJdLFsxLDAsIlhfe1xcc2lnbWEvfVxcdGltZXMgX3tYfVxce3gnXFx9Il0sWzAsMSwiWV97Zlxcb3ZlcmxpbmV7XFxzaWdtYX0vfVxcdGltZXMgX3tZfVxce2YoeCcpXFx9Il0sWzEsMSwiWV97Zlxcc2lnbWEvfVxcdGltZXMgX3tZfVxce2YoeCcpXFx9Il0sWzAsMiwiWl97XFxkZWx0YV97S15cXHRyaWFuZ2xlcmlnaHR9KHopL31cXHRpbWVzIF97Wn1cXHt6J1xcfSJdLFsxLDIsIlpfe1xcZGVsdGFfSyh6KS99XFx0aW1lcyBfe1p9XFx7eidcXH0iXSxbMCwxXSxbMCwyXSxbMSwzXSxbMiwzXSxbMiw0XSxbMyw1XSxbNCw1XV0=
\[\begin{tikzcd}
	{X_{\overline{\sigma}/}\times _{X}\{x'\}} & {X_{\sigma/}\times _{X}\{x'\}} \\
	{Y_{f\overline{\sigma}/}\times _{Y}\{f(x')\}} & {Y_{f\sigma/}\times _{Y}\{f(x')\}} \\
	{Z_{\delta_{K^\triangleright}(z)/}\times _{Z}\{z'\}} & {Z_{\delta_K(z)/}\times _{Z}\{z'\}}
	\arrow[from=1-1, to=1-2]
	\arrow[from=1-1, to=2-1]
	\arrow[from=1-2, to=2-2]
	\arrow[from=2-1, to=2-2]
	\arrow[from=2-1, to=3-1]
	\arrow[from=2-2, to=3-2]
	\arrow[from=3-1, to=3-2]
\end{tikzcd}\]of Kan complexes be homotopy cartesian. Note that the vertical arrows
of the above diagram are all Kan fibrations; for instance, the map
$X_{\sigma/}\times_{X}\{x'\}\to Y_{f\sigma/}\times_{Y}\{f(x')\}$
is a pullback of the left fibration $X_{\sigma/}\to Y_{f\sigma/}\times_{Y}X$,
so it is a left fibration whose codomain is a Kan complex and hence
is a Kan fibration. Moreover, since the inclusion $\{\infty\}\subset K^{\rcone}$
is final, the projection $K^{\rcone}\to\{\infty\}$ induces a covariant
equivalence $Z_{z/}\to Z_{\delta_{K^{\rcone}}\pr z/}$ over $Z$.
Thus, every vertex of $Z_{\delta_{K^{\rcone}}\pr z/}\times_{Z}\{z'\}$
belongs to the same components as a vertex of the form $\alpha_{K^{\rcone}}$,
where $\alpha:z\to z'$ is an edge of $Z$. Therefore, we can reformulate
condition (1) as follows:
\begin{itemize}
\item [(1$'$)]For each morphism $\alpha:z\to z'$ of $Z$ and for each
$x'\in X_{z'}$, the diagram % https://q.uiver.app/#q=WzAsNCxbMCwwLCIoWF97XFxvdmVybGluZXtcXHNpZ21hfS99XFx0aW1lcyBfe1h9XFx7eCdcXH0pX3tcXGFscGhhX3tLXlxcdHJpYW5nbGVyaWdodH19Il0sWzEsMCwiKFhfe1xcc2lnbWEvfVxcdGltZXMgX3tYfVxce3gnXFx9KV97XFxhbHBoYV9LfSJdLFswLDEsIihZX3tmXFxvdmVybGluZXtcXHNpZ21hfS99XFx0aW1lcyBfe1l9XFx7Zih4JylcXH0pX3tcXGFscGhhX3tLXlxcdHJpYW5nbGVyaWdodH19Il0sWzEsMSwiKFlfe2ZcXHNpZ21hL31cXHRpbWVzIF97WX1cXHtmKHgnKVxcfSlfe1xcYWxwaGFfS30iXSxbMCwxXSxbMCwyXSxbMSwzXSxbMiwzXV0=
\begin{equation}\label{d:fiberwise_colimit}
\begin{tikzcd}
	{(X_{\overline{\sigma}/}\times _{X}\{x'\})_{\alpha_{K^\triangleright}}} & {(X_{\sigma/}\times _{X}\{x'\})_{\alpha_K}} \\
	{(Y_{f\overline{\sigma}/}\times _{Y}\{f(x')\})_{\alpha_{K^\triangleright}}} & {(Y_{f\sigma/}\times _{Y}\{f(x')\})_{\alpha_K}}
	\arrow[from=1-1, to=1-2]
	\arrow[from=1-1, to=2-1]
	\arrow[from=1-2, to=2-2]
	\arrow[from=2-1, to=2-2]
\end{tikzcd}
\end{equation}of Kan complexes is homotopy cartesian, where the subscripts on the
left and right indicate fibers over the vertices $\alpha_{K^{\rcone}}$
and $\alpha_{K}$ of $Z_{\delta_{K^{\rcone}}\pr z/}\times_{Z}\{z'\}$
and $Z_{\delta_{K}\pr z/}\times_{Z}\{z'\}$, respectively.
\end{itemize}

Now let $\alpha:z\to z'$ be an edge of $Z$, and let $x'\in X_{z'}$
be a vertex. We will show that the Kan complex $\pr{X_{\sigma/}\times_{X}\{x'\}}_{\alpha_{K}}$
is homotopy equivalent to the Kan complex $\pr{X_{z'}}_{\alpha_{!}\sigma/}\times_{X_{z'}}\{x'\}$.
We will write $\beta:\pr{K\times\Delta^{1}}^{\rcone}\to Z$ for the
composite
\[
\pr{K\times\Delta^{1}}^{\rcone}\to\pr{\Delta^{1}}^{\rcone}\cong\Delta^{2}\xrightarrow{u}\Delta^{1}\xrightarrow{\alpha}Z,
\]
where the map $u$ is induced by the surjective poset map $[2]\to[1]$
which hits $1\in[1]$ twice. Choose a $q$-cocartesian natural transformation
$\psi:Y_{z}\times\Delta^{1}\to Y$ covering $\alpha$, and choose
a $p$-cocartesian natural transformation $\phi:X_{z}\times\Delta^{1}\to X$
rendering the diagram % https://q.uiver.app/#q=WzAsNSxbMCwwLCJcXHswXFx9XFx0aW1lcyBYX3oiXSxbMiwwLCJYIl0sWzEsMSwiXFxEZWx0YV4xXFx0aW1lcyBZX3oiXSxbMiwxLCJZIl0sWzAsMSwiXFxEZWx0YV4xXFx0aW1lcyBYX3oiXSxbMCwxXSxbMiwzLCJcXHBzaSIsMl0sWzEsMywiZiJdLFs0LDJdLFs0LDEsIlxccGhpIiwxLHsic3R5bGUiOnsiYm9keSI6eyJuYW1lIjoiZGFzaGVkIn19fV0sWzAsNF1d
\[\begin{tikzcd}
	{\{0\}\times X_z} && X \\
	{\Delta^1\times X_z} & {\Delta^1\times Y_z} & Y
	\arrow[from=1-1, to=1-3]
	\arrow["\psi"', from=2-2, to=2-3]
	\arrow["f", from=1-3, to=2-3]
	\arrow[from=2-1, to=2-2]
	\arrow["\phi"{description}, dashed, from=2-1, to=1-3]
	\arrow[from=1-1, to=2-1]
\end{tikzcd}\]commutative. We consider the following commutative diagram:% https://q.uiver.app/#q=WzAsNCxbMiwwLCJYX3tcXGFscGhhXyFcXHNpZ21hL31cXHRpbWVzIF97Wl97XFxkZWx0YV9LKHonKS99fVpfe1xcYWxwaGFcXHBpIC99Il0sWzEsMCwiWF97XFxwaGkoXFxzaWdtYVxcdGltZXMgXFxvcGVyYXRvcm5hbWV7aWR9X3tcXERlbHRhXjF9KS99Il0sWzAsMCwiWF97XFxzaWdtYS99XFx0aW1lcyBfe1pfe1xcZGVsdGFfSyh6KS99fVpfe1xcYWxwaGFcXHBpIC99Il0sWzEsMSwiWFxcdGltZXMgX3tafVpfe1xcYWxwaGFcXHBpL30uIl0sWzEsMF0sWzEsMl0sWzIsM10sWzAsM10sWzEsM11d
\[\begin{tikzcd}
	{X_{\sigma/}\times _{Z_{\delta_K(z)/}}Z_{\alpha\pi /}} & {X_{\phi(\sigma\times \operatorname{id}_{\Delta^1})/}} & {X_{\alpha_!\sigma/}\times _{Z_{\delta_K(z')/}}Z_{\alpha\pi /}} \\
	& {X\times _{Z}Z_{\alpha\pi/}.}
	\arrow[from=1-2, to=1-3]
	\arrow[from=1-2, to=1-1]
	\arrow[from=1-1, to=2-2]
	\arrow[from=1-3, to=2-2]
	\arrow[from=1-2, to=2-2]
\end{tikzcd}\]Here $\pi:K\times\Delta^{1}\to\Delta^{1}$ denotes the projection.
Since the inclusion $\{1\}\subset\Delta^{1}$ is right anodyne, the
right horizontal arrow is a trivial fibration. According to Lemma
\ref{lem:2.2.2.7}, the left horizontal arrow is also a trivial fibration.
Thus, by passing to the fiber over $\pr{x',\beta}\in X\times_{Z}Z_{\alpha\pi/}$,
we obtain trivial fibrations
\[
\pr{X_{\sigma/}\times_{X}\{x'\}}_{\alpha_{K}}\stackrel{\simeq}{\twoheadleftarrow}X_{\phi\pr{\sigma\times\id_{\Delta^{1}}}/}\times_{X\times_{Z}Z_{\alpha\pi/}}\{\pr{x',\beta}\}\xepi{\simeq}{}\pr{X_{z'}}_{\alpha_{!}\sigma/}\times_{X_{z'}}\{x'\}.
\]
This gives the desired identification of the homotopy type of $\pr{X_{\sigma/}\times_{X}\{x'\}}_{\alpha_{K}}$. 

Carrying out similar analyses for the vertices of the diagram (\ref{d:fiberwise_colimit}),
we obtain the following reformulation of (1$'$):
\begin{itemize}
\item [(1$''$)]For each edge $\alpha:z\to z'$ of $Z$ and each vertex
$x'\in X_{z'}$, the diagram % https://q.uiver.app/#q=WzAsNCxbMSwwLCIoWF97eid9KV97XFxhbHBoYV8hXFxzaWdtYS99XFx0aW1lcyBfe1hfe3onfX1cXHt4J1xcfSJdLFswLDAsIihYX3t6J30pX3tcXGFscGhhXyFcXG92ZXJsaW5lXFxzaWdtYS99XFx0aW1lcyBfe1hfe3onfX1cXHt4J1xcfSJdLFsxLDEsIihZX3t6J30pX3tcXGFscGhhXyFmXFxzaWdtYS99XFx0aW1lcyBfe1lfe3onfX1cXHtmKHgnKVxcfSJdLFswLDEsIihZX3t6J30pX3tcXGFscGhhXyFmXFxvdmVybGluZVxcc2lnbWEvfVxcdGltZXMgX3tZX3t6J319XFx7Zih4JylcXH0iXSxbMSwwXSxbMCwyXSxbMywyXSxbMSwzXV0=
\[\begin{tikzcd}
	{(X_{z'})_{\alpha_!\overline\sigma/}\times _{X_{z'}}\{x'\}} & {(X_{z'})_{\alpha_!\sigma/}\times _{X_{z'}}\{x'\}} \\
	{(Y_{z'})_{\alpha_!f\overline\sigma/}\times _{Y_{z'}}\{f(x')\}} & {(Y_{z'})_{\alpha_!f\sigma/}\times _{Y_{z'}}\{f(x')\}}
	\arrow[from=1-1, to=1-2]
	\arrow[from=1-2, to=2-2]
	\arrow[from=2-1, to=2-2]
	\arrow[from=1-1, to=2-1]
\end{tikzcd}\]of Kan complexes is homotopy cartesian.
\end{itemize}
Condition (1$''$) is equivalent to condition (2), and we are done.
\end{proof}
We are now ready to prove Proposition \ref{prop:recognition_of_fibrant_objects}:
\begin{proof}
[Proof of Proposition \ref{prop:recognition_of_fibrant_objects}]

We will write $p:X\to S\times\cal D$ for the structure map and $q:X\to S$
for the composition of $p$ and the projection $S\times\cal D\to S$.
We must show that $\overline{X}$ is $S\times\frak P$-fibered if
and only if $X$ is a $\frak P$-bundle over $S$ and the set $M$
consists of the $p$-cocartesian morphisms lifting morphisms in $M_{\cal D}$. 

Suppose first that the object $\overline{X}$ is $S\times\frak P$-fibered.
Then the marked edges of $\overline{X}$ are the $p$-cocartesian
morphisms lifting morphisms in $M_{\cal D}$, so it will suffice to
show that $X$ is a $\frak P$-bundle over $S$. For this, we will
check that $X$ satisfies conditions (a) through (d) of Definition
\ref{def:P-bundle}.

\begin{enumerate}[label=(\alph*)]

\item The map $q:X\to S$ is a cocartesian fibration. Let $x\in X$
be a vertex with image $\pr{v,D}\in S\times\cal D$, and let $f:v\to v'$
be an edge of $S$. We wish to find a $q$-cocartesian lift of $f$.
It suffices to show that there is a $p$-cocartesian lift of the edge
$\pr{f,\id_{D}}$ with source $x$. Since $\pr{f,\id_{D}}$ is a marked
edge of $S^{\sharp}\times\overline{\cal D}$, there is an edge $e:x\to x'$
of $M$ lying over $\pr{f,\id_{D}}$. Since $S\times\overline{\frak P}$
contains all $2$-simplices $\sigma$ such that $\sigma\vert\Delta^{\{0,1\}}=\pr{f,\id_{D}}$,
it follows from Remark \ref{rem:rezk} that $e$ is $p$-cocartesian.
Hence $e$ is the desired lift of $\pr{f,\id_{D}}$.

\item By Remark \ref{rem:P-bund_b}, it will suffice to show that
the map $p:\overline{X}\to S^{\sharp}\times\overline{\cal D}$ is
a marked left fibration. This follows from Remark \ref{rem:mlfib}.

\item For each vertex $v\in S$, the map $X_{v}\to\cal D$ is $\frak P$-fibered.
This follows from the fact that the inclusion $\{v\}\hookrightarrow S$
induces a left Quillen functor $\SS_{/\frak P}^{+}\to\SS_{/S\times\frak P}^{+}$
\cite[Proposition B.2.9]{HA}.

\item Let $f:v\to v'$ be an edge of $S$, and let $e$ be a $p_{v}$-cocartesian
edge lying over an edge in $M_{\cal D}$, where $p_{v}:X_{v}\to\{v\}\times\cal D$
denotes the pullback of $p$. We must show that the functor $f_{!}:X_{v}\to X_{v'}$
carries the morphism $e$ to a cocartesian morphism over $\cal D$.
Since $e$ is locally $p$-cocartesian, Remark \ref{rem:rezk} shows
that $e$ is $p$-cocartesian. So the claim follows from Lemma \ref{lem:fiberwise_colimit}.

\end{enumerate}

This completes the verification of the ``only if'' part.

Conversely, suppose that $X$ is a $\frak P$-bundle over $S$, so
that it satisfies conditions (a) through (d) of Definition \ref{def:P-bundle},
and suppose moreover that $\overline{X}=X_{\natural}$. We must show
that $\overline{X}$ is $S\times\frak P$-fibered. For this, we will
verify conditions (A) and (B) of Remark \ref{rem:mlfib}.

\begin{enumerate}[label=(\Alph*)]

\item The map $\overline{X}\to S^{\sharp}\times\overline{\cal D}$
is a marked left fibration. Indeed, let $x\in X$ be a vertex with
image $\pr{v,D}\in S\times\cal D$, and let $\pr{f,g}:\pr{v,D}\to\pr{v',D'}$
be a marked edge of $S^{\sharp}\times\overline{\cal D}$. We must
find a $p$-cocartesian lift $x\to x''$ lying over $\pr{f,g}$. Using
condition (b), we can find a $p$-cocartesian edge $\widetilde{f}:x\to x'$
lying over $\pr{f,\id_{D}}$. By condition (c), there is a $p_{v'}$-cocartesian
edge $\widetilde{g}:x'\to x''$ lying over $g$, where $p_{v'}:X_{v'}\to\{v'\}\times\cal D$
denotes the pullback of $p$. By conditions (b) and (d) and Lemma
\ref{lem:fiberwise_colimit}, the edge $\widetilde{g}$ is $p$-cocartesian.
Since $p$ is an inner fibration by (b), we can find a $2$-simplex
of $X$ whose boundary can be depicted as % https://q.uiver.app/#q=WzAsMyxbMSwwLCJ4JyJdLFsyLDEsIngnJyJdLFswLDEsIngiXSxbMCwxLCJcXHdpZGV0aWxkZXtnfSJdLFsyLDAsIlxcd2lkZXRpbGRle2Z9Il0sWzIsMSwiXFx3aWRldGlsZGV7aH0iLDJdXQ==
\[\begin{tikzcd}
	& {x'} \\
	x && {x'',}
	\arrow["{\widetilde{g}}", from=1-2, to=2-3]
	\arrow["{\widetilde{f}}", from=2-1, to=1-2]
	\arrow["{\widetilde{h}}"', from=2-1, to=2-3]
\end{tikzcd}\]such that $p\pr{\widetilde{h}}=\pr{f,g}$. The edge $\widetilde{h}$
is the desired $p$-cocartesian lift of $\pr{f,g}$.

\item Any lifting problem of the form% https://q.uiver.app/#q=WzAsNSxbMCwxLCJcXHt2XFx9Xlxcc2hhcnBcXHRpbWVzIChLX1xcYWxwaGFeXFx0cmlhbmdsZWxlZnQpIF5cXHNoYXJwIl0sWzAsMCwiXFx7dlxcfV5cXHNoYXJwXFx0aW1lcyAoS19cXGFscGhhKSBeXFxzaGFycCJdLFsyLDAsIlxcb3ZlcmxpbmV7WH0iXSxbMiwxLCJTXlxcc2hhcnAgXFx0aW1lcyBcXG92ZXJsaW5le1xcbWF0aGNhbHtEfX0iXSxbMSwxLCJcXHt2XFx9Xlxcc2hhcnBcXHRpbWVzIFxcb3ZlcmxpbmV7XFxtYXRoY2Fse0R9fSJdLFsxLDJdLFsyLDNdLFsxLDBdLFswLDIsIiIsMSx7InN0eWxlIjp7ImJvZHkiOnsibmFtZSI6ImRhc2hlZCJ9fX1dLFswLDQsIlxcb3BlcmF0b3JuYW1le2lkfVxcdGltZXMgcF9cXGFscGhhIiwyXSxbNCwzLCIiLDIseyJzdHlsZSI6eyJ0YWlsIjp7Im5hbWUiOiJob29rIiwic2lkZSI6InRvcCJ9fX1dXQ==
\[\begin{tikzcd}
	{\{v\}^\sharp\times (K_\alpha) ^\sharp} && {\overline{X}} \\
	{\{v\}^\sharp\times (K_\alpha^\triangleleft) ^\sharp} & {\{v\}^\sharp\times \overline{\mathcal{D}}} & {S^\sharp \times \overline{\mathcal{D}}}
	\arrow[from=1-1, to=1-3]
	\arrow[from=1-3, to=2-3]
	\arrow[from=1-1, to=2-1]
	\arrow[dashed, from=2-1, to=1-3]
	\arrow["{\operatorname{id}\times p_\alpha}"', from=2-1, to=2-2]
	\arrow[hook, from=2-2, to=2-3]
\end{tikzcd}\]admits a solution, and any such lift is a $p$-limit diagram. A solution
to this lifting problem exists because of condition (c). The assertion
on relative limits follows from conditions (a), (b), and (c), together
with \cite[Corollary 4.3.1.15]{HTT}.

\end{enumerate}

This completes the verification of the ``if'' part. 
\end{proof}
We conclude this section with a fiberwise criterion for equivalences
of $\frak P$-bundles (Definition \ref{def:eq_P-bund}):
\begin{prop}
\label{prop:fiberwise_P_equiv}Let $\frak P$ be a commutative categorical
pattern on an $\infty$-category $\cal D$ and let $S$ be a simplicial
set. Let $f:X_{\natural}\to Y_{\natural}$ be a morphism of fibrant
objects of $\SS_{/S\times\frak P}^{+}$. The following conditions
are equivalent:
\begin{enumerate}
\item The map $f$ is a $S\times\frak P$-equivalence.
\item For each vertex $v\in S$, the map
\[
X_{\natural}\times_{S}\{v\}\to Y_{\natural}\times_{S}\{v\}
\]
is a $\frak P$-equivalence.
\item For each vertex $v\in S$, the map $X\times_{S}\{v\}\to Y\times_{S}\{v\}$
is a categorical equivalence.
\end{enumerate}
\end{prop}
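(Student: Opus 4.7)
The plan is to prove $(1) \Rightarrow (2) \Leftrightarrow (3)$ first by short arguments, and then to tackle the substantial direction $(3) \Rightarrow (1)$. For $(1) \Rightarrow (2)$, I note that for each vertex $v \in S$ the inclusion $\{v\}^\sharp \times \overline{\cal D} \hookrightarrow S^\sharp \times \overline{\cal D}$ is compatible with $\frak P$ and $S \times \frak P$, so by \cite[Proposition B.2.9]{HA} the pullback defines a right Quillen functor $\SS^+_{/S \times \frak P} \to \SS^+_{/\frak P}$, and Ken Brown's lemma yields the implication. The equivalence $(2) \Leftrightarrow (3)$ reduces to Proposition \ref{prop:creative_precise} applied to the commutative (hence creative) pattern $\frak P$, once one checks that the marking on $X_\natural \times_S \{v\}$ inherited from $X_\natural$ coincides with the marking making $X \times_S \{v\} \to \cal D$ a $\frak P$-fibered object (using the commutativity of $\frak P$ and assertion ($\ast$) of Remark \ref{rem:rezk}).

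The main direction is $(3) \Rightarrow (1)$. Since the marking on $X_\natural$ contains all $p$-cocartesian lifts of edges of the form $(e, \operatorname{id})$, the map $f$ sends $q$-cocartesian edges of $X \to S$ to $q'$-cocartesian edges of $Y \to S$; combining condition (3) with the classical fiberwise criterion for categorical equivalences between cocartesian fibrations (\cite[Proposition 3.3.1.5]{HTT}) then shows that $f: X \to Y$ is a categorical equivalence of underlying simplicial sets. It remains to promote this to an $S \times \frak P$-equivalence. By Corollary \ref{cor:P-equivalence}, this reduces to showing that for every fibrant $\overline Z \in \SS^+_{/S \times \frak P}$, the induced functor
\[
f^*: \Map^\flat_{S \times \cal D}(Y_\natural, Z_\natural) \longrightarrow \Map^\flat_{S \times \cal D}(X_\natural, Z_\natural)
\]
of $\infty$-categories (they are $\infty$-categories by Lemma \ref{lem:Map_S^=00005Csharp_core}) is a categorical equivalence.

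The hard part is establishing this last claim, since $S \times \cal D$ need not be an $\infty$-category and Proposition \ref{prop:creative_precise} does not apply directly to $S \times \frak P$. My plan is to imitate its proof. The map $Z \to S \times \cal D$ is an inner fibration (Remark \ref{rem:P-bund_b}), and each fiber $Z \times_S \{v\} \to \cal D$ is $\frak P$-fibered, hence a categorical fibration by Proposition \ref{prop:creative}(1); together with the fact that $Z \to S$ is itself a categorical fibration, I expect this to imply that the ambient functor $\Fun_{S \times \cal D}(Y, Z) \to \Fun_{S \times \cal D}(X, Z)$ is a categorical equivalence. The delicate essential-surjectivity argument given in the last two paragraphs of the proof of Proposition \ref{prop:creative_precise} should then transfer almost verbatim, using the stability property of Proposition \ref{prop:A_0} and the existence of cocartesian lifts in the bundles $Y$ and $Z$ to replace any chosen preimage functor by one that preserves the $\natural$-marking. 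This transfer of the categorical-invariance argument to a base which is merely a product with a simplicial set is where I expect the genuine technical work to lie.
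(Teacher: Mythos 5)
Your arguments for $(1)\Rightarrow(2)$ and $(2)\Leftrightarrow(3)$ are fine and agree with the paper's. The direction $(3)\Rightarrow(1)$, however, has a genuine gap. After correctly observing (via Lemma \ref{lem:cc_P-bundle} and the fiberwise criterion) that $f\colon X\to Y$ is a categorical equivalence, you reduce via Corollary \ref{cor:P-equivalence} to showing that $\Map^\flat_{S\times\cal D}(Y_\natural,Z_\natural)\to\Map^\flat_{S\times\cal D}(X_\natural,Z_\natural)$ is a categorical equivalence for every fibrant $\overline Z$, and you plan to imitate the proof of Proposition \ref{prop:creative_precise}. But the linchpin of that proof is the assertion that the ambient functor $\Fun_{S\times\cal D}(Y,Z)\to\Fun_{S\times\cal D}(X,Z)$ is a categorical equivalence, which there followed from $\cal Z\to\cal D$ being a categorical fibration over an \emph{$\infty$-category}. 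In your setting $S\times\cal D$ is not an $\infty$-category, and the facts you marshal --- that $Z\to S\times\cal D$ is an inner fibration, that the fibers $Z_v\to\cal D$ are categorical fibrations, and that $Z\to S$ is a cocartesian fibration --- do not imply that $Z\to S\times\cal D$ is a Joyal fibration, so there is no reason for $\Fun_{S\times\cal D}(-,Z)$ to be homotopy-invariant. You flag this as ``where the genuine technical work lies,'' but the work is not done, and I do not see how to do it within your framework: the natural reduction to $S$ an $\infty$-category is precisely Proposition \ref{prop:weak_categorical_invariance}, which appears later in the paper and itself invokes the proposition at issue.

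The paper avoids mapping-space arguments over the possibly ill-behaved base $S\times\cal D$ altogether. After factoring $f$ as a trivial cofibration followed by a fibration, it proves the fibration is a trivial fibration directly in $\SS^+_{/S\times\frak P}$ (equivalently: a trivial fibration of underlying simplicial sets which reflects markings). The key move is to switch from the markings $X_\natural$, $Y_\natural$ (cocartesian over edges in $M_{\cal D}$) to the markings $\overline X$, $\overline Y$ by cocartesian edges over $S$; Lemma \ref{lem:cc_P-bundle} shows $f$ respects these, Proposition \ref{prop:creative} together with \cite[Proposition 3.1.3.5]{HTT} shows $\overline X\to\overline Y$ is a cocartesian equivalence over $S$, and a lifting argument against $\frak P$-anodynes (types $(B_0)$ and $(C_1)$ of \cite[Example B.2.2]{HA}) shows it is a marked left fibration, hence a fibration and therefore a trivial fibration in the cocartesian model structure on $\SS^+_{/S}$. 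All of this is available for arbitrary $S$ because it works via lifting properties, not via homotopy invariance of an internal hom over a non-fibrant base. If you wish to keep your line of reasoning, you would have to recast the mapping-space claim inside the cocartesian model structure over $S$ and then reconcile the two markings --- which is in effect what the paper's proof does.
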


The proof of Proposition \ref{prop:fiberwise_P_equiv} needs a lemma.
\begin{lem}
\label{lem:cc_P-bundle}Let $\frak P$ be a commutative categorical
pattern on an $\infty$-category $\cal D$ and let $S$ be a simplicial
set. Let $p:X\to S\times\cal D$ be a $\frak P$-bundle over $S$,
and let $e:x\to x'$ be an edge of $X$. Let $q:X\to S$ denote the
composite $X\xrightarrow{p}S\times\cal D\xrightarrow{\opn{pr}}S$.
The following conditions are equivalent:
\begin{enumerate}
\item The edge $e$ is $q$-cocartesian.
\item The edge $e$ is $p$-cocartesian and lies over an equivalence of
$\cal D$.
\end{enumerate}
\end{lem}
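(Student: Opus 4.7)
The plan is to reduce both implications to the calculus of cocartesian edges for composable inner fibrations. Write $\pi\colon S\times\cal D\to S$ for the projection, so that $q=\pi\circ p$. A standard calculation shows that $\pi$ is a cocartesian fibration whose cocartesian edges are precisely the pairs $\pr{f,g}$ with $g$ an equivalence in $\cal D$. Granting this, the implication $(2)\To(1)$ is immediate: if $e$ is $p$-cocartesian and $p\pr e$ has an equivalence as its $\cal D$-component, then $p\pr e$ is $\pi$-cocartesian, so \cite[Proposition 2.4.1.3]{HTT} forces $e$ to be $q$-cocartesian.

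For the converse $(1)\To(2)$, let $e\colon x\to x'$ be $q$-cocartesian, write $v,v'$ for the images of $x,x'$ in $S$ and $D$ for the image of $x$ in $\cal D$, and set $f=q\pr e$. Using condition (b) of Definition \ref{def:P-bundle} in the form of Remark \ref{rem:P-bund_b}, I lift $\pr{f,\id_{D}}$ to a $p$-cocartesian edge $\tilde e\colon x\to y$. By the already established $(2)\To(1)$, this $\tilde e$ is also $q$-cocartesian. Since both $e$ and $\tilde e$ are $q$-cocartesian lifts of $f$ with source $x$, the standard uniqueness of such lifts up to equivalence in the fiber produces a $2$-simplex $\sigma$ of $X$ with $d_{2}\sigma=\tilde e$, $d_{1}\sigma=e$, and $d_{0}\sigma=h\colon y\to x'$ an equivalence in $X_{v'}$. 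Then $h$ is an equivalence in $X$, hence $p$-cocartesian, so by the stability of $p$-cocartesian morphisms under composition in a $2$-simplex, $e=d_{1}\sigma$ is $p$-cocartesian. Applying $p$ to $\sigma$ exhibits $p\pr e$ as a composite of $\pr{f,\id_{D}}$ and $\pr{\id_{v'},g}$, where $g$ is the image of $h$ under $X_{v'}\to\cal D$; since $h$ is an equivalence in $X_{v'}$, the edge $g$ is an equivalence of $\cal D$, and so $p\pr e=\pr{f,g}$ lies over an equivalence, as required.

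The argument is largely a bookkeeping exercise; the only point that requires slight care is the uniqueness-up-to-fiber-equivalence of $q$-cocartesian lifts used in the converse direction, which has to be invoked in its $2$-simplex form (producing a witnessing $\sigma$ with the third edge an equivalence in the fiber) rather than as a statement about homotopy classes. Once that witness is in hand, the rest reduces to the two standard facts that cocartesian edges compose and that their composition along an inner fibration is governed by \cite[Proposition 2.4.1.3]{HTT}.
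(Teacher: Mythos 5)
Your proof is correct and follows essentially the same path as the paper's: both directions reduce to the observation that $\pi$-cocartesian edges in $S\times\cal D$ are precisely those with an equivalence as their $\cal D$-component, and the converse direction is handled by producing a $p$-cocartesian lift $\tilde e$ of $(f,\id_D)$ and then comparing $e$ with $\tilde e$ via a $2$-simplex whose third edge is an equivalence in the fiber. The only cosmetic difference is that the paper deduces $e'$ (your $h$) is $p$-cocartesian by noting its image is $\opn{pr}$-cocartesian and invoking the composition criterion once more, whereas you observe directly that $h$, being an equivalence in $X$, is automatically $p$-cocartesian; both are valid.
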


\begin{proof}
Condition (2) implies that $e$ is $p$-cocartesian and $p\pr e$
is $\opn{pr}$-cocartesian, so clearly (2)$\implies$(1). Conversely,
suppose that $e$ is $q$-cocartesian. Write $p\pr e=\pr{u,v}:\pr{s,D}\to\pr{s',D'}$.
The edge $\pr{u,\id_{D}}:\pr{s,D}\to\pr{s',D}$ is marked in $S\times\frak P$,
so it has a $p$-cocartesian lift $\hat e:x\to x''$. Using (2)$\implies$(1),
we find that $\hat e$ is $q$-cocartesian. Thus there is a $2$-simplex
$\sigma$ of $X$ depicted as % https://q.uiver.app/#q=WzAsMyxbMSwwLCJ4JyciXSxbMiwxLCJ4JyJdLFswLDEsIngiXSxbMiwwLCJcXGhhdHtlfSJdLFsyLDEsImUiLDJdLFswLDEsImUnIl1d
\[\begin{tikzcd}
	& {x''} \\
	x && {x',}
	\arrow["{\hat{e}}", from=2-1, to=1-2]
	\arrow["e"', from=2-1, to=2-3]
	\arrow["{e'}", from=1-2, to=2-3]
\end{tikzcd}\]which lifts the degenerate $2$-simplex $s_{1}\pr u$. The edge $e'$
is necessarily $q$-cocartesian, so it is an equivalence of the $\infty$-category
$X\times_{S}\{s'\}$. Hence its image in $\cal D$ is an equivalence.
Thus $p\pr{e'}$ is $\opn{pr}$-cocartesian. It follows that $e'$
is $p$-cocartesian. Since $\hat e$ is $p$-cocartesian, we deduce
that $e$ is also $p$-cocartesian. Also, since the images of $\hat e$
and $e'$ in $\cal D$ are equivalences, so must the image of $e$.
Hence (1)$\implies$(2), and the proof is complete.
\end{proof}
\begin{proof}
[Proof of Proposition \ref{prop:fiberwise_P_equiv}]The equivalence
(2)$\iff$(3) follows from Proposition \ref{prop:creative}. The implication
(1)$\implies$(2) is clear, for the functor $\SS_{/S\times\frak P}^{+}\to\SS_{/\{v\}\times\frak P}^{+}$
is right Quillen. For (2)$\implies$(1), suppose that condition (2)
is satisfied. By factoring the map $f$ as a trivial cofibration followed
by a fibration, we may assume that $f$ is a fibration. We must show
that $f$ is a trivial fibration. In other words, we must prove the
following:

\begin{enumerate}[label=(\roman*)]

\item The map $f:X\to Y$ is a trivial fibration of simplicial sets.

\item If $e:x\to x'$ is an edge of $X$ such that $f\pr e$ is marked
in $Y_{\natural}$, then $e$ is marked in $X_{\natural}$.

\end{enumerate}

Assertion (ii) follows from assertion (i) and Proposition \ref{prop:recognition_of_fibrant_objects},
because (i) would imply that every edge of $X$ is $f$-cocartesian.
So we will focus on (i).

Let $p:X\to S\times\cal D$, $q:Y\to S\times\cal D$, and $r:S\times\cal D\to S$
denote the projections. Let $\overline{X}$ and $\overline{Y}$ denote
the marked simplicial sets obtained from $X$ and $Y$ by marking
the $rp$-cocartesian edges and $rq$-cocartesian edges, respectively.
By Lemma \ref{lem:cc_P-bundle}, the map $f$ lifts to a map $\overline{X}\to\overline{Y}$
of marked simplicial sets. To prove (i), it suffices to show that
the latter map is a trivial fibration of $\SS_{/S}^{+}$, equipped
with the cocartesian model structure. By Proposition \ref{prop:creative}
and \cite[Proposition 3.1.3.5]{HTT}, the map $f:\overline{X}\to\overline{Y}$
is a cocartesian equivalence over $S$. It will therefore suffice
to show that it is a fibration in the cocartesian model structure
over $S$. Since the object $\overline{Y}\in\SS_{/S}^{+}$ is fibrant
in the cocartesian model structure, it will suffice to show that $f$
is a marked left fibration \cite[Proposition 1.1.7]{JacoMT}. In other
words, we must prove the following:

\begin{enumerate}[label=(\alph*)]

\item The map $X\to Y$ is an inner fibration.

\item The marked edges of $\overline{X}$ are precisely the $f$-cocartesian
edges over the marked edges of $\overline{Y}$.

\item For each vertex $x\in X$ and each marked edge $e:f\pr x\to y$
of $\overline{Y}$, there is a marked edge $x\to x'$ of $\overline{X}$
lying over $e$.

\end{enumerate}

According to \cite[Example B.2.2]{HA}, the map $X_{\natural}\to Y_{\natural}$
has the right lifting property for the maps of marked simplicial sets
of the following forms:
\begin{itemize}
\item [($B_0$)]The inclusion $\{0\}^{\sharp}\subset\pr{\Delta^{1}}^{\sharp}$.
\item [($C_1$)]The inclusion $\pr{\Lambda_{i}^{n}}^{\flat}\subset\pr{\Delta^{n}}^{\flat}$,
for every $0<i<n$.
\end{itemize}
Using the lifting property for morphisms of type ($C_{1}$), we deduce
that $f$ satisfies condition (a). Condition (b) is obvious. For condition
(c), use the lifting property for the morphisms of type ($B_{0}$)
to find a marked edge $\widetilde{e}:x\to x'$ of $X_{\natural}$
lying over $e$. Using Lemma \ref{lem:cc_P-bundle}, we find that
$\widetilde{e}$ is marked in $\overline{X}$. The proof is now complete.
\end{proof}
In Section \ref{sec:Rectification}, we will establish a categorical
equivalence
\[
\Fun\pr{S,\frak P\-\Fib}\simeq\frak P\-\Bund\pr S.
\]
For the proof of the equivalence, we need to review Lurie's straightening
and unstraightening constructions. This is the content of the next
section (Section \ref{sec:St_Un}).

\section{\label{sec:St_Un}Review of Straightening and Unstraightening}

In this section, we review Lurie's straightening and unstraightening
functors and related constructions. We begin with the definition and
motivation of Lurie's unstraightening functor in Subsection \ref{subsec:Definition-and-Intuition}.
In Subsection \ref{subsec:relnerve}, we recall a toy version of the
unstraightening functor, called the relative nerve functor. In Subsection
\ref{subsec:unstraightening_process}, we take a closer look at the
functors between the fibers of the cocartesian fibrations obtained
by the unstraightening construction. Since our definition of unstraightening
looks different from the one in \cite{HTT}, we give a proof of the
equivalence of the two functors in Subsection \ref{subsec:comparison_with_Lurie}.

\subsection{\label{subsec:Definition-and-Intuition}Definitions and Intuition}

Given a small category $\cal C$ and a functor $F:\cal C\to\sf{Cat}$,
where $\sf{Cat}$ denotes the category of small categories, we define
a category $\int_{\cal C}F=\int F$, called the \textbf{Grothendieck
construction} of $F$, as follows:
\begin{itemize}
\item The objects are the pairs $\pr{C,X}$, where $C\in\cal C$ and $X\in FC$.
\item A morphism $\pr{C,X}\to\pr{C',X'}$ is a pair $\pr{f,g}$, where $f:C\to C'$
is a morphism of $\cal C$ and $g:Ff\pr X\to X'$ is a morphism of
$FC'$.
\item The composition is defined in the obvious manner.
\end{itemize}
Lurie's \textbf{unstraightening construction}, which we now define,
is a homotopy coherent version of the Grothendieck construction.
\begin{defn}
\label{def:St_Un}For each $n\geq0$, let $\Psi_{n}:\widetilde{\fr C}[\Delta^{n}]\to\SS$
denote the simplicial functor defined by $\Psi_{n}\pr i=N\pr{P_{\max i}}^{\op}$,
where $P_{\max i}$ denotes the poset of subsets of $[n]$ with maximal
element $i$. (See \nameref{subsec:not_term} for the definition of
$\widetilde{\fr C}[-]$.) The structure map
\[
N\pr{P_{i,j}}^{\op}\times N\pr{P_{\max i}}^{\op}\to N\pr{P_{\max j}}^{\op}
\]
is induced by the operation of taking unions. 

Now let $S$ be a simplicial set and let $\phi:\widetilde{\fr C}[S]\to\cal C_{\Delta}$
be a simplicial functor. Given a simplicial functor $F:\cal C_{\Delta}\to\SS$,
we define the (\textbf{covariant}) \textbf{unstraightening} $\widetilde{\Un}_{\phi}\pr F\in\SS_{/S}$
of $F$ as follows:
\begin{itemize}
\item The $n$-simplices of $\widetilde{\Un}_{\phi}\pr F$ are the pairs
$\pr{\sigma,\alpha}$, where $\sigma$ is an $n$-simplex of $S$
and $\alpha$ is a simplicial natural transformation $\Psi_{n}\to F\circ\phi\circ\widetilde{\fr C}[\sigma]$
of simplicial functors $\widetilde{\fr C}[\Delta^{n}]\to\SS$. 
\item Given a poset map $u:[m]\to[n]$, the map $u^{*}:\widetilde{\Un}_{\phi}\pr F_{n}\to\widetilde{\Un}_{\phi}\pr F_{m}$
is induced by the natural transformation
\[
\Psi_{m}\to\Psi_{n}\circ\widetilde{\fr C}[u]
\]
determined by the direct image map $u\pr -:P_{\max i}\to P_{\max u\pr i}$.
\end{itemize}
The functor $\cun_{\phi}:\Fun\pr{\cal C_{\Delta},\SS}\to\SS_{/S}$
has a left adjoint (see Subsection \ref{subsec:comparison_with_Lurie}),
denoted by $\widetilde{\St}_{\phi}$. This is the (\textbf{covariant})
\textbf{straightening functor}.

If $G:\cal C_{\Delta}\to\SS^{+}$ is a simplicial functor, we define
the (\textbf{marked}) (\textbf{covariant}) \textbf{unstraightening}
$\widetilde{\Un}_{\phi}^{+}\pr G\in\SS_{/S}^{+}$ of $G$ by marking
the edges $\pr{x\to y,\alpha}$ of $\cun_{\phi}\pr{G_{\flat}}$ such
that the map $\alpha:\Psi_{1}\pr 1\to G_{\flat}\pr{\phi\pr y}$ classifies
a marked edge of $G\pr{\phi\pr y}$. The functor $\cun_{\phi}^{+}:\Fun\pr{\cal C_{\Delta},\SS^{+}}\to\SS_{/S}^{+}$
has a left adjoint (see Subsection \ref{subsec:comparison_with_Lurie}),
denoted by $\widetilde{\St}_{\phi}^{+}$. This is the (\textbf{marked})
(\textbf{covariant}) \textbf{straightening functor}.
\end{defn}

\begin{rem}
Lurie's definition of the unstraightening functor does not resemble
the one in Definition \ref{def:St_Un}. Nevertheless, our definition
yields the same functor as Lurie's. The verification is somewhat tedious
and only distracts us from the main discussion, so we will defer it
to Subsection \ref{subsec:comparison_with_Lurie}.
\end{rem}

\begin{rem}
Let us see why the unstraightening functor may be regarded as a generalization
of the Grothendieck construction. Let $S$ be a simplicial set, let
$\phi:\widetilde{\fr C}[S]\to\cal C_{\Delta}$ be a simplicial functor,
and let $F:\cal C_{\Delta}\to\SS$ be another simplicial functor.
Simplices of $\widetilde{\Un}_{\phi}\pr F$ of low dimensions can
be described as follows. We will write $F'=F\phi$. Also, if $f$
is an edge of $S$, we will denote the corresponding morphism of $\widetilde{\fr C}[S]$
by $f$. 
\begin{enumerate}
\item A vertex is a pair $\pr{s,v}$, where $s$ is a vertex of $S$ and
$v$ is a vertex of $F'\pr s$. 
\item An edge $\pr{s,v}\to\pr{s',v'}$ is a pair $\pr{f,g}$, where $f:s\to s'$
is an edge of $S$ and $g:F'f\pr v\to v'$ is an edge of $F'\pr{s'}$.
\item Suppose we are given edges $\pr{f_{ij},g_{ij}}:\pr{s_{i},v_{i}}\to\pr{s_{j},v_{j}}$
of $\cun_{\phi}\pr F$ for $0\leq i<j\leq2$. Then a $2$-simplex
in $\cun_{\phi}\pr F$ whose boundary is depicted as % https://q.uiver.app/?q=WzAsMyxbMCwxLCIoc18wLHZfMCkiXSxbMSwwLCIoc18xLHZfMSkiXSxbMiwxLCIoc18yLHZfMikiXSxbMCwxLCIoZl97MDF9LGdfezAxfSkiXSxbMSwyLCIoZl97MTJ9LGdfezEyfSkiXSxbMCwyLCIoZl97MDJ9LGdfezAyfSkiLDJdXQ==
\[\begin{tikzcd}
	& {(s_1,v_1)} \\
	{(s_0,v_0)} && {(s_2,v_2)}
	\arrow["{(f_{01},g_{01})}", from=2-1, to=1-2]
	\arrow["{(f_{12},g_{12})}", from=1-2, to=2-3]
	\arrow["{(f_{02},g_{02})}"', from=2-1, to=2-3]
\end{tikzcd}\]consists of the following data:
\begin{itemize}
\item A $2$-simplex $\sigma$ of $S$ whose boundary is depicted as% https://q.uiver.app/?q=WzAsMyxbMCwxLCJzXzAiXSxbMSwwLCJzXzEiXSxbMiwxLCJzXzIiXSxbMCwxLCJmX3swMX0iXSxbMSwyLCJmX3sxMn0iXSxbMCwyLCJmX3swMn0iLDJdXQ==
\[\begin{tikzcd}
	& {s_1} \\
	{s_0} && {s_2.}
	\arrow["{f_{01}}", from=2-1, to=1-2]
	\arrow["{f_{12}}", from=1-2, to=2-3]
	\arrow["{f_{02}}"', from=2-1, to=2-3]
\end{tikzcd}\]
\item A square $\Delta^{1}\times\Delta^{1}\to F's_{2}$ depicted as% https://q.uiver.app/?q=WzAsNCxbMCwwLCJGJ2ZfezEyfUYnZl97MDF9KHZfMCkiXSxbMCwxLCJGJ2ZfezAyfSh2XzApIl0sWzEsMCwiRidmX3sxMn0odl8xKSJdLFsxLDEsInZfMiJdLFswLDEsIlxcYWxwaGEiLDJdLFswLDIsIkYnZl97MTJ9KGdfezAxfSkiXSxbMiwzLCJnX3sxMn0iXSxbMSwzLCJnX3swMn0iLDJdXQ==
\[\begin{tikzcd}
	{F'f_{12}F'f_{01}(v_0)} & {F'f_{12}(v_1)} \\
	{F'f_{02}(v_0)} & {v_2}
	\arrow["\alpha"', from=1-1, to=2-1]
	\arrow["{F'f_{12}(g_{01})}", from=1-1, to=1-2]
	\arrow["{g_{12}}", from=1-2, to=2-2]
	\arrow["{g_{02}}"', from=2-1, to=2-2]
\end{tikzcd}\]where the map $\alpha$ is determined by the $2$-simplex $\sigma$.
\end{itemize}
\end{enumerate}
This explains the sense in which the unstraightening construction
is a homotopy coherent version of the Grothendieck construction: Given
an $n$-simplex $\sigma$ of $S$, a simplex of $\cun_{\phi}\pr F$
lying over $\sigma$ carries the data of vertices $v_{i}\in F'\pr{\sigma\pr i}$
and coherent homotopy filling the $n$-dimensional cube arising from
$\sigma$ and $v_{i}$ which expresses all possible ways the edges
of $\sigma$ can be composed.
\end{rem}

The following result, which is an $\infty$-categorical version of
the equivalence of Grothendieck fibrations and presheaves, is one
of the monumental achievements of \cite{HTT}. Recall that a simplicial
functor is said to be a \textbf{weak equivalence} if it induces a
categorical equivalence between the homotopy categories and weak homotopy
equivalences between the hom-simplicial sets.
\begin{thm}
\cite[Theorem 2.2.1.2, 3.2.0.1]{HTT}\label{thm:st_un}Let $S$ be
a simplicial set, $\cal C$ a simplicial category, and $\phi:\fr C[S]\to\cal C$
a weak equivalence of simplicial categories. The adjunctions
\[
\St_{\phi}:\pr{\SS_{/S}}_{\mathrm{contra}}\to\Fun^{s}\pr{\cal C_{\Delta},\SS}_{\mathrm{proj}}:\Un_{\phi}
\]
and
\[
\St_{\phi}^{+}:\pr{\SS_{/S}^{+}}_{\mathrm{cart}}\adj\Fun^{s}\pr{\cal C_{\Delta},\u{\SS^{+}}}_{\mathrm{proj}}:\Un_{\phi}^{+}
\]
are Quillen equivalences.
\end{thm}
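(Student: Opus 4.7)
The proof strategy is to reduce to Lurie's Theorems 2.2.1.2 and 3.2.0.1 of \cite{HTT} by identifying the unstraightening functors of Definition \ref{def:St_Un} with the unstraightening functors of \cite[Chapter 3]{HTT}, modulo the fact that our convention on $\widetilde{\fr C}[-]$ is opposite to Lurie's. Given such an identification, the theorem follows essentially formally from the cited results, with the convention switch exchanging the roles of the contravariant/cartesian and covariant/cocartesian model structures.

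As a preliminary, I would verify that $\widetilde{\Un}_\phi$ and $\widetilde{\Un}_\phi^+$ admit left adjoints. Both functors are defined simplex-wise as sets of pairs $\pr{\sigma,\alpha}$ involving simplicial natural transformations, so a direct inspection shows that they are accessible and preserve small limits. Since both source and target are presentable, the adjoint functor theorem supplies left adjoints $\widetilde{\St}_\phi$ and $\widetilde{\St}_\phi^+$.

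The main step is the comparison with Lurie's construction, to be carried out in Subsection \ref{subsec:comparison_with_Lurie}. Lurie defines his unstraightening as the right adjoint to an explicit straightening built from cone constructions, whereas our definition proceeds via maps out of the universal diagrams $\Psi_n$. To bridge the two, I would recognise $\Psi_n$ as representing ``fiber data over an $n$-simplex'': an $n$-simplex of $\widetilde{\Un}_\phi\pr F$ over $\sigma$ corresponds to a map out of a homotopy coherent cone, which matches the defining data of Lurie's construction once the $N\pr{P_{i,j}}^{\op}$ versus $N\pr{P_{i,j}}$ discrepancy is propagated through. The marked version is handled in parallel using the marking condition imposed in Definition \ref{def:St_Un}.

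The main obstacle is precisely this comparison: although the translation is conceptually clear, it requires a careful combinatorial unwinding of the two different presentations, and the opposite conventions on $\widetilde{\fr C}[-]$ make it easy to introduce sign-like errors in the direction of morphisms. Once the identification is achieved, the Quillen equivalence assertions transport without further work from the corresponding statements in \cite{HTT}.
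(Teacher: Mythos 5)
Your high-level strategy matches the paper: the theorem is cited from Lurie, and the substantive content is the verification in Subsection \ref{subsec:comparison_with_Lurie} that the explicit formula in Definition \ref{def:St_Un} really is Lurie's unstraightening (modulo the opposite convention on $\widetilde{\fr C}[-]$, which the paper handles via the composite with $(-)^{\op}$ and $(-)^\circ$). However, your proposal has two issues.

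First, the adjoint functor theorem step is an unnecessary detour, and actually creates work rather than saving it: even after abstractly producing a left adjoint of $\widetilde{\Un}_\phi$, you would still have to show it agrees with Lurie's straightening functor, which is precisely the nontrivial comparison. The paper instead proves the adjunction directly in the other direction: it starts from Lurie's explicit $\widetilde{\St}'_\phi$ and shows that $\widetilde{\Un}_\phi$ is its right adjoint, so the identification comes for free by uniqueness of adjoints.

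Second, and more importantly, your ``careful combinatorial unwinding'' is where the gap lies: you gesture at $\Psi_n$ as ``fiber data over an $n$-simplex'' but do not supply the actual mechanism that makes the comparison work. The paper's argument hinges on a precise structural fact, namely the identification $\widetilde{\fr C}[X^{\lcone}]\cong\widetilde{\fr C}[X]^{\Psi_X}$ from Dugger--Spivak, together with the universal property of the cone category $\cal A^F$ (Proposition \ref{prop:precosheaf_by_cat}) and its pushout description (Corollary \ref{cor:lan_pushout}). These imply that the canonical map $\Psi_X\to\widetilde{\St}'_\phi(p)\circ\phi\circ\widetilde{\fr C}[p]$ exhibits $\widetilde{\St}'_\phi(p)$ as a \emph{left Kan extension} of $\Psi_X$ along $\phi\circ\widetilde{\fr C}[p]$. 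Specialising to representable objects $\sigma:\Delta^n\to S$ and using $\Psi_{\Delta^n}=\Psi_n$ then gives a natural bijection
\[
\Fun^{s}\pr{\cal C_{\Delta},\SS}\pr{\widetilde{\St}'_{\phi}\pr{\sigma},F}\cong\Fun^{s}\pr{\widetilde{\fr C}[\Delta^{n}],\SS}\pr{\Psi_{n},F\circ\phi\circ\widetilde{\fr C}[\sigma]}\cong\SS_{/S}\pr{\sigma,\widetilde{\Un}_{\phi}\pr F},
\]
which extends to all of $\SS_{/S}$ by density of simplices. Without this Kan-extension observation, the ``unwinding'' you describe has no concrete handle, so the comparison step---which you rightly flag as the main obstacle---remains unsubstantiated. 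The marked case works the same way with $\Psi_{\overline X}$ replacing $\Psi_X$.
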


In light of Remark \ref{def:St_Un}, it is not hard to imagine that
the unstraightening construction over the nerve of an ordinary category
is close to the ordinary Grothendieck construction. This is true,
as the following Proposition shows:
\begin{prop}
\label{prop:unstraightening_vs_Gr_const}Let $\cal C$ be a small
category and let $\varepsilon:\widetilde{\fr C}[N\pr{\cal C}]\to\cal C$
denote the counit map. Suppose we are given a commutative diagram% https://q.uiver.app/?q=WzAsNCxbMCwwLCJcXG1hdGhjYWx7Q30iXSxbMSwwLCJcXG1hdGhzZntzU2V0fV4rIl0sWzEsMSwiXFxtYXRoc2Z7c1NldH0iXSxbMCwxLCJcXG1hdGhzZntDYXR9Il0sWzAsMywiRiIsMl0sWzMsMiwiTiIsMl0sWzAsMSwiRl4rIl0sWzEsMiwiXFx0ZXh0e2ZvcmdldH0iXV0=
\[\begin{tikzcd}
	{\mathcal{C}} & {\mathsf{sSet}^+} \\
	{\mathsf{Cat}} & {\mathsf{sSet}}
	\arrow["F"', from=1-1, to=2-1]
	\arrow["N"', from=2-1, to=2-2]
	\arrow["{F^+}", from=1-1, to=1-2]
	\arrow["{\text{forget}}", from=1-2, to=2-2]
\end{tikzcd}\]of functors. There is an isomorphism 
\[
\widetilde{\Un}_{\varepsilon}^{+}\pr{F^{+}}\cong\pr{N\pr{\int F},M}
\]
of marked simplicial sets over $N\pr{\cal C}^{\sharp}$, where $M$
is the set of edges of $N\pr{\int F}$ corresponding to a morphism
$\pr{f,g}:\pr{C,X}\to\pr{D,Y}$ such that the induced map $g:\pr{Ff}X\to Y$
is marked in $F^{+}\pr C$. The isomorphism is natural in $F^{+}$.
\end{prop}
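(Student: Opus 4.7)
The plan is to construct an explicit natural isomorphism
\[
\theta_F:\cun_\varepsilon(NF)\xrightarrow{\cong}N(\int F)
\]
of simplicial sets over $N(\cal C)$, and then to verify that the marking on the left-hand side picks out exactly the edges in $M$ on the right. The key observation that makes everything explicit is that, since $\cal C$ is an ordinary $1$-category, its hom-simplicial sets are discrete, so for any $n$-simplex $\sigma:[n]\to\cal C$ the composite $G:=NF\varepsilon\widetilde{\fr C}[\sigma]:\widetilde{\fr C}[\Delta^n]\to\SS$ has all of its hom-level maps $\widetilde{\fr C}[\Delta^n](i,j)\to\Fun(G(i),G(j))$ constantly equal to $NF(\sigma_{ij})$. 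Consequently a simplicial natural transformation $\alpha:\Psi_n\to G$ amounts to a family of functors $\alpha_i:P_{\max i}^{\op}\to F(\sigma(i))$ subject only to the object-level identity $\alpha_j(s\cup T)=F(\sigma_{ij})(\alpha_i(T))$ for $s\in P_{i,j}$, $T\in P_{\max i}$, together with the morphism-level constraint that $\alpha_j$ sends every edge of the form $s\cup T\to s'\cup T$ (with $s\supseteq s'$ in $P_{i,j}$) to an identity.

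Given $(\sigma,\alpha)\in\cun_\varepsilon(NF)_n$, I define the corresponding functor $\tau:[n]\to\int F$ by $\tau(i)=(\sigma(i),\alpha_i(\{i\}))$ on objects, and on morphisms by letting the second component of $\tau(i\leq j)$ be the morphism $g_{ij}:F(\sigma_{ij})(\alpha_i(\{i\}))\to\alpha_j(\{j\})$ obtained by applying $\alpha_j$ to the edge $\{i,j\}\to\{j\}$ of $\Psi_n(j)=N(P_{\max j})^{\op}$, after invoking the identification $\alpha_j(\{i,j\})=F(\sigma_{ij})(\alpha_i(\{i\}))$ supplied by the key observation. The cocycle condition $g_{ik}=g_{jk}\circ F(\sigma_{jk})(g_{ij})$ is then extracted from the naturality of $\alpha$ applied to the chain $\{i,j,k\}\supseteq\{i,k\}\supseteq\{k\}$ in $P_{\max k}^{\op}$, combined with the functoriality of $F$.

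For the inverse, given $\tau=(\sigma,X_\bullet,g_{\bullet\bullet})$, I set
\[
\alpha_i(T)=F(\sigma_{t_0,i})(X_{t_0}),\qquad \alpha_i(T\to T')=F(\sigma_{t'_0,i})(g_{t_0,t'_0}),
\]
where $t_0=\min T$, $t'_0=\min T'$, and $g_{t,t}=\id$. Functoriality of each $\alpha_i$ reduces, via functoriality of $F$, to the cocycle for $g_{\bullet\bullet}$; simplicial naturality of the family $(\alpha_i)_{i\in[n]}$ reduces to the object-level identity $\min(s\cup T)=\min T$ and the morphism-level fact that $g_{\min T,\min T}=\id$, both as demanded by the key observation. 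The two constructions are visibly mutual inverses on each simplicial degree, and both commute with reindexing by order-preserving maps $[m]\to[n]$, so they assemble into the desired natural isomorphism $\theta_F$.

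Finally, a 1-simplex $(\sigma,\alpha)$ is marked in $\cun_\varepsilon^+(F^+)$ exactly when the edge $\alpha_1(\{0,1\}\to\{1\})=g_{01}$ is marked in $F^+(\sigma(1))$, which is precisely the defining condition for the corresponding edge $(\sigma_{01},g_{01})$ of $N(\int F)$ to lie in $M$. Naturality in $F^+$ is manifest throughout, since every step of the construction depends functorially on $F^+$. The main obstacle will be the bookkeeping required for the simplicial-naturality checks in both directions — verifying the correspondence on the edges of $\widetilde{\fr C}[\Delta^n](i,j)$ and $\Psi_n(i)$, not merely on their vertices — which in the end boils down to the key observation combined with the convention $g_{t,t}=\id$.
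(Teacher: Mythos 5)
Your approach is the same as the paper's: construct the bijection on $n$-simplices directly by the formulas $\tau(i)=(\sigma(i),\alpha_i(\{i\}))$, $g_{ij}=\alpha_j(\{i,j\}\supseteq\{j\})$ in one direction and $\alpha_i(T)=F(\sigma_{\min T,i})(X_{\min T})$, $\alpha_i(T\supseteq T')=F(\sigma_{\min T',i})(g_{\min T,\min T'})$ in the other. These are exactly the paper's $\varphi$ and $\psi$ (modulo the harmless replacement of $\{i,\ldots,j\}$ by $\{i,j\}$). However, there is a genuine gap, and it originates in the ``key observation.'' You claim that a simplicial natural transformation $\alpha:\Psi_n\to G$ is equivalent to a family of functors $\alpha_i$ subject \emph{only} to (a) the object-level identity $\alpha_j(s\cup T)=F(\sigma_{ij})(\alpha_i(T))$ and (b) $\alpha_j(s\cup T\supseteq s'\cup T)=\mathrm{id}$. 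This is not the full naturality condition. Naturality of $\alpha$ over the hom-simplicial set $N(P_{ij})^{\op}$ must be checked on \emph{edges} of $N(P_{ij})^{\op}\times\Psi_n(i)$, not just vertices, which additionally forces
\[
\alpha_j(s\cup T\supseteq s\cup T')=F(\sigma_{ij})\bigl(\alpha_i(T\supseteq T')\bigr)
\]
for $s\in P_{ij}$ and $T\supseteq T'$ in $P_{\max i}$. This constraint is not a consequence of (a) and (b): already for $n=2$, $j=2$, the value $\alpha_2(\{0,1,2\}\supseteq\{1,2\})$ is left completely undetermined by (a) and (b), whereas the omitted constraint (with $i=1$, $s=\{1,2\}$, $T=\{0,1\}\supseteq T'=\{1\}$) pins it to $F(\sigma_{12})(\alpha_1(\{0,1\}\supseteq\{1\}))$.

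This omission undercuts the assertion that the two maps are ``visibly mutual inverses.'' The nontrivial composite $\alpha\mapsto\tau\mapsto\alpha'$ returns the original $\alpha$ only because $\alpha_i(S\supseteq T)$ depends solely on $\min S$, $\min T$, and $i$ --- equivalently, $\alpha_i(S\supseteq T)=F(\sigma_{\min T,i})(\alpha_{\min T}(\{\min S,\min T\}\supseteq\{\min T\}))$. This is precisely the calculation the paper carries out, and it is exactly the use of the morphism-level naturality you dropped; under (a) and (b) alone the bijection fails because the family $\{\alpha_i\}$ carries strictly more data than an $n$-simplex of $N(\int F)$. A similar issue appears in your derivation of the cocycle condition: computing $\alpha_k(\{i,j,k\}\supseteq\{j,k\})$ again requires the omitted constraint. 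So while your constructions are the right ones, the ``visibly mutual inverses'' claim is not visible and is in fact the heart of the proof; restoring the full naturality condition and spelling out the min-dependence argument would close the gap.
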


\begin{proof}
Let us first construct an isomorphism
\[
\widetilde{\Un}_{\varepsilon}\pr{N\circ F}\cong N\pr{\int F}
\]
of simplicial sets over $N\pr{\cal C}$. Let $\sigma=\pr{C_{0}\to\cdots\to C_{n}}$
be an $n$-simplex of $N\pr{\cal C}$. By definition, an $n$-simplex
of $\widetilde{\Un}_{\varepsilon}\pr{N\circ F}$ which lies over $\sigma$
is a simplicial natural transformation $\alpha:\Psi_{n}\to F\circ\varepsilon\circ\widetilde{\fr C}[\sigma]$.
The data of $\alpha$ is equivalent a collection of functors $\{\alpha_{i}:P_{\max i}^{\op}\to FC_{i}\}_{0\leq i\leq n}$
such that the diagram % https://q.uiver.app/#q=WzAsNSxbMCwwLCJQX3tcXG1heCBpfV5cXG1hdGhybXtvcH1cXHRpbWVzIFBfe2lqfV5cXG1hdGhybXtvcH0iXSxbMCwxLCJQX3tcXG1heCBqfV5cXG1hdGhybXtvcH0iXSxbMSwwLCJQXntcXG1hdGhybXtvcH19X3tcXG1heCBpfSJdLFsyLDEsIkZDX2oiXSxbMiwwLCJGQ19pIl0sWzAsMSwiXFxjdXAiLDJdLFswLDIsIlxcb3BlcmF0b3JuYW1le3ByfSJdLFsxLDMsIlxcYWxwaGFfaiIsMl0sWzIsNCwiXFxhbHBoYV9pIl0sWzQsMywiRmZfe2lqfSJdXQ==
\[\begin{tikzcd}
	{P_{\max i}^\mathrm{op}\times P_{ij}^\mathrm{op}} & {P^{\mathrm{op}}_{\max i}} & {FC_i} \\
	{P_{\max j}^\mathrm{op}} && {FC_j}
	\arrow["\cup"', from=1-1, to=2-1]
	\arrow["{\operatorname{pr}}", from=1-1, to=1-2]
	\arrow["{\alpha_j}"', from=2-1, to=2-3]
	\arrow["{\alpha_i}", from=1-2, to=1-3]
	\arrow["{Ff_{ij}}", from=1-3, to=2-3]
\end{tikzcd}\]commutes for each pair of integers $0\leq i\le j\leq n$, where $f_{ij}:C_{i}\to C_{j}$
is the map determined by $\sigma$. On the other hand, an $n$-simplex
of $N\pr{\int F}$ lying over $\sigma$ is a collection of functors
$\{\beta_{i}:[i]\to FC_{i}\}_{0\leq i\leq n}$ such that the diagram
% https://q.uiver.app/?q=WzAsNCxbMCwwLCJbaV0iXSxbMSwwLCJGQ19pIl0sWzEsMSwiRkNfe2p9Il0sWzAsMSwiW2pdIl0sWzAsMSwiXFxiZXRhX2kiXSxbMSwyLCJGZl97aWp9Il0sWzMsMiwiXFxiZXRhX2oiLDJdLFswLDMsIiIsMix7InN0eWxlIjp7InRhaWwiOnsibmFtZSI6Imhvb2siLCJzaWRlIjoidG9wIn19fV1d
\[\begin{tikzcd}
	{[i]} & {FC_i} \\
	{[j]} & {FC_{j}}
	\arrow["{\beta_i}", from=1-1, to=1-2]
	\arrow["{Ff_{ij}}", from=1-2, to=2-2]
	\arrow["{\beta_j}"', from=2-1, to=2-2]
	\arrow[hook, from=1-1, to=2-1]
\end{tikzcd}\]commutes for each pair of integers $0\leq i\le j\leq n$. With this
in mind, we define a pair of maps
\[
\varphi:\widetilde{\Un}_{\varepsilon}^{+}\pr{N\circ F}_{n}\times_{\cal C_{n}}\{\sigma\}\rl N\pr{\int F}\times_{\cal C_{n}}\{\sigma\}:\psi
\]
as follows. Given an element $\{\alpha_{i}\}_{0\leq i\leq n}$ of
$\widetilde{\Un}_{\varepsilon}^{+}\pr{N\circ F}_{n}\times_{\cal C_{n}}\{\sigma\}$,
we set $\varphi\pr{\{\alpha_{i}\}_{0\leq i\leq n}}=\{\beta_{i}\}_{0\leq i\leq n}$,
where $\beta_{i}:[i]\to FC_{i}$ is defined by
\[
\beta_{i}\pr{s\leq t}=\alpha_{i}\pr{\{s,\dots,i\}\supset\{t,\dots,i\}}.
\]
Conversely, given an element $\{\beta_{i}\}_{0\leq i\leq n}$ of $N\pr{\int F}\times_{\cal C_{n}}\{\sigma\}$,
we set $\psi\pr{\{\beta_{i}\}_{0\leq i\leq n}}=\{\alpha_{i}\}_{0\le i\leq n}$,
where $\alpha_{i}:P_{i}^{\op}\to FC_{i}$ is defined by
\[
\alpha_{i}\pr{S\supset T}=\beta_{i}\pr{\min S\leq\min T}.
\]

We claim that $\varphi$ and $\psi$ are inverses of each other. It
is clear that the composite $\varphi\psi$ is the identity map. It
thus suffices to show that $\psi\varphi$ is the identity map. We
must show that, given an element $\{\alpha_{i}\}_{0\leq i\leq n}$
of $\widetilde{\Un}_{\varepsilon}^{+}\pr{N\circ F}_{n}\times_{\cal C_{n}}\{\sigma\}$,
an integer $0\leq i\leq n$, and a morphism $S\supset T$ of $P_{\max i}^{\op}$,
we have
\[
\alpha_{i}\pr{S\supset T}=\alpha_{i}\pr{\{s,\dots,i\}\supset\{t,\dots,i\}},
\]
where $s=\min S$ and $t=\min T$. For this, it suffices to show that
$\alpha_{i}\pr{S\supset T}$ depends only on $s$ and $t$. This follows
from the computation
\begin{align*}
\alpha_{i}\pr{S\supset T} & =\alpha_{i}\pr{\{s\}\cup T\supset T}\circ\alpha_{i}\pr{S\supset\{s\}\cup T}\\
 & =\alpha_{i}\pr{\{s,t\}\cup T\supset\{t\}\cup T}\circ\alpha_{i}\pr{\{s\}\cup S\supset\{s\}\cup T}\\
 & =Ff_{ti}\pr{\{s,t\}\supset\{t\}}\circ Ff_{si}\pr{\alpha_{s}\pr{\{s\}\supset\{s\}}}\\
 & =Ff_{ti}\pr{\{s,t\}\supset\{t\}}.
\end{align*}

We have thus obtained a bijection $\widetilde{\Un}_{\varepsilon}\pr{N\circ F}_{n}\cong N\pr{\int F}_{n}$.
This bijection is natural in $n$ and commutes with the projections
to $N\pr{\cal C}_{n}$, so it gives rise to the desired isomorphism
of simplicial sets $\widetilde{\Un}_{\varepsilon}\pr{N\circ F}\cong N\pr{\int F}$
over $N\pr{\cal C}$. The assertion on the markings and the naturality
with respect to $F^{+}$ follows by inspection.
\end{proof}
We conclude this subsection with a remark on enrichment of the unstraightening
functor.
\begin{rem}
The unstraightening functor admits a simplicial enrichment. Let $S$
be a simplicial set and let $\phi:\widetilde{\fr C}[S]\to\cal C_{\Delta}$
be a simplicial functor. Let $\u{\Nat}^{s}\pr{-,-}$ denote the hom-simplicial
set of the simplicial category of simplicial functors $\cal C_{\Delta}\to\SS$.
Thus if $F,G:\cal C_{\Delta}\to\SS$ are simplicial functors, then
an $n$-simplex of $\u{\Nat}^{s}\pr{F,G}$ is a simplicial natural
transformation $\Delta^{n}\times F\to G$. We define a map
\[
\theta:\u{\Nat}^{s}\pr{F,G}\times\cun_{\phi}\pr F\to\cun_{\phi}\pr G
\]
as follows. The poset maps $\{\min:P_{\max i}^{\op}\to[n]\}_{0\leq i\leq n}$
induces a simplicial natural transformation $\varphi:\Psi_{n}\to\delta\pr{\Delta^{n}}$,
where $\delta\pr{\Delta^{n}}$ is the constant simplicial functor
at the object $\Delta^{n}$. The map $\theta$ maps an $n$-simplex
\[
\pr{\alpha:\Delta^{n}\times F\to G,\sigma:\Delta^{n}\to S,\beta:\Psi_{n}\to F\circ\phi\circ\widetilde{\fr C}[\sigma]}
\]
of $\u{\Nat}^{s}\pr{F,G}\times\cun_{\phi}\pr F$ to the $n$-simplex
\[
\pr{\sigma,\Psi_{n}\xrightarrow{\pr{\varphi,\beta}}\pr{\Delta^{n}\times F}\circ\phi\circ\widetilde{\fr C}[\sigma]\xrightarrow{\alpha\phi\widetilde{\fr C}[\sigma]}G\circ\phi\circ\widetilde{\fr C}[\sigma]}.
\]
The adjoint of $\theta$ determines a map 
\[
\u{\Nat}^{s}\pr{F,G}\to\Fun_{S}\pr{\cun_{\phi}\pr F,\cun_{\phi}\pr G},
\]
which endows $\cun_{\phi}$ with a simplicial enrichment. Similarly,
the marked unstraightening admits a simplicial enrichment.
\end{rem}

\subsection{\label{subsec:relnerve}Relative Nerve}

In Subsection \ref{subsec:Definition-and-Intuition}, we introduced
the unstraightening functor as a natural generalization of the Grothendieck
construction in the setting of $\infty$-categories. For unstraightening
over the nerve of ordinary categories, there is another natural generalization
of the Grothendieck construction, called the relative nerve construction.
In this subsection, we recall this construction and compare it with
the unstraightening functor.
\begin{defn}
\cite[$\S$3.2.5]{HTT} Let $\cal C$ be a small category and $F:\cal C\to\SS$
a functor. The \textbf{nerve of $\cal C$ relative to $F$} is the
simplicial set $\int F=\int_{\cal C}F$ (also denoted by $N_{F}\pr{\cal C}$
in \cite{HTT}) whose $n$-simplex is a pair $\pr{\sigma,\alpha}$,
where $\sigma:C_{0}\to\cdots\to C_{n}$ is an $n$-simplex of $N\pr{\cal C}$,
and $\alpha$ is a natural transformation $\{\Delta^{i}\to F\pr{C_{i}}\}_{0\leq i\leq n}$
of functors $[n]\to\SS$. If $u:[m]\to[n]$ is a poset map, then the
map $\pr{\int F}_{m}\to\pr{\int F}_{n}$ is given by $\pr{\sigma,\{\alpha_{i}\}_{0\leq i\leq n}}\mapsto\pr{u^{*}\sigma,\{\alpha_{u\pr j}\}_{0\leq j\leq m}}$.

Given a functor $G:\cal C\to\SS^{+}$, we let $\int^{+}G=\int_{\cal C}^{+}G\in\SS_{/\cal C}^{+}$
denote the marked simplicial set obtained from $\int G_{\flat}$ by
marking the edges $\pr{C_{0}\to C_{1},\{x_{I}\}_{\emptyset\neq I\subset[1]}}$
such that the map $x_{[1]}:\Delta^{1}\to G_{\flat}\pr{C_{1}}$ classifies
a marked edge of $G\pr{C_{1}}$.
\end{defn}

\begin{rem}
The relative nerve extends the familiar Grothendieck construction
in the following sense: If $\cal C$ is an ordinary category and $F:\cal C\to\sf{Cat}$
is a functor, there is an isomorphism of simplicial sets
\[
N\pr{\int F}\cong\int\pr{N\circ F}
\]
between the Grothendieck construction of $F$ and the relative nerve
of the composite $\cal C\xrightarrow{F}\sf{Cat}\xrightarrow{N}\SS$. 
\end{rem}

\begin{rem}
\label{rem:rel_nerve_leftadj}Let $\cal C$ be an ordinary category.
The relative nerve functor $\int:\Fun\pr{\cal C,\SS}\to\SS_{/N\pr{\cal C}}$
admits a left adjoint, given by 
\[
L\pr X=X\times_{N\pr{\cal C}}N\pr{\cal C_{/\bullet}}.
\]
To see why $L$ is a left adjoint of $\int$, let $\sigma:[n]\to\cal C$
be an $n$-simplex of $N\pr{\cal C}$. The maps $\{\Delta^{i}\to\Delta^{n}\times_{N\pr{\cal C}}N\pr{\cal C_{/C_{i}}}\}_{0\leq i\leq n}$
which classify the simplices of the form $\pr{0\to\cdots\to i,\,C_{0}\to\cdots\to C_{i}\xrightarrow{\opn{id}}C_{i}}$
determine a natural transformation $\eta:\Delta^{\bullet}\to L\pr{\sigma}\circ\sigma$
of functors $[n]\to\SS$, and $\eta$ exhibits $L\pr{\sigma}$ as
a left Kan extension of $\Delta^{\bullet}:[n]\to\SS$ along $\sigma:[n]\to\cal C$.
It follows that there is a bijection
\[
\Fun\pr{\cal C,\SS}\pr{L\pr{\sigma},F}\cong\Fun\pr{[n],\SS}\pr{\Delta^{\bullet},F\sigma}\cong\SS_{/N\pr{\cal C}}\pr{\sigma,\int F}.
\]
This proves that $L$ is a left adjoint of $\int$. Similarly, the
left adjoint $L^{+}:\SS_{/N\pr{\cal C}}^{+}\to\Fun\pr{\cal C,\SS^{+}}$
of $\int^{+}$ is given by $L^{+}\pr{\overline{X}}=\overline{X}\times_{N\pr{\cal C}^{\sharp}}N\pr{\cal C_{/\bullet}}^{\sharp}$.
\end{rem}

We now consider an extension the isomorphism of Proposition \ref{prop:unstraightening_vs_Gr_const}
by using the relative nerve functor.
\begin{defn}
Let $\cal C$ be an ordinary category. We define a natural transformation
\[
\int_{\cal C}\pr -\to\cun_{\varepsilon}\pr -,
\]
where $\varepsilon=\varepsilon_{\cal C}:\widetilde{\fr C}[N\pr{\cal C}]\to\cal C$
is the counit map, as follows. Let $F:\cal C\to\SS$ be a functor.
An $n$-simplex of $\int F$ is a pair $\pr{\sigma,\alpha}$, where
$\sigma:[n]\to\cal C$ is a functor and $\alpha:\Delta^{\bullet}\to F\sigma$
is a natural transformation of functors $[n]\to\SS.$ Precomposing
the simplicial functor $\varepsilon_{[n]}:\widetilde{\fr C}[\Delta^{n}]\to[n]$,
we obtain a natural transformation
\[
\alpha':\Delta^{\bullet}\varepsilon_{[n]}\to F\varepsilon_{\cal C}\widetilde{\fr C}[\sigma]:\fr{\widetilde{C}}[\Delta^{n}]\to\SS.
\]
Now there is a simplicial natural transformation $\chi_{n}:\Psi_{n}\to\Delta^{\bullet}\varepsilon_{[n]}$,
given by $\{\min:P_{\max i}^{\op}\to[i]\}_{0\leq i\leq n}$. Precomposing
$\chi_{n}$ to $\alpha'$, we obtain a natural transformation $\alpha'':\Psi_{n}\to F\varepsilon_{\cal C}\widetilde{\fr C}[\sigma]$.
We declare that the image of $\pr{\sigma,\alpha}$ is given by $\pr{\sigma,\alpha''}$. 

Note that the same map determines a natural transformation $\int_{\cal C}^{+}\pr -\to\cun_{\varepsilon}^{+}\pr -$.
\end{defn}

The following proposition appears as \cite[Lemma 3.2.5.17]{HTT}.
We give a quick proof (assuming some results from \cite{HTT}) for
the readers' convenience.
\begin{prop}
\label{prop:relative_nerve}Let $\cal C$ be an ordinary category. 
\begin{enumerate}
\item Let $F:\cal C\to\SS$ be a projectively fibrant functor. The map
\[
\int F\to\cun_{\varepsilon}\pr F
\]
is a covariant equivalence over $N\pr{\cal C}$.
\item Let $F:\cal C\to\SS^{+}$ be a projectively fibrant functor. The map
\[
\int^{+}F\to\cun_{\varepsilon}^{+}\pr F
\]
is a cocartesian equivalence over $N\pr{\cal C}$.
\end{enumerate}
\end{prop}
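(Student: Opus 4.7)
I would first reduce part (1) to part (2) by applying the latter to the everywhere-sharply-marked functor $F^{\sharp}:\cal C\to\SS^{+}$, then unmark. The map $\int F\to\cun_{\varepsilon}(F)$ of (1) is obtained from the map of (2) by forgetting markings, and under the forgetful functor $\SS_{/N(\cal C)^{\sharp}}^{+}\to\SS_{/N(\cal C)}$, cocartesian equivalences between sufficiently well-marked objects restrict to covariant equivalences of the underlying simplicial sets.

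For part (2), the plan is to show both $\int^{+}F$ and $\cun_{\varepsilon}^{+}(F)$ are fibrant in the cocartesian model structure on $\SS_{/N(\cal C)^{\sharp}}^{+}$, and then to apply a fiberwise detection criterion. Fibrancy of $\int^{+}F$ is the content of Lurie's HTT Proposition 3.2.5.18, which identifies $\int^{+}$ as the right adjoint in a Quillen equivalence with $\Fun\pr{\cal C,\SS^{+}}_{\mathrm{proj}}$. Fibrancy of $\cun_{\varepsilon}^{+}(F)$ follows from Theorem \ref{thm:st_un}, since the counit $\varepsilon:\widetilde{\fr C}[N(\cal C)]\to\cal C$ is a weak equivalence of simplicial categories. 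Then, by the fiberwise criterion for cocartesian equivalences between fibrant objects (HTT Proposition 3.3.1.5), it suffices to verify that for each object $C\in\cal C$, the induced map on fibers $(\int^{+}F)_{C}\to(\cun_{\varepsilon}^{+}F)_{C}$ is a categorical equivalence of marked simplicial sets.

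The fibers can be identified explicitly. The fiber $(\int^{+}F)_{C}$ is naturally isomorphic to $F(C)$: an $n$-simplex over the degenerate $n$-simplex at $C$ is a natural transformation $\Delta^{\bullet}\to\delta F(C)$ of functors $[n]\to\SS^{+}$, which by Yoneda reduces to a single marked simplex $\Delta^{n}\to F(C)$; the markings match by inspection. The fiber $(\cun_{\varepsilon}^{+}F)_{C}$ has $n$-simplices equal to simplicial natural transformations $\Psi_{n}\to\delta F(C)$, and the comparison map is induced by precomposition with $\chi_{n}:\Psi_{n}\to\delta\Delta^{n}$. It therefore remains to show that precomposition with $\chi_{n}$ induces a weak equivalence onto the space of such natural transformations.

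This reduces to a model-categorical statement in $\Fun^{s}\pr{\widetilde{\fr C}[\Delta^{n}],\SS}_{\mathrm{proj}}$: the map $\chi_{n}$ is a pointwise weak equivalence, because at each vertex $i$ the map $N\pr{P_{\max i}}^{\op}\to\Delta^{n}$ induced by $\min$ is a map between weakly contractible Kan complexes (the domain has terminal object $\{i\}\in P_{\max i}^{\op}$, and $\Delta^n$ is a simplex). Provided $\Psi_{n}$ is projectively cofibrant, precomposition with $\chi_{n}$ induces a weak equivalence on mapping simplicial sets into any projectively fibrant target, in particular into $\delta F(C)$. I expect the main obstacle to be the verification of projective cofibrancy of $\Psi_n$ and the parallel bookkeeping for the marked version: one would build $\Psi_{n}$ by a cellular filtration indexed by subsets of $[n]$, identifying the latching maps as pushouts of representables, and then check that the marking conventions of Definition \ref{def:St_Un} are respected throughout. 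Once this is in place, the fiberwise comparison is a categorical equivalence and the two fiberwise-fibrant objects are therefore cocartesian equivalent.
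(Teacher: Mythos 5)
You take a genuinely different route from the paper. The paper's proof works on the \emph{left} adjoints: it checks that the natural transformation $\cst_\varepsilon^{+}(-)\to L^{+}(-)$ of left Quillen functors (both left Quillen by Theorem \ref{thm:st_un} and HTT 3.2.5.18) is an isomorphism at the objects $(\Delta^{0})^{\sharp},(\Delta^{1})^{\sharp},(\Delta^{n})^{\flat}$ --- because $\int^{+}\to\cun_\varepsilon^{+}$ is bijective on vertices, edges, and marked edges --- and then observes that this suffices, since $\SS_{/N(\cal C)}^{+}$ is generated under homotopy colimits by those objects and left Quillen functors commute with homotopy colimits. Your plan instead goes through fibrancy and a fiberwise comparison. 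The outer structure is sound, and the fiber identifications are correct: $(\int^{+}F)_{C}\cong F(C)$ and $(\cun_\varepsilon^{+}F)_{C}\cong\cun_{\Delta^{0}}^{+}(F(C))$, with the comparison between them being exactly the map of Corollary~\ref{cor:3.2.1.14}.

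But the last step has a genuine gap. You want to conclude that the map of simplicial sets $F(C)\to\cun_{\Delta^{0}}(F(C))$ is a categorical equivalence from the fact that $\chi_{n}\colon\Psi_{n}\to\delta\Delta^{n}$ is a pointwise weak equivalence and $\Psi_{n}$ is projectively cofibrant. However, the $n$-simplices of the two fibers are the \emph{zero}-simplices of the simplicial mapping sets $\u{\Nat}^{s}(\delta\Delta^{n},\delta F(C))$ and $\u{\Nat}^{s}(\Psi_{n},\delta F(C))$, not those mapping sets themselves. A homotopy equivalence between these Kan complexes (which is the best a weak equivalence between cofibrant sources into a fibrant target can yield --- and note you would also need $\delta\Delta^{n}$ to be projectively cofibrant, which you do not address) controls neither their zero-simplices nor the way those sets assemble as $n$ varies, so it does not give the desired weak equivalence of simplicial sets. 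What is actually needed is that the cosimplicial simplicial set $Q^{\bullet}$ corepresenting $\cun_{\Delta^{0}}$ (so that $\cun_{\Delta^{0}}(\cal X)_{n}=\Hom_{\SS}(Q^{n},\cal X)$) is a cosimplicial resolution of $\Delta^{\bullet}$; this is precisely HTT Proposition 3.2.1.14, which in the paper is Corollary~\ref{cor:3.2.1.14} and is there derived \emph{from} Proposition~\ref{prop:relative_nerve}. To avoid circularity you would have to invoke HTT~3.2.1.14 directly, where it is proved independently via the necklace calculus. With that in hand your fiberwise strategy does go through; the cofibrancy of $\Psi_{n}$ and the cellular filtration you anticipate as the main obstacle are not the missing ingredient.
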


\begin{proof}
We will only prove part (2), for part (1) can be proved similarly.
Let $L^{+}:\SS_{/N\pr{\cal C}}^{+}\to\Fun\pr{\cal C,\SS^{+}}$ denote
the left adjoint of $\int_{\cal C}\pr -$. By \cite[Proposition 3.2.5.18]{HTT},
the functor $L^{+}$ is left Quillen. It will therefore suffice to
show that the natural transformation
\[
\alpha:\cst_{\varepsilon}^{+}\pr -\to L^{+}
\]
is a natural weak equivalence. Since $\SS_{/N\pr{\cal C}}^{+}$ is
generated under homotopy colimits by $\pr{\Delta^{0}}^{\sharp},\pr{\Delta^{1}}^{\sharp},\pr{\Delta^{n}}^{\flat}$,
it suffices to check that $\alpha$ is a weak equivalence at these
objects. Since the map $\int_{\cal C}^{+}\pr -\to\cun_{\varepsilon}^{+}\pr -$
is bijective on vertices, edges, and marked edges, $\alpha$ is an
isomorphism in these cases.
\end{proof}
In the case where $\cal C=[0]$, the relative nerve functor is naturally
isomorphic to the identity functor. Thus we obtain:
\begin{cor}
\cite[Proposition 3.2.1.14]{HTT}\label{cor:3.2.1.14} Let $\cal X$
be an $\infty$-category. The map
\[
\cal X^{\natural}\to\cun_{\Delta^{0}}^{+}\pr{\cal X^{\natural}}
\]
is a weak equivalence of marked simplicial sets. Consequently, the
map
\[
\cal X\to\widetilde{\Un}_{\Delta^{0}}\pr{\cal X}
\]
is an equivalence of $\infty$-categories.
\end{cor}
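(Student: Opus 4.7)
The strategy is to specialize Proposition~\ref{prop:relative_nerve}(2) to the case $\cal C = [0]$. In this case, the counit $\varepsilon : \widetilde{\fr C}[\Delta^0] \to [0]$ is the identity, so $\cun^+_\varepsilon$ agrees with $\cun^+_{\Delta^0}$. Furthermore, under the canonical identifications $\Fun([0], \SS^+) \simeq \SS^+$ and $\SS^+_{/\Delta^0} \simeq \SS^+$, the relative nerve functor $\int^+_{[0]}$ becomes the identity on $\SS^+$: an $n$-simplex of $\int^+_{[0]} F$ is a natural transformation $\{\alpha_i : \Delta^i \to F(0)\}_{0 \leq i \leq n}$ of functors $[n] \to \SS$, which is completely determined by $\alpha_n : \Delta^n \to F(0)$, the other $\alpha_i$ being its restrictions. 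The markings match under this identification as well. Finally, projective fibrancy of a functor $[0] \to \SS^+$ is the same as pointwise fibrancy, which is the same as fibrancy in $\SS^+$.

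Now given an $\infty$-category $\cal X$, the object $\cal X^\natural \in \SS^+$ is fibrant, so the corresponding functor $[0] \to \SS^+$ is projectively fibrant. Applying Proposition~\ref{prop:relative_nerve}(2), the natural map
\[
\cal X^\natural \;\cong\; \int^+_{[0]} \cal X^\natural \;\longrightarrow\; \cun^+_{\Delta^0}(\cal X^\natural)
\]
is a cocartesian equivalence over $N([0]) = \Delta^0$. Since the cocartesian model structure on $\SS^+_{/\Delta^0}$ coincides with the standard cartesian model structure on $\SS^+$, this is precisely a weak equivalence of marked simplicial sets, proving the first assertion.

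For the consequence, both $\cal X^\natural$ and $\cun^+_{\Delta^0}(\cal X^\natural)$ are fibrant in $\SS^+$ (the latter by Theorem~\ref{thm:st_un}), so passing to underlying simplicial sets of a weak equivalence between them yields a categorical equivalence (via the standard fact, cf.\ \cite[Proposition 3.1.3.5]{HTT}, that the forgetful functor reflects weak equivalences between fibrant objects). Since the underlying simplicial set of $\cun^+_{\Delta^0}(\cal X^\natural)$ is $\widetilde{\Un}_{\Delta^0}(\cal X)$ by definition, we conclude that $\cal X \to \widetilde{\Un}_{\Delta^0}(\cal X)$ is an equivalence of $\infty$-categories. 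There is no serious obstacle: the substantive content is packaged in Proposition~\ref{prop:relative_nerve}, and the only verification required is the tautological identification of the relative nerve with the identity over $[0]$.
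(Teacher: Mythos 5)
Your proof is correct and takes essentially the same approach as the paper: the paper's argument is precisely to observe that the relative nerve over $[0]$ is the identity and then invoke Proposition~\ref{prop:relative_nerve}. You spell out the identification $\int^+_{[0]} \cong \mathrm{id}$ and the passage from the marked equivalence to the underlying categorical equivalence in more detail, but the content is the same.
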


\subsection{\label{subsec:unstraightening_process}Understanding the Unstraightening
Process}

Given a cocartesian fibration $p:X\to S$ of simplicial sets and an
edge $f:s\to s'$ in $S$, there is a functor $f_{!}:X_{s}\to X_{s'}$,
which is well-defined up to natural equivalence. The functor $f_{!}$
is defined as follows: Choose a cocartesian natural transformation
$X_{s}\times\Delta^{1}\to X$ rendering the diagram % https://q.uiver.app/?q=WzAsNSxbMCwwLCJYX3NcXHRpbWVzIFxcezBcXH0iXSxbMiwwLCJYIl0sWzIsMSwiUyJdLFswLDEsIlhfc1xcdGltZXNcXERlbHRhIF4xIl0sWzEsMSwiXFxEZWx0YV4xIl0sWzAsMV0sWzEsMl0sWzAsM10sWzMsNF0sWzQsMiwiZiIsMl0sWzMsMSwiaCIsMSx7InN0eWxlIjp7ImJvZHkiOnsibmFtZSI6ImRhc2hlZCJ9fX1dXQ==
\[\begin{tikzcd}
	{X_s\times \{0\}} && X \\
	{X_s\times\Delta ^1} & {\Delta^1} & S
	\arrow[from=1-1, to=1-3]
	\arrow[from=1-3, to=2-3]
	\arrow[from=1-1, to=2-1]
	\arrow[from=2-1, to=2-2]
	\arrow["f"', from=2-2, to=2-3]
	\arrow["h"{description}, dashed, from=2-1, to=1-3]
\end{tikzcd}\]commutative, and then define $f_{!}=h\vert X_{s}\times\{1\}$. In
general, we cannot hope to construct the functor $f_{!}$ ``by hand.''
But for some special class of cocartesian fibrations, there is a canonical
choice for the functor $f_{!}$. In this subsection, we will show
that cocartesian fibrations arising from the unstraightening functors
form one such class. 

Let $S$ be a simplicial set, let $\phi:\widetilde{\fr C}[S]\to\cal C_{\Delta}$
be a simplicial functor, let $f:s\to s'$ be an edge of $S$, and
let $F:\cal C_{\Delta}\to\SS^{+}$ be a simplicial functor. We define
a morphism of simplicial sets
\[
h:\cun_{\phi}\pr{F_{\flat}}_{s}\times\Delta^{1}\to\cun_{\phi}\pr{F_{\flat}}
\]
as follows. An $n$-simplex of $\cun_{\phi}\pr{F_{\flat}}_{s}\times\Delta^{1}$
is a pair $\pr{\{\alpha_{i}:N\pr{P_{\max i}^{\op}}\to F_{\flat}\pr{\phi\pr s}\}_{0\leq i\leq n},u}$,
where the collection $\{\alpha_{i}\}_{0\leq i\leq n}$ makes the diagram
% https://q.uiver.app/?q=WzAsNCxbMCwwLCJOKFBfe1xcbWF4IGl9XlxcbWF0aHJte29wfSkgXFx0aW1lcyBOKFBfe2lqfV5cXG1hdGhybXtvcH0pIl0sWzEsMCwiTihQX3tcXG1heCBqfV5cXG1hdGhybXtvcH0pIl0sWzAsMSwiTihQXntcXG1hdGhybXtvcH19X3tcXG1heCBpfSkiXSxbMSwxLCJGX1xcZmxhdChcXHBoaShzKSkiXSxbMCwxLCJcXGN1cCJdLFswLDIsIlxcb3BlcmF0b3JuYW1le3ByfSIsMl0sWzEsMywiXFxhbHBoYV9qIl0sWzIsMywiXFxhbHBoYV9pIiwyXV0=
\[\begin{tikzcd}
	{N(P_{\max i}^\mathrm{op}) \times N(P_{ij}^\mathrm{op})} & {N(P_{\max j}^\mathrm{op})} \\
	{N(P^{\mathrm{op}}_{\max i})} & {F_\flat(\phi(s))}
	\arrow["\cup", from=1-1, to=1-2]
	\arrow["{\operatorname{pr}}"', from=1-1, to=2-1]
	\arrow["{\alpha_j}", from=1-2, to=2-2]
	\arrow["{\alpha_i}"', from=2-1, to=2-2]
\end{tikzcd}\]commutative for each pair of integers $0\le i\leq j\leq n$, and $u:[n]\to[1]$
is a poset map. We set 
\[
h\pr{\{\alpha_{i}\}_{0\leq i\leq n},u}=\pr{u^{*}f,\{\beta_{i}\}_{0\leq i\leq n}},
\]
where 
\[
\beta_{i}=\begin{cases}
\alpha_{i} & \text{if }u\pr i=0,\\
F_{\flat}\pr{\phi\pr f}\circ\alpha_{i} & \text{if }u\pr i=1.
\end{cases}
\]
Here we wrote $f$ for the morphism of $\widetilde{\fr C}[S]$ determined
by the edge $f\in S_{1}$ by abusing notation. This defines an explicit
functor $\cun_{\phi}\pr{F_{\flat}}_{s}\times\Delta^{1}\to\cun_{\phi}\pr{F_{\flat}}$
which fits into the commutative diagram% https://q.uiver.app/?q=WzAsNCxbMCwxLCJcXHdpZGV0aWxkZXtcXG9wZXJhdG9ybmFtZXtVbn19XitfXFxwaGkoRilfc1xcdGltZXMgKFxcRGVsdGFeMSleXFxzaGFycCJdLFsxLDEsIlNeXFxzaGFycCJdLFswLDAsIlxcd2lkZXRpbGRle1xcb3BlcmF0b3JuYW1le1VufX1eK19cXHBoaShGKV9zXFx0aW1lcyAoXFx7MFxcfSleXFxzaGFycCJdLFsxLDAsIlxcd2lkZXRpbGRle1xcb3BlcmF0b3JuYW1le1VufX1eK19cXHBoaShGKSJdLFswLDFdLFsyLDBdLFsyLDNdLFszLDFdLFswLDMsIiIsMSx7InN0eWxlIjp7ImJvZHkiOnsibmFtZSI6ImRhc2hlZCJ9fX1dXQ==
\[\begin{tikzcd}
	{\widetilde{\operatorname{Un}}^+_\phi(F)_s\times (\{0\})^\sharp} & {\widetilde{\operatorname{Un}}^+_\phi(F)} \\
	{\widetilde{\operatorname{Un}}^+_\phi(F)_s\times (\Delta^1)^\sharp} & {S^\sharp.}
	\arrow[from=2-1, to=2-2]
	\arrow[from=1-1, to=2-1]
	\arrow[from=1-1, to=1-2]
	\arrow[from=1-2, to=2-2]
	\arrow[dashed, from=2-1, to=1-2]
\end{tikzcd}\]We always understand that the functor $f_{!}:\cun_{\phi}^{+}\pr F_{s}\to\cun_{\phi}^{+}\pr F_{s'}$
is obtained as the restriction $h\vert\cun_{\phi}^{+}\pr F_{s}\times\{1\}^{\sharp}$.
This will ensure the following:
\begin{prop}
\label{prop:rectification}Let $S$ be a simplicial set, let $\phi:\widetilde{\fr C}[S]\to\cal C_{\Delta}$
be a simplicial functor, let $f:s\to s'$ be an edge in $S$. The
assignment $F\mapsto\pr{f_{!}:\cun_{\phi}^{+}\pr F_{s}\to\cun_{\phi}^{+}\pr F_{s'}}$
defines a functor
\[
\varepsilon_{f}:\Fun^{s}\pr{\cal C_{\Delta},\SS^{+}}\to\Fun\pr{[1],\SS^{+}}.
\]
Moreover, the functor $\varepsilon_{f}$ admits a natural transformation
from the functor
\[
F\mapsto F\pr{\phi\pr f}
\]
whose components at projectively fibrant functors are weak equivalences.
\end{prop}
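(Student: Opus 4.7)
The proof naturally splits into three parts. First I would verify the functoriality of $\varepsilon_{f}$. The map $h:\cun_{\phi}^{+}\pr F_{s}\times\pr{\Delta^{1}}^{\sharp}\to\cun_{\phi}^{+}\pr F$ constructed just above the proposition is built from the simplices of $\cun_{\phi}\pr{F_{\flat}}$ together with post-composition by $F_{\flat}\pr{\phi\pr f}$, both of which are functorial in $F$. Restricting $h$ along $\{1\}^{\sharp}\hookrightarrow\pr{\Delta^{1}}^{\sharp}$ gives $f_{!}$, so $\varepsilon_{f}$ acquires a functor structure.

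For the natural transformation, I would construct, for each vertex $v\in S$ (in particular $v=s,s'$), a canonical map $\iota_{v}:F\pr{\phi\pr v}\to\cun_{\phi}^{+}\pr F_{v}$. On $n$-simplices, $\iota_{v}$ sends $x:\Delta^{n}\to F_{\flat}\pr{\phi\pr v}$ to the collection with components $\alpha_{i}\pr S=x\pr{\min S}$. Since the $n=1$ component is $\alpha_{1}=x:\Delta^{1}\to F_{\flat}\pr{\phi\pr v}$, the definition of the marking on $\cun_{\phi}^{+}\pr F$ shows that $\iota_{v}$ sends marked edges to marked edges. I would then verify that the square
\[\begin{tikzcd}
{F(\phi(s))} && {F(\phi(s'))} \\
{\cun_{\phi}^{+}(F)_{s}} && {\cun_{\phi}^{+}(F)_{s'}}
\arrow["{F(\phi(f))}", from=1-1, to=1-3]
\arrow["{f_{!}}"', from=2-1, to=2-3]
\arrow["{\iota_{s}}"', from=1-1, to=2-1]
\arrow["{\iota_{s'}}", from=1-3, to=2-3]
\end{tikzcd}\]
commutes. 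Unwinding the formula for $f_{!}$ (the restriction of $h$ to the constant map $[n]\to[1]$ at $1$), both composites send $x:\Delta^{n}\to F_{\flat}\pr{\phi\pr s}$ to the collection $\br{i\mapsto\pr{S\mapsto F_{\flat}\pr{\phi\pr f}\pr{x\pr{\min S}}}}$. This produces a morphism from $F\pr{\phi\pr f}$ to $\varepsilon_{f}\pr F$ in $\Fun\pr{[1],\SS^{+}}$, manifestly natural in $F$.

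Finally, for the weak equivalence claim, I would use a base-change identification: the pullback $\cun_{\phi}^{+}\pr F_{v}=\cun_{\phi}^{+}\pr F\times_{S}\{v\}$ is canonically isomorphic to $\cun_{\Delta^{0}}^{+}\pr{F\pr{\phi\pr v}}$, with $F\pr{\phi\pr v}$ viewed as a constant simplicial functor $[0]\to\SS^{+}$; under this identification $\iota_{v}$ becomes the canonical inclusion of Corollary \ref{cor:3.2.1.14}. When $F$ is projectively fibrant, $F\pr{\phi\pr v}$ is fibrant in $\SS^{+}$, so that corollary shows $\iota_{v}$ is a weak equivalence of marked simplicial sets. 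Since weak equivalences in $\Fun\pr{[1],\SS^{+}}_{\mathrm{proj}}$ are detected pointwise, this concludes the proof. The main (modest) obstacle is the commutativity verification of the central square, which rests on the specific formulas for $h$ and $\iota_{v}$; everything else reduces to cited facts and standard base-change arguments for unstraightening.
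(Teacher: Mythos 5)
Your proof is correct and follows the same route as the paper's, which consists of two sentences: that functoriality is evident from the construction of $h$, and that the weak-equivalence claim follows from the fiber isomorphism $\cun_{\phi}^{+}\pr F_{s}\cong\cun_{\Delta^{0}}^{+}\pr{F\pr{\phi\pr s}}$ together with Corollary \ref{cor:3.2.1.14}. Your contribution is to make explicit what the paper leaves implicit: you write down the comparison map $\iota_{v}$ (which is precisely the $\chi_{n}$-induced map from Subsection \ref{subsec:relnerve} specialized to $\cal C=[0]$), verify by direct computation that the required square commutes, and spell out how projective fibrancy of $F$ feeds into Corollary \ref{cor:3.2.1.14}. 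This is a faithful fleshing-out of the paper's terse argument rather than a different proof.
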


\begin{proof}
The first assertion is evident from the construction. The second assertion
follows from the isomorphism $\cun_{\phi}^{+}\pr F_{s}\cong\cun_{\Delta^{0}}^{+}\pr{F\pr{\phi\pr s}}$
and Corollary \ref{cor:3.2.1.14}. 
\end{proof}

\subsection{\label{subsec:comparison_with_Lurie}Equivalence with Lurie's Straightening
Functor}

In Subsection \ref{subsec:Definition-and-Intuition}, we defined the
unstraightening functor. In this subsection, we will justify our terminology
by showing that our unstraightening functor is right adjoint to Lurie's
straightening functor, defined in \cite[$\S$2.2.1, 3.2.1]{HTT}. Throughout
this subsection, we will work with a fixed simplicial set $S$ and
a simplicial functor $\phi:\widetilde{\fr C}[S]\to\cal C_{\Delta}$.

\subsubsection{Unmarked Case}

Let $p:X\to S$ be a morphism of simplicial sets. Lurie's model of
the straightening of $p$ with respect to $\phi$, which we temporarily
denote by $\cst'_{\phi}\pr p$, is defined\footnote{More precisely, the functor $\cst'_{\phi}\pr -$ is equal to the composite
\[
\SS_{/S}\xrightarrow[\cong]{\pr -^{\op}}\SS_{/S^{\op}}\xrightarrow{\St_{\widetilde{\phi}^{\op}}}\Fun^{s}\pr{\widetilde{\cal C}_{\Delta},\SS}\xrightarrow[\cong]{\pr -^{\circ}}\Fun^{s}\pr{\cal C_{\Delta},\SS},
\]
where $\St_{\widetilde{\phi}^{\op}}$ is defined as in {\cite[$\S$2.2.1]{HTT}},
and for $F\in\Fun^{s}\pr{\widetilde{\cal C}_{\Delta},\SS}$, the simplicial
functor $\pr F^{\circ}$ is defined by $\pr F^{\circ}\pr C=F\pr C^{\op}$.
A similar remark applies to the marked case (Subsubsection \ref{subsec:Marked-Case}).} as the composite
\[
\cal C_{\Delta}\to\widetilde{\fr C}[X^{\lcone}]\amalg_{\widetilde{\fr C}[X]}\cal C_{\Delta}\xrightarrow{\pr{\widetilde{\fr C}[X^{\lcone}]\amalg_{\widetilde{\fr C}[X]}\cal C_{\Delta}}\pr{\infty,-}}\SS,
\]
where $\infty\in X^{\lcone}$ denotes the cone point. We claim that
$\cst'_{\phi}$ is a left adjoint of $\cun_{\phi}$, so that we may
take $\cst_{\phi}=\cst'_{\phi}$. Given a simplicial functor $F:\cal C_{\Delta}\to\SS$,
let $\cal C_{\Delta}^{F}$ denote the simplicial category obtained
from $\cal C_{\Delta}$ by adjoining a single object $\infty$ to
$\cal C_{\Delta}$, with hom-simplicial sets given by
\begin{align*}
\cal C_{\Delta}^{F}\pr{\infty,C} & =F\pr C,\\
\cal C_{\Delta}^{F}\pr{C,\infty} & =\emptyset,\\
\cal C_{\Delta}^{F}\pr{\infty,\infty} & =\Delta^{0},
\end{align*}
for $C\in\cal C_{\Delta}$. This simplicial category has the following
universal property, which follows directly from the definitions:
\begin{prop}
\label{prop:precosheaf_by_cat}Let $F:\cal A\to\SS$ and $\varphi:\cal A\to\cal B$
be simplicial functors of simplicial categories, and let $B_{\infty}\in\cal B$
be an object. Suppose we are given a collection of maps $S=\{FA\to\cal B\pr{B_{\infty},\varphi\pr A}\}_{A\in\cal A}$
of simplicial sets. The following conditions are equivalent:
\begin{enumerate}
\item The set $S$ determines a simplicial functor $\cal A^{F}\to\cal B$
which extends $\varphi$.
\item The set $S$ determines a simplicial natural transformation $F\to\cal B\pr{B_{\infty},-}\circ\varphi$.
\end{enumerate}
\end{prop}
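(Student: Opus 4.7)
The plan is to unwind both conditions into the same concrete data. First, I would observe that specifying a simplicial functor $\Phi:\cal A^F\to\cal B$ which extends $\varphi$ amounts to specifying (i) an object $\Phi(\infty)\in\cal B$, and (ii) for each $A\in\cal A$, a map of simplicial sets $\Phi_{\infty,A}:\cal A^F(\infty,A)=FA\to\cal B(\Phi(\infty),\varphi(A))$, subject to compatibility with composition and units. The hypothesis that the collection $S$ gives these maps forces $\Phi(\infty)=B_\infty$ and $\Phi_{\infty,A}=S_A$.

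Next I would enumerate the composition constraints imposed on such a $\Phi$. Since $\cal A^F(A,\infty)=\emptyset$ for every $A\in\cal A$, no composition of the form $A\to\infty\to ?$ exists, so no constraint arises from such compositions. Since $\cal A^F(\infty,\infty)=\Delta^0$, the composition $\cal A^F(\infty,\infty)\times\cal A^F(\infty,A)\to\cal A^F(\infty,A)$ is just the identity map on $FA$ and is automatic. The unit at $\infty$ is the unique vertex of $\cal A^F(\infty,\infty)$, and its compatibility is likewise automatic. The only substantive constraint comes from compositions $\cal A^F(A,A')\times\cal A^F(\infty,A)\to\cal A^F(\infty,A')$, which (via $\Phi|_{\cal A}=\varphi$ and $\Phi_{\infty,\bullet}=S_\bullet$) says that the square
\[
\begin{tikzcd}
\cal A(A,A')\times FA \ar[r]\ar[d, "\varphi\times S_A"'] & FA' \ar[d, "S_{A'}"] \\
\cal B(\varphi A,\varphi A')\times\cal B(B_\infty,\varphi A) \ar[r, "\circ"] & \cal B(B_\infty,\varphi A')
\end{tikzcd}
\]
commutes for every pair $A,A'\in\cal A$, where the top arrow is the structure map of $F$.

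Finally, I would recognize that the commutativity of this square is exactly the naturality condition defining a simplicial natural transformation $F\to\cal B(B_\infty,-)\circ\varphi$ with components $\{S_A\}_{A\in\cal A}$: the structure map of the target functor $\cal B(B_\infty,-)\circ\varphi$ is, by definition, the composite of $\varphi:\cal A(A,A')\to\cal B(\varphi A,\varphi A')$ with postcomposition in $\cal B$. This yields the equivalence of (1) and (2). There is no real obstacle here; the only care needed is the bookkeeping to check that the above is the unique nontrivial composition constraint, which is immediate from $\cal A^F(A,\infty)=\emptyset$ and $\cal A^F(\infty,\infty)=\Delta^0$.
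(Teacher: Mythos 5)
Your proof is correct and is precisely the unwinding that the paper leaves implicit: the paper states this proposition with no proof, saying it "follows directly from the definitions," and your argument is the careful bookkeeping that backs up that claim. Your identification of the single nontrivial constraint (the compatibility of $\Phi$ with the compositions $\cal A^F(A,A')\times\cal A^F(\infty,A)\to\cal A^F(\infty,A')$) and your observation that the emptiness of $\cal A^F(A,\infty)$ and triviality of $\cal A^F(\infty,\infty)$ dispose of everything else are exactly right, and the resulting square is indeed the simplicial naturality square for $F\to\cal B(B_\infty,-)\circ\varphi$.
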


\begin{cor}
\label{cor:lan_pushout}Let $\cal A$ and $\cal B$ be small simplicial
categories and let $F:\cal A\to\SS$ and $\varphi:\cal A\to\cal B$
be simplicial functors. Let $\eta:F\to\pr{\varphi_{!}F}\circ\varphi$
be a simplicial natural transformation which exhibits $\varphi_{!}F$
as a left Kan extension of $F$ along $\varphi$, and by abusing notation
let $\eta:\cal A^{F}\to\cal B^{\varphi_{!}F}$ denote the induced
simplicial functor (Proposition \ref{prop:precosheaf_by_cat}). The
square % https://q.uiver.app/#q=WzAsNCxbMCwwLCJcXG1hdGhjYWx7QX0iXSxbMCwxLCJcXG1hdGhjYWx7Qn0iXSxbMSwwLCJcXG1hdGhjYWx7QX1eRiJdLFsxLDEsIlxcbWF0aGNhbHtCfV57XFx2YXJwaGlfIUZ9Il0sWzAsMSwiXFx2YXJwaGkiLDJdLFswLDJdLFsyLDMsIlxcZXRhIl0sWzEsM11d
\[\begin{tikzcd}
	{\mathcal{A}} & {\mathcal{A}^F} \\
	{\mathcal{B}} & {\mathcal{B}^{\varphi_!F}}
	\arrow["\varphi"', from=1-1, to=2-1]
	\arrow[from=1-1, to=1-2]
	\arrow["\eta", from=1-2, to=2-2]
	\arrow[from=2-1, to=2-2]
\end{tikzcd}\]of simplicial categories is a pushout.
\end{cor}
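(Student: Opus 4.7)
The plan is to verify the universal property of the pushout directly, using Proposition \ref{prop:precosheaf_by_cat} to translate simplicial functors out of the ``coned-off'' categories $\cal A^F$ and $\cal B^{\varphi_{!}F}$ into natural transformations with values in a representable simplicial set, and then invoking the universal property of left Kan extensions.

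More precisely, fix a small simplicial category $\cal D$ and consider a cocone on the span, i.e., a pair $(\psi, \rho)$ consisting of a simplicial functor $\psi : \cal B \to \cal D$ and a simplicial functor $\rho : \cal A^F \to \cal D$ satisfying $\rho|_{\cal A} = \psi \circ \varphi$. We must produce a unique simplicial functor $\widetilde{\rho} : \cal B^{\varphi_{!}F} \to \cal D$ such that $\widetilde{\rho}|_{\cal B} = \psi$ and $\widetilde{\rho} \circ \eta = \rho$. Set $D_{\infty} = \rho(\infty) \in \cal D$. Any extension $\widetilde{\rho}$ of $\psi$ to $\cal B^{\varphi_{!}F}$ necessarily sends $\infty$ to some object of $\cal D$, and the compatibility $\widetilde{\rho} \circ \eta = \rho$ forces $\widetilde{\rho}(\infty) = D_{\infty}$.

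By Proposition \ref{prop:precosheaf_by_cat}, the datum of the simplicial functor $\rho$ extending $\psi \circ \varphi$ with $\rho(\infty) = D_{\infty}$ is equivalent to a simplicial natural transformation
\[
\alpha : F \longrightarrow \cal D(D_{\infty}, -) \circ \psi \circ \varphi.
\]
Similarly, a hypothetical $\widetilde{\rho}$ extending $\psi$ with $\widetilde{\rho}(\infty) = D_{\infty}$ is equivalent to a simplicial natural transformation
\[
\widetilde{\alpha} : \varphi_{!}F \longrightarrow \cal D(D_{\infty}, -) \circ \psi.
\]
The compatibility $\widetilde{\rho} \circ \eta = \rho$ translates, under these bijections, precisely to the condition that $\alpha$ factors as $\widetilde{\alpha}\varphi \circ \eta$. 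Since $\eta : F \to (\varphi_{!}F) \circ \varphi$ exhibits $\varphi_{!}F$ as the left Kan extension of $F$ along $\varphi$, there is a unique such $\widetilde{\alpha}$, hence a unique $\widetilde{\rho}$. This verifies the universal property.

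The main point to watch is simply that the bookkeeping of simplicial naturality lines up: one must check that precomposition with $\eta$ on the ``coned-off'' side corresponds, under the bijection of Proposition \ref{prop:precosheaf_by_cat}, to precomposition with $\eta$ at the level of natural transformations into $\cal D(D_{\infty}, -) \circ \psi \circ \varphi$. This is immediate from the construction of the bijection in Proposition \ref{prop:precosheaf_by_cat} (the extension reads off the transformation by restricting hom-simplicial sets out of $\infty$, and this restriction is functorial in the choice of extension). No substantial obstacle is expected beyond this compatibility check.
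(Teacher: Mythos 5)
Your proof is correct and follows exactly the route the paper intends: the paper gives no explicit proof, labeling the statement a corollary of Proposition \ref{prop:precosheaf_by_cat}, and your verification of the pushout's universal property by translating through that proposition and invoking the universal property of the left Kan extension is the obvious intended argument.
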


Now by \cite[Proposition 4.3]{DuggerNecklace}, we have $\widetilde{\fr C}[X^{\lcone}]=\widetilde{\fr C}[X]^{\Psi_{X}}$,
where the simplicial functor $\Psi_{X}:\widetilde{\fr C}[X]\to\SS$
is defined by $\Psi_{X}\pr x=\widetilde{\fr C}[X^{\lcone}]\pr{\infty,x}$.
Using Corollary \ref{cor:lan_pushout}, we deduce that the simplicial
natural transformation $\Psi_{X}\to\cst'_{\phi}\pr p\circ\phi\circ\widetilde{\fr C}[p]$
exhibits $\cst'_{\phi}\pr p$ as a left Kan extension of $\Psi_{X}$
along $\phi\circ\widetilde{\fr C}[p]$. Since $\Psi_{\Delta^{n}}=\Psi_{n}$,
we thus obtain, for each $n$-simplex $\sigma:\Delta^{n}\to S$ and
each simplicial functor $F\in\Fun^{s}\pr{\cal C_{\Delta},\SS}$, a
chain of bijections
\begin{align*}
\Fun^{s}\pr{\cal C_{\Delta},\SS}\pr{\cst'_{\phi}\pr{\sigma},F} & \cong\Fun^{s}\pr{\widetilde{\fr C}[\Delta^{n}],\SS}\pr{\Psi_{n},F\circ\phi\circ\widetilde{\fr C}[\sigma]}\\
 & \cong\SS_{/S}\pr{\sigma,\cun_{\phi}\pr F}.
\end{align*}
The resulting bijection 
\[
\theta_{\sigma}:\Fun^{s}\pr{\cal C_{\Delta},\SS}\pr{\cst'_{\phi}\pr{\sigma},F}\cong\SS_{/S}\pr{\sigma,\cun_{\phi}\pr F}
\]
is natural in the simplex $\sigma$, so there is a unique natural
bijection
\[
\Fun^{s}\pr{\cal C_{\Delta},\SS}\pr{\cst'_{\phi}\pr -,F}\cong\SS_{/S}\pr{-,\cun_{\phi}\pr F}
\]
of functors $\pr{\SS_{/S}}^{\op}\to\Set$ which extends the family
$\{\theta_{\sigma}\}_{\sigma:\Delta^{n}\to S}$. This natural bijection
is also natural in $F$. Thus we have proved that $\cun_{\phi}$ is
a right adjoint of $\cst'_{\phi}$. 

\subsubsection{\label{subsec:Marked-Case}Marked Case}

Let $p:\overline{X}=\pr{X,M}\to S^{\sharp}$ be a morphism of marked
simplicial sets. Lurie's model of the straightening of $p$ with respect
to $\phi$, which we temporarily denote by $\cst{}_{\phi}^{\p+}\pr p$,
is defined as follows: The composite
\[
\cal C_{\Delta}\xrightarrow{\cst_{\phi}^{\p+}\pr p}\SS^{+}\xrightarrow{\pr -_{\flat}}\SS
\]
is equal to the unmarked straightening $\cst_{\phi}\pr p$. For each
object $C\in\cal C$, the simplicial set $\cst_{\phi}\pr p\pr C=\pr{\widetilde{\fr C}[X^{\lcone}]\amalg_{\widetilde{\fr C}[X]}\cal C_{\Delta}}\pr{\infty,C}$
carries the marking which is minimal with respect to the following
requirements:
\begin{itemize}
\item $\cst_{\phi}^{\p+}\pr p$ is a simplicial functor from $\cal C_{\Delta}$
to $\SS^{+}$.
\item For each marked edge $f:x\to y$ of $\overline{X}$, the edge $\widetilde{f}$
of $\cst_{\phi}\pr p\pr{\phi\pr{p\pr y}}$ is marked, where $\widetilde{f}$
denotes the edge of $\cst_{\phi}\pr p\pr{\phi\pr{p\pr y}}$ determined
by the composite
\[
\widetilde{\fr C}[\pr{\Delta^{1}}^{\lcone}]\xrightarrow{\widetilde{\fr C}[f^{\lcone}]}\widetilde{\fr C}[X^{\lcone}]\to\widetilde{\fr C}[X^{\lcone}]\amalg_{\widetilde{\fr C}[X]}\cal C_{\Delta}.
\]
\end{itemize}
We claim that $\cst_{\phi}^{\p+}$ is a left adjoint of $\cun_{\phi}^{+}$,
so that we may take $\cst_{\phi}^{+}=\cst_{\phi}^{\p+}$. Define a
simplicial functor $\Psi_{\overline{X}}:\widetilde{\fr C}[X]\to\SS^{+}$
by giving $\Psi_{X}$ the minimal marking such that for each marked
edge $f:x\to y$ of $\overline{X}$, the edge of $\Psi_{X}\pr y$
determined by the map $\widetilde{\fr C}[\pr{\Delta^{1}}^{\lcone}]\xrightarrow{\widetilde{\fr C}[f^{\lcone}]}\widetilde{\fr C}[X^{\lcone}]$
is marked. (In other words, $\Psi_{\overline{X}}=\St_{\id_{\widetilde{\fr C}[X]}}^{\p+}\pr{\id_{\overline{X}}}$).
Using Proposition Corollary \ref{cor:lan_pushout}, we deduce that
the simplicial natural transformation $\Psi_{\overline{X}}\to\cst_{\phi}^{\p+}\pr p\circ\phi\circ\widetilde{\fr C}[p]$
exhibits $\cst_{\phi}^{\p+}\pr p$ as a left Kan extension of $\Psi_{\overline{X}}$.
We thus obtain, as in the unmarked case, a family of bijections $\{\Fun^{s}\pr{\cal C_{\Delta},\SS^{+}}\pr{\cst{}_{\phi}^{\p+}\pr{\sigma},F}\cong\SS_{/S}^{+}\pr{\sigma,\cun_{\phi}^{+}\pr F}\}_{\sigma}$,
where $\sigma$ ranges over all the maps of marked simplicial sets
of the form $\pr{\Delta^{n}}^{\flat}\to S^{\sharp}$ and $\pr{\Delta^{1}}^{\sharp}\to S^{\sharp}$.
This family extends to a natural bijection 
\[
\Fun^{s}\pr{\cal C_{\Delta},\SS^{+}}\pr{\cst_{\phi}^{\p+}\pr -,-}\cong\SS_{/S}^{+}\pr{-,\cun_{\phi}^{+}\pr -},
\]
so that $\cst_{\phi}^{\p+}$ is a left adjoint of $\cun_{\phi}^{+}$,
as claimed. 

\section{\label{sec:Rectification}Rectification of \texorpdfstring{$\mathfrak{P}$}{P}-Bundles}

Let $\frak P=\pr{M_{\cal D},\{p_{\alpha}\}_{\alpha\in A}}$ be a categorical
pattern on an $\infty$-category $\cal D$. In this section, we will
construct a categorical equivalence
\[
\Fun\pr{S,\frak P\-\Fib}\simeq\frak P\-\Bund\pr S
\]
for each (small) simplicial set $S$ (Corollary \ref{cor:main}).

Here is a sketch of our strategy. The equivalence will be realized
on the level of model categories, using the unstraightening functor.
Given a weak equivalence $\phi:\widetilde{\fr C}[S]\to\cal C_{\Delta}$
of simplicial categories, the unstraightening functor $\Fun^{s}\pr{\cal C_{\Delta},\SS^{+}}\to\SS_{/S}^{+}$
carries the constant diagram at $\overline{\cal D}=\pr{\cal D,M_{\cal D}}$
to the object $S^{\sharp}\times\cun_{\Delta^{0}}^{+}\pr{\overline{\cal D}}$.
Thus we obtain a functor
\[
\cun_{\phi}^{\frak P}:\Fun^{s}\pr{\cal C_{\Delta},\SS_{/\frak P}^{+}}\to\SS_{/S^{\sharp}\times\cun_{\Delta^{0}}^{+}\pr{\overline{\cal D}}}^{+}.
\]
With this in mind, we will construct a categorical pattern $\frak P_{\Un}$
on $\cun_{\Delta^{0}}\pr{\cal D}$ so that it has the following properties:
\begin{itemize}
\item The functor $\cun_{\phi}^{\frak P}$ is a right Quillen equivalence
with respect to $S\times\frak P_{\Un}$ (Theorem \ref{thm:unstraightening_P-fibered_obj}).
\item The comparison map $\cal D\to\cun_{\Delta^{0}}\pr{\cal D}$ induces
a right Quillen equivalence
\[
\SS_{/S\times\frak P_{\Un}}^{+}\to\SS_{/S\times\frak P}^{+}
\]
(Proposition \ref{prop:P_Un_good}). 
\end{itemize}
The desired Quillen equivalence will be obtained by composing these
two Quillen equivalences. The proof that these functors are right
Quillen equivalences will be carried out in two steps: We first prove,
in Subsection \ref{subsec:cat_inv_P-bundles}, a result which enables
us to reduce to the case where $S$ is an $\infty$-category. We then
prove the claim in the case where $S$ is an $\infty$-category.

In the case where $S$ is the nerve of an ordinary category, we can
also use relative nerve functor instead of the unstraightening functor.
We will follow this line of thought in Subsection \ref{subsec:rectification_1-cat}.

\subsection{\label{subsec:cat_inv_P-bundles}Weak Categorical Invariance of \texorpdfstring{$\mathfrak{P}$}{P}-Bundles}

The goal of this subsection is to prove the following weak categorical
invariance of $\frak P$-bundles.
\begin{prop}
\label{prop:weak_categorical_invariance}Let $\frak P$ be a commutative
categorical pattern on an $\infty$-category $\cal D$. Let $f:A\to B$
be a weak categorical equivalence of simplicial sets. The adjunction
\[
f_{!}:\SS_{/A\times\frak P}^{+}\adj\SS_{/B\times\frak P}^{+}:f^{*}
\]
is a Quillen equivalence.
\end{prop}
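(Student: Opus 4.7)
The adjunction is automatically Quillen by \cite[Proposition B.2.9]{HA}, since $f\times\id_{\cal D}:A\times\cal D\to B\times\cal D$ is compatible with the categorical patterns $A\times\frak P$ and $B\times\frak P$. To upgrade this to a Quillen equivalence, the plan is to reduce to the case of a categorical equivalence of $\infty$-categories, which is handled by Proposition \ref{prop:cat_inv}. Concretely, I would choose Joyal-fibrant replacements $i_{A}:A\hookrightarrow A'$ and $i_{B}:B\hookrightarrow B'$ into $\infty$-categories, and use the lifting property of $i_{A}$ against the Joyal fibration $B'\to\Delta^{0}$ to produce a map $f':A'\to B'$ fitting into a commutative square with $f,i_{A},i_{B}$. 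By two-out-of-three, $f'$ is a categorical equivalence of $\infty$-categories. The induced square of left Quillen functors commutes, so by two-out-of-three for Quillen equivalences it suffices to prove the claim for $f'$ and for any Joyal-trivial cofibration $i:S\hookrightarrow S'$ with $S'$ an $\infty$-category.

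For $f'$, I would verify strong compatibility with $A'\times\frak P$ and $B'\times\frak P$ and then invoke Proposition \ref{prop:cat_inv}. Compatibility is immediate from the product form of the patterns; the substantive conditions are that every marked edge of $B'\times\frak P$ is equivalent in $\Fun(\Delta^{1},B'\times\cal D)$ to $(f'(\alpha),\gamma)$ for some $\alpha\in A'_{1}$, $\gamma\in M_{\cal D}$, and that every diagram $\{w\}\times K_{\alpha}^{\lcone}$ is naturally equivalent to $\{f'(v)\}\times K_{\alpha}^{\lcone}$ for some $v\in A'$. Both follow from $f'$ being a categorical equivalence of $\infty$-categories, hence essentially surjective on vertices and on $1$-morphisms (the latter via the induced categorical equivalence $\Fun(\Delta^{1},A')\to\Fun(\Delta^{1},B')$).

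The substance of the proof is the remaining case: a Joyal-trivial cofibration $i:S\hookrightarrow S'$ with $S'$ an $\infty$-category. I would check the two conditions characterizing a Quillen equivalence, namely conservativity of $\bb R i^{*}$ on fibrant objects and weak-equivalence of the derived unit. Both hinge on Proposition \ref{prop:fiberwise_P_equiv}, which gives a fiberwise criterion over arbitrary bases. For conservativity, given a map $\phi:X_{\natural}\to Y_{\natural}$ of $\frak P$-bundles over $S'$ with $i^{*}\phi$ a weak equivalence, Proposition \ref{prop:fiberwise_P_equiv} yields categorical equivalences $X_{i(v)}\to Y_{i(v)}$ for every $v\in S$; since $i$ is essentially surjective on Joyal homotopy categories, every $w\in S'$ admits an equivalence $w\simeq i(v)$ in $S'$ for some $v\in S$, and such an equivalence induces categorical equivalences $X_{w}\simeq X_{i(v)}$ and $Y_{w}\simeq Y_{i(v)}$ compatible with $\phi$ (using that $X,Y\to S'$ are cocartesian fibrations). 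Hence $X_{w}\to Y_{w}$ is a categorical equivalence for all $w\in S'$, and $\phi$ is a weak equivalence by Proposition \ref{prop:fiberwise_P_equiv}.

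The derived-unit direction is the main obstacle. For a fibrant $Y\in\SS^{+}_{/S\times\frak P}$ and a fibrant replacement $i_{!}Y\to Z$ in $\SS^{+}_{/S'\times\frak P}$, one must show that $Y\to i^{*}Z=S\times_{S'}Z$ is fiberwise a categorical equivalence $Y_{v}\to Z_{i(v)}$ at every $v\in S$. The intended route combines (i) an HTT-style invariance lemma stating that $S\times_{S'}Z\to Z$ is a categorical equivalence, since $Z\to S'$ is a cocartesian fibration and $i$ is a weak categorical equivalence, with (ii) a concrete analysis of the fibrant replacement showing that the cells attached to $i_{!}Y$ to produce $Z$ furnish cocartesian lifts and limit cones over vertices in $S'\setminus i(S)$ and therefore do not alter the fibers over $i(S)$ up to categorical equivalence. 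Making (ii) rigorous --- controlling the small-object-argument construction of fibrant replacements in $\SS^{+}_{/S'\times\frak P}$ explicitly enough to compare $Y_{v}$ with $Z_{i(v)}$ --- is the technical heart of the argument.
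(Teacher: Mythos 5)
Your high-level architecture (reduce via a commutative square of Joyal-trivial cofibrations to the $\infty$-categorical case handled by Proposition \ref{prop:cat_inv}, plus the case of a trivial cofibration with fibrant target) matches the paper's, and your analysis of the $\infty$-categorical case via strong compatibility is fine, as is your conservativity argument. But there are two genuine gaps in the remaining case, which you yourself flag as ``the technical heart.''

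First, your reduction is coarser than the paper's in a way that matters. The paper uses the small object argument specifically for inner horn inclusions, so that the remaining case is an \emph{inner anodyne} map $f:A\to B$ with $B$ an $\infty$-category. Inner anodyne maps are bijective on vertices; a general Joyal-trivial cofibration $S\hookrightarrow S'$ need not be (e.g.\ $\Delta^0\hookrightarrow J$). Bijectivity on vertices is used essentially in the paper at two places: it gives conservativity of $\bb Rf^*$ directly from Proposition \ref{prop:fiberwise_P_equiv}, and---more importantly---it ensures that every fiber $Y_b$ of the replacement is equivalent to some fiber $X_a$, which via Lemma \ref{lem:cat_inv} is what forces each $Y_b\to\cal D$ to be $\frak P$-fibered without having to build new fibers over ``missing'' vertices. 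You can recover this by noting that the fibrant replacements in your square can be taken to be inner anodyne, but you do not make that choice, and without it the derived-unit analysis becomes considerably harder.

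Second, and more seriously, your step (ii) --- controlling a generic small-object-argument fibrant replacement in $\SS^+_{/S'\times\frak P}$ --- is not how the paper proceeds and is not carried out. The paper instead factors the composite $\overline{X}\to A^\sharp\times\cal D^\natural\to B^\sharp\times\cal D^\natural$ in the \emph{cocartesian model structure over $B$} (marked left anodyne followed by marked left fibration), not in $\SS^+_{/B\times\frak P}$. This produces $\overline{Y}$ with a cocartesian fibration structure over $B$ inherited directly from $X$, after which one \emph{verifies} that $Y$ is a $\frak P$-bundle using Lemma \ref{lem:cat_inv}, Lemma \ref{lem:fiberwise_colimit}, and inner anodyneness. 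Showing $X_\natural\to Y_\natural$ is a $B\times\frak P$-equivalence then uses Proposition \ref{prop:mla_are_weak_equiv} together with a filtration of $Y_\natural$ by ``composition length'' of morphisms in $B$ relative to $A$ --- another place where inner anodyneness (every morphism of $B$ is a finite composite of morphisms coming from $A$, up to equivalence) is indispensable. Your proposal identifies the right obstruction but does not resolve it, and the route you sketch (working directly with a fibrant replacement in $\SS^+_{/S'\times\frak P}$) sidesteps the cocartesian-over-$B$ factorization that makes the paper's argument tractable.
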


The proof of Proposition \ref{prop:weak_categorical_invariance} relies
on another proposition.

\begin{prop}
\label{prop:mla_are_weak_equiv}Let $\frak P=\pr{M_{S},T,\{p_{\alpha}\}_{\alpha\in A}}$
be a categorical pattern on a simplicial set $S$, and let $f:\overline{X}\to\overline{Y}$
be a morphism in $\SS_{/S}^{+}$. If the image of $f$ in $\SS^{+}$
is marked left anodyne and $T$ contains all $2$-simplices $\sigma$
such that $\sigma\vert\Delta^{\{0,1\}}\in M_{S}$, then $f$ is an
$\frak P$-equivalence.
\end{prop}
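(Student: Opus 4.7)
The strategy is to prove the stronger statement: under the hypothesis on $T$, any morphism of $\SS_{/\frak P}^{+}$ whose underlying map of marked simplicial sets is marked left anodyne is in fact a $\frak P$-anodyne extension in the sense of \cite[Definition B.1.1]{HA}. Since $\frak P$-anodyne extensions are $\frak P$-equivalences by \cite[Example B.2.2]{HA}, this suffices.

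By the small object argument, the saturated class of marked left anodyne maps is generated, dually to \cite[Proposition 3.1.1.1]{HTT}, by three families: (i) inner horn inclusions $(\Lambda_i^n)^{\flat}\subset(\Delta^n)^{\flat}$ for $0<i<n$; (ii) the ``initial outer horn'' inclusions $(\Lambda_0^n)^{\flat}\cup_{(\Delta^{\{0,1\}})^{\flat}}(\Delta^{\{0,1\}})^{\sharp}\subset(\Delta^n)^{\flat}\cup_{(\Delta^{\{0,1\}})^{\flat}}(\Delta^{\{0,1\}})^{\sharp}$ for $n\ge 2$; and (iii) a short list of auxiliary generators involving the sharp marking $(\Delta^2)^{\sharp}$ and sharp markings on Kan complexes. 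Since the class of $\frak P$-anodyne morphisms is also saturated (\cite[Remark B.1.4]{HA}), it is enough to verify that each such generator, equipped with an arbitrary structure morphism to $\overline{S}$, becomes $\frak P$-anodyne in $\SS_{/\frak P}^{+}$.

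This I would do by matching against the list in \cite[Definition B.1.1]{HA}. The generators in (i) are $\frak P$-anodyne regardless of the pattern, so nothing is to check. For (ii), the edge $\Delta^{\{0,1\}}$ is marked, whence its image in $S$ lies in $M_{S}$; the hypothesis on $T$ then forces every $2$-simplex of the form $\Delta^{\{0,1,k\}}$ in the image to lie in $T$, which is precisely the condition making these outer horn inclusions $\frak P$-anodyne. The generators in (iii) reduce to (ii) via \cite[Lemma B.1.11]{HA}, the very tool the authors already invoked in Proposition~\ref{prop:A_0} to build $\frak P$-anodyne extensions out of fully sharp $2$-simplices whose $\{0,1\}$-edge is marked. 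The main obstacle I anticipate is the careful bookkeeping at this last step---identifying each auxiliary generator as an iterated pushout of type-(ii) generators under the $T$-hypothesis---but the prior deployment of B.1.6 and B.1.11 in the paper suggests the argument should proceed without substantive new ideas.
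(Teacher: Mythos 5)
The paper's proof is much shorter and avoids the generator-by-generator analysis you anticipate: it fixes a $\frak P$-fibered object $Z_{\natural}$ and shows directly that $\Map_{S}^{\sharp}(\overline{Y},Z_{\natural})\to\Map_{S}^{\sharp}(\overline{X},Z_{\natural})$ is a \emph{trivial Kan fibration}. This follows from two facts: the pushout--product of a monomorphism with a marked left anodyne map is marked left anodyne (the dual of \cite[Proposition~3.1.2.2]{HTT}), and --- this is exactly where the $T$-hypothesis enters, via Remark~\ref{rem:mlfib} --- $Z_{\natural}\to(S,M_{S})$ is a marked left fibration. No decomposition into generators, no reference to the $\frak P$-anodyne class at all.

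Your route aims at the genuinely stronger claim that under the $T$-hypothesis, marked left anodyne maps over $\overline{S}$ are already $\frak P$-anodyne in the sense of \cite[Definition~B.1.1]{HA}. That is a reasonable alternate strategy, saturation reduces it to generators, and the paper itself uses \cite[Proposition~B.1.6]{HA} and \cite[Lemma~B.1.11]{HA} in the same spirit (Proposition~\ref{prop:A_0}). Your treatment of the inner and outer horn generators is fine. The point I would flag is the Kan-complex generator $K^{\flat}\to K^{\sharp}$: you fold it into step (iii) and assert it ``reduces to (ii) via B.1.11,'' but B.1.11 is a statement about sharpening a single $2$-simplex, and marking up a Kan complex is not simply an iterated pushout of outer $2$-horn sharpenings over $T$ unless you have some marked edge to seed the induction (for instance, in $K=J$ no nondegenerate edge is marked in the source). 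Either you need to identify $K^{\flat}\to K^{\sharp}$ directly with one of the classes in B.1.1 (noting that every edge of $K$ maps to $M_S$ and every $2$-simplex to $T$), or you need a more careful induction; as written this step is a genuine gap. If closed, your argument would give the stronger conclusion that these maps are $\frak P$-anodyne, not merely $\frak P$-equivalences, at the cost of considerably more bookkeeping than the paper's mapping-space argument.
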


\begin{proof}
Let $Z_{\natural}\in\SS_{\frak P}^{+}$ be a fibrant object. We wish
to show that the map
\[
\Map_{S}^{\sharp}\pr{\overline{Y},Z_{\natural}}\to\Map_{S}^{\sharp}\pr{\overline{X},Z_{\natural}}
\]
is a trivial fibration. Let $A\to B$ be a monomorphism of simplicial
sets. In the following diagram, the lifting problem on the left is
equivalent to the one on the right:% https://q.uiver.app/#q=WzAsOCxbMCwwLCJBIl0sWzAsMSwiQiJdLFsxLDAsIlxcb3BlcmF0b3JuYW1le01hcH1fe1N9Xlxcc2hhcnAgKFxcb3ZlcmxpbmV7WX0sWl9cXG5hdHVyYWwpIl0sWzEsMSwiXFxvcGVyYXRvcm5hbWV7TWFwfV97U31eXFxzaGFycCAoXFxvdmVybGluZXtYfSxaX1xcbmF0dXJhbCkiXSxbMiwwLCIoQV5cXHNoYXJwXFx0aW1lcyBcXG92ZXJsaW5le1l9KVxcYW1hbGdfe0Fee1xcc2hhcnB9XFx0aW1lcyBcXG92ZXJsaW5le1h9fShCXlxcc2hhcnAgXFx0aW1lcyBcXG92ZXJsaW5le1h9KSJdLFsyLDEsIkJeXFxzaGFycFxcdGltZXMgXFxvdmVybGluZXtZfSJdLFszLDAsIlpfXFxuYXR1cmFsIl0sWzMsMSwiKFMsTV9TKSJdLFswLDJdLFsyLDNdLFswLDFdLFsxLDNdLFs0LDVdLFs0LDZdLFs2LDddLFs1LDddLFs1LDYsIiIsMSx7InN0eWxlIjp7ImJvZHkiOnsibmFtZSI6ImRhc2hlZCJ9fX1dLFsxLDIsIiIsMSx7InN0eWxlIjp7ImJvZHkiOnsibmFtZSI6ImRhc2hlZCJ9fX1dXQ==
\[\begin{tikzcd}
	A & {\operatorname{Map}_{S}^\sharp (\overline{Y},Z_\natural)} & {(A^\sharp\times \overline{Y})\amalg_{A^{\sharp}\times \overline{X}}(B^\sharp \times \overline{X})} & {Z_\natural} \\
	B & {\operatorname{Map}_{S}^\sharp (\overline{X},Z_\natural)} & {B^\sharp\times \overline{Y}} & {(S,M_S)}
	\arrow[from=1-1, to=1-2]
	\arrow[from=1-2, to=2-2]
	\arrow[from=1-1, to=2-1]
	\arrow[from=2-1, to=2-2]
	\arrow[from=1-3, to=2-3]
	\arrow[from=1-3, to=1-4]
	\arrow[from=1-4, to=2-4]
	\arrow[from=2-3, to=2-4]
	\arrow[dashed, from=2-3, to=1-4]
	\arrow[dashed, from=2-1, to=1-2]
\end{tikzcd}\]The right hand lifting problem admits a solution, for the map $\pr{A^{\sharp}\times\overline{Y}}\amalg_{A^{\sharp}\times\overline{X}}\pr{B^{\sharp}\times\overline{X}}\to B^{\sharp}\times\overline{Y}$
is marked left anodyne by \cite[Proposition 3.1.2.2]{HTT} and the
map $Z_{\natural}\to\pr{S,M_{S}}$ is a marked left fibration by Remark
\ref{rem:mlfib}. The proof is now complete.
\end{proof}
\begin{proof}
[Proof of Proposition \ref{prop:weak_categorical_invariance}]We will
write $M_{\cal D}$ for the set of edges of $\cal D$ specified by
the categorical pattern $\frak P$.

Using the small object argument, find a commutative diagram% https://q.uiver.app/#q=WzAsNCxbMCwwLCJBIl0sWzEsMCwiQiJdLFswLDEsIlxcbWF0aGNhbHtBfSJdLFsxLDEsIlxcbWF0aGNhbHtCfSJdLFswLDIsImkiLDJdLFsyLDMsImciLDJdLFswLDEsImYiXSxbMSwzLCJqIl1d
\[\begin{tikzcd}
	A & B \\
	{\mathcal{A}} & {\mathcal{B},}
	\arrow["i"', from=1-1, to=2-1]
	\arrow["g"', from=2-1, to=2-2]
	\arrow["f", from=1-1, to=1-2]
	\arrow["j", from=1-2, to=2-2]
\end{tikzcd}\]where $\cal A$ and $\cal B$ are $\infty$-categories and $i$ and
$j$ are countable compositions of pushouts of coproducts of inclusions
of inner horns. The functor $g$ is a categorical equivalence, so
Proposition \ref{prop:cat_inv} shows that the adjunction $g_{!}\dashv g^{*}$
is a Quillen equivalence. It will therefore suffice to show that the
adjunctions $i_{!}\dashv i^{*}$ and $j_{!}\dashv j^{*}$ are Quillen
equivalences. Thus we are reduced to the case where $B$ is an $\infty$-category
and $f$ is a countable composition of pushouts of coproducts of inclusions
of inner horns. 

Since inner anodyne extensions induce bijections between the set of
vertices, Proposition \ref{prop:fiberwise_P_equiv} shows that the
total right derived functor of $f^{*}$ is conservative. It will therefore
suffice to show that the derived unit of the adjunction $f_{!}\dashv f^{*}$
is an isomorphism.

Let $p:X_{\natural}\to A^{\sharp}\times\overline{\cal D}$ be a fibrant
object of $\SS_{/A\times\frak P}^{+}$. Let $\overline{X}$ denote
the marked simplicial set obtained from $X$ by marking the cocartesian
edges over $A$; equivalently, $\overline{X}=X_{\natural}\times_{\overline{\cal D}}\cal D^{\natural}$
by Lemma \ref{lem:cc_P-bundle}. Find a commutative diagram % https://q.uiver.app/#q=WzAsNCxbMCwxLCJBXlxcc2hhcnBcXHRpbWVzIFxcbWF0aGNhbHtEfV5cXG5hdHVyYWwiXSxbMSwxLCJCXlxcc2hhcnBcXHRpbWVzIFxcbWF0aGNhbHtEfV5cXG5hdHVyYWwiXSxbMSwwLCJcXG92ZXJsaW5le1l9Il0sWzAsMCwiXFxvdmVybGluZXtYfSJdLFswLDEsImZcXHRpbWVzIFxcb3BlcmF0b3JuYW1le2lkfV97XFxtYXRoY2Fse0R9fSIsMl0sWzIsMSwicSJdLFszLDAsInAiLDJdLFszLDIsImkiXV0=
\[\begin{tikzcd}
	{\overline{X}} & {\overline{Y}} \\
	{A^\sharp\times \mathcal{D}^\natural} & {B^\sharp\times \mathcal{D}^\natural}
	\arrow["{f\times \operatorname{id}_{\mathcal{D}}}"', from=2-1, to=2-2]
	\arrow["q", from=1-2, to=2-2]
	\arrow["p"', from=1-1, to=2-1]
	\arrow["i", from=1-1, to=1-2]
\end{tikzcd}\]where $i$ is marked left anodyne, $q$ is a marked left fibration,
and $\cal D^{\natural}$ denotes the marked simplicial set obtained
from $\cal D$ by marking all equivalences. We will prove that the
map $q$ has the following properties:
\begin{enumerate}
\item For each vertex $a\in A$, the induced map $X_{a}\to Y_{f\pr a}$
of fibers is a categorical equivalence.
\item The map $q:Y\to B\times\cal D$ is a $\frak P$-bundle over $B$.
\item The map $i$ induces a map $X_{\natural}\to Y_{\natural}$ of marked
simplicial sets.
\item The map $X_{\natural}\to Y_{\natural}$ is a $B\times\frak P$-equivalence.
\end{enumerate}
It will then follow from Proposition \ref{prop:fiberwise_P_equiv}
that the derived unit of the adjunction $f_{!}\dashv f^{*}$ is an
isomorphism.

We begin with the verification of (1). Since the adjunction $\SS_{/A}^{+}\adj\SS_{/B}^{+}$
is a Quillen equivalence (\cite[Proposition 3.3.1.1]{HTT}), we deduce
that the derived unit $\overline{X}\to A^{\sharp}\times_{B^{\sharp}}\overline{Y}$
is a cocartesian equivalence over $A$. In particular, for each vertex
$a\in A$, the map $X_{a}\to Y_{f\pr a}$ is a categorical equivalence,
as claimed.

Next we check condition (2). We must check that $q$ satisfies conditions
(a) through (d) of Definition \ref{def:P-bundle}. 

\begin{enumerate}[label=(\alph*)]

\item The map $Y\to B$ is a cocartesian fibration. This follows
from the construction.

\item The map $q$ lifts to a map of fibrant objects of $\SS_{/B}^{+}$.
This follows from the construction.

\item For each object $b\in B$, the map $Y_{b}\to\cal D$ is $\frak P$-fibered.
Since $f$ is inner anodyne, it is bijective on vertices. Therefore,
there is some vertex $a\in A$ such that $f\pr a=b$. According to
claim (1), the map $X_{a}\to Y_{f\pr a}$ is a categorical equivalence.
Moreover, the map $Y_{b}\to\cal D$ is a categorical fibration by
Proposition \ref{prop:creative}. Therefore, Lemma \ref{lem:cat_inv}
show that the map $Y_{b}\to\cal D$ is $\frak P$-fibered.

\item For each morphism $\beta:b\to b'$ of $B$, the induced functor
$\alpha_{!}:Y_{b}\to Y_{b'}$ preserves cocartesian edges over the
edges in $M_{\cal D}$. By the reduction we made in the first paragraph,
the morphism $\alpha$ can be written as a finite composition of morphisms
in the image of $f$. It will therefore suffice to consider the case
where $\beta=f\pr{\alpha}$ for some edge $\alpha:a\to a'$ of $A$.
Consider the diagram % https://q.uiver.app/#q=WzAsNCxbMCwwLCJYX2EiXSxbMCwxLCJYX3thJ30iXSxbMSwwLCJZX3tmKGEpfSJdLFsxLDEsIllfe2YoYScpfSJdLFswLDIsImkiXSxbMiwzLCJcXGJldGFfISJdLFsxLDMsImkiLDJdLFswLDEsIlxcYWxwaGFfISIsMl1d
\[\begin{tikzcd}
	{X_a} & {Y_{f(a)}} \\
	{X_{a'}} & {Y_{f(a')}}
	\arrow["i", from=1-1, to=1-2]
	\arrow["{\beta_!}", from=1-2, to=2-2]
	\arrow["i"', from=2-1, to=2-2]
	\arrow["{\alpha_!}"', from=1-1, to=2-1]
\end{tikzcd}\]of $\infty$-categories, which commutes up to natural equivalence
over $\cal D$. The horizontal arrows are categorical equivalences
by claim (1), so it suffices to show that the functor $\alpha_{!}$
preserves cocartesian edges lying over the edges in $M_{\cal D}$.
This follows from the fact that $X$ is a $\frak P$-bundle.

\end{enumerate}

Next we verify claim (3). Let $\gamma:x\to x''$ be a marked edge
of $X_{\natural}$. We wish to show that the edge $f\pr{\gamma}$
is $q$-cocartesian. Let $p\pr{\gamma}=\pr{\alpha,\delta}:\pr{a,D}\to\pr{a'',D''}$
denote the image of $\gamma$. By Remark \ref{rem:mlfib}, the map
$X_{\natural}\to A^{\sharp}\times\overline{\cal D}$ is a marked left
fibration. Therefore, we can find a $p$-cocartesian edge $\gamma':x\to x'$
lying over $\pr{\alpha,\id_{D}}$. Since $\gamma'$ is $p$-cocartesian,
there is a $2$-simplex % https://q.uiver.app/#q=WzAsMyxbMCwxLCJ4Il0sWzIsMSwieCcnIl0sWzEsMCwieCciXSxbMCwyLCJcXGdhbW1hJyJdLFsyLDEsIlxcZ2FtbWEnJyJdLFswLDEsIlxcZ2FtbWEiLDJdXQ==
\[\begin{tikzcd}
	& {x'} \\
	x && {x''}
	\arrow["{\gamma'}", from=2-1, to=1-2]
	\arrow["{\gamma''}", from=1-2, to=2-3]
	\arrow["\gamma"', from=2-1, to=2-3]
\end{tikzcd}\]of $X$, where $\gamma''$ lies over the edge $\pr{\id_{a''},\delta}$.
Note that the edge $\gamma''$ is $p$-cocartesian because $\gamma$
and $\gamma'$ are $p$-cocartesian. Now the edge $\gamma'$ is marked
in $\overline{X}$, so its image $i\pr{\gamma'}$ is marked in $\overline{Y}$
and hence is $q$-cocartesian. Also, the edge $\gamma''$ is $p_{a''}$-cocartesian,
where $p_{a''}:X_{a''}\to\cal D$ denotes the pullback of $p$, so
claim (1) implies that its image in $Y_{a''}$ is $q_{f\pr{a''}}$-cocartesian.
It follows from (2) and Lemma \ref{lem:fiberwise_colimit} that the
edge $i\pr{\gamma''}$ is also $q$-cocartesian. In conclusion, the
edges $i\pr{\gamma'}$ and $i\pr{\gamma''}$ are $q$-cocartesian.
Hence $i\pr{\gamma}$ is $q$-cocartesian, as claimed.

We complete the proof by verifying condition (4). Since the model
structure on $\SS_{/B\times\frak P}^{+}$ is left proper, Proposition
\ref{prop:mla_are_weak_equiv} shows that the map $X_{\natural}\to X_{\natural}\amalg_{\overline{X}}\overline{Y}$
is a $B\times\frak P$-equivalence. Therefore, it suffices to show
that the inclusion $X_{\natural}\amalg_{\overline{X}}\overline{Y}\to Y_{\natural}$
is a $B\times\frak P$-equivalence. For each $n\geq0$, let $M\pr n$
denote the set of morphisms $\beta$ of $B$ such that $\beta$ is
either an equivalence of $B$ or a composition of at most $\pr{n+1}$
morphisms in $A$. Set $\overline{Y}\pr n=Y_{\natural}\times_{B^{\sharp}}\pr{B,M\pr n}$
for $n\geq0$, and set $\overline{Y}\pr{-1}=X_{\natural}\amalg_{\overline{X}}\overline{Y}$.
We then have a nested sequence
\[
X_{\natural}\amalg_{\overline{X}}\overline{Y}=\overline{Y}\pr{-1}\subset\overline{Y}\pr 0\subset\overline{Y}\pr 1\subset\cdots\subset Y_{\natural}
\]
with $Y_{\natural}=\bigcup_{n\geq-1}\overline{Y}\pr n$. To complete
the proof, it suffices to show that the inclusion $\overline{Y}\pr{n-1}\subset\overline{Y}\pr n$
is a $B\times\frak P$-equivalence for every $n\geq1$. But this inclusion
is a pushout of coproducts of the inclusions of the form $\pr{\Lambda_{1}^{2}}^{\sharp}\cup\pr{\Delta^{2}}^{\flat}\subset\pr{\Delta^{2}}^{\sharp}$,
so the claim follows from \cite[Example B.2.2]{HA}.
\end{proof}

\subsection{\label{subsec:P_Un}The Categorical Pattern \texorpdfstring{$\mathfrak{P}_{\operatorname{Un}}$}{P}}

In this subsection, we construct an auxiliary categorical pattern
$\frak P_{\Un}$ from a given commutative categorical pattern $\frak P$,
and show that it behaves well with respect to considerations of bundles
(Proposition \ref{prop:P_Un_good}).
\begin{defn}
Let $\frak P=\pr{M_{\cal D},\{p_{\alpha}\}_{\alpha\in A}}$ be a commutative
categorical pattern on an $\infty$-category $\cal D$. We let $\frak P_{\Un}$
denote the smallest commutative categorical pattern on $\cun_{\Delta^{0}}\pr{\cal D}$
which contains the image of $\frak P$ under the comparison map $f:\cal D\to\cun_{\Delta^{0}}\pr{\cal D}$
of Corollary \ref{cor:3.2.1.14}. Explicitly, we have
\[
\frak P_{\Un}=\pr{f\pr{M_{\cal D}},\{fp_{\alpha}\}_{\alpha\in A}}.
\]
\end{defn}

\begin{defn}
Let $\frak P$ be a commutative categorical pattern on an $\infty$-category
$\cal D$. Given a simplicial set $S$ and a simplicial functor $\phi:\widetilde{\fr C}[S]\to\cal C_{\Delta}$,
the functor 
\[
\cun_{\phi}^{+}:\Fun^{s}\pr{\cal C_{\Delta},\SS^{+}}\to\SS_{/S}^{+}
\]
carries the constant diagram $\delta\pr{\overline{\cal D}}$ at $\overline{\cal D}$
to the object $S^{\sharp}\times\cun_{\Delta^{0}}^{+}\pr{\overline{\cal D}}$.
We let $\cun_{\phi}^{\frak P}$ denote the induced functor
\[
\Fun^{s}\pr{\cal C_{\Delta},\SS_{/\frak P}^{+}}\to\SS_{/S\times\frak P_{\Un}}^{+}.
\]
The left adjoint of $\cun_{\phi}^{\frak P}$ will be denoted by $\cst_{\phi}^{\frak P}$.
We will write $\cun_{\cal C}^{\frak P}=\cun_{\id_{\widetilde{\frak C}[\cal C]}}^{\frak P}$
and $\cst_{\cal C}^{\frak P}=\cst_{\id_{\widetilde{\frak C}[\cal C]}}^{\frak P}$.
\end{defn}

Observe that the functor $\cun_{\phi}^{\frak P}$ takes values in
$\SS_{/S\times\frak P_{\Un}}^{+}$, not in $\SS_{/S\times\frak P}^{+}$.
But this is not a problem for us, because of the following proposition:
\begin{prop}
\label{prop:P_Un_good}Let $\frak P$ be a commutative categorical
pattern on an $\infty$-category $\cal D$ and let $S$ be a simplicial
set. The functor
\[
\SS_{/S\times\frak P_{\Un}}^{+}\to\SS_{/S\times\frak P}^{+}
\]
is a right Quillen equivalence.
\end{prop}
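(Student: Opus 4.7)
The plan is to deduce the statement from Proposition \ref{prop:cat_inv} applied to the map $\id_{S}\times f:S\times\cal D\to S\times\cun_{\Delta^{0}}\pr{\cal D}$, where $f:\cal D\to\cun_{\Delta^{0}}\pr{\cal D}$ is the comparison map of Corollary \ref{cor:3.2.1.14}. Since $f$ is a categorical equivalence, so is $\id_{S}\times f$ whenever $S$ is an $\infty$-category; however, Proposition \ref{prop:cat_inv} requires the source and target to be $\infty$-categories, so I would first reduce to this case.

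For the reduction, I would choose a weak categorical equivalence $g:S\to S'$ with $S'$ an $\infty$-category (for instance, a countable composition of pushouts of inner horn inclusions produced by the small-object argument) and invoke Proposition \ref{prop:weak_categorical_invariance} twice---once for the pattern $\frak P$ and once for $\frak P_{\Un}$---to conclude that $\pr{g\times\id}_{!}\dashv\pr{g\times\id}^{*}$ is a Quillen equivalence in both situations. These assemble into a commutative square of right Quillen functors with the two pullbacks $\pr{\id\times f}^{*}$ on its vertical arrows, and two-out-of-three for Quillen equivalences then reduces the problem to the case where $S$ is an $\infty$-category.

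Assuming $S$ is an $\infty$-category, I would verify that $\id_{S}\times f$ is strongly compatible with the commutative categorical patterns $S\times\frak P$ and $S\times\frak P_{\Un}$. Ordinary compatibility (i.e., $S\times\frak P\subset\pr{\id_{S}\times f}^{*}\pr{S\times\frak P_{\Un}}$) is immediate from the definition of $\frak P_{\Un}$ as the smallest commutative categorical pattern containing the image of $\frak P$. For strong compatibility, condition (2) asks that every marked edge of $S\times\frak P_{\Un}$ be equivalent to an edge in the image; this is automatic because the marked edges of $\frak P_{\Un}$ are literally $f\pr{M_{\cal D}}$. Condition (3) asks that every diagram of $S\times\frak P_{\Un}$ be naturally equivalent to the image of a diagram of $S\times\frak P$; this too is automatic because the diagrams of $\frak P_{\Un}$ are, by definition, $\{fp_{\alpha}\}_{\alpha\in A}$. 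Proposition \ref{prop:cat_inv} then yields that $\pr{\id_{S}\times f}_{!}\dashv\pr{\id_{S}\times f}^{*}$ is a Quillen equivalence, which is the content of the proposition.

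The substantive work has already been done in Propositions \ref{prop:cat_inv} and \ref{prop:weak_categorical_invariance}; the proof here is a direct combination of these two tools with the definitions. There is no serious obstacle---the only points requiring care are the observations that $\frak P_{\Un}$ is commutative (built into its definition) and that its underlying simplicial set $\cun_{\Delta^{0}}\pr{\cal D}$ is an $\infty$-category, which holds because the unstraightening of any diagram is a cocartesian fibration and cocartesian fibrations over $\Delta^{0}$ are precisely $\infty$-categories.
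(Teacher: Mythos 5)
Your proof is correct and takes essentially the same approach as the paper: reduce to the case of an $\infty$-category base via Proposition \ref{prop:weak_categorical_invariance}, then apply Proposition \ref{prop:cat_inv} to the comparison map $\id_S\times f$. The paper's proof is terser (two sentences), but your filled-in details — the two-out-of-three reduction, the verification of strong compatibility, and the observation that $\cun_{\Delta^0}(\cal D)$ is an $\infty$-category — are exactly the checks the paper leaves implicit.
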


\begin{proof}
By Proposition \ref{prop:weak_categorical_invariance}, we may assume
that $S$ is an $\infty$-category. In this case, the claim follows
from Proposition \ref{prop:cat_inv}.
\end{proof}

\subsection{\label{subsec:main}Main Result}

In this subsection, we show that $\cun_{\phi}^{\frak P}$ is a right
Quillen equivalence (Theorem \ref{thm:unstraightening_P-fibered_obj}),
and then use it to prove the main result of this paper (Corollary
\ref{cor:main}). 

We begin by showing that $\cun_{\phi}^{\frak P}$ is right Quillen.
\begin{prop}
\label{prop:Un_=00005Cphi^=00007B=00005CfrakP=00007D_right_Quillen}Let
$S$ be a simplicial set and let $\phi:\widetilde{\fr C}[S]\to\cal C_{\Delta}$
be a simplicial functor. Let $\frak P$ be a commutative categorical
pattern on an $\infty$-category $\cal D$. The functor
\[
\cun_{\phi}^{\frak P}:\Fun^{s}\pr{\cal C_{\Delta},\SS_{/\frak P}^{+}}_{{\rm proj}}\to\SS_{/S\times\frak P_{\Un}}^{+}
\]
is right Quillen.
\end{prop}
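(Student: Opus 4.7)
The plan is to verify that $\cun_{\phi}^{\frak P}$ preserves trivial fibrations and fibrations, invoking the Quillen adjunction $\cst_\phi^+ \dashv \cun_\phi^+$ of Theorem \ref{thm:st_un} as the main input. The key auxiliary observation is that cofibrations in $\SS_{/\frak P}^+$, $\SS^+$, $(\SS_{/S}^+)_{\mathrm{cart}}$, and $\SS_{/S\times\frak P_{\Un}}^+$ are all precisely the monomorphisms, so the four classes of trivial fibrations between them collapse to trivial fibrations of underlying marked simplicial sets.

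\emph{Preservation of trivial fibrations.} A projective trivial fibration $p\colon F\to G$ in $\Fun^s(\cal C_\Delta,\SS_{/\frak P}^+)_{\mathrm{proj}}$ is pointwise a trivial fibration of marked simplicial sets; in particular it is a projective trivial fibration in $\Fun^s(\cal C_\Delta,\SS^+)_{\mathrm{proj}}$. By Theorem \ref{thm:st_un}, $\cun_\phi^+(p)$ is then a trivial fibration in $(\SS_{/S}^+)_{\mathrm{cart}}$, which is again a trivial fibration of marked simplicial sets, hence a trivial fibration in $\SS_{/S\times\frak P_{\Un}}^+$.

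\emph{Preservation of fibrations.} By adjunction, it suffices to show that $\cst_\phi^{\frak P}$ sends generating trivial cofibrations of $\SS_{/S\times\frak P_{\Un}}^+$ to projective trivial cofibrations in $\Fun^s(\cal C_\Delta,\SS_{/\frak P}^+)_{\mathrm{proj}}$. I will view $\SS_{/S\times\frak P_{\Un}}^+$ as a left Bousfield localization of the slice model structure on $(\SS_{/S}^+)_{\mathrm{cart}}/(S^\sharp\times\cun_{\Delta^0}^+(\overline{\cal D}))$, and similarly view $\Fun^s(\cal C_\Delta,\SS_{/\frak P}^+)_{\mathrm{proj}}$ as a localization on the functor side. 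Slicing Theorem \ref{thm:st_un} directly handles the generators inherited from the unsliced cartesian model structure; the extra generators come from the $(S\times\frak P_{\Un})$-anodyne maps of \cite[\S B.1]{HA} and split into two classes. The first consists of the marked-left-anodyne-type maps, whose image under $\cst_\phi^{\frak P}$ is pointwise a marked left anodyne map and hence a pointwise $\frak P$-equivalence by Proposition \ref{prop:mla_are_weak_equiv}. The second class enforces that each $\{s\}\times K_\alpha^{\lcone}$ be a limit cone; here one tracks the straightening of a cocartesian section extension and identifies the resulting pointwise map with precisely the $\frak P$-anodyne extension that encodes $\frak P$-fibrancy.

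The main obstacle will be the second class of generators: one must show that the straightening of a lifting problem for the diagrams $\{s\}\times K_\alpha^{\lcone} \to S\times\cun_{\Delta^0}(\cal D)$ pointwise produces the limit-diagram extension data that $\frak P$-fibrancy demands at each object of $\cal C_\Delta$. Once this identification is carefully carried out using the description of $\cst_\phi^{\frak P}$ afforded by Subsection \ref{subsec:comparison_with_Lurie}, the remainder of the argument is routine bookkeeping with monomorphisms and the unsliced Quillen adjunction.
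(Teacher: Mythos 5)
Your argument for preservation of trivial fibrations is correct and matches the paper's. But the heart of your proposal — showing that $\cst_{\phi}^{\frak P}$ sends generating trivial cofibrations of $\SS_{/S\times\frak P_{\Un}}^{+}$ to projective trivial cofibrations — founders on a point that you partially acknowledge but do not resolve. The model structure of Theorem \ref{thm:B.0.20} is produced by Lurie via an abstract existence result; the $\frak P$-anodyne maps of \cite[\S B.1]{HA} form a convenient subclass of trivial cofibrations that suffices to characterize fibrant objects by a lifting property, but they are nowhere asserted to be a \emph{generating} set of trivial cofibrations (and in general they are not). So the claim that ``the extra generators come from the $(S\times\frak P_{\Un})$-anodyne maps'' has no justification, and the strategy of reducing to those maps does not give you control over all generating trivial cofibrations. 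Even granting it, you explicitly defer the critical computation for your ``second class'' of maps (``the main obstacle\ldots once this identification is carefully carried out\ldots''), which is precisely the step that would need to be supplied.

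The paper sidesteps this difficulty entirely by using \cite[Proposition 7.15]{JT07}: to show $\cun_{\phi}^{\frak P}$ is right Quillen it is enough to show it preserves trivial fibrations (which you do correctly) and preserves fibrations between \emph{fibrant} objects. The latter, after a further reduction via Proposition \ref{prop:creative}, comes down to showing $\cun_{\phi}^{\frak P}$ preserves fibrant objects; this is then verified concretely using the recognition criterion of Proposition \ref{prop:recognition_of_fibrant_objects}, which identifies fibrant objects with $\frak P$-bundles, a class one can check conditions (a)--(d) against directly. The paper also first reduces to the case where $S$ is an $\infty$-category (so that Proposition \ref{prop:creative} applies), a reduction your proposal omits. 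The point of this detour is exactly to avoid needing an explicit description of generating trivial cofibrations, which is the gap in your approach.
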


\begin{proof}
First we make a reduction to the case where $S$ is an $\infty$-category
and $\phi$ is the identity simplicial functor. Suppose that we have
proved that, for each $\infty$-category $\cal C$, the functor $\cun_{\cal C}^{\frak P}$
is right Quillen. We then complete the proof as follows. The functor
$\cun_{\phi}^{\frak P}$ factors as
\[
\Fun^{s}\pr{\cal C_{\Delta},\SS_{/\frak P}^{+}}\xrightarrow{\phi^{*}}\Fun^{s}\pr{\widetilde{\fr C}[S],\SS_{/\frak P}^{+}}\xrightarrow{\cun_{S}^{\frak P}}\SS_{/S\times\frak P_{\Un}}^{+}.
\]
Since $\phi^{*}$ is right Quillen, it suffices to show that the functor
$\cun_{S}^{\frak P}$ is right Quillen. We will show that $\cun_{S}^{\frak P}$
preserves trivial fibrations and that $\cst_{S}^{\frak P}$ preserves
weak equivalences. The functor $\cun_{S}^{\frak P}$ preserves trivial
fibrations because the forgetful functors $\Fun^{s}\pr{\widetilde{\fr C}[S],\SS_{/\frak P}^{+}}\to\Fun^{s}\pr{\widetilde{\fr C}[S],\SS^{+}}$
and $\SS_{/S\times\frak P_{\Un}}^{+}\to\SS_{/S}^{+}$ preserve and
reflect trivial fibrations, and because the functor $\cun_{\phi}^{+}$
is right Quillen. To show that $\cst_{S}^{\frak P}$ preserves weak
equivalences, choose a weak categorical equivalence $i:S\to\cal C$,
where $\cal C$ is an $\infty$-category. The diagram % https://q.uiver.app/#q=WzAsNCxbMSwxLCJcXG9wZXJhdG9ybmFtZXtGdW59XntzfShcXHdpZGV0aWxkZXtcXG1hdGhmcmFre0N9fVtcXG1hdGhjYWx7Q31dLFxcbWF0aHNme3NTZXR9Xitfey9cXG1hdGhmcmFre1B9fSkiXSxbMSwwLCJcXG9wZXJhdG9ybmFtZXtGdW59XntzfShcXHdpZGV0aWxkZXtcXG1hdGhmcmFre0N9fVtTXSxcXG1hdGhzZntzU2V0fV4rX3svXFxtYXRoZnJha3tQfX0pIl0sWzAsMCwiXFxtYXRoc2Z7c1NldH1eK197L1NcXHRpbWVzXFxtYXRoZnJha3tQfX0iXSxbMCwxLCJcXG1hdGhzZntzU2V0fV4rX3svXFxtYXRoY2Fse0N9XFx0aW1lc1xcbWF0aGZyYWt7UH19Il0sWzMsMCwiXFx3aWRldGlsZGV7XFxvcGVyYXRvcm5hbWV7U3R9fV97XFxtYXRoY2Fse0N9fV57XFxtYXRoZnJha3tQfX0iLDJdLFsyLDMsImlfISIsMl0sWzIsMCwiXFx3aWRldGlsZGV7XFxvcGVyYXRvcm5hbWV7U3R9fV97XFx3aWRldGlsZGV7XFxtYXRoZnJha3tDfX1baV19XntcXG1hdGhmcmFre1B9fSIsMV0sWzEsMCwiXFx3aWRldGlsZGV7XFxtYXRoZnJha3tDfX1baV1fISJdLFsyLDEsIlxcd2lkZXRpbGRle1xcb3BlcmF0b3JuYW1le1N0fX1fe1N9XntcXG1hdGhmcmFre1B9fSJdXQ==
\[\begin{tikzcd}
	{\mathsf{sSet}^+_{/S\times\mathfrak{P}}} & {\operatorname{Fun}^{s}(\widetilde{\mathfrak{C}}[S],\mathsf{sSet}^+_{/\mathfrak{P}})} \\
	{\mathsf{sSet}^+_{/\mathcal{C}\times\mathfrak{P}}} & {\operatorname{Fun}^{s}(\widetilde{\mathfrak{C}}[\mathcal{C}],\mathsf{sSet}^+_{/\mathfrak{P}})}
	\arrow["{\widetilde{\operatorname{St}}_{\mathcal{C}}^{\mathfrak{P}}}"', from=2-1, to=2-2]
	\arrow["{i_!}"', from=1-1, to=2-1]
	\arrow["{\widetilde{\operatorname{St}}_{\widetilde{\mathfrak{C}}[i]}^{\mathfrak{P}}}"{description}, from=1-1, to=2-2]
	\arrow["{\widetilde{\mathfrak{C}}[i]_!}", from=1-2, to=2-2]
	\arrow["{\widetilde{\operatorname{St}}_{S}^{\mathfrak{P}}}", from=1-1, to=1-2]
\end{tikzcd}\]commutes up to natural isomorphism. Since the functor $\widetilde{\fr C}[i]_{!}$
is a left Quillen equivalence \cite[Proposition A.3.3.8]{HTT}, it
reflects weak equivalences of cofibrant objects. Therefore, it suffices
to show that the functor $\cst_{\widetilde{\fr C}[i]}^{\frak P}$
preserves weak equivalences. This follows from our assumption that
the functor $\cst_{\cal C}^{\frak P}$ is left Quillen and the fact
that the functor $i_{!}$ is also left Quillen.

We are therefore reduced to the case where $S=\cal C$ is an $\infty$-category
and $\phi$ is the identity simplicial functor. For the remainder
of the proof, we will write $\pi:\SS_{/\fr P}^{+}\to\SS^{+}$ for
the forgetful functor.

By \cite[Proposition 7.15]{JT07}, it suffices to show that the functor
$\cun_{\phi}^{\frak P}$ preserves fibrations between fibrant objects
and that $\cst_{\phi}^{\frak P}$ preserves cofibrations. As in the
first paragraph, the functor $\cun_{\phi}^{\frak P}$ preserves trivial
fibrations, so the functor $\cst_{\phi}^{\frak P}$ preserves cofibrations.
It will therefore suffice to show that the functor $\cun_{\phi}^{\frak P}$
preserves fibrations between fibrant objects.

To prove that the functor $\cun_{\phi}^{\frak P}$ preserves fibrations
between fibrant objects, it will suffice to show that it preserves
fibrant objects. Indeed, suppose we have shown that $\cun_{\phi}^{\frak P}$
preserves fibrant objects. Let $\alpha:F\to G$ be a projective fibration
between projectively fibrant functors $F,G\in\Fun^{s}\pr{\widetilde{\fr C}[\cal C],\SS_{/\frak P}^{+}}$.
Since the categorical pattern $\cal C\times\frak P_{\Un}$ is creative
(in fact, commutative) and $\cun_{\phi}^{\frak P}$ preserves fibrant
objects, Proposition \ref{prop:creative} shows that $\cun_{\phi}^{\frak P}\pr{\alpha}$
is a fibration if and only if the map $\cun_{\phi}\pr{\pr{\pi\circ F}_{\flat}}\to\cun_{\phi}\pr{\pr{\pi\circ G}_{\flat}}$
is a categorical fibration. But this is the underlying map of the
map $\cun_{\phi}^{+}\pr{\pr{\pr{\pi\circ F}_{\flat}}^{\natural}}\to\cun_{\phi}^{+}\pr{\pr{\pr{\pi\circ G}_{\flat}}^{\natural}}$,
so the claim follows from Proposition \ref{prop:creative} and the
fact that the functor $\cun_{\phi}^{+}$ is right Quillen.

We are thus reduced to showing that $\cun_{\phi}^{\frak P}$ preserves
fibrant objects. Let $F:\cal C_{\Delta}\to\SS_{/\frak P}^{+}$ be
a projectively fibrant functor. We must show that the object $\cun_{\phi}^{\frak P}\pr F\in\SS_{/\cal C\times\frak P_{\Un}}^{+}$
is fibrant. Set $F'=\pi\circ F$. According to Proposition \ref{prop:recognition_of_fibrant_objects},
it will suffice to show that the map $p:\cun_{\phi}\pr{F'_{\flat}}\to\cal C\times\cun_{\Delta^{0}}\pr{\cal D}$
is a $\frak P_{\Un}$-bundle over $\cal C$ and that the marked edges
of $\cun_{\phi}^{+}\pr{F'}$ are precisely the $p$-cocartesian morphisms
whose images in $\cun_{\Delta^{0}}\pr{\cal D}$ are marked. 

We first verify that $p$ is a $\frak P_{\Un}$-bundle over $\cal C$.
We will check conditions (a) through (d) of Definition \ref{def:P-bundle}.

\begin{enumerate}[label=(\alph*)]

\item The map $q:\cun_{\phi}\pr{F'_{\flat}}\to\cal C$ is a cocartesian
fibration. This follows from the fact that $q$ is the underlying
map of the fibrant object $\cun_{\phi}^{+}\pr{\pr{F'_{\flat}}^{\natural}}\in\SS_{/\cal C}^{+}$.

\item The map $p$ lifts to a fibration in $\SS_{/\cal C}^{+}$ equipped
with the cocartesian model structure. Since $p$ is the underlying
map of the map $\cun_{\phi}^{+}\pr{\pr{F_{\flat}^{\p}}^{\natural}}\to\cun_{\phi}^{+}\pr{\delta\pr{\cal D^{\natural}}}$,
the claim is a consequence of Proposition \ref{prop:creative} and
the fact that the functor $\cun_{\phi}^{+}$ is right Quillen.

\item For each object $C\in\cal C$, the map
\[
p_{C}:\cun_{\phi}\pr{F'_{\flat}}\times_{\cal C}\{C\}\to\cun_{\Delta^{0}}\pr{\cal D}
\]
is $\frak P_{\Un}$-fibered. According to Corollary \ref{cor:3.2.1.14},
there is a commutative diagram % https://q.uiver.app/#q=WzAsNCxbMCwwLCJGJ19cXGZsYXQoQykiXSxbMCwxLCJcXG1hdGhjYWx7RH0iXSxbMSwwLCJcXHdpZGV0aWxkZXtcXG9wZXJhdG9ybmFtZXtVbn19X3tcXERlbHRhXjB9IChGJ19cXGZsYXQoQykpIl0sWzEsMSwiXFx3aWRldGlsZGV7XFxvcGVyYXRvcm5hbWV7VW59fV97XFxEZWx0YV4wfSAoXFxtYXRoY2Fse0R9KSJdLFsxLDMsIlxcc2ltZXEiXSxbMiwzXSxbMCwyLCJcXHNpbWVxIl0sWzAsMV1d
\[\begin{tikzcd}
	{F'_\flat(C)} & {\widetilde{\operatorname{Un}}_{\Delta^0} (F'_\flat(C))} \\
	{\mathcal{D}} & {\widetilde{\operatorname{Un}}_{\Delta^0} (\mathcal{D})}
	\arrow["\simeq", from=2-1, to=2-2]
	\arrow[from=1-2, to=2-2]
	\arrow["\simeq", from=1-1, to=1-2]
	\arrow[from=1-1, to=2-1]
\end{tikzcd}\]whose horizontal arrows are categorical equivalences. The vertical
maps are categorical fibrations by Proposition \ref{prop:creative}.
It follows from Lemma \ref{lem:cat_inv} and Corollary \ref{cor:3.2.1.14}
that $p_{C}$ is $\frak P_{\Un}$-fibered.

\item Let $f:C\to C'$ be a morphism of $\cal C$. We must show that
the induced functor 
\[
\cun_{\phi}\pr{F'_{\flat}}\times_{\cal C}\{C\}\to\cun_{\phi}\pr{F'_{\flat}}\times_{\cal C}\{C'\}
\]
preserves cocartesian edges over the marked edges of $\cun_{\Delta^{0}}^{+}\pr{\overline{\cal D}}$.
According to Proposition \ref{prop:rectification}, there is a commutative
diagram % https://q.uiver.app/#q=WzAsNixbMCwxLCJGJ19cXGZsYXQoQykiXSxbMSwwLCJGJ19cXGZsYXQoQycpIl0sWzEsMiwiXFxtYXRoY2Fse0R9Il0sWzIsMSwiXFx3aWRldGlsZGV7XFxvcGVyYXRvcm5hbWV7VW59fV97XFxEZWx0YV4wfShGJ19cXGZsYXQoQykpIl0sWzMsMCwiXFx3aWRldGlsZGV7XFxvcGVyYXRvcm5hbWV7VW59fV97XFxEZWx0YV4wfShGJ19cXGZsYXQoQycpKSJdLFszLDIsIlxcd2lkZXRpbGRle1xcb3BlcmF0b3JuYW1le1VufX1fe1xcRGVsdGFeMH0oXFxtYXRoY2Fse0R9KSJdLFswLDFdLFsxLDJdLFswLDJdLFswLDMsIlxcc2ltZXEiLDAseyJsYWJlbF9wb3NpdGlvbiI6NzB9XSxbMSw0LCJcXHNpbWVxIl0sWzMsNF0sWzMsNV0sWzIsNSwiXFxzaW1lcSJdLFs0LDVdXQ==
\[\begin{tikzcd}
	& {F'_\flat(C')} && {\widetilde{\operatorname{Un}}_{\Delta^0}(F'_\flat(C'))} \\
	{F'_\flat(C)} && {\widetilde{\operatorname{Un}}_{\Delta^0}(F'_\flat(C))} \\
	& {\mathcal{D}} && {\widetilde{\operatorname{Un}}_{\Delta^0}(\mathcal{D})}
	\arrow[from=2-1, to=1-2]
	\arrow[from=1-2, to=3-2]
	\arrow[from=2-1, to=3-2]
	\arrow["\simeq"{pos=0.7}, from=2-1, to=2-3]
	\arrow["\simeq", from=1-2, to=1-4]
	\arrow[from=2-3, to=1-4]
	\arrow[from=2-3, to=3-4]
	\arrow["\simeq", from=3-2, to=3-4]
	\arrow[from=1-4, to=3-4]
\end{tikzcd}\]whose horizontal arrows are categorical equivalences. Since $F'_{\flat}\pr f$
preserves cocartesian edges over the marked edges of $\cal D$, we
are done.

\end{enumerate}

Next, we check that the marked edges of $\cun_{\phi}^{+}\pr F$ are
precisely the $p$-cocartesian morphisms whose image in $\cun_{\Delta^{0}}^{+}\pr{\overline{\cal D}}$
is marked. Take an arbitrary morphism $\pr{f,g}:\pr{C,X}\to\pr{C',X'}$
of $\cun_{\phi}\pr{F'_{\flat}}$ whose image in $\cun_{\Delta^{0}}^{+}\pr{\cal D}$
is marked. We wish to show that $\pr{f,g}$ is marked in $\cun_{\phi}^{+}\pr{F'}$
if and only if it is $p$-cocartesian. By the definition of marked
edges of $\cun_{\phi}^{+}\pr{F'}$, the following conditions are equivalent:
\begin{itemize}
\item [(1)]The morphism $\pr{f,g}$ is marked in $\cun_{\phi}^{+}\pr{F'}$.
\item [(2)]The morphism $g:\pr{F'_{\flat}\pr f}X\to X'$ is marked in $F'\pr{C'}$.
\end{itemize}
Since the map $F'_{\flat}\pr{C'}\to\cal D$ is $\frak P$-fibered,
condition (2) is equivalent to the following condition:
\begin{itemize}
\item [(3)]The morphism $g$ is cocartesian with respect to the projection
$F'_{\flat}\pr{C'}\to\cal D$.
\end{itemize}
Using Corollary \ref{cor:3.2.1.14}, we see that condition (3) is
equivalent to the following condition:
\begin{itemize}
\item [(4)]The morphism $g$ is cocartesian with respect to the projection
$\cun_{\Delta^{0}}\pr{F'_{\flat}\pr{C'}}\to\cun_{\Delta^{0}}\pr{\cal D}$.
\end{itemize}
By Lemma \ref{lem:fiberwise_colimit}, this is equivalent to the following
condition:
\begin{itemize}
\item [(5)]The morphism $\pr{\id_{C'},g}:\pr{C',\pr{F'_{\flat}\pr f}X}\to\pr{C',X'}$
is $p$-cocartesian.
\end{itemize}
Now the morphism $\pr{f,g}$ can be factored as 
\[
\pr{f,g}=\pr{\id_{C'},g}\circ\pr{f,\id_{\pr{F'_{\flat}\pr f}X}}.
\]
The morphism $\pr{f,\id_{\pr{F'_{\flat}\pr f}X}}$ is $p$-cocartesian
because it is cocartesian over $\cal C$ and its image in $\cal C\times\cun_{\Delta^{0}}\pr{\cal D}$
is also cocartesian over $\cal C$. Therefore, condition (5) holds
if and only if the morphism $\pr{f,g}$ is $p$-cocartesian, as required.
\end{proof}
We next show that $\cun_{\phi}^{\frak P}$ is a right Quillen equivalence.
\begin{thm}
\label{thm:unstraightening_P-fibered_obj}Let $S$ be a simplicial
set and let $\phi:\widetilde{\fr C}[S]\to\cal C_{\Delta}$ be a weak
equivalence of simplicial categories. Let $\frak P$ be a commutative
categorical pattern on an $\infty$-category $\cal D$. The functor
\[
\cun_{\phi}^{\frak P}:\Fun^{s}\pr{\cal C_{\Delta},\SS_{/\frak P}^{+}}\to\SS_{/S\times\frak P_{\Un}}^{+}
\]
is a right Quillen equivalence.
\end{thm}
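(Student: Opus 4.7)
The plan is to reduce the assertion to the case $S=\cal C$ is an $\infty$-category and $\phi=\id_{\widetilde{\fr C}[\cal C]}$, and then promote the unmarked straightening--unstraightening equivalence (Theorem \ref{thm:st_un}) to the $\frak P$-fibered setting using the fiberwise criteria from Sections \ref{sec:catpat} and \ref{sec:P-bundles}.

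First, I would execute the reduction, mirroring the first paragraph of the proof of Proposition \ref{prop:Un_=00005Cphi^=00007B=00005CfrakP=00007D_right_Quillen}. Choose a weak categorical equivalence $i:S\to\cal C$ into an $\infty$-category, and factor $\phi$ as $\widetilde{\fr C}[S]\xrightarrow{\widetilde{\fr C}[i]}\widetilde{\fr C}[\cal C]\to\cal C_{\Delta}$. Proposition \ref{prop:weak_categorical_invariance} yields a Quillen equivalence $i_{!}:\SS^{+}_{/S\times\frak P_{\Un}}\to\SS^{+}_{/\cal C\times\frak P_{\Un}}$; the Dwyer--Kan equivalences $\widetilde{\fr C}[i]$ and $\widetilde{\fr C}[\cal C]\to\cal C_{\Delta}$ induce Quillen equivalences on projective functor categories (\cite[Proposition A.3.3.8]{HTT}); the diagram of unstraightening functors commutes up to natural isomorphism. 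Two-out-of-three then reduces the problem to showing that $\cun^{\frak P}_{\cal C}$ is a Quillen equivalence.

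Second, in the reduced case, I would verify that $\cun^{\frak P}_{\cal C}$ reflects weak equivalences between fibrant objects. Suppose $\alpha:F\to G$ is a morphism of projectively fibrant functors for which $\cun^{\frak P}_{\cal C}(\alpha)$ is a $\cal C\times\frak P_{\Un}$-equivalence. Proposition \ref{prop:fiberwise_P_equiv} shows that for every $C\in\cal C$ the fiberwise map $\cun^{\frak P}_{\cal C}(F)_{C}\to\cun^{\frak P}_{\cal C}(G)_{C}$ is a $\frak P_{\Un}$-equivalence. Inspection of the construction of $\cun^{\frak P}_{\cal C}$ (as in the verification of fibrancy in Proposition \ref{prop:Un_=00005Cphi^=00007B=00005CfrakP=00007D_right_Quillen}) identifies this fiber with $\cun^{+}_{\Delta^{0}}(F(C))$ together with its canonical $\frak P_{\Un}$-marking, and Corollary \ref{cor:3.2.1.14} furnishes a natural $\frak P_{\Un}$-equivalence to this from $F(C)$ pushed along $\cal D\to\cun_{\Delta^{0}}(\cal D)$. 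Proposition \ref{prop:P_Un_good} (with $S=\Delta^{0}$) translates this to a $\frak P$-equivalence $F(C)\to G(C)$, so $\alpha$ is a projective weak equivalence.

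Third, I would show that the derived unit of $\cst^{\frak P}_{\cal C}\dashv\cun^{\frak P}_{\cal C}$ is a weak equivalence, by comparison with the underlying Quillen equivalence. Given a cofibrant $X\in\SS^{+}_{/\cal C\times\frak P_{\Un}}$, the derived unit $X\to\cun^{\frak P}_{\cal C}(\cst^{\frak P}_{\cal C}(X)^{\mathrm{fib}})$ factors through the derived unit of the underlying adjunction $\cst^{+}_{\cal C}\dashv\cun^{+}_{\cal C}$ (with fibrant replacement taken in the cocartesian model structure over $\cal C$ on $\SS^{+}_{/\cal C\times\cun_{\Delta^{0}}(\cal D)^{\natural}}$), followed by further fibrant replacement in the $\frak P$-localized projective model structure on the target. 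The first factor is a weak equivalence by Theorem \ref{thm:st_un} (applied to the slice over the constant diagram $\delta(\overline{\cal D})$), and the second factor becomes a fiberwise categorical equivalence between fibrant $\frak P_{\Un}$-bundles; Proposition \ref{prop:creative_precise} then upgrades this to a $\cal C\times\frak P_{\Un}$-equivalence. Combined with conservativity from step two and the standard criteria for a Quillen equivalence, this completes the proof.

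The main obstacle is step three: carefully juggling the model structures, since $\Fun^{s}(\cal C_{\Delta},\SS^{+}_{/\frak P})$ is a left Bousfield localization of the projective model structure on $\Fun^{s}(\cal C_{\Delta},\SS^{+}_{/\overline{\cal D}})_{\mathrm{cocart}}$ while $\SS^{+}_{/\cal C\times\frak P_{\Un}}$ is a localization of the cocartesian-over-$\cal C$ structure, and checking cleanly that the underlying Quillen equivalence of Theorem \ref{thm:st_un} descends to both localizations.
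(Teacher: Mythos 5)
Your step 1 (reduction to an $\infty$-category with $\phi=\id$) and step 2 (conservativity of $\bb R\cun_{\cal C}^{\frak P}$ via Proposition \ref{prop:fiberwise_P_equiv} and Corollary \ref{cor:3.2.1.14}) are sound, and the overall decomposition into conservativity plus derived unit is a valid criterion for a Quillen equivalence. However, step 3 contains a genuine gap that your closing caveat understates. You want to factor the derived unit through the underlying (non-$\frak P$) derived unit of $\cst^{+}_{\cal C}\dashv\cun^{+}_{\cal C}$ and then handle the remainder by ``further fibrant replacement.'' The problem is that the intermediate object $\cun^{+}_{\cal C}(R^{+}\cst^{\frak P}_{\cal C}X)$, where $R^{+}$ is fibrant replacement in $\Fun^{s}(\cal C_{\Delta},\SS^{+})_{\mathrm{proj}}$, is \emph{not} fibrant in $\SS^{+}_{/\cal C\times\frak P_{\Un}}$ — its fibers over $\cal C$ need not be $\frak P_{\Un}$-fibered — so you cannot invoke Proposition \ref{prop:fiberwise_P_equiv} or Proposition \ref{prop:creative_precise} to promote the second factor, and right Quillenness of $\cun^{\frak P}_{\cal C}$ does not apply because the weak equivalence $R^{+}\cst^{\frak P}_{\cal C}X\to R^{\frak P}\cst^{\frak P}_{\cal C}X$ does not have fibrant source in $\Fun^{s}(\cal C_{\Delta},\SS^{+}_{/\frak P})_{\mathrm{proj}}$. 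In short, there is no evident control over the $\frak P$-local fibrant replacement of $\cst^{\frak P}_{\cal C}X$.

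The paper sidesteps exactly this difficulty by working with the derived \emph{counit} rather than the derived unit, so that no fibrant replacement in the localized model structure is ever needed: since every object of $\SS^{+}_{/\cal C\times\frak P_{\Un}}$ is cofibrant and $\cun^{\frak P}_{\phi}$ preserves fibrant objects, the derived counit at a projectively fibrant $F$ is literally $\cst^{\frak P}_{\phi}\cun^{\frak P}_{\phi}F\to F$. It is then shown to be a levelwise $\frak P$-equivalence by a pushout argument along the \emph{unmarked} counit (Lemma \ref{lem:St_Un_counit} ensures surjectivity on marked edges, Lemma \ref{lem:weak_+_equiv->P-equiv} upgrades the unmarked weak equivalence, and left properness of $\SS^{+}_{/\frak P}$ finishes). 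Essential surjectivity of $\bb R\cun^{\frak P}_{\phi}$ is then handled by a second idea you are also missing: given a $\frak P_{\Un}$-bundle $\cal X$, one factors the adjoint of the structure map to $\delta(\cal D^{\natural})$ in the \emph{unlocalized} projective model structure on $\Fun^{s}(\cal C_{\Delta},\SS^{+})$, and then verifies a posteriori (conditions (1-a), (1-b) in the paper's proof, using Lemma \ref{lem:cat_inv} and Proposition \ref{prop:rectification}) that the resulting functor is automatically levelwise $\frak P$-fibered because $\cal X$ was a $\frak P_{\Un}$-bundle. This trick — do fibrant replacement in the easy model structure and then argue it was already fibrant in the localized one — is precisely what your step 3 lacks.
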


The proof of Theorem \ref{thm:unstraightening_P-fibered_obj} requires
some preliminaries.
\begin{lem}
\label{lem:weak_+_equiv->P-equiv}Let $\frak P$ be a commutative
categorical pattern on an $\infty$-category $\cal D$. Let $f:\overline{X}\to\overline{Y}$
be a morphism in $\SS_{/\frak P}^{+}$. If $f$ is a weak equivalence
of marked simplicial sets and the marked simplicial set $\overline{Y}\in\SS^{+}$
is fibrant, then $f$ is a $\frak P$-equivalence.
\end{lem}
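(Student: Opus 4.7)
The plan is to factor $f$ in $\SS^+$ as a marked left anodyne map followed by a marked left fibration and to verify that both pieces are $\frak P$-equivalences. Applying the small object argument to the set of generating marked left anodyne extensions, I would obtain a factorization $\overline{X} \xrightarrow{j} \overline{X}' \xrightarrow{p} \overline{Y}$ in $\SS^+$, with $j$ marked left anodyne and $p$ a marked left fibration. Equipping $\overline{X}'$ with the composite structure map $\overline{X}' \xrightarrow{p} \overline{Y} \to \overline{\cal D}$ turns both $j$ and $p$ into morphisms in $\SS^+_{/\frak P}$.

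The first piece is immediate: since $\frak P$ is commutative, its set $T$ consists of all $2$-simplices of $\cal D$, so Proposition \ref{prop:mla_are_weak_equiv} applies and shows that $j$ is a $\frak P$-equivalence. The work is in handling $p$, which I would treat by first verifying that $\overline{X}'$ is itself fibrant in $\SS^+$. Write $\overline{Y} = \cal Y^\natural$. Since $p$ is a marked left fibration, the underlying map of simplicial sets $X' \to \cal Y$ is an inner fibration over an $\infty$-category, so $X'$ is an $\infty$-category. Marked edges of $\overline{X}'$ lie over marked edges of $\overline{Y}$ (i.e.\ equivalences of $\cal Y$) and are $p$-cocartesian lifts of them, hence equivalences of $X'$. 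Conversely, testing the RLP of $p$ against the marked left anodyne maps $K^\flat \to K^\sharp$, applied to the Kan subcomplexes $K \subset X'$, shows that every edge lying in a Kan subcomplex of $X'$---in particular every equivalence of $X'$---is marked in $\overline{X}'$. Consequently $\overline{X}' = (X')^\natural$ is fibrant in $\SS^+$.

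Now $p$ is a marked left fibration between fibrant objects of $\SS^+$, hence a fibration in $\SS^+$ by the characterization of fibrations between fibrant objects in the cocartesian model structure. By two-out-of-three, the hypothesis that $f$ is a weak equivalence in $\SS^+$ together with the fact that the marked left anodyne map $j$ is a trivial cofibration in $\SS^+$ shows that $p$ is also a weak equivalence in $\SS^+$. A fibration that is a weak equivalence is a trivial fibration, so $p$ has the right lifting property against every monomorphism. Since cofibrations of $\SS^+_{/\frak P}$ are precisely the monomorphisms (with structure maps induced by composition), this makes $p$ a trivial fibration, and in particular a $\frak P$-equivalence, in $\SS^+_{/\frak P}$. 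Therefore $f = pj$ is a composite of $\frak P$-equivalences, and we conclude.

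The main obstacle I anticipate is the identification $\overline{X}' = (X')^\natural$; once this is in hand, everything else consists of assembling facts already at our disposal. The input of commutativity of $\frak P$ enters the argument only through Proposition \ref{prop:mla_are_weak_equiv}, which is what allows the marked left anodyne half of the factorization to be recognized as a $\frak P$-equivalence in the first place.
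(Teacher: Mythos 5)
Your argument is correct and is essentially the same as the paper's: factor $f$ as a marked left anodyne map followed by a marked left fibration, handle the first half with Proposition \ref{prop:mla_are_weak_equiv}, and recognize the second half as a trivial fibration of marked simplicial sets. The only difference is that you spell out the verification that $\overline{X}'$ is fibrant before invoking the characterization of fibrations between fibrant objects in $\SS^+$, whereas the paper leaves that step implicit; both routes are fine.
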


\begin{proof}
Factor the map $f$ as $\overline{X}\xrightarrow{i}\overline{X'}\xrightarrow{p}\overline{Y}$,
where $i$ is marked left anodyne and $p$ is a marked left fibration.
Since $\overline{Y}\in\SS^{+}$ is fibrant, the map $p$ is a fibration
of $\SS^{+}$. Since $f$ and $i$ are weak equivalences of marked
simplicial sets, the map $p$ is a trivial fibration of marked simplicial
sets. Therefore, the map $p$ is a $\frak P$-equivalence. We also
know from Lemma \ref{prop:mla_are_weak_equiv} that the map $i$ is
a $\frak P$-equivalence. Being the composite of $\frak P$-equivalences,
the map $f$ is also a $\frak P$-equivalence, and the proof is complete.
\end{proof}
\begin{lem}
\label{lem:St_Un_counit}Let $S$ be a simplicial set and let $\phi:\widetilde{\fr C}[S]\to\cal C_{\Delta}$
be a simplicial functor. For each simplicial functor $F:\cal C_{\Delta}\to\SS^{+}$
and each vertex $x\in S$, the map
\[
\varepsilon:\cst_{\phi}^{+}\cun_{\phi}^{+}\pr F\pr{\phi\pr x}\to F\pr{\phi\pr x}
\]
induces a surjection on the set of marked edges.
\end{lem}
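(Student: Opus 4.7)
The plan is to exhibit, for each marked edge $m \colon a \to b$ of $F(\phi(x))$, an explicit marked preimage in $\cst_\phi^+ \cun_\phi^+ F(\phi(x))$ under the counit $\varepsilon$. The construction uses the degenerate $1$-simplex $s_0 x \colon x \to x$ of $S$ as a bridge to lift $m$ to the unstraightening, and then transports it back through the straightening.

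First I would identify $m$ with a marked edge $\tilde m$ of $\cun_\phi^+ F$ lying over $s_0 x$. By Definition \ref{def:St_Un}, a $1$-simplex of $\cun_\phi(F_\flat)$ over $s_0 x$ is a simplicial natural transformation $\Psi_1 \to F_\flat \circ \phi \circ \widetilde{\fr C}[s_0 x]$, which, because $\phi \circ \widetilde{\fr C}[s_0 x]$ is constant at $\phi(x)$ and sends the generating morphism to $\id_{\phi(x)}$, unwinds to a vertex $\alpha_0 \in F(\phi(x))$ together with an edge $\alpha_1$ of $F(\phi(x))$ satisfying $\alpha_0 = d_1 \alpha_1$. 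Taking $\alpha_0 = a$ and $\alpha_1 = m$ furnishes $\tilde m$, which is marked in $\cun_\phi^+ F$ precisely because $m$ is marked in $F(\phi(x))$. Applying the minimal-marking recipe for $\cst_\phi^+$ from Subsubsection \ref{subsec:Marked-Case} to $\tilde m$ then yields a canonical marked edge $\check m \in \cst_\phi^+ \cun_\phi^+ F(\phi(x))$, namely the image of the generating edge of $\widetilde{\fr C}[(\Delta^1)^\lcone](\infty,1) \cong \Delta^1$ under the composite
\[
\widetilde{\fr C}[(\Delta^1)^\lcone] \xrightarrow{\widetilde{\fr C}[\tilde m^\lcone]} \widetilde{\fr C}[(\cun_\phi^+ F)^\lcone] \to \widetilde{\fr C}[(\cun_\phi^+ F)^\lcone] \amalg_{\widetilde{\fr C}[\cun_\phi^+ F]} \cal C_\Delta
\]
evaluated at $(\infty, \phi(x))$.

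Finally I would verify $\varepsilon(\check m) = m$ via the triangle identity $\cun_\phi^+(\varepsilon_F) \circ \eta_{\cun_\phi^+ F} = \id_{\cun_\phi^+ F}$ applied to $\tilde m$. By the Kan-extension presentation of $\cst_\phi^+$ recalled in Subsubsection \ref{subsec:Marked-Case}, the unit $\eta$ sends $\tilde m$ to the edge of $\cun_\phi^+ \cst_\phi^+ \cun_\phi^+ F$ over $s_0 x$ whose edge component, under the identification from the previous paragraph, is exactly $\check m$; since $\cun_\phi^+(\varepsilon_F)$ acts on edge components by post-composition with $\varepsilon_F$, the triangle identity forces $\varepsilon(\check m) = m$. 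The main technical hurdle is verifying the stated description of $\eta(\tilde m)$: this is a coherence statement relating the Kan-extension description of $\cst_\phi^+$ to the explicit formula for $\cun_\phi^+$ in Definition \ref{def:St_Un}, and it amounts to a careful bookkeeping of the simplices involved in both constructions.
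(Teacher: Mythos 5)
The proposal is correct and takes essentially the same approach as the paper: both lift the marked edge $m$ to a marked edge of $\cun_{\phi}^{+}F$ over the degenerate edge $s_{0}x$ and then pull it back through the straightening using the Kan-extension description of the adjunction from Subsubsection \ref{subsec:Marked-Case}. The only difference is cosmetic: the paper routes the final verification through the corepresenting object $\cst_{\phi}^{+}\pr{\pr{\Delta^{1}}^{\sharp}}$ and reads off the answer from the adjunction hom-bijection (noting that $\alpha$ factors through $\varepsilon$), which sidesteps the explicit computation of $\eta(\tilde m)$ that you flag as the technical hurdle.
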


\begin{proof}
Let $f:\pr{\Delta^{1}}^{\sharp}\to F\pr{\phi\pr x}$ be a marked edge.
We must show that $f$ factors through $\varepsilon$. Regard $\pr{\Delta^{1}}^{\sharp}$
as an object of $\SS_{/S}^{+}$ with respect to the projection $\pr{\Delta^{1}}^{\sharp}\to\{x\}^{\sharp}\hookrightarrow S^{\sharp}$,
and consider the simplicial natural transformation $\alpha:\cst_{\phi}^{+}\pr{\pr{\Delta^{1}}^{\sharp}}\to F$
which is adjoint to the composite
\[
\pr{\Delta^{1}}^{\sharp}\to F\pr{\phi\pr x}\to\cun_{\Delta^{0}}^{+}\pr{F\pr{\phi\pr x}}\cong\cun_{\phi}^{+}\pr F\times_{S}\{x\}\hookrightarrow\cun_{\phi}^{+}\pr F.
\]
According to the description of the adjunction $\cst_{\phi}^{+}\dashv\cun_{\phi}^{+}$
given in subsection \ref{subsec:comparison_with_Lurie}, the composite
\[
\pr{\Delta^{1}}^{\sharp}\cong\Psi_{\pr{\Delta^{1}}^{\sharp}}\pr 1\to\St_{\phi}^{+}\pr{\pr{\Delta^{1}}^{\sharp}}\pr{\phi\pr x}\xrightarrow{\alpha}F\pr{\phi\pr x}
\]
classifies the edge $f$. Hence $f$ is the image of a marked edge
of $\St_{\phi}^{+}\pr{\pr{\Delta^{1}}^{\sharp}}\pr{\phi\pr x}$ under
$\alpha$. Since $\alpha$ factors through the counit map $\cst_{\phi}^{+}\cun_{\phi}^{+}\pr F\to F$,
we are done.
\end{proof}
We now arrive at the proof of Theorem \ref{thm:unstraightening_P-fibered_obj}.
\begin{proof}
[Proof of Theorem \ref{thm:unstraightening_P-fibered_obj}]Notice
first that the validity of the statement of the theorem depends only
on the simplicial set $S$ and not on the simplicial functor $\phi$.
Indeed, the functor $\cun_{\phi}^{\frak P}$ factors as
\[
\Fun^{s}\pr{\cal C_{\Delta},\SS_{/\frak P}^{+}}_{{\rm proj}}\xrightarrow{\phi^{*}}\Fun^{s}\pr{\widetilde{\fr C}[S],\SS_{/\frak P}^{+}}\xrightarrow{\cun_{S}^{\frak P}}\SS_{/S\times\frak P_{\Un}}^{+},
\]
and by \cite[Proposition A.3.3.8]{HTT}, the functor $\phi^{*}$ is
a right Quillen equivalence. Therefore, $\cun_{\phi}^{\frak P}$ is
a right equivalence if and only if $\cun_{S}^{\frak P}$ is a right
Quillen equivalence. The latter functor clearly does not depend on
$\phi$.

We next remark that the validity of the theorem depends only on the
weak categorical type of $S$. Indeed, for each map of simplicial
sets $f:S\to T$, there is a diagram % https://q.uiver.app/#q=WzAsNCxbMCwwLCJcXG9wZXJhdG9ybmFtZXtGdW59XntzfShcXHdpZGV0aWxkZXtcXG1hdGhmcmFre0N9fVtUXSxcXG1hdGhzZntzU2V0fV4rX3svXFxtYXRoZnJha3tQfX0pIl0sWzAsMSwiXFxvcGVyYXRvcm5hbWV7RnVufV57c30oXFx3aWRldGlsZGV7XFxtYXRoZnJha3tDfX1bU10sXFxtYXRoc2Z7c1NldH1eK197L1xcbWF0aGZyYWt7UH19KSJdLFsxLDEsIlxcbWF0aHNme3NTZXR9Xitfey9TXFx0aW1lc1xcbWF0aGZyYWt7UH19Il0sWzEsMCwiXFxtYXRoc2Z7c1NldH1eK197L1RcXHRpbWVzXFxtYXRoZnJha3tQfX0iXSxbMCwzLCJcXHdpZGV0aWxkZXtcXG9wZXJhdG9ybmFtZXtVbn19X3tUfV57XFxtYXRoZnJha3tQfX0iXSxbMywyLCJmXioiXSxbMCwxLCJcXHdpZGV0aWxkZXtcXG1hdGhmcmFre0N9fVtmXV4qIiwyXSxbMSwyLCJcXHdpZGV0aWxkZXtcXG9wZXJhdG9ybmFtZXtVbn19X3tTfV57XFxtYXRoZnJha3tQfX0iLDJdXQ==
\[\begin{tikzcd}
	{\operatorname{Fun}^{s}(\widetilde{\mathfrak{C}}[T],\mathsf{sSet}^+_{/\mathfrak{P}})} & {\mathsf{sSet}^+_{/T\times\mathfrak{P}}} \\
	{\operatorname{Fun}^{s}(\widetilde{\mathfrak{C}}[S],\mathsf{sSet}^+_{/\mathfrak{P}})} & {\mathsf{sSet}^+_{/S\times\mathfrak{P}}}
	\arrow["{\widetilde{\operatorname{Un}}_{T}^{\mathfrak{P}}}", from=1-1, to=1-2]
	\arrow["{f^*}", from=1-2, to=2-2]
	\arrow["{\widetilde{\mathfrak{C}}[f]^*}"', from=1-1, to=2-1]
	\arrow["{\widetilde{\operatorname{Un}}_{S}^{\mathfrak{P}}}"', from=2-1, to=2-2]
\end{tikzcd}\]of right Quillen functors, which commutes up to natural isomorphisms.
If $f$ is a weak categorical equivalence, then the functor $f^{*}$
is a right Quillen equivalence by Proposition \ref{prop:weak_categorical_invariance},
and the functor $\widetilde{\fr C}[f]^{*}$ is a right Quillen equivalence
by \cite[Proposition A.3.3.8]{HTT}. Thus $\cun_{S}^{\frak P}$ is
a right Quillen equivalence if and only if $\cun_{T}^{\frak P}$ is
a right Quillen equivalence.

Combining the observations in the previous paragraphs, we may assume
that $\cal C_{\Delta}$ is fibrant in the Bergner model structure
(i.e., the hom-simplicial sets of $\cal C_{\Delta}$ are Kan complexes),
that $S=N\pr{\cal C_{\Delta}}$ is its homotopy coherent nerve, and
that $\phi$ is given by the counit map $\widetilde{\fr C}[N\pr{\cal C_{\Delta}}]\to\cal C_{\Delta}$.
For the remainder of the proof, we let $\pi:\SS_{/\fr P}^{+}\to\SS^{+}$
denote the forgetful functor. It suffices to verify that the total
right derived functor $\bb R\cun_{\phi}^{\frak P}$ of $\cun_{\phi}^{\frak P}$
has the following properties:
\begin{enumerate}
\item The functor $\bb R\cun_{\phi}^{\frak P}$ is essentially surjective.
\item The functor $\bb R\cun_{\phi}^{\frak P}$ is fully faithful.
\end{enumerate}
We first prove (1). Let $p:\cal X\to S\times\cun_{\Delta^{0}}\pr{\cal D}$
be a $\frak P_{\Un}$-bundle over $S$. We wish to find a projectively
fibrant simplicial functor $F:\cal C_{\Delta}\to\SS_{/\frak P}^{+}$
and a weak equivalence $\cal X_{\natural}\to\cun_{\phi}^{\frak P}\pr F$
of $\SS_{/S\times\frak P_{\Un}}^{+}$. By Proposition \ref{prop:creative},
it suffices to find a projectively fibrant simplicial functor $F:\cal C_{\Delta}\to\SS_{/\frak P}^{+}$
and a categorical equivalence $\cal X\to\cun_{\phi}\pr{\pr{\pi\circ F}_{\flat}}$
over $S\times\cun_{\Delta^{0}}\pr{\cal D}$.

Let $\overline{\cal X}$ denote the marked simplicial set obtained
from $\cal X$ by marking the cocartesian morphisms over $S$. Let
$p':\cst_{\phi}^{+}\pr{\overline{\cal X}}\to\delta\pr{\cal D^{\natural}}$
denote the adjoint of the functor $p$. We factor the map $p'$ as
% https://q.uiver.app/#q=WzAsMyxbMCwwLCJcXHdpZGV0aWxkZXtcXG9wZXJhdG9ybmFtZXtTdH19XitfXFxwaGkoXFxvdmVybGluZXtcXG1hdGhjYWx7WH19KSJdLFsyLDAsIkciXSxbMSwxLCJcXGRlbHRhKFxcbWF0aGNhbHtEfV5cXG5hdHVyYWwpIl0sWzAsMSwiaSciXSxbMSwyLCJxJyJdLFswLDIsInAnIiwyXV0=
\[\begin{tikzcd}
	{\widetilde{\operatorname{St}}^+_\phi(\overline{\mathcal{X}})} && G \\
	& {\delta(\mathcal{D}^\natural),}
	\arrow["{i'}", from=1-1, to=1-3]
	\arrow["{q'}", from=1-3, to=2-2]
	\arrow["{p'}"', from=1-1, to=2-2]
\end{tikzcd}\]where $i'$ is a trivial cofibration and $q'$ is a fibration in $\Fun^{s}\pr{\cal C_{\Delta},\SS^{+}}_{{\rm proj}}$.
We will show that the map $q'$ satisfies the following conditions:
\begin{itemize}
\item [(1-a)]For each $C\in\cal C_{\Delta}$, the map $G_{\flat}\pr C\to\cal D$
is $\frak P$-fibered.
\item [(1-b)]For each morphism $f:C\to C'$ in the underlying category
of $\cal C_{\Delta}$, the induced map $G_{\flat}\pr f:G_{\flat}\pr C\to G_{\flat}\pr{C'}$
is a morphism of $\frak P$-fibered objects.
\end{itemize}
Assuming that conditions (1-a) and (1-b) have been verified for now,
we may complete the proof of (1) as follows. Using Lemma \ref{lem:Map_S^=00005Csharp_core}
and conditions (1-a) and (1-b), we can define a projectively fibrant
simplicial functor $F:\cal C_{\Delta}\to\SS_{/\frak P}^{+}$ by
\[
F\pr C=\pr{G_{\flat}\pr C}_{\natural}.
\]
We then consider the commutative diagram% https://q.uiver.app/#q=WzAsMyxbMCwwLCJcXG92ZXJsaW5le1xcbWF0aGNhbHtYfX0iXSxbMiwwLCJcXHdpZGV0aWxkZXtcXG9wZXJhdG9ybmFtZXtVbn19X1xccGhpIF4rKEcpIl0sWzEsMSwiU15cXHNoYXJwXFx0aW1lcyBcXHdpZGV0aWxkZXtcXG9wZXJhdG9ybmFtZXtVbn19X3tcXERlbHRhXjB9XisoXFxtYXRoY2Fse0R9XlxcbmF0dXJhbCkiXSxbMCwxLCJpIl0sWzEsMiwicSJdLFswLDIsInAiLDJdXQ==
\[\begin{tikzcd}
	{\overline{\mathcal{X}}} && {\widetilde{\operatorname{Un}}_\phi ^+(G)} \\
	& {S^\sharp\times \widetilde{\operatorname{Un}}_{\Delta^0}^+(\mathcal{D}^\natural),}
	\arrow["i", from=1-1, to=1-3]
	\arrow["q", from=1-3, to=2-2]
	\arrow["p"', from=1-1, to=2-2]
\end{tikzcd}\]where $i$ is adjoint to $i'$ and $q=\cun_{\phi}^{+}\pr G$. By Theorem
\ref{thm:st_un}, the map $i$ is a weak equivalence of fibrant objects
in $\SS_{/S}^{+}$, so its underlying map of simplicial sets 
\[
\cal X\to\cun_{\phi}\pr{G_{\flat}}=\cun_{\phi}\pr{\pr{\pi\circ F}_{\flat}}
\]
is a categorical equivalence over $S\times\cun_{\Delta^{0}}\pr{\cal D}$.
This is the categorical equivalence we were looking for.

We now turn to the verification of conditions (1-a) and (1-b). We
begin with (1-a). According to Proposition \ref{prop:creative}, the
image of the map $q$ in $\SS$ is a categorical fibration. Since
$\cal X$ is $S\times\frak P_{\Un}$-fibered, Lemma \ref{lem:cat_inv}
tells us that so is $\widetilde{\Un}_{\phi}\pr{G_{\flat}}$. It follows
that for each $C\in\cal C$, the map
\[
\cun_{\phi}\pr{G_{\flat}\pr C}\to\cun_{\Delta^{0}}\pr{\cal D}
\]
is $\frak P_{\Un}$-fibered. Thus, by Lemma \ref{lem:cat_inv} and
Corollary \ref{cor:3.2.1.14}, the map $G_{\flat}\pr C\to\cal D$
is $\frak P$-fibered, as required.

Next, we prove (1-b). By Proposition \ref{prop:rectification}, there
is a commutative diagram% https://q.uiver.app/#q=WzAsNixbMCwxLCJHX1xcZmxhdChDKSJdLFsxLDAsIkdfXFxmbGF0KEMnKSJdLFsxLDIsIlxcbWF0aGNhbHtEfSJdLFszLDAsIlxcd2lkZXRpbGRle1xcb3BlcmF0b3JuYW1le1VufX1fe1xcRGVsdGFeMH0oR19cXGZsYXQoQycpKSJdLFsyLDEsIlxcd2lkZXRpbGRle1xcb3BlcmF0b3JuYW1le1VufX1fe1xcRGVsdGFeMH0oR19cXGZsYXQoQykpIl0sWzMsMiwiXFx3aWRldGlsZGV7XFxvcGVyYXRvcm5hbWV7VW59fV97XFxEZWx0YV4wfShcXG1hdGhjYWx7RH0pIl0sWzAsMV0sWzEsMl0sWzAsMl0sWzEsMywiXFxzaW1lcSJdLFswLDQsIlxcc2ltZXEiLDIseyJsYWJlbF9wb3NpdGlvbiI6NjB9XSxbNCwzXSxbNCw1XSxbMiw1LCJcXHNpbWVxIiwyXSxbMyw1XV0=
\[\begin{tikzcd}
	& {G_\flat(C')} && {\widetilde{\operatorname{Un}}_{\Delta^0}(G_\flat(C'))} \\
	{G_\flat(C)} && {\widetilde{\operatorname{Un}}_{\Delta^0}(G_\flat(C))} \\
	& {\mathcal{D}} && {\widetilde{\operatorname{Un}}_{\Delta^0}(\mathcal{D})}
	\arrow[from=2-1, to=1-2]
	\arrow[from=1-2, to=3-2]
	\arrow[from=2-1, to=3-2]
	\arrow["\simeq", from=1-2, to=1-4]
	\arrow["\simeq"'{pos=0.6}, from=2-1, to=2-3]
	\arrow[from=2-3, to=1-4]
	\arrow[from=2-3, to=3-4]
	\arrow["\simeq"', from=3-2, to=3-4]
	\arrow[from=1-4, to=3-4]
\end{tikzcd}\]whose horizontal arrows are categorical equivalences. Since $\cun_{\phi}\pr{G_{\flat}}$
is a $\frak P_{\Un}$-bundle over $\cal C$, the map $\cun_{\Delta^{0}}\pr{G_{\flat}\pr C}\to\cun_{\Delta^{0}}\pr{G_{\flat}\pr{C'}}$
preserves cocartesian edges over the marked edges of $\cun_{\Delta^{0}}\pr{\cal D}$.
Therefore, the functor $G_{\flat}\pr C\to G_{\flat}\pr{C'}$ preserves
cocartesian edges lying over marked edges of $\cal D$, as claimed.

Next, we turn to the proof of (2). Let $F:\cal C_{\Delta}\to\SS_{/\frak P}^{+}$
be a projectively fibrant functor. We must show that, for each object
$C\in\cal C_{\Delta}$, the counit map
\[
\varepsilon:\pr{\cst_{\phi}^{\frak P}\cun_{\phi}^{\frak P}F}\pr C\to F\pr C
\]
is a $\frak P$-equivalence. Set $F'=\pi\circ F$ and consider the
commutative diagram % https://q.uiver.app/#q=WzAsNCxbMCwxLCIoXFx3aWRldGlsZGV7XFxvcGVyYXRvcm5hbWV7U3R9fV4rX3tcXHBoaX1cXHdpZGV0aWxkZXtcXG9wZXJhdG9ybmFtZXtVbn19Xitfe1xccGhpfUYnKShDKSJdLFsxLDEsIkYnKEMpIl0sWzAsMCwiKFxcd2lkZXRpbGRle1xcb3BlcmF0b3JuYW1le1N0fX1eK197XFxwaGl9XFx3aWRldGlsZGV7XFxvcGVyYXRvcm5hbWV7VW59fV4rX3tcXHBoaX0oRidfXFxmbGF0KV5cXG5hdHVyYWwpKEMpIl0sWzEsMCwiKEYnX1xcZmxhdCleXFxuYXR1cmFsKEMpIl0sWzAsMSwiXFx2YXJlcHNpbG9uIiwyXSxbMiwwXSxbMiwzLCJcXHZhcmVwc2lsb24nIl0sWzMsMV1d
\[\begin{tikzcd}
	{(\widetilde{\operatorname{St}}^+_{\phi}\widetilde{\operatorname{Un}}^+_{\phi}(F'_\flat)^\natural)(C)} & {(F'_\flat)^\natural(C)} \\
	{(\widetilde{\operatorname{St}}^+_{\phi}\widetilde{\operatorname{Un}}^+_{\phi}F')(C)} & {F'(C)}
	\arrow["\varepsilon"', from=2-1, to=2-2]
	\arrow[from=1-1, to=2-1]
	\arrow["{\varepsilon'}", from=1-1, to=1-2]
	\arrow[from=1-2, to=2-2]
\end{tikzcd}\]in $\SS_{/\frak P}^{+}$. According to Lemma \ref{lem:St_Un_counit},
the square is cocartesian. Moreover, Theorem \ref{thm:st_un} and
Lemma \ref{lem:weak_+_equiv->P-equiv} show that the map $\varepsilon'$
is a $\frak P$-equivalence. Since the model structure on $\SS_{/\frak P}^{+}$
is left proper, we conclude that $\varepsilon$ is also a $\frak P$-equivalence.
\end{proof}
\begin{cor}
\label{cor:main}Let $S$ be a simplicial set and let $\frak P$ be
a commutative categorical pattern on an $\infty$-category $\cal D$.
There is a categorical equivalence
\[
\Fun\pr{S,\frak P\-\Fib}\simeq\frak P\-\Bund\pr S.
\]
\end{cor}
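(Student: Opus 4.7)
The plan is to combine Theorem~\ref{thm:unstraightening_P-fibered_obj} with Proposition~\ref{prop:P_Un_good} to obtain a Quillen equivalence, then pass to the underlying $\infty$-categories and identify both sides with the functor and bundle $\infty$-categories in the statement.

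First, I would choose a weak equivalence $\phi:\widetilde{\fr C}[S]\to\cal C_{\Delta}$ where $\cal C_{\Delta}$ is a fibrant simplicial category (for instance, a Bergner-fibrant replacement of $\widetilde{\fr C}[S]$, so that the adjoint map $S\to N\pr{\cal C_{\Delta}}$ is a weak categorical equivalence). Composing the two right Quillen equivalences
\[
\Fun^{s}\pr{\cal C_{\Delta},\SS_{/\frak P}^{+}}_{\mrm{proj}}\xrightarrow{\cun_{\phi}^{\frak P}}\SS_{/S\times\frak P_{\Un}}^{+}\longrightarrow\SS_{/S\times\frak P}^{+}
\]
furnished by Theorem~\ref{thm:unstraightening_P-fibered_obj} and Proposition~\ref{prop:P_Un_good}, I obtain a single right Quillen equivalence whose target is (by Corollary~\ref{cor:model_structure}) the simplicial model category presenting $\frak P\-\Bund\pr S$ in the sense of Definition~\ref{def:eq_P-bund}.

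Next, I would take homotopy coherent nerves of the full simplicial subcategories of fibrant--cofibrant objects on both sides. On the right, this yields $\frak P\-\Bund\pr S$ by definition. For the left, I would invoke the standard identification of the underlying $\infty$-category of a projective model structure on simplicial functors into a combinatorial simplicial model category (a straightforward variant of \cite[Proposition 4.2.4.4]{HTT}):
\[
N\pr{\Fun^{s}\pr{\cal C_{\Delta},\SS_{/\frak P}^{+}}^{\circ}_{\mrm{proj}}}\simeq\Fun\pr{N\pr{\cal C_{\Delta}},\frak P\-\Fib},
\]
where $\pr{-}^{\circ}$ denotes the full simplicial subcategory of fibrant--cofibrant objects. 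Composing with the weak categorical equivalence $S\xrightarrow{\simeq}N\pr{\cal C_{\Delta}}$ yields an equivalence with $\Fun\pr{S,\frak P\-\Fib}$, which together with the previous step gives the asserted equivalence.

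The main obstacle is ensuring the hypotheses of the last comparison are in place: one must verify that $\SS_{/\frak P}^{+}$ is a combinatorial simplicial model category (which is guaranteed by Theorem~\ref{thm:B.0.20} together with the simplicial enrichment of Remark~\ref{rem:enrichment}) and that the chosen $\cal C_{\Delta}$ is both fibrant and connected to $S$ by a weak equivalence whose nerve is a weak categorical equivalence. Everything else is a formal consequence of the Quillen equivalences already established in Sections~\ref{sec:catpat}--\ref{sec:Rectification}.
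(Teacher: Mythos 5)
Your proof follows essentially the same route as the paper's: compose the right Quillen equivalences of Theorem~\ref{thm:unstraightening_P-fibered_obj} and Proposition~\ref{prop:P_Un_good}, identify the homotopy coherent nerve of fibrant--cofibrant objects on the target as $\frak P\-\Bund\pr S$, and use \cite[Proposition 4.2.4.4]{HTT} to identify the source with a functor $\infty$-category. The only difference is cosmetic: the paper takes $\phi$ to be the identity of $\widetilde{\fr C}[S]$ (so that \cite[Proposition 4.2.4.4]{HTT} applies directly, with no fibrant replacement or passage through $N\pr{\cal C_{\Delta}}$ needed), whereas you introduce an unnecessary Bergner-fibrant replacement and hence need a ``variant'' of 4.2.4.4 plus a further rectification along $S\to N\pr{\cal C_{\Delta}}$.
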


\begin{proof}
Let $\Fun^{s}\pr{\widetilde{\fr C}[S],\SS_{/\frak P}^{+}}^{\circ}\subset\Fun^{s}\pr{\widetilde{\fr C}[S],\SS_{/\frak P}^{+}}$
denote the full simplicial subcategory spanned by the projectively
fibrant-cofibrant objects. By \cite[Proposition 4.2.4.4]{HTT}, the
functor
\[
N\pr{\Fun^{s}\pr{\widetilde{\fr C}[S],\SS_{/\frak P}^{+}}^{\circ}}\to\Fun\pr{S,\frak P\-\Fib}
\]
is a categorical equivalence. Since the functor $\Un_{\phi}^{\frak P}:\Fun^{s}\pr{\widetilde{\fr C}[S],\SS_{/\frak P}^{+}}\to\SS_{/S\times\frak P_{\Un}}^{+}$
admits a simplicial enrichment (Remark \ref{rem:enrichment}), Theorem
\ref{thm:unstraightening_P-fibered_obj} and \cite[Corollary A.3.1.12]{HTT}
give us a categorical equivalence 
\[
N\pr{\Fun^{s}\pr{\widetilde{\fr C}[S],\SS_{/\frak P}^{+}}^{\circ}}\xrightarrow{\simeq}\frak P_{\Un}\-\Bund\pr S.
\]
Similarly, using Proposition \ref{prop:P_Un_good}, we find that the
simplicial functor $\SS_{/S\times\frak P_{\Un}}^{+}\to\SS_{/S\times\frak P}^{+}$
induces a categorical equivalence
\[
\frak P_{\Un}\-\Bund\pr S\xrightarrow{\simeq}\frak P\-\Bund\pr S.
\]
In conclusion, there are categorical equivalences
\[
\frak P\-\Bund\pr S\xleftarrow{\simeq}N\pr{\Fun^{s}\pr{\widetilde{\fr C}[S],\SS_{/\frak P}^{+}}^{\circ}}\xrightarrow{\simeq}\Fun\pr{S,\frak P\-\Fib},
\]
and the proof is complete.
\end{proof}

\subsection{\label{subsec:rectification_1-cat}Rectifications of \texorpdfstring{$\mathfrak{P}$}{P}-Bundles
over Ordinary Categories}

Let $\cal C$ be an ordinary category and let $\frak P$ be a commutative
categorical pattern on an $\infty$-category $\cal D$. In this subsection,
we will construct yet another right Quillen equivalence
\[
\Fun\pr{\cal C,\SS_{/\frak P}^{+}}\to\SS_{/N\pr{\cal C}\times\frak P}^{+}
\]
using the relative nerve functor. As we will see, the construction
is equivalent to the one in Theorem \ref{thm:unstraightening_P-fibered_obj}.
The construction in this subsection has the advantage that it is far
simpler than the previous construction.
\begin{defn}
Let $\cal C$ be an ordinary category and let $\frak P$ be a commutative
categorical pattern on an $\infty$-category $\cal D$. The relative
nerve functor
\[
\int^{+}:\Fun\pr{\cal C,\SS^{+}}\to\SS_{/N\pr{\cal C}}^{+}
\]
carries the constant functor $\delta\pr{\overline{\cal D}}:\cal C\to\SS^{+}$
at the marked simplicial set $\overline{\cal D}\in\SS^{+}$ to the
object $N\pr{\cal C}^{\sharp}\times\overline{\cal D}$. We let $\int^{\frak P}$
denote the induced functor
\[
\int^{\frak P}:\Fun\pr{\cal C,\SS_{/\frak P}^{+}}\to\SS_{/N\pr{\cal C}\times\frak P}^{+}.
\]
\end{defn}

\begin{prop}
Let $\frak P$ be a commutative categorical pattern on an $\infty$-category
$\cal D$, and let $\cal C$ be an ordinary category. The functor
\[
\int^{\frak P}:\Fun\pr{\cal C,\SS_{/\frak P}^{+}}_{\mathrm{proj}}\to\SS_{/N\pr{\cal C}\times\frak P}^{+}
\]
is a right Quillen equivalence. Moreover, there is a natural transformation
$\Theta\to\int^{\frak P}$ whose components at fibrant objects are
weak equivalences, where $\Theta$ denotes the composite
\[
\Theta:\Fun\pr{\cal C,\SS_{/\frak P}^{+}}\xrightarrow{\cun_{\varepsilon}^{\frak P}}\SS_{/N\pr{\cal C}\times\frak P_{\Un}}^{+}\to\SS_{/N\pr{\cal C}\times\frak P}^{+}.
\]
Here $\varepsilon:\widetilde{\fr C}[N\pr{\cal C}]\to\cal C$ denotes
the counit map.
\end{prop}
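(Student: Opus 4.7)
The plan is to deduce both assertions from the already-established Theorem \ref{thm:unstraightening_P-fibered_obj}, Proposition \ref{prop:P_Un_good}, and Proposition \ref{prop:relative_nerve}, by comparing $\int^{\mathfrak{P}}$ with $\Theta$ rather than handling it independently. The first observation is that $\Theta$ is a right Quillen equivalence: the counit $\varepsilon: \widetilde{\mathfrak{C}}[N(\mathcal{C})] \to \mathcal{C}$ is a weak equivalence of simplicial categories, so Theorem \ref{thm:unstraightening_P-fibered_obj} makes $\widetilde{\operatorname{Un}}_\varepsilon^{\mathfrak{P}}$ a right Quillen equivalence onto $\mathsf{sSet}^+_{/N(\mathcal{C})\times\mathfrak{P}_{\operatorname{Un}}}$, and Proposition \ref{prop:P_Un_good} provides the right Quillen equivalence to $\mathsf{sSet}^+_{/N(\mathcal{C})\times\mathfrak{P}}$. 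Composing gives the claim for $\Theta$.

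Next I would construct the natural transformation between $\Theta$ and $\int^{\mathfrak{P}}$. Given $F: \mathcal{C} \to \mathsf{sSet}^+_{/\mathfrak{P}}$ with underlying $F_0: \mathcal{C} \to \mathsf{sSet}^+$, the natural map $\int^+ F_0 \to \widetilde{\operatorname{Un}}_\varepsilon^+ F_0$ of Proposition \ref{prop:relative_nerve} is natural in $F_0$; applied to the natural transformation $F_0 \to \delta(\overline{\mathcal{D}})$ and using that both relative nerve and unstraightening take constant diagrams to products, it sits in a commutative square whose bottom edge is the comparison map $N(\mathcal{C})^\sharp \times \overline{\mathcal{D}} \to N(\mathcal{C})^\sharp \times \widetilde{\operatorname{Un}}_{\Delta^0}^+(\overline{\mathcal{D}})$ coming from Corollary \ref{cor:3.2.1.14}. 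Taking pullback along this comparison (i.e.\ applying the right adjoint in the Quillen equivalence of Proposition \ref{prop:P_Un_good}), this gives a natural map between $\int^{\mathfrak{P}} F$ and $\Theta(F)$ in $\mathsf{sSet}^+_{/N(\mathcal{C})\times\mathfrak{P}}$.

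To show the natural transformation is a weak equivalence on projectively fibrant $F$, I would invoke Proposition \ref{prop:fiberwise_P_equiv} to reduce to a fiberwise check over each vertex $C \in N(\mathcal{C})$. On the fiber, the map becomes (up to the comparison of Corollary \ref{cor:3.2.1.14}) the map $F(C) \to \widetilde{\operatorname{Un}}_{\Delta^0}^+(F(C))$, which is a weak equivalence of marked simplicial sets by Corollary \ref{cor:3.2.1.14}, hence a $\mathfrak{P}$-equivalence by Proposition \ref{prop:creative_precise}. This requires knowing that both $\Theta(F)$ and $\int^{\mathfrak{P}} F$ are fibrant, so I would independently verify that $\int^{\mathfrak{P}} F$ is $N(\mathcal{C}) \times \mathfrak{P}$-fibered by checking conditions (a)--(d) of Definition \ref{def:P-bundle}: condition (a) (cocartesian fibration) is the content of \cite[Proposition 3.2.5.21]{HTT}; (c) follows because the fiber over $C$ is just $F(C)$; (d) is immediate from functoriality of $F$; and (b), the marked-left-fibration condition, is the main technical point and follows from the combinatorial description of edges of $\int^+ F_0$ together with the fact that each $F(C) \to \mathcal{D}$ is $\mathfrak{P}$-fibered.

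Once $\int^{\mathfrak{P}}$ is shown to preserve fibrant objects and to be naturally weakly equivalent to $\Theta$ on them, the Quillen equivalence follows formally: $\int^{\mathfrak{P}}$ is a right adjoint (its left adjoint is the obvious lift of the functor $L^+$ of Remark \ref{rem:rel_nerve_leftadj}), it preserves trivial fibrations and fibrations between fibrant objects (by Proposition \ref{prop:creative} once we know fibrant objects map to fibrant objects), and its total right derived functor is naturally isomorphic to that of the Quillen equivalence $\Theta$. The main obstacle, as indicated, is the verification of the marked-edge condition in (b) --- this will require analyzing when edges $(f, \{x_I\}_{\emptyset \neq I \subset [1]})$ of the relative nerve are locally cocartesian over edges of $N(\mathcal{C}) \times \mathcal{D}$, in terms of the pointwise marking of $F$.
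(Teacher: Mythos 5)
Your proposal is correct and essentially mirrors the paper's own argument: both reduce the Quillen equivalence for $\int^{\frak P}$ to that of $\Theta$ (which follows from Theorem \ref{thm:unstraightening_P-fibered_obj} and Proposition \ref{prop:P_Un_good}), both construct the comparison natural transformation from Proposition \ref{prop:relative_nerve} and Corollary \ref{cor:3.2.1.14} by pulling back along the map $N(\cal C)^{\sharp}\times\overline{\cal D}\to N(\cal C)^{\sharp}\times\cun_{\Delta^{0}}^{+}(\overline{\cal D})$, and both establish fibrancy of $\int^{\frak P}F$ by checking conditions (a)--(d) of Definition \ref{def:P-bundle}. The only cosmetic difference is that you verify the weak equivalence on fibrant objects fiberwise via Proposition \ref{prop:fiberwise_P_equiv}, whereas the paper argues directly through Proposition \ref{prop:creative} that the underlying map of simplicial sets is a categorical equivalence; these are logically interchangeable.
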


\begin{proof}
According to Theorem \ref{thm:unstraightening_P-fibered_obj} and
Proposition \ref{prop:P_Un_good}, the functor $\Theta$ is a right
Quillen equivalence. It will therefore suffice to prove the following:
\begin{enumerate}
\item The functor $\int^{\frak P}$ is right Quillen.
\item There is a natural transformation $\Theta\to\int^{\frak P}$ whose
components at fibrant objects are weak equivalences.
\end{enumerate}
The proof of (1) is entirely analogous to (and easier than) the proof
of Proposition \ref{prop:Un_=00005Cphi^=00007B=00005CfrakP=00007D_right_Quillen},
so we leave it to the reader.

We next turn to the proof of (2). Let $\pi:\SS_{/\frak P}^{+}\to\SS^{+}$
denote the forgetful functor. Given a functor $F:\cal C\to\SS_{/\frak P}^{+}$,
we define a map $\alpha_{F}:\int^{\frak P}F\to\Theta\pr F$ as follows:
Set $F'=\pi\circ F$. The comparison map of Proposition \ref{prop:relative_nerve}
gives rise to a commutative diagram % https://q.uiver.app/#q=WzAsNCxbMCwwLCJcXGludCBeK0YnIl0sWzEsMCwiXFx3aWRldGlsZGV7XFxvcGVyYXRvcm5hbWV7VW59fV97XFx2YXJlcHNpbG9ufV4rKEYnKSJdLFsxLDEsIlxcbWF0aGNhbHtDfV57XFxzaGFycH1cXHRpbWVzIFxcd2lkZXRpbGRle1xcb3BlcmF0b3JuYW1le1VufX1fe1xcRGVsdGFeMH1eKyhcXG92ZXJsaW5le1xcbWF0aGNhbHtEfX0pIl0sWzAsMSwiXFxtYXRoY2Fse0N9XntcXHNoYXJwfVxcdGltZXMgXFxvdmVybGluZXtcXG1hdGhjYWx7RH19Il0sWzAsMV0sWzEsMiwiXFx3aWRldGlsZGV7XFxvcGVyYXRvcm5hbWV7VW59fV97XFx2YXJlcHNpbG9ufV5cXG1hdGhmcmFre1B9KEYpIl0sWzAsMywiXFxpbnRee1xcbWF0aGZyYWt7UH19RiIsMl0sWzMsMl1d
\[\begin{tikzcd}
	{\int ^+F'} & {\widetilde{\operatorname{Un}}_{\varepsilon}^+(F')} \\
	{\mathcal{C}^{\sharp}\times \overline{\mathcal{D}}} & {\mathcal{C}^{\sharp}\times \widetilde{\operatorname{Un}}_{\Delta^0}^+(\overline{\mathcal{D}})}
	\arrow[from=1-1, to=1-2]
	\arrow["{\widetilde{\operatorname{Un}}_{\varepsilon}^\mathfrak{P}(F)}", from=1-2, to=2-2]
	\arrow["{\int^{\mathfrak{P}}F}"', from=1-1, to=2-1]
	\arrow[from=2-1, to=2-2]
\end{tikzcd}\]of marked simplicial sets. We define $\alpha_{F}$ to be the induced
map $\int^{+}F'\to\overline{\cal D}\times_{\cun_{\Delta^{0}}^{+}\pr{\overline{\cal D}}}\cun_{\varepsilon}^{+}\pr{F'}$.
Clearly $\alpha_{F}$ is natural in $F$. If $F$ is projectively
fibrant, then the underlying map of simplicial sets of $\alpha_{F}$
is a categorical equivalence by Proposition \ref{prop:creative},
so again by Proposition \ref{prop:creative} the map $\alpha_{F}$
is a weak equivalence. So the maps $\{\alpha_{F}\}_{F\in\Fun\pr{\cal C,\SS_{/\frak P}^{+}}}$
form the desired natural transformation $\Theta\to\int^{\frak P}$.
\end{proof}

\section{\label{sec:applications}Applications}

In this section, we look at some applications of the results which
we established in Section \ref{sec:Rectification}. In Subsection
\ref{subsec:Classification}, we will show that every $\frak P$-bundle
is classified by a universal $\frak P$-bundle. In Subsections \ref{subsec:Limits}
and \ref{subsec:Colimits}, we will provide formulas for limits and
colimits in $\frak P\-\Fib$ in terms of the associated $\frak P$-bundles
(Corollaries \ref{cor:limit_formula} and \ref{cor:colimit_formula}),
and establish a criterion for a diagram in $\frak P\-\Fib$ to be
a limit or a colimit diagram (Propositions \ref{prop:3.3.3.1} and
\ref{prop:3.3.4.2}). 

We remark that the underlying theme of this section is heavily influenced
by \cite[$\S$3.3]{HTT}. 

\subsection{\label{subsec:Classification}Classification of \texorpdfstring{$\mathfrak{P}$}{P}-Bundles}

Let $\frak P$ be a commutative categorical pattern on an $\infty$-category
$\cal D$. The counit map $\varepsilon:\widetilde{\fr C}[\frak P\-\Fib]\to\pr{\SS_{/\frak P}^{+}}^{\circ}$
gives rise to a $\frak P$-bundle
\[
\cal Z\pr{\frak P}=\cun_{\frak P\-\Fib}\pr{\varepsilon}\times_{\cun_{\Delta^{0}}\pr{\cal D}}\cal D\to\frak P\-\Fib\times\cal D
\]
which we shall refer to as the \textbf{universal $\frak P$-bundle}. 

Given a (small) simplicial set $K$ and a $\frak P$-bundle $p:X\to K\times\cal D$
over $K$, we will say that a diagram $f:K\to\frak P\-\Fib$ \textbf{classifies}
$p$ if there is an equivalence $X_{\natural}\simeq\cal Z\pr{\frak P}\times_{\pr{\frak P\-\Fib}}K$
of $\frak P$-bundles over $K$. Equivalently, $f$ classifies $p$
if it corresponds to $X$ under the categorical equivalence $\Fun\pr{K,\frak P\-\Fib}\simeq\frak P\-\Bund\pr K$
of Corollary \ref{cor:main}. In particular, every $\frak P$-bundle
admits an essentially unique classifying map, justifying the usage
of the adjective ``universal.''

\subsection{\label{subsec:Limits}Limits of \texorpdfstring{$\mathfrak{P}$}{P}-Fibered
Objects}

Recall that if $K$ is a simplicial set and $f:K\to\Cat_{\infty}$
is a functor which classifies a cocartesian fibration $q:X\to K$,
then the limit of $f$ is given by the $\infty$-category of cocartesian
sections of $q$ (\cite[Proposition 3.3.3.2]{HTT}). In this subsection,
we will generalize this result to the case where $\Cat_{\infty}$
is replaced by $\frak P\-\Fib$ (Corollary \ref{cor:limit_formula}).
We will in fact prove a more precise statement (Proposition \ref{prop:3.3.3.1}),
using the diffraction map (Definition \ref{def:diffraction}). 
\begin{rem}
Let $\frak P$ be a commutative categorical pattern on an $\infty$-category
$\cal D$. Then small limits in $\frak P\-\Fib$ can be computed in
$\pr{\Cat_{\infty}}_{/\cal D}$. More precisely, let $\frak{Eq}_{\cal D}=\pr{\{\text{equivalences of }\cal D\},\emptyset}$
denote the smallest commutative categorical pattern on $\cal D$.
Note that the functor $\fr{Eq}_{\cal D}\-\Fib\xrightarrow{\simeq}\pr{\Cat_{\infty}}_{/\cal D}$
is a categorical equivalence \cite[\href{https://kerodon.net/tag/01ZS}{Tag 01ZS}]{kerodon}.
The composite
\[
\frak P\-\Fib\to\fr{Eq}_{\cal D}\-\Fib\xrightarrow{\simeq}\pr{\Cat_{\infty}}_{/\cal D}
\]
preserves and reflects small limits, for it is a conservative right
adjoint by Proposition \ref{prop:creative} and \cite[Proposition B.2.9]{HA}.
\end{rem}

To define the diffraction map, we need a few preliminaries.
\begin{defn}
Let $\frak P$ be a categorical pattern on an $\infty$-category $\cal D$,
and let $S$ be a simplicial set. We will write
\[
\Gamma_{S}^{\frak P}:\SS_{/S\times\frak P}^{+}\to\SS_{/\frak P}^{+}
\]
for the right adjoint of the functor $S^{\sharp}\times-:\SS_{/\frak P}^{+}\to\SS_{/S\times\frak P}^{+}$.
Note that $\Gamma_{S}^{\frak P}$ is right Quillen by \cite[Remark B.2.5]{HA}.
\end{defn}

\begin{prop}
\label{prop:section_equivalence}Let $\frak P$ be a commutative categorical
pattern on an $\infty$-category $\cal D$ and let $f:A\to B$ be
an initial map of simplicial sets. For each fibrant object $X_{\natural}\in\SS_{/B\times\frak P}^{+}$,
the map
\[
\Gamma_{B}^{\frak P}\pr{X_{\natural}}\to\Gamma_{A}^{\frak P}\pr{f^{*}X_{\natural}}
\]
is a $\frak P$-equivalence, which is a trivial fibration if $f$
is a monomorphism.
\end{prop}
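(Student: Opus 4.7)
The plan is to translate the statement into a pushout-product claim via adjunction, handle the initial-monomorphism case by a cellular decomposition of $f$, and reduce a general initial $f$ to the monomorphism case.

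Let $g = f \times \id_\mathcal{D}$. The adjunctions $(B^\sharp \times -) \dashv \Gamma_B^{\mathfrak{P}}$ and $g_!(A^\sharp \times -) \dashv \Gamma_A^{\mathfrak{P}} \circ g^*$ present the restriction map $\Gamma_B^{\mathfrak{P}}(X_\natural) \to \Gamma_A^{\mathfrak{P}}(f^*X_\natural)$ as the right-adjoint image at $X_\natural$ of the natural transformation $g_!(A^\sharp \times Y) \to B^\sharp \times Y$ induced by $f \times \id_Y$. By the lifting criterion, the trivial-fibration claim (when $f$ is a monomorphism) reduces to showing that for every cofibration $K \hookrightarrow L$ in $\SS_{/\mathfrak{P}}^+$ the map
\[
(A^\sharp \times L) \cup_{A^\sharp \times K} (B^\sharp \times K) \longrightarrow B^\sharp \times L
\]
is a trivial cofibration in $\SS_{/B \times \mathfrak{P}}^+$; the $\mathfrak{P}$-equivalence claim amounts to the same map becoming a $B \times \mathfrak{P}$-equivalence after mapping into any fibrant $X_\natural$.

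For initial monomorphisms, I would verify the pushout-product claim by decomposing $f$ as a transfinite composition of pushouts of the left-horn inclusions $\Lambda_i^n \hookrightarrow \Delta^n$ for $0 \leq i < n$---the cellular presentation of initial monomorphisms as left-anodyne maps in the sense of \cite[\S 4.1]{HTT}---and checking that each cellular step, crossed with an arbitrary cofibration $K \hookrightarrow L$ in $\SS_{/\mathfrak{P}}^+$, yields a $B \times \mathfrak{P}$-anodyne extension in the sense of the generating classes from \cite[Appendix B]{HA}.

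For a general initial $f$, factor $f$ as $A \xrightarrow{j} A' \xrightarrow{p} B$ with $j$ a monomorphism and $p$ a trivial Kan fibration. Since trivial Kan fibrations are categorical equivalences and hence initial, $2$-out-of-$3$ forces $j$ to be an initial monomorphism. The map under consideration factors as $\Gamma_B^{\mathfrak{P}}(X_\natural) \to \Gamma_{A'}^{\mathfrak{P}}(p^*X_\natural) \to \Gamma_A^{\mathfrak{P}}(f^*X_\natural)$: the second arrow is a trivial fibration by the initial-monomorphism case applied to $j$, and the first is a $\mathfrak{P}$-equivalence because $p \times \id_\mathcal{D}$ is a trivial Kan fibration of underlying simplicial sets, which renders $p_!((A')^\sharp \times -) \to B^\sharp \times -$ a pointwise trivial fibration in $\SS_{/B \times \mathfrak{P}}^+$ and hence an equivalence on right-adjoint values.

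The hard part will be the cellular verification in the second step---matching the left-horn generators against Lurie's explicit list of $B \times \mathfrak{P}$-anodyne generators and confirming stability under arbitrary cofibrations of $\SS_{/\mathfrak{P}}^+$. The commutativity assumption on $\mathfrak{P}$ (ensuring every $2$-simplex of $\mathcal{D}$ lies in the distinguished set $T$) will be essential for this bookkeeping.
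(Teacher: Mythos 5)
Your overall plan mirrors the paper's proof in outline: translate the statement into a natural transformation of left Quillen functors $f_!\circ(A^\sharp\times-)\to B^\sharp\times-$, handle the left-anodyne part, and dispose of a general initial map by factoring as (initial monomorphism) $\circ$ (trivial fibration) and using two-out-of-three for initial maps and for weak equivalences. Your handling of the trivial fibration step is correct, and the reduction via factorization is essentially what the paper does.

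However, you have a genuine unresolved gap at exactly the step you flag as ``the hard part,'' and it is not merely unfinished bookkeeping. You propose to show that the pushout-product of each generating left-anodyne inclusion $\Lambda_i^n\hookrightarrow\Delta^n$ ($0\le i<n$) with an arbitrary cofibration of $\SS^+_{/\frak P}$ is a $B\times\frak P$-\emph{anodyne} extension, i.e.\ lies in the weakly saturated class generated by Lurie's explicit list in \cite[Definition B.1.1]{HA}. That class is not known (and is likely not true) to be closed under pushout-product with arbitrary $\SS^+_{/\frak P}$-cofibrations; it is a strictly smaller class than the trivial cofibrations (cf.\ \cite[Example B.2.2]{HA}), and Lurie himself does not prove such a pushout-product closure for the anodyne class. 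What he proves instead is the weaker but sufficient statement \cite[Remark B.2.5]{HA}: the $\SS^+$-action $(\overline A,\overline X)\mapsto\overline A\times\overline X$ on $\SS^+_{/\frak P}$ is a left Quillen bifunctor, so the pushout-product with a cofibration of $\SS^+$ is a trivial cofibration provided $A^\sharp\to B^\sharp$ is a trivial cofibration \emph{of} $\SS^+$. This single citation collapses your entire cellular program and, crucially, factors out all dependence on $\frak P$: one is left only with the $\frak P$-free assertion that $A^\sharp\to B^\sharp$ is a weak equivalence of marked simplicial sets when $f$ is left anodyne, which the paper takes from \cite[Lemma 3.2.17]{Landoo-cat}. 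So the correct fix to your proposal is not to complete the anodyne matching, but to replace the target ``$\frak P$-anodyne'' with ``trivial cofibration in $\SS^+_{/\frak P}$'' and invoke \cite[Remark B.2.5]{HA} plus the left-anodyne lemma, which is precisely what the paper does.
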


\begin{proof}
It suffices to show that the natural transformation $f_{!}\circ\pr{A^{\sharp}\times-}\to B^{\sharp}\times-$
of left Quillen functors $\SS_{/\frak P}^{+}\to\SS_{/B\times\frak P}^{+}$
is a natural weak equivalence. By \cite[Remark B.2.5]{HA}, it will
suffice to show that the map $A^{\sharp}\to B^{\sharp}$ is a weak
equivalence of $\SS^{+}$. Since every initial map is a composition
of a left anodyne extension followed by a trivial fibration \cite[Corollary 4.1.1.12]{HTT},
we are reduced to the case where $f$ is left anodyne. In this case,
the claim is a consequence of \cite[Lemma 3.2.17]{Landoo-cat}.
\end{proof}
Using Proposition \ref{prop:section_equivalence}, we can now define
the diffraction map. (Compare \cite[\href{https://kerodon.net/tag/02TD}{Tag 02TD}]{kerodon}.)
\begin{defn}
\label{def:diffraction}Let $\frak P$ be a commutative categorical
pattern on an $\infty$-category $\cal D$ and let $X'\to K^{\lcone}\times\cal D$
be a $\frak P$-bundle. Set $X=X'\times_{K^{\lcone}}K$. We define
the \textbf{diffraction map}
\[
\opn{Df}:X'_{\natural}\times_{\pr{K^{\lcone}}^{\sharp}}\pr{\{\infty\}}^{\sharp}\to\Gamma_{K}^{\frak P}\pr{X_{\natural}}
\]
to be the composite
\[
X'_{\natural}\times_{\pr{K^{\lcone}}^{\sharp}}\pr{\{\infty\}}^{\sharp}\xrightarrow{\theta}\Gamma_{K^{\lcone}}^{\frak P}\pr{X'_{\natural}}\to\Gamma_{K}^{\frak P}\pr{X_{\natural}},
\]
where $\theta$ is a section of the trivial fibration
\[
\Gamma_{K^{\lcone}}^{\frak P}\pr{X'_{\natural}}\to\Gamma_{\{\infty\}}^{\frak P}\pr{X'_{\natural}\times_{\pr{K^{\lcone}}^{\sharp}}\pr{\{\infty\}}^{\sharp}}\cong X'_{\natural}\times_{\pr{K^{\lcone}}^{\sharp}}\pr{\{\infty\}}^{\sharp}.
\]
\end{defn}

We now arrive at the main result of this subsection.
\begin{prop}
\label{prop:3.3.3.1}Let $\frak P$ be a commutative categorical pattern
on an $\infty$-category $\cal D$. Let $K$ be a small simplicial
set, let $\overline{p}:K^{\lcone}\to\frak P\-\Fib$ be a diagram,
let $X'\to K^{\lcone}\times\cal D$ be a $\frak P$-bundle classified
by $\overline{p}$. Set $X=X'\times_{K^{\lcone}}K$. The following
conditions are equivalent:
\begin{enumerate}
\item The diagram $\overline{p}$ is a limit diagram.
\item The restriction map
\[
\Gamma_{K^{\lcone}}^{\frak P}\pr{X'_{\natural}}\to\Gamma_{K}^{\frak P}\pr{X_{\natural}}
\]
is a $\frak P$-equivalence.
\item The diffraction map
\[
\opn{Df}:X'_{\natural}\times_{\pr{K^{\lcone}}^{\sharp}}\pr{\{\infty\}}^{\sharp}\to\Gamma_{K}^{\frak P}\pr{X_{\natural}}
\]
is a $\frak P$-equivalence.
\end{enumerate}
\end{prop}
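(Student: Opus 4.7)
The equivalence $(2)\iff(3)$ is formal. By Proposition~\ref{prop:section_equivalence} applied to the initial inclusion $\{\infty\}\hookrightarrow K^{\lcone}$, the restriction map $\Gamma^{\frak P}_{K^{\lcone}}(X'_{\natural})\to X'_{\natural}\times_{(K^{\lcone})^{\sharp}}\{\infty\}^{\sharp}$ is a trivial fibration between fibrant objects, so its section $\theta$ is a $\frak P$-equivalence. Since $\opn{Df}$ is the composite of $\theta$ with the restriction map appearing in $(2)$, two-out-of-three yields $(2)\iff(3)$.

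For $(1)\iff(2)$ the plan is to use rectification to identify the map in $(2)$ with the canonical comparison $\overline{p}(\infty)\to\lim_K p$ in $\frak P\-\Fib$, where $p=\overline{p}|_K$. First I would choose a projectively fibrant functor $F:\widetilde{\fr C}[K^{\lcone}]\to\SS^{+}_{/\frak P}$ together with a weak equivalence $\cun^{\frak P}_{K^{\lcone}}(F)\simeq X'_{\natural}$, available by essential surjectivity of the derived functor of $\cun^{\frak P}_{K^{\lcone}}$ (Theorem~\ref{thm:unstraightening_P-fibered_obj}) composed with Proposition~\ref{prop:P_Un_good}. By Corollary~\ref{cor:main}, such an $F$ is a rectification of $\overline{p}$.

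The key observation is that $\Gamma^{\frak P}_{K^{\lcone}}$ rectifies to the homotopy limit functor $\Fun^{s}(\widetilde{\fr C}[K^{\lcone}],\SS^{+}_{/\frak P})_{\mathrm{proj}}\to\SS^{+}_{/\frak P}$. Indeed, its left adjoint $(K^{\lcone})^{\sharp}\times(-)$ rectifies to the constant diagram functor: by construction $\cun^{\frak P}_{K^{\lcone}}(\delta(Y))=(K^{\lcone})^{\sharp}\times\cun^{+}_{\Delta^{0}}(Y)$, which is weakly equivalent to $(K^{\lcone})^{\sharp}\times Y$ by Corollary~\ref{cor:3.2.1.14}. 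Passing to derived right adjoints, $\Gamma^{\frak P}_{K^{\lcone}}(X'_{\natural})$ and $\Gamma^{\frak P}_K(X_{\natural})$ model $\holim F$ and $\holim F|_{\widetilde{\fr C}[K]}$, respectively, and the restriction map in $(2)$ models the canonical comparison $\holim F\to\holim F|_{\widetilde{\fr C}[K]}$. Since $\infty$ is initial in $K^{\lcone}$, we have $\holim F\simeq F(\infty)\simeq\overline{p}(\infty)$, so this comparison is precisely $\overline{p}(\infty)\to\lim_K p$.

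Since both $\Gamma^{\frak P}_{K^{\lcone}}(X'_{\natural})$ and $\Gamma^{\frak P}_K(X_{\natural})$ are fibrant in $\SS^{+}_{/\frak P}$, Proposition~\ref{prop:creative} ensures that the map between them is a $\frak P$-equivalence iff it is an equivalence in $\frak P\-\Fib$, which by the preceding paragraph is exactly the condition that $\overline{p}$ be a limit diagram. The hardest part will be pinning down the rectification of the adjunction $(K^{\lcone})^{\sharp}\times(-)\dashv\Gamma^{\frak P}_{K^{\lcone}}$ rigorously, in particular tracking the intermediate passage through $\frak P_{\Un}$ needed for the target of $\cun^{\frak P}_{K^{\lcone}}$ to match up with the domain of $\Gamma^{\frak P}_{K^{\lcone}}$; once this compatibility is in place, the rest is a formal consequence of the Quillen equivalences already established.
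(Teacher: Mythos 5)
Your outline for $(2)\iff(3)$ is correct and matches the paper's (brief) treatment.

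For $(1)\iff(2)$, you identify the right conceptual content but leave the crucial step unfinished, as you yourself acknowledge in the final sentence. The identification of $\Gamma^{\frak P}_{K^\lcone}$ with a homotopy limit functor is precisely where the argument must be made rigorous, and there are two nontrivial obstructions you defer. First, $\cun^{\frak P}_{K^\lcone}$ lands in $\SS^+_{/K^\lcone\times\frak P_{\Un}}$, not $\SS^+_{/K^\lcone\times\frak P}$, so the commuting square of Quillen adjunctions relating $\delta$ and $(K^\lcone)^\sharp\times(-)$ must be compatible with the equivalence $\frak P_{\Un}\simeq\frak P$ at the level of \emph{natural} weak equivalences, not just objectwise; this is delicate. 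Second, your appeal to ``$\holim F\simeq F(\infty)$ since $\infty$ is initial in $K^\lcone$'' is not a formal consequence: for a simplicial functor on $\widetilde{\fr C}[K^\lcone]$ in the projective model structure, the cone point of $\widetilde{\fr C}[K^\lcone]$ is not ``initial'' in a way that immediately computes the homotopy limit, so some form of \cite[Theorem 4.2.4.1]{HTT} or a cofinality argument is still required, and this needs a specific fibrancy condition on $F$ to apply.

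The paper sidesteps both difficulties by a different route. It first uses Proposition \ref{prop:section_equivalence} together with \cite[Proposition 4.2.3.14]{HTT} to reduce to the case $K=N(\cal A)$ for $\cal A$ an ordinary category. It then rectifies $X'_\natural$ via the relative nerve $\int^{\frak P}_{\cal A^\lcone}$ rather than the general unstraightening: this functor lands directly in $\SS^+_{/N(\cal A^\lcone)\times\frak P}$ (no $\frak P_{\Un}$ intermediary), and $\int^{\frak P}(\delta(\overline{\cal D}))=N(\cal A^\lcone)^\sharp\times\overline{\cal D}$ holds on the nose. It further chooses the rectification $F'$ to be \emph{injectively} fibrant, so that \cite[Theorem 4.2.4.1]{HTT} literally reduces $(1)$ to the condition that the one-categorical map $F'(\infty)\to\lim_{\cal A}F'|_{\cal A}$ be a $\frak P$-equivalence (here $F'(\infty)=\lim_{\cal A^\lcone}F'$ since $\infty\in\cal A^\lcone$ is an initial object, a genuine one-categorical fact). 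The comparison with $\Gamma^{\frak P}$ then comes from a commuting triangle of left Quillen functors involving $L^{\frak P}_{\cal A}$ and $\delta$ whose natural transformation is a weak equivalence by \cite[Remark B.2.5]{HA} and \cite[Lemma 3.2.17]{Landoo-cat}. You would need to supply analogues of all three of these ingredients to complete your version; as it stands the proposal is an outline rather than a proof.
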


\begin{proof}
The equivalence of conditions (2) and (3) is obvious, so we will focus
on the equivalence (1)$\iff$(2). Given a simplicial set $S$, we
will write $\Gamma_{S}^{\frak P}=\Gamma_{S}$. By \cite[Proposition 4.2.3.14]{HTT},
there is an ordinary category $\cal A$ and an initial map $f:N\pr{\cal A}\to K$.
By Proposition \ref{prop:section_equivalence}, the maps
\[
\Gamma_{K^{\lcone}}\pr{X'_{\natural}}\to\Gamma_{N\pr{\cal A}^{\lcone}}\pr{\pr{f^{\lcone}}^{*}X'_{\natural}},\,\Gamma_{K}\pr{X_{\natural}}\to\Gamma_{N\pr{\cal A}}\pr{f^{*}X_{\natural}}
\]
are $\frak P$-equivalences. Therefore, condition (2) is equivalent
to the condition that the map
\[
\Gamma_{N\pr{\cal A}^{\lcone}}\pr{\pr{f^{\lcone}}^{*}X'_{\natural}}\to\Gamma_{N\pr{\cal A}}\pr{f^{*}X_{\natural}}
\]
be a $\frak P$-equivalence. Thus, replacing $K$ by $N\pr{\cal A}$
if necessary, we may assume that $K=N\pr{\cal A}$ is the nerve of
an ordinary category $\cal A$. 

Replacing $X'_{\natural}\in\frak P\-\Bund\pr{N\pr{\cal A}^{\lcone}}$
by an equivalent object, we may assume that $X'_{\natural}=\int_{\cal A^{\lcone}}^{\frak P}F'$
for some \textit{injectively} fibrant (hence projectively fibrant)
functor $F':\cal A^{\lcone}\to\SS_{/K\times\frak P}^{+}$ and that
$\overline{p}$ is the nerve of $F'$. Note that the restriction $F=F'\vert\cal A$
is injectively fibrant, for the left Kan extension functor
\[
\Fun\pr{\cal A,\SS_{/K\times\frak P}^{+}}\to\Fun\pr{\cal A^{\lcone},\SS_{/K\times\frak P}^{+}}
\]
is left Quillen with respect to the injective model structures (because
it simply assigns to the cone point the initial object). Therefore,
by \cite[Theorem 4.2.4.1]{HTT}, condition (1) is equivalent to the
condition that the map $F'\pr{\infty}\to\lim_{\cal A}F$ be a $\frak P$-equivalence.

Now let $L_{\cal A}^{\frak P}:\SS_{/N\pr{\cal A}\times\frak P}^{+}\to\Fun\pr{\cal A,\SS_{/\frak P}^{+}}$
denote the left adjoint of the functor $\int_{\cal A}^{\frak P}$.
Explicitly, $L_{\cal A}^{\frak P}$ is given by $L_{\cal A}^{\frak P}\pr{\overline{X}}=\overline{X}\times_{N\pr{\cal A}^{\sharp}}N\pr{\cal A_{/\bullet}}^{\sharp}$
(Remark \ref{rem:rel_nerve_leftadj}). We consider the diagram % https://q.uiver.app/#q=WzAsMyxbMCwxLCJcXG1hdGhzZntzU2V0fV4rX3svXFxtYXRoZnJha3tQfX0iXSxbMiwxLCJcXG9wZXJhdG9ybmFtZXtGdW59KE4oXFxtYXRoY2Fse0F9KSxcXG1hdGhzZntzU2V0fV4rX3svXFxtYXRoZnJha3tQfX0pX3tcXG1hdGhybXtpbmp9fSJdLFsxLDAsIlxcbWF0aHNme3NTZXR9Xitfey9OKFxcbWF0aGNhbHtBfSlcXHRpbWVzIFxcbWF0aGZyYWt7UH19Il0sWzAsMSwiXFxkZWx0YSIsMl0sWzAsMiwiTihcXG1hdGhjYWx7QX0pXlxcc2hhcnBcXHRpbWVzIiwwLHsibGFiZWxfcG9zaXRpb24iOjMwfV0sWzIsMSwiTF5cXG1hdGhmcmFre1B9X3tcXG1hdGhjYWx7QX19Il1d
\[\begin{tikzcd}[ampersand replacement=\&]
	\& {\mathsf{sSet}^+_{/N(\mathcal{A})\times \mathfrak{P}}} \\
	{\mathsf{sSet}^+_{/\mathfrak{P}}} \&\& {\operatorname{Fun}(N(\mathcal{A}),\mathsf{sSet}^+_{/\mathfrak{P}})_{\mathrm{inj}}}
	\arrow["\delta"', from=2-1, to=2-3]
	\arrow["{N(\mathcal{A})^\sharp\times}"{pos=0.3}, from=2-1, to=1-2]
	\arrow["{L^\mathfrak{P}_{\mathcal{A}}}", from=1-2, to=2-3]
\end{tikzcd}\]of left Quillen functors, where $\delta$ denotes the diagonal functor.
By \cite[Remark B.2.5]{HA} and \cite[Lemma 3.2.17]{Landoo-cat},
for each object $\overline{X}\in\SS_{/\frak P}^{+}$, the projection
\[
L_{\cal A}^{\frak P}\pr{N\pr{\cal A}^{\sharp}\times\overline{X}}=N\pr{\cal A_{/\bullet}}^{\sharp}\times\overline{X}\to\delta\pr{\overline{X}}
\]
is a weak equivalence of $\Fun\pr{N\pr{\cal A},\SS_{/\frak P}^{+}}$.
We thus obtain a $\frak P$-equivalence
\[
\lim_{\cal A}F\to\Gamma_{\cal A}X_{\natural}
\]
of right adjoints. Likewise, there is a $\frak P$-equivalence
\[
F'\pr{\infty}=\lim_{\cal A^{\lcone}}F'\to\Gamma_{\cal A^{\lcone}}X_{\natural}'.
\]
So the map $F'\pr{\infty}\to\lim_{\cal A}F$ is a $\frak P$-equivalence
if and only if condition (2) holds. The proof is now complete.
\end{proof}
\begin{cor}
\label{cor:limit_formula}Let $\frak P$ be a commutative categorical
pattern on an $\infty$-category $\cal D$. Let $K$ be a small simplicial
set, let $p:K\to\frak P\-\Fib$ be a diagram, and let $X\to K\times\cal D$
be a $\frak P$-bundle classified by $p$. Then $\Gamma_{K}^{\frak P}\pr{X_{\natural}}$
is a limit of $p$.
\end{cor}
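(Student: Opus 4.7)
The proof will be a direct application of Proposition \ref{prop:3.3.3.1}. The plan is to extend $p$ to an abstract limit cone in $\frak P\-\Fib$, realize this cone as a $\frak P$-bundle over $K^\lcone$ extending $X$, and then use Proposition \ref{prop:3.3.3.1} to identify the value at the cone point with $\Gamma_K^{\frak P}(X_\natural)$ via the diffraction map.

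To begin, I would invoke the remark at the start of this subsection to observe that $\frak P\-\Fib$ admits all small limits. Indeed, the composite $\frak P\-\Fib \to \fr{Eq}_{\cal D}\-\Fib \simeq (\Cat_\infty)_{/\cal D}$ is a conservative right adjoint, so it creates limits, and $(\Cat_\infty)_{/\cal D}$ is complete. Hence $p$ admits a limit $L \in \frak P\-\Fib$ together with a limit cone $\overline{p}: K^\lcone \to \frak P\-\Fib$ satisfying $\overline{p}|_K = p$ and $\overline{p}(\infty) = L$. By Corollary \ref{cor:main}, this extended diagram is classified by some $\frak P$-bundle $X' \to K^\lcone \times \cal D$, and, since $\overline{p}|_K = p$ corresponds to $X$, the restriction $X' \times_{K^\lcone} K$ is equivalent to $X$ as a $\frak P$-bundle over $K$. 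The fiber $X'_\infty$ is then a $\frak P$-fibered object representing $L$.

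Next, I would apply the implication (1)$\Rightarrow$(3) of Proposition \ref{prop:3.3.3.1}: since $\overline{p}$ is a limit diagram, the diffraction map
\[
\opn{Df}: X'_\natural \times_{(K^\lcone)^\sharp} \pr{\{\infty\}}^\sharp \longrightarrow \Gamma_{K}^{\frak P}\pr{(X' \times_{K^\lcone} K)_{\natural}}
\]
is a $\frak P$-equivalence. The source is a model for $L$, and since $\Gamma_K^{\frak P}$ is right Quillen (hence preserves weak equivalences between fibrant objects) the equivalence $X' \times_{K^\lcone} K \simeq X$ of $\frak P$-bundles yields a $\frak P$-equivalence $\Gamma_K^{\frak P}((X' \times_{K^\lcone} K)_\natural) \simeq \Gamma_K^{\frak P}(X_\natural)$. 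Composing gives an equivalence $L \simeq \Gamma_K^{\frak P}(X_\natural)$ in $\frak P\-\Fib$, so $\Gamma_K^{\frak P}(X_\natural)$ is also a limit of $p$.

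The only mild technicality is making precise the statement that $\overline{p}$ is classified by a $\frak P$-bundle $X'$ whose restriction to $K$ recovers $X$ up to equivalence. This is a formal consequence of the fact that the rectification equivalence of Corollary \ref{cor:main} is natural in $K$ under pullback along the inclusion $K \hookrightarrow K^\lcone$; this naturality is built into the construction (via the Quillen equivalence $\cun_\phi^{\frak P}$ and the compatibility of unstraightening with restriction in the base). No serious obstacle is anticipated beyond tracking this functoriality.
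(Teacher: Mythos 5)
Your proof is correct and takes essentially the same approach as the paper: extend $p$ to a limit cone $\overline{p}$, pass to the associated $\frak P$-bundle over $K^\lcone$, and invoke Proposition \ref{prop:3.3.3.1}. The only cosmetic difference is that you invoke the diffraction-map criterion (3) directly, while the paper uses criterion (2) together with Proposition \ref{prop:section_equivalence}; these are interchangeable by construction.
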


\begin{proof}
Extend $p$ to a limit diagram $\overline{p}:K^{\lcone}\to\frak P\-\Fib$
and let $Y\to K^{\lcone}\times\cal D$ denote a $\frak P$-bundle
classified by $\overline{p}$. The $\frak P$-bundle $Y\times_{K^{\lcone}}K\to K\times\cal D$
is classified by $p$, so it is equivalent to $X$ as a $\frak P$-bundle
over $K$. Thus, by Proposition \ref{prop:3.3.3.1}, there is a $\frak P$-equivalence
$\Gamma_{K^{\lcone}}^{\frak P}\pr{Y_{\natural}}\simeq\Gamma_{K}^{\frak P}\pr{X_{\natural}}.$
By Proposition \ref{prop:section_equivalence}, there is also a $\frak P$-equivalence
\[
\Gamma_{K^{\lcone}}^{\frak P}\pr{Y_{\natural}}\xrightarrow{\simeq}\Gamma_{\{\infty\}}^{\frak P}\pr{Y_{\natural}\times_{K^{\lcone}}\{\infty\}}\cong Y_{\natural}\times_{K^{\lcone}}\{\infty\}\simeq\overline{p}\pr{\infty}.
\]
Hence $\Gamma_{K}^{\frak P}\pr{X_{\natural}}$ is $\frak P$-equivalent
to $\overline{p}\pr{\infty}$, as desired.
\end{proof}

\subsection{\label{subsec:Colimits}Colimits of \texorpdfstring{$\mathfrak{P}$}{P}-Fibered
Objects}

In Subsection \ref{subsec:Limits}, we showed how to compute limits
of $\frak P$-fibered objects. In this subsection, we compute colimits.
Again, the situation is quite similar to the case of colimits in $\Cat_{\infty}$.
Recall that, given a diagram $f:K\to\Cat_{\infty}$ classifying a
cocartesian fibration $q:X\to K$, the colimit of $f$ is weakly equivalent
in $\SS^{+}$ to $X^{\natural}$, the marked simplicial set obtained
from $X$ by marking the $q$-cocartesian edges (\cite[Proposition 3.3.4.2]{HTT}).
We will see that an analogous statement holds when $\Cat_{\infty}$
is replaced by $\frak P\-\Fib$ (Corollary \ref{cor:colimit_formula}).
In fact, we will prove a more precise statement (Proposition \ref{prop:3.3.4.2}),
using the refraction map (Definition \ref{def:refraction}).

We start by introducing the refraction map. (Compare \cite[\href{https://kerodon.net/tag/02UP}{Tag 02UP}]{kerodon}.)
\begin{defn}
\label{def:refraction}Let $\frak P$ be a commutative categorical
pattern on an $\infty$-category $\cal D$, let $K$ be a simplicial
set, and let $p':X'\to K^{\rcone}\times\cal D$ be a $\frak P$-bundle
over $K^{\rcone}$. Set $X=X'\times_{K^{\rcone}}K$. A map $\opn{Rf}:X_{\natural}\to X'_{\natural}\times_{\pr{K^{\rcone}}^{\sharp}}\{\infty\}^{\sharp}$
of $\SS_{/\frak P}^{+}$ is called a \textbf{refraction map} if there
is a morphism $H:\pr{\Delta^{1}}^{\sharp}\times X_{\natural}\to X'_{\natural}$
in $\SS_{/K^{\rcone}\times\frak P}^{+}$ satisfying the following
conditions:
\begin{enumerate}
\item The diagram% https://q.uiver.app/#q=WzAsNSxbMCwwLCJcXHswXFx9Xlxcc2hhcnBcXHRpbWVzIFhfXFxuYXR1cmFsIl0sWzAsMSwiKFxcRGVsdGFeMSleXFxzaGFycCBcXHRpbWVzIFhfXFxuYXR1cmFsIl0sWzIsMCwiWCdfe1xcbmF0dXJhbH0iXSxbMiwxLCIoS15cXHRyaWFuZ2xlcmlnaHQpXlxcc2hhcnBcXHRpbWVzIFxcb3ZlcmxpbmV7XFxtYXRoY2Fse0R9fSJdLFsxLDEsIihcXERlbHRhXjEpXntcXHNoYXJwfVxcdGltZXMgKEspXlxcc2hhcnBcXHRpbWVzIFxcb3ZlcmxpbmV7XFxtYXRoY2Fse0R9fSJdLFswLDEsIiIsMCx7InN0eWxlIjp7InRhaWwiOnsibmFtZSI6Imhvb2siLCJzaWRlIjoidG9wIn19fV0sWzEsNCwiXFxvcGVyYXRvcm5hbWV7aWR9XFx0aW1lcyBwIiwyXSxbNCwzLCJoXFx0aW1lcyBcXG9wZXJhdG9ybmFtZXtpZH0iLDJdLFsyLDMsInAiXSxbMCwyXSxbMSwyLCJIIiwxXV0=
\[\begin{tikzcd}
	{\{0\}^\sharp\times X_\natural} && {X'_{\natural}} \\
	{(\Delta^1)^\sharp \times X_\natural} & {(\Delta^1)^{\sharp}\times (K)^\sharp\times \overline{\mathcal{D}}} & {(K^\triangleright)^\sharp\times \overline{\mathcal{D}}}
	\arrow[hook, from=1-1, to=2-1]
	\arrow["{\operatorname{id}\times p}"', from=2-1, to=2-2]
	\arrow["{h\times \operatorname{id}}"', from=2-2, to=2-3]
	\arrow["p", from=1-3, to=2-3]
	\arrow[from=1-1, to=1-3]
	\arrow["H"{description}, from=2-1, to=1-3]
\end{tikzcd}\]is commutative, where $h:\Delta^{1}\times K\to K$ is the map determined
by the inclusion $K\times\{0\}\hookrightarrow K^{\rcone}$ and the
projection $K\times\{1\}\to\{\infty\}$. 
\item The restriction $H\vert\{1\}^{\sharp}\times X_{\natural}$ is equal
to $\opn{Rf}$.
\end{enumerate}
Note that, by Proposition \ref{prop:mla_are_weak_equiv}, refraction
maps exist and are well-defined up to equivalence as objects of $\Map_{\cal D}^{\sharp}\pr{X_{\natural},X_{\natural}'\times_{\pr{K^{\rcone}}^{\sharp}}\{\infty\}}$.
\end{defn}

We can now state the main result of this subsection.
\begin{prop}
\label{prop:3.3.4.2}Let $\frak P$ be a commutative categorical pattern
on an $\infty$-category $\cal D$, let $K$ be a small simplicial
set, let $\overline{f}:K^{\rcone}\to\frak P\-\Fib$ be a diagram which
classifies a $\frak P$-bundle $X'\to K^{\rcone}\times\cal D$. Set
$X=X'\times_{K^{\rcone}}K$. The following conditions are equivalent:
\begin{enumerate}
\item The diagram $\overline{f}$ is a colimit diagram.
\item The inclusion $X_{\natural}\subset X'_{\natural}$ is a $\frak P$-equivalence.
\item The inclusion $X_{\natural}\subset X'_{\natural}$ is a $K^{\rcone}\times\frak P$-equivalence.
\item The refraction map $X_{\natural}\to X'_{\natural}\times_{\pr{K^{\rcone}}^{\sharp}}\{\infty\}$
is a $\frak P$-equivalence.
\end{enumerate}
\end{prop}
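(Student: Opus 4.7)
The plan is to establish the cycle $(1) \iff (3) \implies (2) \iff (4) \implies (1)$, using the rectification equivalence $\SS_{/K^\rcone \times \frak P}^+ \simeq \Fun(K^\rcone, \frak P\-\Fib)$ from Theorem \ref{thm:unstraightening_P-fibered_obj} (combined with Proposition \ref{prop:P_Un_good}) as the main engine. Under this equivalence, the fibrant object $X'_\natural$ corresponds to $\overline f$, and pushforward $i_!$ along $i:K \hookrightarrow K^\rcone$ on marked simplicial sets (which is left Quillen) matches left Kan extension on functor categories.

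For $(1) \iff (3)$, I will factor the inclusion $X_\natural \hookrightarrow X'_\natural$ in $\SS_{/K^\rcone \times \frak P}^+$ as a trivial cofibration $X_\natural \xrightarrow{\sim} \widetilde X$ followed by a fibration $\widetilde X \to X'_\natural$ between fibrant objects, noting that $\widetilde X$ represents the derived left Kan extension $i_!(\overline f|_K)$. Proposition \ref{prop:fiberwise_P_equiv} reduces the $K^\rcone \times \frak P$-equivalence condition to a fiberwise $\frak P$-equivalence; over each $v \in K$ this is automatic (both fibers equal $\overline f(v)$), and at $v = \infty$ the map becomes the canonical $\colim_K(\overline f|_K) \to \overline f(\infty)$, which is a $\frak P$-equivalence precisely when $\overline f$ is a colimit diagram. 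The step $(3) \implies (2)$ will then be formal: for any fibrant $\overline Z \in \SS_{/\frak P}^+$, the constant bundle $(K^\rcone)^\sharp \times \overline Z$ is fibrant in $\SS_{/K^\rcone \times \frak P}^+$, and the adjunction isomorphism $\Map_{K^\rcone \times \cal D}^\sharp(-,(K^\rcone)^\sharp \times \overline Z) \cong \Map_\cal D^\sharp(-,\overline Z)$ transports $(3)$ to $(2)$.

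For $(2) \iff (4)$, I will pass to the homotopy category $\Ho(\SS_{/\frak P}^+)$. The homotopy $H$ from the definition of the refraction map, viewed in $\SS_{/\frak P}^+$ by forgetting the $K^\rcone$-structure on the base, exhibits the inclusion $\opn{Inc}:X_\natural \hookrightarrow X'_\natural$ and the composite $\iota \circ \opn{Rf}:X_\natural \to X'_\natural \times_{(K^\rcone)^\sharp}\{\infty\}^\sharp \hookrightarrow X'_\natural$ (with $\iota$ the fiber inclusion) as left-homotopic through the cylinder $X_\natural \times (\Delta^1)^\sharp$. Both endpoint sections of this cylinder are $\frak P$-equivalences (the projection $(\Delta^1)^\sharp \to \Delta^0$ being a weak equivalence in $\SS^+$, for which one endpoint is obtained from Proposition \ref{prop:mla_are_weak_equiv} and the other by two-out-of-three), so $[\opn{Inc}] = [\iota \circ \opn{Rf}]$ in $\Ho(\SS_{/\frak P}^+)$. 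The key intermediate claim is that $\iota$ is always a $\frak P$-equivalence; this follows because $\infty$ is terminal in $K^\rcone$, so both $X'_\natural$ and $X'_\natural|_\infty$ represent $\overline f(\infty) = \colim_{K^\rcone}(\overline f)$ in $\frak P\-\Fib$, with $\iota$ realizing the canonical identification. Two-out-of-three then yields $(2) \iff (4)$. Closing the cycle with $(4) \implies (1)$ will use the analogous identification: $\opn{Rf}$ represents the canonical morphism $\colim_K(\overline f|_K) \to \overline f(\infty)$ in $\frak P\-\Fib$, since $X_\natural$ represents $\colim_K(\overline f|_K)$.

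The main obstacle will be the clean identification, under the rectification equivalence, of which morphism in $\Fun(K^\rcone, \frak P\-\Fib)$ or $\frak P\-\Fib$ each concrete map $\opn{Inc}$, $\iota$, $\opn{Rf}$ represents. This boils down to verifying that the forgetful functor $\SS_{/K^\rcone \times \frak P}^+ \to \SS_{/\frak P}^+$ is left Quillen and that its total derived functor corresponds, via the rectification equivalence, to $\colim_{K^\rcone}:\Fun(K^\rcone, \frak P\-\Fib) \to \frak P\-\Fib$; this is a direct check once one observes that the right adjoint $(K^\rcone)^\sharp \times -$ corresponds under rectification to the constant-diagram functor $\delta$, whose left adjoint is $\colim_{K^\rcone}$.
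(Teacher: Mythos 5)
Your outline is sound and traverses a genuinely different cycle than the paper's ($(1)\iff(3)$, $(3)\Rightarrow(2)$, $(2)\iff(4)$, $(4)\Rightarrow(1)$, versus the paper's $(2)\iff(3)$, $(1)\iff(3)$, $(2)\iff(4)$). The parts that coincide with the paper — $(1)\iff(3)$ via fully faithfulness of derived left Kan extension, $(3)\Rightarrow(2)$ via the forgetful functor being left Quillen, and the use of the left homotopy $H$ to compare $\opn{Inc}$ with $\iota\circ\opn{Rf}$ — are correct. The interesting divergence is how you handle the two facts that close the cycle: that the fiber inclusion $\iota: X'_{\natural}\times_{(K^{\rcone})^{\sharp}}\{\infty\}^{\sharp}\hookrightarrow X'_{\natural}$ is a $\frak P$-equivalence, and that $\opn{Rf}$ ``represents the canonical cone map.''

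Here the proposal has a real gap, which you partly anticipate. The paper proves the former via Lemma \ref{lem:pullback_finality}, a concrete deformation-retract argument applied to the final map $\{\infty\}\hookrightarrow K^{\rcone}$; once that is in hand, $(2)\iff(4)$ is pure two-out-of-three and no identification of what $\opn{Rf}$ or $\iota$ ``means'' under rectification is ever required. You instead assert that $\iota$ ``realizes the canonical identification'' $\overline f(\infty)\simeq\colim_{K^{\rcone}}\overline f$, and that $\opn{Rf}$ ``represents'' the cone map $\colim_K(\overline f|_K)\to\overline f(\infty)$. These are not free: knowing that two objects of $\frak P\-\Fib$ are abstractly equivalent does not yet tell you that a given concrete map between representing marked simplicial sets is a $\frak P$-equivalence. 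To make your argument rigorous you would need to show that $\iota$ is the image under the derived forgetful functor of the derived counit of $j_!\dashv j^*$ for $j:\{\infty\}\hookrightarrow K^{\rcone}$, and then match this, through the rectification equivalence, with the formal statement that the cone evaluated at a terminal vertex computes the colimit. The analogous bookkeeping is needed for $\opn{Rf}$ in step $(4)\Rightarrow(1)$. All of this is doable but amounts to significantly more work than the paper's route, which sidesteps these naturality checks entirely: since the paper establishes $(2)\iff(3)$ directly (via Proposition \ref{prop:fiberwise_P_equiv} and Lemma \ref{lem:pullback_finality}), it never needs a step of the form $(4)\Rightarrow(1)$ and never needs to know what morphism in $\frak P\-\Fib$ the map $\opn{Rf}$ rectifies to. So: your approach is viable, but the ``canonical identification'' claims are the actual technical content and are not yet proved; either supply the naturality arguments, or — more economically — import Lemma \ref{lem:pullback_finality}, after which the cycle collapses to essentially the paper's proof.
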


To prove Proposition \ref{prop:3.3.4.2}, we need a certain result
on the interaction of $\frak P$-bundles and final maps, stated as
Lemma \ref{lem:pullback_finality}. We will prove this lemma by using
the notion of deformation retracts (in a rather ad hoc manner), which
we now recall.
\begin{defn}
\cite{Nguyen2019}Let $\bf A$ be a simplicial model category. A morphism
$i:A\to B$ of $\bf A$ is called a \textbf{right deformation retract}
if the map
\[
\Delta^{1}\otimes A\amalg_{\{0\}\otimes A}\{0\}\otimes B\to B
\]
obtained as the amalgamation of the maps $\Delta^{1}\otimes A\to\Delta^{0}\otimes A\cong A\xrightarrow{i}B$
and $\{0\}\otimes B\xrightarrow{\cong}B$ extends to a map $h:\Delta^{1}\otimes B\to B$
such that the restriction $h\vert\{1\}\otimes B$ factors through
$A$.
\end{defn}

\begin{rem}
Every right deformation retract of a simplicial model category is
a weak equivalence.
\end{rem}

\begin{example}
\label{exa:right_deformation_retracts}Right deformation retracts
abounds in nature. Here are some examples.
\begin{itemize}
\item The inclusion $\{1\}\subset\Delta^{1}$ is a right deformation retract
of $\SS$. 
\item Let $\bf A$ be a simplicial model category. For any object $A\in\bf A$,
and each right deformation retract $K\to L$ in $\SS$ the map $K\otimes A\to L\otimes A$
is a right deformation retract. 
\item Any pushout of a right deformation retract is again a right deformation
retract.
\end{itemize}
\end{example}

\begin{defn}
Let $\bf A$ be a simplicial model category. A morphism $p:X\to Y$
of $\bf A$ is said to have the \textbf{left path lifting property}
if the map
\[
X^{\Delta^{1}}\to Y^{\Delta^{1}}\times_{Y^{\{0\}}}X^{\{0\}}
\]
is a trivial fibration.
\end{defn}

\begin{example}
Every fibration in a simplicial model category has the left path lifting
property. Maps which has the left path lifting property is stable
under pullback.
\end{example}

\begin{example}
A map $p:X\to Y$ of simplicial sets has the left path lifting property
if and only if it is a left fibration. This follows from Proposition
\cite[Proposition 2.1.2.6]{HTT}.
\end{example}

\begin{prop}
\label{prop:rdr_base_change}Let $\bf A$ be a simplicial model category
and let % https://q.uiver.app/#q=WzAsNCxbMSwwLCJCJyJdLFsxLDEsIkIiXSxbMCwxLCJBIl0sWzAsMCwiQSciXSxbMywwLCJpJyJdLFszLDJdLFsyLDEsImkiLDJdLFswLDEsInAiXV0=
\[\begin{tikzcd}[ampersand replacement=\&]
	{A'} \& {B'} \\
	A \& B
	\arrow["{i'}", from=1-1, to=1-2]
	\arrow[from=1-1, to=2-1]
	\arrow["i"', from=2-1, to=2-2]
	\arrow["p", from=1-2, to=2-2]
\end{tikzcd}\]be a pullback square in $\bf A$. If the map $i$ is a right deformation
retract and $p$ has the left path lifting property, then $i'$ is
a right deformation retract.
\end{prop}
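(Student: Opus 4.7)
The plan is to produce the required homotopy $h': \Delta^1 \otimes B' \to B'$ as a lift of the composite $h \circ (\id_{\Delta^1} \otimes p): \Delta^1 \otimes B' \to B$ along $p$, where $h: \Delta^1 \otimes B \to B$ is the deformation witnessing that $i$ is a right deformation retract. Concretely, the task reduces to solving the lifting problem
\[
\begin{tikzcd}
	{\{0\} \otimes B' \cup_{\{0\} \otimes A'} \Delta^1 \otimes A'} & {B'} \\
	{\Delta^1 \otimes B'} & {B,}
	\arrow["\phi", from=1-1, to=1-2]
	\arrow["j"', from=1-1, to=2-1]
	\arrow["p", from=1-2, to=2-2]
	\arrow["{h \circ (\id \otimes p)}"', from=2-1, to=2-2]
	\arrow["{h'}"{description}, dashed, from=2-1, to=1-2]
\end{tikzcd}
\]
where $\phi$ is the canonical map (the identity on $\{0\} \otimes B'$ and projection-then-$i'$ on $\Delta^1 \otimes A'$). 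The two components of $\phi$ agree on $\{0\} \otimes A'$ because both evaluate there to $i'$, so $\phi$ is well-defined.

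To solve this lifting problem, I would pass through the cotensor-tensor adjunction. Finding $h'$ is equivalent to lifting a map $B' \to B^{\Delta^1} \times_{B^{\{0\}}} B'^{\{0\}}$ (assembled from the adjoint $h^\vee \circ p$ on the first factor and $\id_{B'}$ on the second) to a map $B' \to B'^{\Delta^1}$ through the map $\pi: B'^{\Delta^1} \to B^{\Delta^1} \times_{B^{\{0\}}} B'^{\{0\}}$, subject to the constraint that the restriction to $A'$ coincide with the constant-path map determined by $i'$. By hypothesis $\pi$ is a trivial fibration---this is exactly the LPP of $p$---which is what produces the required lift. The main obstacle will be to carry out this lifting while respecting the prescribed boundary data on $A'$ and $\{0\} \otimes B'$, so that the interaction between the trivial fibration $\pi$ and the pushout-product structure of $j$ is handled correctly.

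Once $h'$ is in hand, the remaining condition in the definition of a right deformation retract---that $h'|_{\{1\} \otimes B'}$ factor through $A'$---follows automatically from the pullback structure. Indeed, from the lifting condition we have $p \circ h'|_{\{1\} \otimes B'} = h|_{\{1\} \otimes B} \circ p$, and since $h|_{\{1\} \otimes B}$ factors through $i: A \to B$ (by the definition of $h$ as a right-deformation-retraction for $i$), the universal property of the pullback $A' = A \times_B B'$ yields a unique factorization of $h'|_{\{1\} \otimes B'}$ through $i': A' \to B'$. Combined with the extension property built into $h'$, this exhibits $i'$ as a right deformation retract, completing the proof.
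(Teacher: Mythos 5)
Your approach is the same as the paper's: you set up precisely the same lifting problem, namely lifting $h\circ(\operatorname{id}\otimes p)$ along $p$ against the pushout-product map $j\colon\Delta^1\otimes A'\amalg_{\{0\}\otimes A'}\{0\}\otimes B'\to\Delta^1\otimes B'$, and observe that any solution is a witness for $i'$. The paper simply asserts the filler exists and that it exhibits $i'$ as a right deformation retract; you add two useful elaborations, both correct. First, you transpose through the tensor-cotensor adjunction to reduce the filling problem to lifting $A'\to B'$ against $\pi\colon B'^{\Delta^1}\to B^{\Delta^1}\times_{B^{\{0\}}}B'^{\{0\}}$, which is a trivial fibration by the LPP hypothesis. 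Second, you spell out why the restriction $h'\vert\{1\}\otimes B'$ factors through $A'$, via the universal property of the pullback $A'=A\times_B B'$ applied to the pair $(r\circ p, h'\vert\{1\}\otimes B')$, where $r$ witnesses $h\vert\{1\}\otimes B=i\circ r$. One small remark: the ``main obstacle'' you flag --- carrying out the lift while respecting the boundary data on $A'$ and on $\{0\}\otimes B'$ --- is not really an obstacle once you pass to the adjoint form; the two boundary constraints become exactly the top arrow $A'\to B'^{\Delta^1}$ and the $B'^{\{0\}}$-component of the bottom arrow, and the lift against the trivial fibration $\pi$ (against the cofibration $A'\to B'$) automatically respects both. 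That cofibration hypothesis on $A'\to B'$ is implicit in the paper's proof as well, so your argument carries the same unstated assumption, not an additional one.
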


\begin{proof}
Choose a map $h:\Delta^{1}\otimes B\to B$ which exhibits $i$ as
a right deformation retract. Since $p$ has the left path lifting
property, we can find a filler of the diagram % https://q.uiver.app/#q=WzAsNSxbMCwwLCJcXERlbHRhXjFcXG90aW1lcyBBJ1xcYW1hbGdfe1xcezBcXH1cXG90aW1lcyBBJ31cXHswXFx9XFxvdGltZXMgQiciXSxbMCwxLCJcXERlbHRhXjFcXG90aW1lcyBCJyJdLFsyLDAsIkInIl0sWzIsMSwiQiJdLFsxLDEsIlxcRGVsdGFeMVxcb3RpbWVzIEIiXSxbMCwxXSxbMCwyXSxbMiwzXSxbMSw0LCJcXERlbHRhXjFcXG90aW1lcyBwIiwyXSxbNCwzLCJoIiwyXSxbMSwyLCIiLDIseyJzdHlsZSI6eyJib2R5Ijp7Im5hbWUiOiJkYXNoZWQifX19XV0=
\[\begin{tikzcd}
	{\Delta^1\otimes A'\amalg_{\{0\}\otimes A'}\{0\}\otimes B'} && {B'} \\
	{\Delta^1\otimes B'} & {\Delta^1\otimes B} & B.
	\arrow[from=1-1, to=2-1]
	\arrow[from=1-1, to=1-3]
	\arrow[from=1-3, to=2-3]
	\arrow["{\Delta^1\otimes p}"', from=2-1, to=2-2]
	\arrow["h"', from=2-2, to=2-3]
	\arrow[dashed, from=2-1, to=1-3]
\end{tikzcd}\]Any such filler exhibits $i'$ as a right deformation retract.
\end{proof}
\begin{lem}
\label{lem:pullback_finality}Let $\frak P$ be a commutative categorical
pattern on an $\infty$-category $\cal D$. Let $f:A\to B$ be a final
map of simplicial sets, and let % https://q.uiver.app/#q=WzAsNCxbMCwwLCJcXG92ZXJsaW5le1h9Il0sWzEsMCwiXFxvdmVybGluZXtZfSJdLFsxLDEsIkJeXFxzaGFycCBcXHRpbWVzIFxcb3ZlcmxpbmV7XFxtYXRoY2Fse0R9fSJdLFswLDEsIkFeXFxzaGFycCBcXHRpbWVzIFxcb3ZlcmxpbmV7XFxtYXRoY2Fse0R9fSJdLFswLDEsImciXSxbMSwyLCJxIl0sWzMsMiwiZlxcdGltZXN7XFxvcGVyYXRvcm5hbWV7aWR9X1xcbWF0aGNhbHtEfX0iLDJdLFswLDMsInAiLDJdXQ==
\[\begin{tikzcd}[ampersand replacement=\&]
	{\overline{X}} \& {\overline{Y}} \\
	{A^\sharp \times \overline{\mathcal{D}}} \& {B^\sharp \times \overline{\mathcal{D}}}
	\arrow["g", from=1-1, to=1-2]
	\arrow["q", from=1-2, to=2-2]
	\arrow["{f\times{\operatorname{id}_\mathcal{D}}}"', from=2-1, to=2-2]
	\arrow["p"', from=1-1, to=2-1]
\end{tikzcd}\]be a pullback square of marked simplicial sets. If $\overline{Y}\in\SS_{/B\times\frak P}^{+}$
is fibrant, then $g$ is a $\frak P$-equivalence.
\end{lem}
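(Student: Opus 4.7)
Plan: First I would note that $\overline{X}$ is automatically fibrant in $\SS^+_{/A\times\frak P}$, since the pullback functor $(f\times\id_{\cal D})^\ast$ is right Quillen by \cite[Proposition B.2.9]{HA}. My strategy is to reduce the general final map $f$ to two simpler cases using the Joyal factorization system. Every final map $f\colon A\to B$ factors as $A\xrightarrow{i}A'\xrightarrow{p}B$, where $i$ is right anodyne and $p$ is a right fibration; since $f$ is final and $p$ is a right fibration, $p$ must be a trivial Kan fibration. Writing $g$ as the composition of the pullback along $p$ and then along $i$, it suffices by two-out-of-three to show that each of the two resulting maps in $\SS^+_{/B\times\frak P}$ is a $\frak P$-equivalence.

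For the piece coming from $p$, I observe that trivial fibrations are stable under pullback (at the level of marked simplicial sets), so the induced map into $\overline{Y}$ is a trivial fibration of marked simplicial sets. Since $\overline{Y}$ is fibrant in $\SS^+$ (being fibrant in a simplicial model category whose forgetful functor to $\SS^+$ preserves fibrant objects), Lemma \ref{lem:weak_+_equiv->P-equiv} upgrades this weak equivalence of marked simplicial sets to a $\frak P$-equivalence.

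For the right-anodyne piece, the key tool is Proposition \ref{prop:rdr_base_change}: because $q\colon\overline{Y}\to B^\sharp\times\overline{\cal D}$ is a fibration in the simplicial model category $\SS^+_{/B\times\frak P}$, it has the left path lifting property, so the pullback along $q$ of any right deformation retract in $\SS^+_{/B\times\frak P}$ is again a right deformation retract, and in particular a $\frak P$-equivalence. It therefore suffices to exhibit $A^\sharp\times\overline{\cal D}\hookrightarrow A'^\sharp\times\overline{\cal D}$ as a right deformation retract, or more realistically as something built out of right deformation retracts by operations that are preserved under pullback along $q$. Using Example \ref{exa:right_deformation_retracts}, right deformation retracts are closed under smashing with marked simplicial sets and under pushouts, so I would decompose the right anodyne inclusion $i$ as a transfinite composition of pushouts of the outer horn inclusions $\{n\}\hookrightarrow\Delta^n$ (which are right deformation retracts of $\SS$ via the explicit homotopy $(j,k)\mapsto\max(k,j\cdot n)$) together with inner horn inclusions; the latter, when smashed with $\overline{\cal D}$ and pulled back along $q$, can be shown to be $\frak P$-equivalences by an independent argument using the marked left anodyne machinery of Proposition \ref{prop:mla_are_weak_equiv}.

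The main obstacle will be the right-anodyne piece: a naive "$i$ is a right deformation retract after smashing with $\overline{\cal D}$" fails because inner horn inclusions $\Lambda^n_i\hookrightarrow\Delta^n$ ($0<i<n$) are not right deformation retracts in $\SS$ (monotonicity of the required homotopy $[1]\times[n]\to[n]$ cannot be arranged so that the image at $1$ lands in $\Lambda^n_i$). I would therefore handle inner and outer horns separately, combining the right-deformation-retract argument for outer right horns with a direct left-anodyne argument for inner horns, and glue the two via the cellular description of right anodyne inclusions. Once this combinatorial bookkeeping is done, the conclusion follows by closure of $\frak P$-equivalences under transfinite composition and pushout.
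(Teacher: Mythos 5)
Your setup is correct and matches the paper's strategy: reduce to $f$ right anodyne using the factorization from \cite[Corollary 4.1.1.12]{HTT}, handle the trivial-fibration piece by Lemma \ref{lem:weak_+_equiv->P-equiv}, observe that $q$ is a marked left fibration by Remark \ref{rem:mlfib} and hence has the left path lifting property, and use Proposition \ref{prop:rdr_base_change} to pull back right deformation retracts. You also correctly diagnose the obstacle: the inner horn inclusions $\Lambda^n_i \subset \Delta^n$ for $0<i<n$ are not right deformation retracts, so the naive approach via the standard generators of right anodyne maps does not go through.

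However, your proposed fix has a genuine gap. First, the decomposition of right anodyne extensions into ``pushouts of the vertex inclusions $\{n\}\hookrightarrow\Delta^n$ together with inner horn inclusions'' is not correct; right anodyne extensions are generated by the right horn inclusions $\Lambda^n_i\subset\Delta^n$ for $0<i\leq n$, and neither $\{n\}\hookrightarrow\Delta^n$ (a vertex inclusion) nor the inner horns alone give a generating set, nor does any obvious combination of them. Second, and more seriously, the ``independent argument using the marked left anodyne machinery of Proposition \ref{prop:mla_are_weak_equiv}'' cannot work for the inner-horn pieces: that proposition applies to maps whose image in $\SS^+$ is marked left anodyne, but what you must analyze is the \emph{pullback} of such a map along $q$, and marked left anodyne maps are not stable under pullback along marked left fibrations (left-lifting classes are stable under pushout, not pullback). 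Nothing in the proposal bridges this.

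The paper's resolution is to invoke a different generating set for right anodyne extensions, namely
\[
S=\bigl\{\,\Delta^1\times\partial\Delta^n\,\cup\,\{1\}\times\Delta^n\ \subset\ \Delta^1\times\Delta^n\,\bigr\}_{n\geq 0},
\]
from \cite[Proposition 2.1.2.6]{HTT}. The crucial feature is that the vertex subcomplex $C=\{1\}\times\Delta^n$ is a right deformation retract of \emph{both} the source $D=\Delta^1\times\partial\Delta^n\cup\{1\}\times\Delta^n$ and the target $E=\Delta^1\times\Delta^n$ (by Example \ref{exa:right_deformation_retracts}). After smashing with $\overline{\cal D}$ and pulling back along $q$, both $C^\sharp\times_{B^\sharp}\overline Y\to D^\sharp\times_{B^\sharp}\overline Y$ and $C^\sharp\times_{B^\sharp}\overline Y\to E^\sharp\times_{B^\sharp}\overline Y$ are right deformation retracts by Proposition \ref{prop:rdr_base_change}, hence $\frak P$-equivalences, and two-out-of-three gives the claim for $D\subset E$. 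One then closes up under the weakly saturated class to get all right anodyne extensions, avoiding inner horns entirely. Replacing your decomposition with this generating set removes the gap.
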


\begin{proof}
By factoring $f$ into a right anodyne extension followed by a trivial
fibration, which is possible by \cite[Corollary 4.1.1.12]{HTT}, we
may assume that $f$ is a right anodyne extension. Consider the class
$\scr M$ of monomorphisms $C\to D$ of simplicial sets such that,
for any morphism $D\to B$ of simplicial sets, the map
\[
C^{\sharp}\times_{B^{\sharp}}\overline{Y}\to D^{\sharp}\times_{B^{\sharp}}\overline{Y}
\]
is a $\frak P$-equivalence. We claim that $\scr M$ contains all
right anodyne extensions. Since $\scr M$ is weakly saturated, it
will suffice to show that $\scr M$ contains a generating set of right
anodyne extensions. By \cite[Proposition 2.1.2.6]{HTT}, the set 
\[
S=\{\Delta^{1}\times\partial\Delta^{n}\cup\{1\}\times\Delta^{n}\to\Delta^{1}\times\Delta^{n}\}_{n\geq0}
\]
generates the class of right anodyne extensions. We claim that $S\subset\scr M$.

Let $n\geq0$, and set $D=\Delta^{1}\times\partial\Delta^{n}\cup\{1\}\times\Delta^{n}$
and $E=\Delta^{1}\times\Delta^{n}$. We wish to show that, for every
map $E\to B$ of simplicial sets, the map $D^{\sharp}\times_{B^{\sharp}}\overline{Y}\to E^{\sharp}\times_{B^{\sharp}}\overline{Y}$
is a $\frak P$-equivalence. Since $\frak P$ is commutative, Remark
\ref{rem:mlfib} shows that $q$ is a marked left fibration. Therefore,
the map $q$ has the left path lifting property as a morphism of $\SS_{/\frak P}^{+}$.
Now set $C=\{1\}\times\Delta^{n}$. The inclusions $C\to D$ and $C\to E$
are right deformation retracts of simplicial sets (Example \ref{exa:right_deformation_retracts}),
so the inclusions $C^{\sharp}\times\overline{\cal D}\to D^{\sharp}\times\overline{\cal D}$
and $\overline{C}^{\sharp}\times\overline{\cal D}\to E^{\sharp}\times\overline{\cal D}$
are right deformation retracts of $\SS_{/\frak P}^{+}$. It follows
from Proposition \ref{prop:rdr_base_change} that the maps $C^{\sharp}\times_{B^{\sharp}}\overline{Y}\to D^{\sharp}\times_{B^{\sharp}}\overline{Y}$
and $C^{\sharp}\times_{B^{\sharp}}\overline{Y}\to E^{\sharp}\times_{B^{\sharp}}\overline{Y}$
are right deformation retracts of $\SS_{/\frak P}^{+}$. In particular,
these maps are $\frak P$-equivalences. By the two out of three property
of $\frak P$-equivalences, we deduce that the map $D^{\sharp}\times_{B^{\sharp}}\overline{Y}\to E^{\sharp}\times_{B^{\sharp}}\overline{Y}$
is a $\frak P$-equivalence, completing the proof.
\end{proof}
We now arrive at the proof of Proposition \ref{prop:3.3.4.2}.
\begin{proof}
[Proof of Proposition \ref{prop:3.3.4.2}]First we prove the equivalence
of conditions (2) and (3). Since the forgetful functor $\SS_{/K^{\rcone}\times\frak P}^{+}\to\SS_{/\frak P}^{+}$
is left Quillen, the implication (3)$\implies$(2) is obvious. For
the converse, suppose that condition (2) is satisfied. Factor the
map $X_{\natural}\subset X'_{\natural}$ as
\[
X_{\natural}\xrightarrow{i}Y_{\natural}\xrightarrow{p}X'_{\natural},
\]
where $i$ is a trivial cofibration and $p$ is a fibration of $\SS_{/K^{\rcone}\times\frak P}^{+}$.
We wish to show that the map $p$ is a trivial fibration. By Proposition
\ref{prop:fiberwise_P_equiv}, it suffices to show that, for each
vertex $v\in K^{\rcone}$, the induced map $p_{v}:Y_{\natural}\times_{\pr{K^{\rcone}}^{\sharp}}\{v\}^{\sharp}\to X'_{\natural}\times_{\pr{K^{\rcone}}^{\sharp}}\{v\}^{\sharp}$
is a $\frak P$-equivalence. 

Since the functor $\Fun\pr{K^{\rcone},\frak P\-\Fib}\to\Fun\pr{K,\frak P\-\Fib}$
admits a fully faithful left adjoint, so does the functor $\frak P\-\Bund\pr{K^{\rcone}}\to\frak P\-\Bund\pr K$.
Therefore, the derived unit of the adjunction
\[
\SS_{/K^{\rcone}\times\frak P}^{+}\adj\SS_{/K\times\frak P}^{+}
\]
is an isomorphism. This means that the map $i$ induces a $K\times\frak P$-equivalence
$X'_{\natural}\to Y_{\natural}\times_{\pr{K^{\rcone}}^{\sharp}}K^{\sharp}$,
so the map $p_{v}$ is a $\frak P$-equivalence for every $v\in K$.
Also, Lemma \ref{lem:pullback_finality} shows that the inclusions
$Y_{\natural}\times_{\pr{K^{\rcone}}^{\sharp}}\{\infty\}^{\sharp}\subset Y_{\natural}$
and $X'_{\natural}\times_{\pr{K^{\rcone}}^{\sharp}}\{\infty\}^{\sharp}\subset X'_{\natural}$
are $\frak P$-equivalences, so the map $p_{\infty}$ is a $\frak P$-equivalence
if and only if $p$ is a $\frak P$-equivalence. Since $i$ and $pi$
are $\frak P$-equivalences, it follows that $p_{\infty}$ is a $\frak P$-equivalence.
Hence $p_{v}$ is a $\frak P$-equivalence for every vertex $v\in K^{\rcone}$,
as required.

Next we prove the equivalence of conditions (1) and (3). Condition
(1) is equivalent to the condition that the object $\overline{f}\in\Fun\pr{K^{\rcone},\frak P\-\Fib}$
belong to the essential image of the left adjoint $\Fun\pr{K,\frak P\-\Fib}\to\Fun\pr{K^{\rcone},\frak P\-\Fib}$
of the restriction functor. This is equivalent to the condition that
the object $X_{\natural}'\in\ho\pr{\SS_{/K^{\rcone}\times\frak P}^{+}}$
belong to the essential image of the total left derived functor $\bb Li_{!}:\ho\pr{\SS_{/K\times\frak P}^{+}}\to\ho\pr{\SS_{/K^{\rcone}\times\frak P}^{+}}$.
Since $\bb Li_{!}$ is fully faithful, this is equivalent to the condition
that the derived counit $X_{\natural}\to X'_{\natural}$ be a $K^{\rcone}\times\frak P$-equivalence,
and the proof is complete.

We now complete the proof by proving the equivalence of conditions
(2) and (4). By construction, the inclusion $X_{\natural}\hookrightarrow X'_{\natural}$
is left homotopic in $\SS_{/\frak P}^{+}$ to the composite $X_{\natural}\xrightarrow{\opn{Rf}}X_{\natural}\times_{\pr{K^{\rcone}}^{\sharp}}\{\infty\}\hookrightarrow X'_{\natural}$.
Since the inclusion $X_{\natural}\times_{\pr{K^{\rcone}}^{\sharp}}\{\infty\}\hookrightarrow X'_{\natural}$
is a $\frak P$-equivalence by Lemma \ref{lem:pullback_finality},
the claim follows from the two out of three property of $\frak P$-equivalences.
\end{proof}
We conclude this subsection with a corollary of Proposition \ref{prop:3.3.4.2}.
\begin{cor}
\label{cor:colimit_formula}Let $\frak P$ be a commutative categorical
pattern on an $\infty$-category $\cal D$, let $K$ be a small simplicial
set, and let $f:K\to\frak P\-\Fib$ be a diagram classifying the $\frak P$-bundle
$X\to K\times\cal D$. Let $X_{\natural}\to Y_{\natural}$ be a trivial
cofibration in $\SS_{/\frak P}^{+}$ such that $Y_{\natural}$ is
$\frak P$-fibered. Then $Y_{\natural}\in\frak P\-\Fib$ is a colimit
of $f$.
\end{cor}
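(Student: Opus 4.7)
The plan is to apply Proposition \ref{prop:3.3.4.2} to a cocone $\overline{f}\colon K^\rcone \to \frak P\-\Fib$ extending $f$ with $\overline{f}\pr{\infty} = Y_\natural$, whose cocone structure is induced by the given trivial cofibration $g\colon X_\natural \to Y_\natural$. First, I observe that $g$ determines a map of $\frak P$-bundles $\gamma\colon X_\natural \to K^\sharp \times Y_\natural$ in $\SS_{/K \times \frak P}^+$: a map in this category is exactly a map of marked simplicial sets over $K^\sharp \times \overline{\cal D}$ whose $K$-component is the given projection, and the codomain is $K \times \frak P$-fibered by Proposition \ref{prop:recognition_of_fibrant_objects} since $Y_\natural$ is $\frak P$-fibered. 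Under the categorical equivalence $\frak P\-\Bund\pr{K} \simeq \Fun\pr{K, \frak P\-\Fib}$ of Corollary \ref{cor:main}, the bundle $K^\sharp \times Y_\natural$ corresponds to the constant diagram $\delta\pr{Y_\natural}$, so $\gamma$ corresponds to a natural transformation $\eta\colon f \to \delta\pr{Y_\natural}$, which in turn determines an extension $\overline{f}\colon K^\rcone \to \frak P\-\Fib$ of $f$ with $\overline{f}\pr{\infty} = Y_\natural$.

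Let $X' \to K^\rcone \times \cal D$ be a $\frak P$-bundle classifying $\overline{f}$, so that $X' \times_{K^\rcone} K \simeq X$ as $\frak P$-bundles over $K$ and $X'_\natural \times_{\pr{K^\rcone}^\sharp} \{\infty\}^\sharp \simeq Y_\natural$ in $\SS_{/\frak P}^+$. By Proposition \ref{prop:3.3.4.2}, conditions (4) and (1) are equivalent, so $\overline{f}$ is a colimit diagram if and only if the refraction map
\[
\opn{Rf}\colon X_\natural \to X'_\natural \times_{\pr{K^\rcone}^\sharp} \{\infty\}^\sharp \simeq Y_\natural
\]
is a $\frak P$-equivalence. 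The crucial claim is that, under the above identifications, $\opn{Rf}$ is equivalent in $\SS_{/\frak P}^+$ to $g$. Indeed, by construction of $\overline{f}$ from $\gamma$, the functor on fibers induced by cocartesian transport along each cone edge $v \to \infty$ in $K^\rcone$ recovers the $v$-component of $g$, and the refraction map is precisely the assembly of these cocartesian transports into a global map $X_\natural \to Y_\natural$. Since $g$ is a trivial cofibration, hence a $\frak P$-equivalence, so is $\opn{Rf}$; therefore $\overline{f}$ is a colimit diagram and $Y_\natural = \overline{f}\pr{\infty}$ is a colimit of $f$.

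The main obstacle is making precise the identification of $\opn{Rf}$ with $g$ up to $\frak P$-equivalence: the refraction map is defined via a marked left anodyne lift in $\SS_{/K^\rcone \times \frak P}^+$ tied to an abstractly chosen model for $X'$, whereas $g$ is transported to $\overline{f}$ through the zig-zag of Quillen equivalences underlying Corollary \ref{cor:main} (which passes through the unstraightening functor and the comparison of $\frak P_{\Un}$ with $\frak P$ of Proposition \ref{prop:P_Un_good}). To circumvent this, one may construct $X'_\natural$ explicitly as a fibrant replacement in $\SS_{/K^\rcone \times \frak P}^+$ of a mapping-cylinder-type object built directly from $g$ along the cone projection $K \times \Delta^1 \to K^\rcone$; the lift $H$ in Definition \ref{def:refraction} can then be chosen to arise from the cylinder structure itself, rendering the equivalence $\opn{Rf} \simeq g$ essentially manifest from the construction.
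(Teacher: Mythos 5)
Your approach inverts the paper's. You build a specific cocone $\overline{f}\colon K^{\rcone}\to\frak P\-\Fib$ with $\overline{f}\pr{\infty}=Y_{\natural}$ (by transporting $g$ across the equivalence of Corollary \ref{cor:main}) and then try to verify it is a colimit by matching the refraction map with $g$. The paper instead starts from an arbitrary colimit diagram $\overline{f}$ extending $f$, classified by a $\frak P$-bundle $X'\to K^{\rcone}\times\cal D$, and simply reads off a chain of $\frak P$-equivalences in $\SS_{/\frak P}^{+}$: since $X$ and $X'\times_{K^{\rcone}}K$ are both classified by $f$ they are equivalent bundles over $K$, hence $X_{\natural}\simeq\pr{X'\times_{K^{\rcone}}K}_{\natural}$; by Proposition \ref{prop:3.3.4.2}(2) the inclusion $\pr{X'\times_{K^{\rcone}}K}_{\natural}\subset X'_{\natural}$ is a $\frak P$-equivalence; and by Lemma \ref{lem:pullback_finality} the inclusion $X'_{\natural}\times_{\pr{K^{\rcone}}^{\sharp}}\{\infty\}^{\sharp}\subset X'_{\natural}$ is a $\frak P$-equivalence. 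Hence $X_{\natural}$ is $\frak P$-equivalent to $\overline{f}\pr{\infty}$, and since $X_{\natural}\to Y_{\natural}$ is a fibrant replacement, $Y_{\natural}\simeq\overline{f}\pr{\infty}$. The virtue of this direction is that one never has to identify any particular map with any other; it suffices to know the objects are equivalent, which is exactly what a colimit characterizes.

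The gap in your argument is precisely what you flag at the end, but it is more serious than ``an obstacle to be circumvented.'' The refraction map of Proposition \ref{prop:3.3.4.2}(4) has source $\pr{X'\times_{K^{\rcone}}K}_{\natural}$, which is only equivalent, not equal, to the original $X_{\natural}$; it depends on a choice of $\frak P$-bundle $X'$ classifying $\overline{f}$ and on a choice of marked-anodyne lift $H$; and saying it ``is precisely the assembly of these cocartesian transports'' is a heuristic, not an argument. Making it rigorous means tracing $g$ through the full zigzag of Quillen equivalences behind Corollary \ref{cor:main} (unstraightening, $\frak P_{\Un}$ versus $\frak P$), which is substantial work. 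Your proposed repair via a mapping cylinder $M=\pr{X_{\natural}\times\pr{\Delta^{1}}^{\sharp}}\amalg_{X_{\natural}\times\{1\}^{\sharp}}Y_{\natural}$ does not close the gap either: after a fibrant replacement $M\to X'_{\natural}$ in $\SS_{/K^{\rcone}\times\frak P}^{+}$, the fiber of $X'_{\natural}$ over $\infty$ is no longer literally $Y_{\natural}$, and because $M$ is not fibrant you cannot invoke Proposition \ref{prop:fiberwise_P_equiv} to conclude the fiber stays equivalent to $Y_{\natural}$; establishing that is exactly the content you were trying to sidestep. In short, the construction of $\overline{f}$ is fine, but the verification that it is a colimit is missing, and the paper's proof avoids that verification entirely by running the comparison in the opposite direction.
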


\begin{proof}
This follows from Lemma \ref{lem:pullback_finality} and Proposition
\ref{prop:3.3.4.2}.
\end{proof}

\providecommand{\bysame}{\leavevmode\hbox to3em{\hrulefill}\thinspace}
\providecommand{\MR}{\relax\ifhmode\unskip\space\fi MR }
% \MRhref is called by the amsart/book/proc definition of \MR.
\providecommand{\MRhref}[2]{%
  \href{http://www.ams.org/mathscinet-getitem?mr=#1}{#2}
}
\providecommand{\href}[2]{#2}


\begin{thebibliography}{HDM06}

\bibitem[Bar18]{Barwick18}
Clark Barwick, \emph{From operator categories to higher operads}, Geom. Topol.
  \textbf{22} (2018), no.~4, 1893--1959. \MR{3784514}

\bibitem[Bor94]{Borceux2}
Francis Borceux, \emph{Handbook of categorical algebra. 2}, Encyclopedia of
  Mathematics and its Applications, vol.~51, Cambridge University Press,
  Cambridge, 1994, Categories and structures. \MR{1313497}

\bibitem[BW05]{BW05}
Michael Barr and Charles Wells, \emph{Toposes, triples and theories}, Repr.
  Theory Appl. Categ. (2005), no.~12, x+288, Corrected reprint of the 1985
  original [MR0771116]. \MR{2178101}

\bibitem[CA93]{AnAn93}
Andrea Corradini and Andrea Asperti, \emph{A categorical model for logic
  programs: indexed monoidal categories}, Semantics: foundations and
  applications ({B}eekbergen, 1992), Lecture Notes in Comput. Sci., vol. 666,
  Springer, Berlin, 1993, pp.~110--137. \MR{1255986}

\bibitem[CH21]{CH19}
Hongyi Chu and Rune Haugseng, \emph{Homotopy-coherent algebra via segal
  conditions}, \url{https://arxiv.org/abs/1907.03977}, 2021.

\bibitem[DS11]{DuggerNecklace}
Daniel Dugger and David~I. Spivak, \emph{Rigidification of quasi-categories},
  Algebr. Geom. Topol. \textbf{11} (2011), no.~1, 225--261. \MR{2764042}

\bibitem[GH15]{GH15}
David Gepner and Rune Haugseng, \emph{Enriched {$\infty$}-categories via
  non-symmetric {$\infty$}-operads}, Adv. Math. \textbf{279} (2015), 575--716.
  \MR{3345192}

\bibitem[HDM06]{HoDe06}
Pieter Hofstra and Federico De~Marchi, \emph{Descent for monads}, Theory Appl.
  Categ. \textbf{16} (2006), No. 24, 668--699. \MR{2259266}

\bibitem[Hov99]{Hovey}
Mark Hovey, \emph{Model categories}, Mathematical Surveys and Monographs,
  vol.~63, American Mathematical Society, Providence, RI, 1999. \MR{1650134}

\bibitem[JT07]{JT07}
Andr\'{e} Joyal and Myles Tierney, \emph{Quasi-categories vs {S}egal spaces},
  Categories in algebra, geometry and mathematical physics, Contemp. Math.,
  vol. 431, Amer. Math. Soc., Providence, RI, 2007, pp.~277--326. \MR{2342834}

\bibitem[Lan21]{Landoo-cat}
Markus Land, \emph{Introduction to infinity-categories}, Compact Textbooks in
  Mathematics, Birkh\"{a}user/Springer, Cham, [2021] \copyright 2021.
  \MR{4259746}

\bibitem[Law63]{Law63}
F.~William Lawvere, \emph{Functorial semantics of algebraic theories}, Proc.
  Nat. Acad. Sci. U.S.A. \textbf{50} (1963), 869--872. \MR{158921}

\bibitem[Lur09]{HTT}
Jacob Lurie, \emph{Higher topos theory}, Annals of Mathematics Studies, vol.
  170, Princeton University Press, Princeton, NJ, 2009. \MR{2522659}

\bibitem[Lur17]{HA}
J.~Lurie, \emph{Higher algebra},
  \url{https://www.math.ias.edu/~lurie/papers/HA.pdf}, 2017.

\bibitem[Lur24]{kerodon}
Jacob Lurie, \emph{Kerodon}, \url{https://kerodon.net}, 2024.

\bibitem[Ngu19]{Nguyen2019}
Hoang~Kim Nguyen, \emph{Covariant \& contravariant homotopy theories},
  \url{https://arxiv.org/abs/1908.06879}, 2019.

\bibitem[Rui20]{JacoMT}
Jaco Ruit, \emph{Grothendieck constructions in higher category theory},
  Master's thesis, Utrecht University, 2020,
  \url{https://studenttheses.uu.nl/bitstream/handle/20.500.12932/38175/GrothendieckConstructions.pdf}.

\bibitem[Shu13]{Shul13}
Michael Shulman, \emph{Enriched indexed categories}, Theory Appl. Categ.
  \textbf{28} (2013), 616--696. \MR{3094435}

\end{thebibliography}
\end{document}